\documentclass[12pt]{amsart}
\usepackage{amsmath}
\usepackage{amsthm}
\usepackage{amsfonts}
\usepackage{amscd}
\usepackage{amssymb}
\usepackage{amstext}
\usepackage{hyperref}
\usepackage{epsfig}

\addtolength{\textwidth}{1.2cm}
\hoffset=-0.5cm

\numberwithin{equation}{section}

\allowdisplaybreaks

\newtheorem{theorem}{Theorem}[section]
\newtheorem{proposition}{Proposition}[section]
\newtheorem{lemma}{Lemma}[section]
\newtheorem{remark}{Remark}[section]
\newtheorem{definition}{Definition}[section]
\newtheorem{corollary}{Corollary}[section]

\renewcommand{\epsilon}{\varepsilon}

\newcommand{\abs}[1]{\left\vert #1\right\vert}
\newcommand{\1}[1]{{\mathbf 1}{\{#1\}}}
\newcommand{\R}{\mathbb{R}}
\newcommand{\N}{\mathbb{N}}
\newcommand{\Z}{\mathbb{Z}}
\newcommand{\T}{\mathcal{T}}

\newcommand{\reff}{R_{\text{eff}}}

\def\fff#1{&{{\pageref{#1}}}\cr}
\def\hfff#1{\label{#1}}

\title[]{Scaling limit for the ant in a simple labyrinth}
\date{}
\author[G.~Ben Arous]{G\'erard Ben Arous}
\address{G\'erard Ben Arous, Courant Institute of Mathematical Sciences, 251 Mercer Street, New York University,
New York, 12012-1185, U.S.A.} \email{benarous@cims.nyu.edu}
\author[M.~Cabezas]{Manuel Cabezas}
\address{Manuel Cabezas\\ Pontificia Universidad Cat\'{o}lica de Chile, Facultad de Matem\'{a}ticas, Campus San Joaqu\'{i}n, Avenida Vicu\~{n}a Mackenna 4860, Santiago, Chile.} \email{mncabeza@mat.puc.cl}

\author[A.~Fribergh]{Alexander Fribergh}
\address{Alexander Fribergh\\Universit\'e de Montr\'eal, DMS\\
Pavillon Andr\'e-Aisenstadt\\     2920, chemin de la Tour Montréal (Qu\'ebec),  H3T 1J4} \email{fribergh@dms.umontreal.ca}

\keywords{Random walk, random environments,  Branching random walk, super-process, spatial tree} \subjclass[2000]{primary 60K37;
secondary 82D30}

\begin{document}

\begin{abstract}
We prove that, after suitable rescaling, the simple random walk on the trace of a large critical branching random walk converges to the Brownian motion on the integrated super-Brownian excursion.
\end{abstract}

\maketitle

\section{Introduction}

The last decade has witnessed a resurgence of mathematical interest in
multi-dimensional random walk in random environment. We refer the reader to~\cite{zeitouni2004part}, \cite{SZ1}, \cite{SZ2}, \cite{kumagai} and~\cite{benarous_fribergh}, for several surveys of the field. 

Random walks in random environments (RWRE) is a subject deeply rooted in physics which became popular because several models exhibit anomalous behaviors. As an illustrative model, de Gennes in~\cite{deGennes1976} proposed to study the random walk on critical percolation clusters. This model is very difficult to study and actually a proper definition of the infinite critical clusters is only available for $d=2$ (see~\cite{kesten1986subdiffusive}) and for large $d$ (see~\cite{vdhjarai} and~\cite{HHH-revisited}). 

In high dimensions (currently meaning $d\geq 11$, see~\cite{fvdh1} and~\cite{fvdh2}) infinite critical clusters were defined using lace expansion (for a recent survey see~\cite{PIMS}). This technique allowed to obtained results (see~\cite{Hara}) which opened the door to one of the first significant result concerning the ant in the labyrinth, \cite{KN}, where the authors proved the Alexander-Orbach conjecture (see~\cite{AO}) stating that the spectral dimension in this model is $4/3$. This results was previously known for critical trees~\cite{Barlow_Kumagai} and for critical oriented percolation~\cite{BJKS}. A detailed discussion of the Alexander-Orbach conjecture can be found in~\cite{kumagai}.

The technique of lace expansion was developed to study random graphs in high dimensions that are critical. It allowed to prove that  critical branching random walks, oriented percolation, percolation and lattice trees have, in some sense, similar universal large scale behavior. This is explained in more details in Section 6 of~\cite{van2006infinite}. Because of this, proving results for the simple random walk on any of those models should provide a blueprint for results on other models in the same universality class.

Our goal in this paper is to prove that the properly normalized random walk on the range of critical branching random walks converges to an object known as the Brownian motion in the ISE (introduced in~\cite{Croydon_arc}). This provides a sophisticated result on the dynamics on the simplest model in the universality class of the critical percolation cluster. Our result is obtained by applying a general abstract convergence theorem (Theorem~\ref{thm_abstract}) which is proved in~\cite{BCFa}.

\subsection{The model}\label{sect_model}

We begin by introducing the critical branching random walk. For this, we will consider a critical Galton-Watson tree, which is a branching process with i.i.d.~offspring that are copies of a random variable $Z$ defined under a law ${\bf P}$ verifying ${\bf E}[Z]=1$ (excluding the case where $Z=1$ a.s.)\hfff{P}\hfff{Z}. Given a random realization of a Galton-Watson tree $\T^{\text{GW}}\hfff{gw}$, we can consider the simple random walk indexed by $\T^{\text{GW}}$, which means we will assign a spatial location $\phi_{\T^{\text{GW}}}(u)\in \Z^d$ for every $u\in \T^{\text{GW}}$. First, the spatial location $\phi_{\T^{\text{GW}}}(\text{root})$ of the root is the origin of $\Z^d$. Then, each edge $e\in \T^{\text{GW}}$ gets assigned, in an i.i.d.~manner, a random variable $X_e$ which is distributed according to the jump distribution of a simple random walk. The spatial location $\phi_{\T^{\text{GW}}}(u)$ of a vertex $u$ is the sum of the quantities $X_e$ over all edges $e$ belonging to the simple path from the root to $u$ in the tree. The couple $(\T^{\text{GW}},\phi_{\T^{\text{GW}}})$ is a random spatial tree under a measure that we will still denote ${\bf P}$. We will call this object the critical branching random walk. This spatial tree can be viewed as a subgraph of $\Z^d$, by considering the graph $\omega$ with vertices given by $\{x\in \Z^d,\ x=\phi_{\T^{\text{GW}}}(u) \text{ with } u\in \T^{\text{GW}}\}$ and edges given by $\{[x_1,x_2]\in E(\Z)^d,\ x_i=\phi_{\T^{\text{GW}}}(\T^{\text{GW}}(u_i)) \text{ for $i\in \{1,2\}$ with } [u_1,u_2] \in E(\T^{\text{GW}})\}$. Obviously, this embedded subgraph is not necessarily a tree.

In this paper, we will be mainly interested in large critical branching random walks, obtained by our previous construction under the measure ${\bf P}_n:={\bf P}[\,\cdot \mid \abs{\T^{\text{GW}}}=n]$\hfff{Pn}, where $\abs{\T^{\text{GW}}}$ denotes the vertex cardinality of $\T^{\text{GW}}$. In this case we will denoted the spatial tree by $(\T^{\text{GW}}_n,\phi_{\T^{\text{GW}}_n})$\hfff{phin} and the random embedded graph (which is $\phi_{\T^{\text{GW}}_n}(\T^{\text{GW}}_n)$) by $\omega_n$\hfff{omega}. 

In this paper we are going to study the simple random walk $(X_m^{\omega_n})_{m\in \N}$, started from $0$, on the range $\omega_n$ of the large critical branching random walks.

\subsection{Main result}

It is well known that $\omega_n$ properly rescaled has a scaling limit called the integrated super-Brownian excursion (ISE) (introduced in~\cite{Al4}) and there is a natural dynamic on this object called the Brownian motion on the ISE (introduced in~\cite{Croydon_arc}). We delay a detailed description of these objects to Section~\ref{section_bmsbm}.  Our main result is that  
\begin{theorem}\label{thm:mainannealed} 
Let us take $d>14$, assume \footnote{The probability measure ${\bf P}_n$ can asymptotically only be defined if the distribution of $Z$ is not supported on a lattice. We make this assumption in this theorem for simplicity. However, if this was not verified one  could get a similar theorem by restricting the convergence to a sub-sequence of the form $(dn)_{n\geq 0}$ where $d$ is the largest integer such that $Z/d$ is an integer almost surely} that $E[Z]=1$, $\sigma^2_Z=\text{Var}(Z)\in (0,\infty)$\hfff{sigZ} and ${\bf E}[\exp(\lambda Z)]<\infty$ for some $\lambda>0$. There exists constants $\nu>0$, $\rho_1>0$ and $\sigma>0$ such that, under ${\bf P}_n$ we have
  \[
  (n^{-1/4}X^{\omega_n}_{tn^{3/2}})_{t\geq0}\to (\sqrt{\sigma} B^{ISE}_{\nu^{-1}\sigma^{-1}\rho_1^{-1}t})_{t\geq0},
  \]
  where $B^{ISE}$ is the {\emph Brownian motion on the ISE} and the convergence is annealed and occurs in the topology of uniform convergence over compact sets. 
  
  We have $\sigma=2/\sigma_Z$ and the definitions of  $\nu$ and $\rho_1$ can be found at~\eqref{def_nu} and \eqref{def_rho1}.
\end{theorem}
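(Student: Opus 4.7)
The plan is to invoke the abstract convergence theorem, Theorem~\ref{thm_abstract}, of~\cite{BCFa}. That result states that for a sequence of rescaled simple random walks on random finite graphs to converge to the Brownian motion on a compact resistance metric space, it suffices to verify a Gromov--Hausdorff--Prokhorov-type convergence of the associated resistance metric measure spaces to the limit, together with some mild non-degeneracy conditions on the limit. The task is therefore to check these inputs for $\omega_n$ under the rescaling $n^{-1/4}$ in space, $n^{-1}$ in mass, $n^{-1/2}$ in the resistance metric, and $n^{-3/2}$ in time, against the ISE equipped with its natural mass measure and the resistance metric associated to the Brownian motion on the ISE of~\cite{Croydon_arc}.

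The first step is to work with the intermediate object, the spatial tree $(\T^{\text{GW}}_n,\phi_{\T^{\text{GW}}_n})$. Under ${\bf P}_n$, Aldous's invariance principle gives that $n^{-1/2}\T^{\text{GW}}_n$, endowed with its graph distance and counting measure, converges in the Gromov--Hausdorff--Prokhorov sense to a multiple of the CRT, with constant $\sigma=2/\sigma_Z$. Combined with the Brownian snake convergence of~\cite{Al4}, one obtains the joint convergence of $(n^{-1/2}\T^{\text{GW}}_n,\, n^{-1/4}\phi_{\T^{\text{GW}}_n})$ to the CRT equipped with the head of the Brownian snake, whose pushforward onto $\R^d$ is the ISE. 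Since on the tree the effective resistance equals the graph distance, this already verifies the hypotheses of Theorem~\ref{thm_abstract} for the sequence of simple random walks indexed by $\T^{\text{GW}}_n$.

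The main obstacle is the passage from the spatial tree to the embedded graph $\omega_n$. The projection $\T^{\text{GW}}_n\to\omega_n$ collapses tree vertices carrying the same spatial location, potentially creating shortcuts and new cycles in $\omega_n$. In the range $d>14$, high-dimensional lace-expansion estimates in the spirit of~\cite{Hara} and~\cite{HHH-revisited} provide tight control on these self-intersections: suitably rescaled, the density of collision pairs converges to a finite functional of the ISE, and typical points of the range are covered by $O(1)$ tree vertices, with uniform integrability provided by the exponential moment assumption on $Z$. From these estimates one extracts the two constants in the statement: the constant $\nu$ of~\eqref{def_nu}, representing the asymptotic tree-to-range mass ratio, and the constant $\rho_1$ of~\eqref{def_rho1}, representing the effective-conductance correction introduced by collisions.

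With these intersection estimates in hand, I would then show that the effective resistance in $\omega_n$ between the projections of two tree vertices is, up to the deterministic factor $\nu^{-1}\rho_1^{-1}$ and a negligible error, the corresponding tree distance. This transfers the Gromov--Hausdorff--Prokhorov convergence from the spatial tree to the triple $(\omega_n,\text{range measure},\reff)$, at which point Theorem~\ref{thm_abstract} of~\cite{BCFa} applies and yields the annealed scaling limit with the stated constants. The hardest point is precisely this resistance comparison: one must rule out that microscopic collisions accumulate into macroscopic shortcuts in $\omega_n$, and this is where both the assumption $d>14$ and the exponential moment on $Z$ are essential, as they make the lace-expansion machinery converge and deliver the required quantitative bounds.
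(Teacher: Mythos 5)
Your high-level strategy is right: you correctly identify that the theorem is proved by invoking Theorem~\ref{thm_abstract} of~\cite{BCFa}, and that the work reduces to establishing a scaling limit for the spatial tree (which gives the ISE) together with a comparison between tree quantities and the corresponding quantities on the embedded graph $\omega_n$ (volume and effective resistance). You also correctly single out the resistance comparison as the hardest step. However, the mechanism you propose for that comparison — lace-expansion estimates in the spirit of~\cite{Hara} and~\cite{HHH-revisited} controlling self-intersection densities — is not what the paper does, and this is a genuine gap, not just a cosmetic difference.

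The paper never uses lace expansion. The entire engine of the resistance estimate (Section~\ref{sect_resistance}) is an ergodic-theory argument on the \emph{bi-infinite} incipient critical branching random walk (IBICBRW), centered on the notion of \emph{pivotal points} of the backbone. One proves (Lemma~\ref{cutp_2}) that pivotal points exist with positive density when $d>10$, using only elementary heat-kernel estimates (Corollary~\ref{bound_q} and Lemma~\ref{cutp_1}), then applies Birkhoff's ergodic theorem along the shift by pivotal points (Lemma~\ref{ergo_shift}, Lemma~\ref{simple_LLN1}) to get a law of large numbers for the resistance along the backbone. The constant $\rho_1$ of~\eqref{def_rho1} is not a ``conductance correction from collisions'' extracted from a lace expansion; it is the ratio $\rho_1 = {\bf E}^{-\infty,\infty}[R_{\text{eff}}^{-\infty,\infty}(0,\alpha(\gamma_1))\mid 0 \text{ pivotal}]/{\bf E}^{-\infty,\infty}[\gamma_1\mid 0 \text{ pivotal}]$, an ergodic-average ratio for the bi-infinite object. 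The dimension threshold $d>14$ arises not from convergence of a lace expansion but from moment bounds on the gaps between pivotal points (Lemmas~\ref{tail_gamma} and~\ref{neglect_bubble}) feeding into the transfer lemmas. Likewise $\nu$ of~\eqref{def_nu} is obtained from the edge-volume linearity Theorem~\ref{teo:legalllinedge} (adapted from~\cite{LGL}), not from intersection-density asymptotics.

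Equally important, and missing from your outline entirely, is the transfer machinery in Sections~\ref{sect_transfer}--\ref{proof_res}. The law of large numbers is proved on the IBICBRW, but the theorem concerns trees conditioned on $\abs{\T^{\text{GW}}}=n$. Passing from the bi-infinite object to the finite conditioned one is a multi-stage argument: coupling IBICBRW $\supset$ IICBRW; the first transfer lemma~\ref{transfer_cvg} for events measurable in the first $(1-\epsilon)n$ levels; the second transfer lemma~\ref{transfer_monotone}, which relies on the monotone-representation/fringe-distribution formula of Aldous~\cite{Aldous_fringe} (equation~\eqref{eq_aldous}); and a final change of conditioning from $\{H(\T)\geq n\}$ to $\{\abs{\T}\geq n\}$ to $\{\abs{\T}= n\}$ via a Kemperman/local-limit argument following~\cite{LG1,LGL}. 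Without these transfer steps, no amount of control on the infinite model produces a statement under ${\bf P}_n$. Finally, a structural point: the conditions of Theorem~\ref{thm_abstract} are phrased in terms of a \emph{finite $K$-skeleton} built from cut-points (Conditions (G), (V), (R) and asymptotic thinness), not in terms of Gromov--Hausdorff--Prokhorov convergence of the full resistance metric measure space; the skeleton decomposition is what makes the argument tractable, and you would need to use it rather than a direct GHP statement.
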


This theorem is proved at the end of Section~\ref{sect_abstract_thm}.

\begin{remark} 
In~\cite{Croydon_arc}, Croydon proved that the Brownian motion on the ISE appeared in the scaling limit of a sequence of processes, $(n^{-1/4}\phi_{n}(Y^{\T^{\text{GW}}_n}_{tn^{3/2}}))_{t\geq 0}$ where $(Y_m^{\T^{\text{GW}}_n})_{m\in \N}$ is the simple random walk on $\T^{\text{GW}}_n$.  

It is important to notice that, in nature, the process considered by Croydon is very different from the one we consider in this paper. In essence $(\phi_{\T^{\text{GW}}_n}(Y_m))_{m\geq 0}$ is a random walk on a tree, which is then embedded. In our case $X_n$ is random walk on a graph that contains loops. This adds significant complexity to the analysis of the problem. 

The models have similar scaling limits because asymptotically the loops of $\omega_n$ are microscopic and their only effect, in the limit, is to speed up the random walk down by a factor $(\rho \nu)^{-1}>1$.
\end{remark}

 Our result on large critical branching random walks, Theorem~\ref{thm:mainannealed}, is obtained as a application of an abstract result available in~\cite{BCFa}. We decided to postpone the statement of this theorem because it requires a large amount of notations (see  Theorem~\ref{thm_abstract}). 

Finally, let us emphasize that in order to obtain Theorem~\ref{thm:mainannealed} we need to prove that the resistance metric and the graph metric are proportional in high dimensions (this result does not hold for $d<6$, see~\cite{JN}). This statement is of interest on its own right and one way it  can be formulated precisely is as in Theorem~\ref{theorem_res}. This answers a question raised in Section 1.8 (iii) of~\cite{BJKS}.

\subsection{Notations}

Given a graph $G$, we will denote $V(G)$ the set of its vertices and $E(G)$ the set of its edges. For $x\in G$ and $k\in \R$, we will write $B_G(x,k)$ for the ball of $k$ centered at $x$ in the natural metric induced by $G$.

Given a rooted tree $T$, we denote $H(T)$  the associated height of the underlying tree, i.e.~the largest non-empty generation, and for any $i\leq H(T)$ we denote $Z_i(T)$ (or simply $Z_i$) the number of offspring at level $i$ in $T$. 

The constants in this paper will typically be denoted $c$ (for lower bounds) and $C$ (for upper bounds) and implicitly assumed to be positive and finite. Their value may change from line to line.

This paper contains a significant amount of notation, so we decided to include a glossary of notation at the end of the paper to help the reader.

\subsection{Organization of the paper}

We start by introducing some notations related to the Brownian motion on the ISE in Section~\ref{section_bmsbm}. This will allow us to state the precise conditions that imply convergence to the Brownian motion on the ISE in Section~\ref{sect_abstract_cvg_thm}. We will then proceed to prove those conditions in the final sections.

The first condition is a resistance estimate, which is the most difficult of this paper. It is done in two parts: we start by introducing objects and techniques related to infinite and bi-infinite versions of critical branching random walks see Section~\ref{section_bi_inf}. Those objects are subsequently used in Section~\ref{sect_resistance} to prove that the resistance and the intrinsic metric are proportional for branching random walks in high dimensions.

The second condition relates to the spatial geometry of critical branching random walks. This problem is addressed in Section~\ref{sect:condG}. The third and final condition is an estimate on the asymptotic volume distribution which is tackled in Section~\ref{sect:condV}.

The paper ends with an appendix with basic estimates on branching random walks and a glossary of notations.

\section{The Brownian motion on the ISE}\label{section_bmsbm}

\subsection{Real trees and spatial trees}

Before defining the Brownian motion on the  ISE it is necessary to actually define the ISE. For this, we choose to introduce the formalism of real trees and spatial trees of which the ISE is the canonical random example. For this we follow, almost to the word, notes from Le Gall (see~\cite{LG}).

\subsubsection{Real trees}\label{sect_real_tree}

\begin{definition}
A metric space $(T,d_{T})$ is a real tree ($\R$-tree) if the following two
properties hold for every $\sigma_1,\sigma_2\in T$.

\begin{enumerate}
\item There is a unique
isometric map
$f_{\sigma_1,\sigma_2}$ from $[0,d_{T}(\sigma_1,\sigma_2)]$ into $T$ such
that $f_{\sigma_1,\sigma_2}(0)=\sigma_1$ and $f_{\sigma_1,\sigma_2}(
d_{T}(\sigma_1,\sigma_2))=\sigma_2$.
\item If $q$ is a continuous injective map from $[0,1]$ into
$T$, such that $q(0)=\sigma_1$ and $q(1)=\sigma_2$, we have
$q([0,1])=f_{\sigma_1,\sigma_2}([0,d_{T}(\sigma_1,\sigma_2)]).$
\end{enumerate}

A rooted real tree is a real tree $(T,d_{T})$
with a distinguished vertex called the root.
\end{definition}

Let us consider a rooted real tree $(T,d)$.
The range of the mapping $f_{\sigma_1,\sigma_2}$ in (1) is denoted by
$[ \sigma_1,\sigma_2]$ (this is the line segment between $\sigma_1$
and $\sigma_2$ in the tree). 
In particular, for every $\sigma\in T$, $[ \text{root},\sigma]$ is the path 
going from the root to $\sigma$, which we will interpret as the ancestral
line of the point $\sigma$. More precisely we can define a partial order on the
tree by setting $\sigma\preccurlyeq \sigma'$
($\sigma$ is an ancestor of $\sigma'$) if and only if $\sigma\in [ \text{root},\sigma']$, and, $\sigma \prec \sigma'$ if $\sigma\preccurlyeq \sigma'$ and $\sigma \neq \sigma'$.

If $\sigma,\sigma'\in T$, there is a unique $\eta\in T$ such that
$[ \text{root},\sigma ] \cap [\text{root} ,\sigma']=[\text{root},\eta ]$. We write $\eta=\sigma\wedge \sigma'$ and call $\eta$ the most recent
common ancestor to $\sigma$ and $\sigma'$.

Finally, let us observe that for any three points $\sigma_1,\sigma_2,\sigma_3$ of a real tree $T$ there exists a unique branching point $b^{T}(\sigma_1,\sigma_2,\sigma_3)\in T$ that satisfies $b^{T}(\sigma_1,\sigma_2,\sigma_3)\in T=[\sigma_1,\sigma_2]\cap [\sigma_2,\sigma_3]\cap [\sigma_3,\sigma_1].$

There are collections of real trees that cannot be distinguished as metric
spaces. For compact rooted real
trees (which are the only type of real trees we consider in this paper) two rooted real trees are equivalent if and only if
there exists a root preserving isometry between them. For our purposes, this subtlety will not be relevant and we will not make any distinction between a tree and its equivalence class. See~\cite{LG} for more details.

\vspace{0.5cm}

{\it A way to construct real trees }

\vspace{0.5cm}

There is a simple way of constructing compact real trees. We consider a	 (deterministic) continuous function
$g:[0,\infty)\longrightarrow[0,\infty)$ with compact support
and such that $g(0)=0$ and $g(x)=0$ for $x$ larger than some $x_0>0$ but $g$ is not identically zero.

For every $s,t\geq 0$, we set
\[
m_g(s,t)=\inf_{r\in[s\wedge t,s\vee t]}g(r),
\]
and
\begin{equation}\label{eq:distancefromexcursion}
d_g(s,t)=g(s)+g(t)-2m_g(s,t).
\end{equation}


We then introduce the equivalence relation
$s\sim t$ iff $d_g(s,t)=0$ (or equivalently iff $g(s)=g(t)=m_g(s,t)$). Let
$T_g$ be the quotient space
\begin{equation}\label{def_equiv_tree}
T_g=[0,\infty)/ \sim.
\end{equation}

Obviously the function $d_g$ induces a distance on $T_g$, and we keep the
notation $d_g$ for this distance. Viewing the equivalence class of $0$ as the root, this means we have the following (see~\cite{DuLG})

\begin{theorem}
\label{tree-deterministic}
The metric space $(T_g,d_g)$ is a rooted real tree.
\end{theorem}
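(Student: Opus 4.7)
The plan is to verify the two axioms of the real-tree definition for the quotient metric space $(T_g, d_g)$. The first step is to show that $d_g$, viewed on $[0,\infty)$, is a pseudo-metric: non-negativity follows from $m_g(s,t) \leq \min(g(s), g(t))$, symmetry is immediate, and the triangle inequality reduces, by splitting on the ordering of three points $s, t, u$, to the elementary identity $m_g(s,u) = \min(m_g(s,t), m_g(t,u))$ valid whenever $s \leq t \leq u$. By construction $d_g$ then descends to a genuine metric on the quotient $T_g$, and since $g$ is compactly supported with $g(0)=0$, every point outside $\operatorname{supp}(g)$ is $\sim$-equivalent to $0$, so $(T_g, d_g)$ is compact with a natural root $[0]$.

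To verify condition (1) I would construct a candidate geodesic between two classes $[s]$ and $[t]$ (WLOG $s<t$) as follows. Pick $s^{*} \in [s,t]$ with $g(s^{*}) = m_g(s,t)$, which exists by continuity of $g$, and follow $g$ ``downward'' from height $g(s)$ to height $m_g(s,t)$ along $[s, s^{*}]$, then ``upward'' from $m_g(s,t)$ to $g(t)$ along $[s^{*}, t]$; precisely, parametrize by $a \in [0, d_g(s,t)]$ so that the point at parameter $a$ has height $g(s)-a$ on the downward leg and $g(t)-(d_g(s,t)-a)$ on the upward leg. A direct computation using $d_g(u,v) = g(u)+g(v)-2m_g(u,v)$ shows that this path is an isometry onto its image, supplying the required map $f_{[s],[t]}$.

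Uniqueness of that isometry, and condition (2) saying that every continuous injective path in $T_g$ from $[s]$ to $[t]$ has the same image as $f_{[s],[t]}$, both follow from the $0$-hyperbolicity of $d_g$, i.e.~the four-point condition
\[
d_g(x_1, x_2) + d_g(x_3, x_4) \leq \max\bigl(d_g(x_1, x_3) + d_g(x_2, x_4),\ d_g(x_1, x_4) + d_g(x_2, x_3)\bigr).
\]
After permuting indices to assume $s_1 \leq s_2 \leq s_3 \leq s_4$, this inequality reduces to an elementary comparison of infima of $g$ over the nested intervals $[s_i, s_j]$. Given compactness, geodesicity, and $0$-hyperbolicity, the standard characterization of real trees (as laid out in Le~Gall's notes) yields the theorem.

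The main obstacle is the bookkeeping: the triangle inequality and, more so, the four-point condition boil down to purely elementary comparisons of the quantities $m_g(\cdot,\cdot)$, but the correct case splits on the ordering of the arguments must be tracked carefully. Conceptually nothing deeper is needed beyond the observation that $d_g$ is the natural ``excursion distance'' encoding a tree structure, forcing any path between two classes to descend to the common minimum $m_g$ before re-ascending.
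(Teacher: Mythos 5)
The paper does not prove this theorem; it cites \cite{DuLG} (and the surrounding discussion follows Le Gall's notes \cite{LG}), so there is no in-text argument to compare against. Your proposal is correct and reproduces the standard proof from that literature: show $d_g$ is a pseudo-metric by case-splitting on the ordering of $s,t,u$ and using $m_g(s,u)=\min(m_g(s,t),m_g(t,u))$ for $s\le t\le u$; construct the geodesic between $[s]$ and $[t]$ by descending to the running minimum and re-ascending; check the four-point inequality; and then invoke the characterization of $\R$-trees as geodesic, $0$-hyperbolic metric spaces (which gives both (1) and (2) at once).
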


We will call $T_g$  the real tree coded by $g$.

\subsubsection{Spatial trees}\label{sect_spatial_tree}

\begin{definition} A ($d$-dimensional) spatial tree is a pair $(T,\phi_{T})$ where 
$T$ is a real tree and $\phi_T$ is a continuous mapping from 
$\phi_T$ into $\R^d$. 
\end{definition}

\begin{remark}
Two spatial trees $(T,\phi_T)$ and $(T', \phi_{T'})$ are said to be equivalent if and only if there exists a root preserving isometry $\pi$ from $T$ to $T'$ such that $\phi_{T}=\phi_{T'}\circ \pi$. In Section~\ref{sect_condG}, we will define a topology on spatial trees, which will actually be a topology on the equivalence classes of spatial trees with respect to the previous relation. Nevertheless, for our purposes it does not pose a problem to identify a tree with its equivalence class.
\end{remark}

Let $T$ be a compact rooted real tree  with a metric $d$. We may consider the $\R^d$-valued Gaussian process
$(\phi_{T}(\sigma),\sigma\in T)$ whose distribution is characterized by
\begin{align*}
&E[\phi_{T}(\sigma)]=0\;,\\
&{\rm cov}(\phi_{T}(\sigma),\phi_{T}(\sigma'))=d(\text{root},\sigma\wedge \sigma')\,{\rm Id}\;,
\end{align*}
where ${\rm Id}$ denotes the $d$-dimensional identity matrix. 

This corresponds to a Brownian embedding of $T$ into $\R^d$. The formula for the covariance is easy to understand if we recall that $\sigma\wedge \sigma'$ is the most recent common ancestor to $\sigma$
and $\sigma'$, and so the ancestors of $\sigma$ and $\sigma'$ are the same up to
level $d(\text{root},\sigma\wedge \sigma')$. 

Under certain assumptions, that will be verified in our context (see (8) in~\cite{LG} for details) the process $(\phi_{T}(\sigma),\sigma\in T)$ has a continuous modification. We keep the notation $\phi_T$ for this modification. 

Given a real tree $T$, we denote by $Q_T$ the law of the spatial tree $(T,(\phi_{T}(\sigma),\sigma\in T))$ (provided it exists).

\subsubsection{Graph spatial trees}\label{sect_graph_tree}

Let us now present a notion introduced by Croydon in~\cite{Croydon_arc}.

\begin{definition}
If a spatial tree $(T,(\phi_T(\sigma),\sigma\in T))$ is such that $T$ is a finite tree with finite edge length, we say that $(T,(\phi_T(\sigma),\sigma\in T))$ is a graph spatial tree.
\end{definition}

 Given a graph spatial tree $(T,(\phi_T(\sigma),\sigma\in T))$, we can assign a probability measure $\lambda_{T}$  defined as the renormalized Lebesgue measure \hfff{lambdaT}(so that the $\lambda_{T}$-measure of a line segment in $T$ is proportional to its length).

\vspace{0.5cm}

{\it A simple way to construct graph spatial  trees }

\vspace{0.5cm}

There is a simple way to construct a rooted graph spatial tree from a rooted spatial tree $(T,d_{T},\phi_T)$. For this we consider a sequence $(\sigma_i)_{i\in \N}$ of elements of a real tree $T$. Fix $K\in \N$. We define the reduced subtree $T(\sigma_1,\ldots,\sigma_K)$ to be the graph tree with vertex set 
\[
V(T(\sigma_1,\ldots, \sigma_K)):=\{b^{T}(\sigma,\sigma',\sigma''):\sigma,\sigma',\sigma'' \in \{\text{root},\sigma_1,\ldots,\sigma_K\}\},
\]
and graph tree structure induced by the arcs of $T$, so that two elements $\sigma$ and $\sigma'$ of $V(T(\sigma_1,\ldots,\sigma_K))$ are connected by an edge if and only if $\sigma\neq \sigma'$ and also $[\sigma,\sigma']\cap V(T(\sigma_1,\ldots,\sigma_K))=\{\sigma,\sigma'\}$. We set the length of an edge $\{\sigma,\sigma'\}$ to be equal to $d_{T}(\sigma,\sigma')$ and we extend the distance linearly on that edge. This allows us to view $T(\sigma_1,\ldots,\sigma_K)$ as a graph spatial tree.

This spatial graph tree will be denoted $(T^{K,(\sigma_i)},d_{T^{K,(\sigma_i)}}, \phi_{T^{K,(\sigma_i)}})$. The associated normalized probability measure is denoted $\lambda_{\phi_{T^{K,(\sigma_i)}}(T^{K,(\sigma_i)})}$. The dependence on $(\sigma_i)$ will often be dropped in the notation when the context is clear.

\subsection{Definition of the $CRT$, the $ISE$ and the $B^{ISE}$}

In this section our goal is to introduce the \lq\lq canonical random object\rq\rq\ associated to real trees, spatial trees, graphs spatial trees and dynamics in these objects.

\subsubsection{The continuum random tree (CRT)}\label{sect:crt}

Denote by $({\bf e}_t)_{0\leq t\leq 1}$ a normalized Brownian excursion. Informally, $({\bf e}_t)_{0\leq
t\leq 1}$ is just a Brownian path started at the origin and conditioned to stay positive
over the time interval $(0,1)$, and to come back to $0$ at time $1$ (see e.g.~Sections 2.9 
and 2.12 of It\^o and McKean \cite{IM} for a discussion of the normalized excursion). We
extend 
the definition of ${\bf e}_t$ by setting ${\bf e}_t=0$ if $t>1$. Then the (random) function
${\bf e}$ satisfies the assumptions of Section~\ref{sect_real_tree} and we can thus consider the
real tree $T_{\bf e}$, which is a random variable with values real trees.

\begin{definition}
\label{CRTdef}
The random real tree $T_{\bf e}$ is called the Continuum Random Tree (CRT) and will be often denoted $(\mathfrak{T},d_{\frak{T}})$ \hfff{crt}. We write $\Xi$ to denote its law.
\end{definition}

The CRT was initially defined by Aldous \cite{Al1} with a different formalism,
but the preceding definition corresponds to Corollary 22 in \cite{Al3}, up to an unimportant
scaling factor $2$.

We can define a natural volume measure on $\mathfrak{T}$ by projecting the Lebesgue measure on $[0,1]$, i.e.~for any open $A\subseteq \mathfrak{T}$, we set
\begin{equation}\label{eq:lambdaT}
\lambda^{\mathfrak{T}}(A)=\text{Leb}\{t\in [0,1], [t]\in A\},
\end{equation}
where $[t]$ denotes the equivalence class of $t$ with respect to the relation defined at~\eqref{def_equiv_tree}.

One major motivation for studying the CRT is the fact that it occurs as the
scaling limit of critical ${\bf P}_n$-Galton-Watson trees. In particular, recalling the notations of the introduction, we have the following (see Theorem~3.1 in~\cite{LG} which is a simple consequence of Theorem 23 in~\cite{Al3})
\begin{theorem} 
Assume that the offspring distribution $Z$ is critical, with finite variance $\sigma^2>0$ and is aperiodic. Then the  rescaled real tree $(\text{GW}_n,\frac{\sigma}{2\sqrt n} d_{\text{GW}_n})$ chosen under ${\bf P}_n$ converges to the CRT, where  the convergence occurs in distribution with the Gromov-Hausdorff topology. 
\end{theorem}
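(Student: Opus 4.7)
The plan is the classical one due to Aldous: encode the Galton--Watson tree by its contour function, use an invariance principle to identify the scaling limit of this contour as the normalized Brownian excursion $({\bf e}_t)_{t\in[0,1]}$, and then transfer uniform convergence of coding functions to Gromov--Hausdorff convergence of the real trees they code via the construction of Section~\ref{sect_real_tree}.

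First, for a rooted ordered tree $\tau$ with $n$ vertices I would define the contour function $C_\tau : [0,2(n-1)] \to \R_+$ as the piecewise linear function of slope $\pm 1$ obtained by running a depth-first exploration of $\tau$ at unit speed, each edge being traversed exactly twice. The key point here is that the rooted real tree $T_{C_\tau}$ built through the equivalence relation \eqref{def_equiv_tree} is canonically root-isometric to $\tau$ equipped with its graph metric, the isometry sending each vertex to its first visit time; in particular, $(\text{GW}_n,\frac{\sigma}{2\sqrt n}\,d_{\text{GW}_n})$ and $(T_{g_n},d_{g_n})$ coincide as rooted real trees, where $g_n(s):=\frac{\sigma}{2\sqrt n}C_n(2(n-1)s)$ and $C_n$ is the contour of $\text{GW}_n$ under ${\bf P}_n$.

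Second, I would invoke Aldous's invariance principle for conditioned Galton--Watson contours: under $E[Z]=1$, $\sigma^2_Z \in (0,\infty)$ and aperiodicity,
\[
\left(\frac{\sigma_Z}{2\sqrt n}\,C_n(2(n-1)s)\right)_{s\in[0,1]} \; \xrightarrow{\;d\;}\; ({\bf e}_s)_{s\in[0,1]}
\]
in $C([0,1],\R_+)$. This is the content of Theorem~23 of~\cite{Al3}; the most streamlined modern proof first establishes the analogous convergence for the Lukasiewicz path (a centered random walk with finite variance, conditioned on hitting $-1$ at step $n$) using Donsker's theorem combined with a local limit theorem, and then transfers it to the contour via the standard pointwise comparison $\sup_s |C_n(2(n-1)s) - L_n(ns)| = o(\sqrt n)$ between the two codings.

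Third, a short and standard lemma says that the map $g\mapsto (T_g,d_g)$ is continuous from the space of continuous nonnegative excursions (with the uniform norm) into the space of compact rooted real trees equipped with the rooted Gromov--Hausdorff distance: the correspondence $\mathcal{R}:=\{([s]_{g_n},[s]_g):s\in[0,1]\}$ has distortion bounded by $4\|g_n-g\|_\infty$, so $d_{GH}(T_{g_n},T_g) \leq 2\|g_n-g\|_\infty$. Combining this continuity with Step~2 (Skorokhod's representation theorem allows one to work almost surely) yields the claimed convergence $(\text{GW}_n,\frac{\sigma_Z}{2\sqrt n}d_{\text{GW}_n}) \to (\mathfrak T,d_{\mathfrak T})$ in distribution. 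The main obstacle is Step~2, whose subtlety lies entirely in handling the conditioning $|\T^{\text{GW}}|=n$; this is precisely where aperiodicity is used, as it guarantees a local central limit theorem for the first hitting time of $-1$ by the associated random walk and hence ensures that the conditioned Donsker convergence goes through.
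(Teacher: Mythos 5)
Your proposal follows the standard Aldous--Le Gall route---encode $\T^{\text{GW}}_n$ by its contour function, invoke the conditioned invariance principle of Theorem~23 of~\cite{Al3}, and pass to Gromov--Hausdorff convergence via the Lipschitz bound $d_{GH}(T_g,T_{g'})\leq 2\|g-g'\|_\infty$---which is exactly the approach of Theorem~3.1 in~\cite{LG} that the paper itself cites in lieu of a proof, so the two proofs essentially coincide. One small inaccuracy in your sketch of Step~2: the contour function and the Lukasiewicz path are \emph{not} uniformly close (the claim $\sup_s|C_n(2(n-1)s)-L_n(ns)|=o(\sqrt n)$ is false); the standard transfer actually proceeds via the height process $H_n(k)=|v_k|$, which is close to $C_n$ after reparametrization, while passing from the Lukasiewicz path to $H_n$ is a separate argument counting records of the reversed walk---but since you correctly cite Theorem~23 of~\cite{Al3} for the statement you need, this imprecision in the sketch does not affect the validity of your argument.
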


\subsubsection{The integrated super-Brownian excursion (ISE)}

We will combine the CRT with $d$-dimensional Brownian motions started from $x=0$, in the
way explained in Section~\ref{sect_spatial_tree}. Precisely this means that we are
considering the probability measure on spatial trees defined by
\[
M=\int \Xi(d\mathfrak{T})\,Q_{\mathfrak{T}}.
\]

Recall the notation $\lambda_{\mathfrak{T}}$ for the uniform measure on $\mathfrak{T}$
(this makes sense $\Xi(d\mathfrak{T})$ a.s.).

\begin{definition}
The random probability measure $\lambda^{\phi_{\mathfrak{T}}(\mathfrak{T})}$ on $\R^d$ defined under $M$ by $\lambda^{\phi_{\mathfrak{T}}(\mathfrak{T})}:=\lambda^{\mathfrak{T}} \circ \phi_{\mathfrak{T}}^{-1}$ is called $d$-dimensional ISE (for Integrated Super-Brownian Excursion).
\end{definition}

Note that the topological support of ISE is the range 
of the spatial tree, and that ISE should be interpreted as the
uniform measure on this set. We will often abuse the terminology and write ISE to mean its topological support. We will write $\phi_{\mathfrak{T}}(\mathfrak{T})$ \hfff{ise} to designated this set.

The random measure
ISE was first discussed by Aldous~\cite{Al4}. It occurs in various 
asymptotics for models of statistical mechanics (see in particular~\cite{Sl1} and~\cite{Sl2}).

\subsubsection{The Brownian motion on the ISE: $B^{ISE}$}

We are now going to define a canonical dynamic on the ISE. For this we will start by discussing the Brownian motion on the CRT.

Let $(T,d_T)$ be any real tree and $\nu$ a (Borelian) probability measure on $T$. It was suggested by Aldous~\cite{Al2} that a Brownian motion on $(T,d_{T},\nu)$ should be a strong Markov process with continuous sample paths that is reversible with respect to its invariant measure $\nu$ and satisfies the following properties,
\begin{enumerate}
\item For $\sigma_1,\sigma_2 \in T$ with $\sigma_1\neq \sigma_2$, we have 
\[
P_{\sigma}^{T,\nu}(T_{\sigma_1}<T_{\sigma_2})=\frac{d_{T}(b^{T}(\sigma,\sigma_1,\sigma_2),\sigma_2)}{d_{T}(\sigma_1,\sigma_2)}, \qquad \forall \sigma \in T,
\]
where $T_{\sigma}:=\inf\{t>0,X_t^{T}=\sigma\}$ is the hitting time of $\sigma \in T$.
\item For $\sigma_1,\sigma_2 \in T$, the mean occupation measure for the process started at $\sigma_1$ and killed on hitting $\sigma_2$ has density
\[
2d_{T}(b^{T}(\sigma,\sigma_1,\sigma_2),\sigma_2)\nu(d\sigma) \qquad \forall \sigma \in T.
\]
\end{enumerate}

These properties guarantee the uniqueness of the Brownian motion on $(T,d_{T},\nu)$. 

The existence of such a process follows from techniques of resistance forms (see~\cite{Kigami_Harm} for an introduction on resistance forms). More specifically, it was proved in Section 6 of~\cite{Croydon_arc} that
\begin{proposition}\label{prop_def_process}
Let $(T,d_{T})$ be a compact real tree, $\nu$ be a finite Borel measure on $T$ that satisfies $\nu(A)>0$ for every non-empty open set $A\subseteq T$ and $(\mathcal{E}_{T},\mathcal{F}_{T})$ be the resistance form associated with $(T,d_{T})$. Then $(\frac 12 \mathcal{E}_{T},\mathcal{F}_{T})$ is a local, regular Dirichlet form on $L^2(T, \nu)$, and the corresponding Markov process $B^{T,\nu}$ is the Brownian motion on $(T,d_{T},\nu)$.
\end{proposition}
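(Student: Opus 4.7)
The plan is to work within Kigami's theory of resistance forms, and then identify the corresponding Markov process via its hitting probabilities and Green function through standard electrical-network calculations that simplify dramatically on a tree.

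First, I would note that on a compact real tree $(T,d_T)$ the metric $d_T$ is itself a resistance metric. The associated resistance form $(\mathcal{E}_T,\mathcal{F}_T)$ can be described explicitly: roughly, $\mathcal{F}_T$ consists of continuous functions on $T$ admitting a weak branch-wise derivative $f'$ in $L^2$ of the natural length measure $\ell$ on $T$, and $\mathcal{E}_T(f,f)=\int_T|f'|^2\,d\ell$. A one-dimensional electrical calculation along the unique arc joining two points then shows $\reff(\sigma_1,\sigma_2)=d_T(\sigma_1,\sigma_2)$.

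Second, to promote $(\tfrac 12\mathcal{E}_T,\mathcal{F}_T)$ to a local, regular Dirichlet form on $L^2(T,\nu)$, I would invoke Kigami's general result: on a compact resistance-metric space, any finite Borel measure of full support (ensured here by the hypothesis $\nu(A)>0$ for every non-empty open $A\subseteq T$) turns the resistance form into a regular Dirichlet form. Locality is immediate from the tree structure, since $\mathcal{E}_T$ is a sum of branch-wise contributions, so $\mathcal{E}_T(f,g)=0$ whenever $f$ and $g$ have disjoint supports.

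Third, I would verify the two Aldous axioms characterizing the Brownian motion on $(T,d_T,\nu)$ by electrical identities, which become particularly clean on a tree. For property~(1), the classical formula $P_\sigma^{T,\nu}(T_{\sigma_1}<T_{\sigma_2})=\reff(\sigma,\sigma_2;\{\sigma_1,\sigma_2\})/\reff(\sigma_1,\sigma_2)$ reduces, via the observation that every path from $\sigma$ to $\{\sigma_1,\sigma_2\}$ must pass through the branching point $b=b^T(\sigma,\sigma_1,\sigma_2)$, to $d_T(b,\sigma_2)/d_T(\sigma_1,\sigma_2)$. For property~(2), the Green function of the process killed upon hitting $\sigma_2$ is given by the symmetric resistance formula $g_{\sigma_2}(\sigma_1,\sigma)=\tfrac 12(\reff(\sigma_1,\sigma_2)+\reff(\sigma,\sigma_2)-\reff(\sigma_1,\sigma))$, which on a tree equals $d_T(b,\sigma_2)$; the prefactor $\tfrac 12$ on the form time-changes the process by a factor of two, producing precisely the density $2\,d_T(b,\sigma_2)\,\nu(d\sigma)$ required.

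The main obstacle I anticipate is the verification of regularity of the Dirichlet form, particularly because $\nu$ is only assumed to have full support (not, say, to be the length measure). This reduces to showing that $\mathcal{F}_T\cap C(T)$ is simultaneously dense in $C(T)$ for the sup-norm and in $\mathcal{F}_T$ for the form norm. I would handle this by approximating a generic $f\in\mathcal{F}_T$ by its piecewise-linear interpolation on a dense family of reduced subtrees $T(\sigma_1,\ldots,\sigma_K)$ (in the sense of Section~\ref{sect_graph_tree}), using compactness of $T$ to control sup-norm errors as $K\to\infty$ and monotone convergence of $\mathcal{E}_T$ along the approximating subtrees to handle the form-norm limit. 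The uniqueness statement recalled just before the proposition then identifies the resulting Hunt process with the Brownian motion on $(T,d_T,\nu)$.
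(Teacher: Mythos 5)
The paper does not actually prove Proposition~\ref{prop_def_process}: it cites Section~6 of \cite{Croydon_arc}, where the result is established. So there is no ``paper proof'' to compare against in detail. That said, your sketch does reproduce the essential structure of Croydon's argument: identify $d_T$ as a resistance metric, obtain a regular Dirichlet form from the resistance form via Kigami's theory once a full-support finite Borel measure is supplied, and then verify Aldous's two characterizing properties by electrical identities that collapse, on a tree, to expressions in the branching point $b^{T}(\sigma,\sigma_1,\sigma_2)$. Your computations of the hitting probability and of the killed Green function, including the interplay between the prefactor $\tfrac{1}{2}$ on the form and the factor $2$ in the occupation density, are all consistent with this route.

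One place you could tighten the argument: the regularity step, which you flag as the main obstacle and propose to handle by hand via piecewise-linear interpolation on reduced subtrees, is actually supplied directly by Kigami's general theory. For a resistance form $(\mathcal{E},\mathcal{F})$ whose resistance metric makes the state space compact, one has $\mathcal{F}\subset C(T)$ by the resistance estimate $\abs{f(x)-f(y)}^2\leq d_T(x,y)\,\mathcal{E}(f,f)$, and Kigami's theorem yields that $(\mathcal{E},\mathcal{F})$ is a regular Dirichlet form on $L^2(T,\nu)$ for any finite Borel $\nu$ with full support. So the ad hoc approximation scheme, while not wrong, duplicates work already done in the general framework you are already invoking. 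Similarly, locality follows on a tree either from your branch-wise gradient description or from the general fact that resistance forms on dendrites are local; both are fine. With that simplification acknowledged, your outline is a faithful reconstruction of the cited proof.
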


For $d\geq 8$, it can be proved that $\phi_{\mathfrak{T}}$ is injective from $\mathfrak{T}$ (the CRT)  to $\phi_{\mathfrak{T}}(\mathfrak{T})$ (the range of the ISE), see Proposition 3.5.~in~\cite{Croydon_arc}. This means $\phi_{\mathfrak{T}}$ is actually and isometry between $(\mathfrak{T},d_{\mathfrak{T}})$ and $(\phi_{\mathfrak{T}}(\mathfrak{T}),d_{\phi_{\mathfrak{T}}(\mathfrak{T})})$ which sends  $\lambda_{\mathfrak{T}}$ to $\lambda_{\phi_{\mathfrak{T}}(\mathfrak{T})}$.  Hence, $(\phi_{\mathfrak{T}}(\mathfrak{T}),d_{\phi_{\mathfrak{T}}(\mathfrak{T})})$ is a real tree and this allows us to define easily a process, which in the sense defined by Aldous, is the Brownian motion on the ISE in $\Z^d$ for $d\geq 8$.

\begin{proposition}\hfff{bise}\label{propdef_BISE} For $\Xi$-a.e.~$\mathfrak{T}$, the Brownian motion $B^{CRT}$ on $(\mathfrak{T},d_{\mathfrak{T}},\lambda^{\mathfrak{T}})$ exists. Furthermore if $d\geq 8$, for $M$-a.e.~ $(\mathfrak{T},\phi)$, the Brownian motion $B^{ISE}$ on $(\phi_{\mathfrak{T}}(\mathfrak{T}),d_{\phi_{\mathfrak{T}}(\mathfrak{T})},\lambda^{\phi(\mathfrak{T})})$ exists and, moreover, $B^{ISE}=\phi_{\mathfrak{T}}(B^{CRT})$.
\end{proposition}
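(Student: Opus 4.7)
The plan is to reduce both statements to the abstract existence result recorded as Proposition \ref{prop_def_process}, and then transport the first via the isometry provided by the injectivity of $\phi_{\mathfrak{T}}$ in high dimensions.

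First I would check the hypotheses of Proposition \ref{prop_def_process} for the triple $(\mathfrak{T},d_{\mathfrak{T}},\lambda^{\mathfrak{T}})$. Since $\mathbf{e}$ is $\Xi$-a.s.~a continuous function with compact support $[0,1]$, Theorem \ref{tree-deterministic} applies and gives that $(\mathfrak{T},d_{\mathfrak{T}})$ is a compact rooted real tree almost surely. The measure $\lambda^{\mathfrak{T}}$ defined in \eqref{eq:lambdaT} is by construction a Borel probability measure on $\mathfrak{T}$. What remains is to check that it is strictly positive on non-empty open sets. If $A\subseteq \mathfrak{T}$ is non-empty and open, pick $[t_0]\in A$; by continuity of the projection $t\mapsto [t]$ (which is $1$-Lipschitz with respect to $d_{\mathbf{e}}$) there is an open interval $I\ni t_0$ whose projection lies in $A$, and $\lambda^{\mathfrak{T}}(A)\geq \mathrm{Leb}(I)>0$. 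Proposition \ref{prop_def_process} then directly produces $B^{CRT}$ as the Hunt process associated with the local regular Dirichlet form $(\tfrac12\mathcal{E}_{\mathfrak{T}},\mathcal{F}_{\mathfrak{T}})$ on $L^2(\mathfrak{T},\lambda^{\mathfrak{T}})$.

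For the second part, let $d\geq 8$. Proposition 3.5 of \cite{Croydon_arc} states that $\phi_{\mathfrak{T}}$ is $M$-a.s.~injective. Combined with its continuity and the compactness of $\mathfrak{T}$, this makes $\phi_{\mathfrak{T}}\colon \mathfrak{T}\to \phi_{\mathfrak{T}}(\mathfrak{T})$ a homeomorphism. Define the metric $d_{\phi_{\mathfrak{T}}(\mathfrak{T})}$ on the image by transport, i.e.~$d_{\phi_{\mathfrak{T}}(\mathfrak{T})}(\phi_{\mathfrak{T}}(\sigma),\phi_{\mathfrak{T}}(\sigma')):=d_{\mathfrak{T}}(\sigma,\sigma')$; by design $\phi_{\mathfrak{T}}$ becomes a root-preserving isometry, so $(\phi_{\mathfrak{T}}(\mathfrak{T}),d_{\phi_{\mathfrak{T}}(\mathfrak{T})})$ is itself a compact real tree, and the pushforward $\lambda^{\phi_{\mathfrak{T}}(\mathfrak{T})}=\lambda^{\mathfrak{T}}\circ\phi_{\mathfrak{T}}^{-1}$ is a Borel probability measure that is positive on every non-empty open subset of the image (transport of the corresponding property for $\lambda^{\mathfrak{T}}$). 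Applying Proposition \ref{prop_def_process} a second time yields the existence of $B^{ISE}$.

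Finally, to identify $B^{ISE}$ with $\phi_{\mathfrak{T}}(B^{CRT})$, I would argue either via Dirichlet forms or via Aldous's characterisation. The cleanest route is the former: an isometry of measured metric spaces pulls back the resistance form and the reference measure, hence the Dirichlet forms $(\tfrac12\mathcal{E}_{\mathfrak{T}},\mathcal{F}_{\mathfrak{T}})$ on $L^2(\mathfrak{T},\lambda^{\mathfrak{T}})$ and $(\tfrac12\mathcal{E}_{\phi_{\mathfrak{T}}(\mathfrak{T})},\mathcal{F}_{\phi_{\mathfrak{T}}(\mathfrak{T})})$ on $L^2(\phi_{\mathfrak{T}}(\mathfrak{T}),\lambda^{\phi_{\mathfrak{T}}(\mathfrak{T})})$ are conjugate under $\phi_{\mathfrak{T}}$. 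Consequently the associated Hunt processes are related by the same conjugation, giving $B^{ISE}=\phi_{\mathfrak{T}}(B^{CRT})$ for $M$-a.e.~realization. Equivalently, one may check Aldous's hitting-probability and occupation-density properties for $\phi_{\mathfrak{T}}(B^{CRT})$ directly by transporting the corresponding properties of $B^{CRT}$ through the isometry and invoking the uniqueness clause following Proposition \ref{prop_def_process}.

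I expect the main obstacle to be the first part's standalone verification that $\lambda^{\mathfrak{T}}$ charges every non-empty open set, together with the careful invocation of the injectivity result from \cite{Croydon_arc} in the second part — everything else is then a mechanical transport of structure through the $\phi_{\mathfrak{T}}$ isometry.
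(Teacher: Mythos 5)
Your proposal matches the paper's (largely sketched) argument: the paper likewise deduces both existence statements from Proposition~\ref{prop_def_process}, using Croydon's injectivity result (\cite{Croydon_arc}, Prop.~3.5) in the case $d\geq 8$ to make $\phi_{\mathfrak{T}}$ a root-preserving isometry onto the ISE and then transporting the CRT structure through it. Your explicit verification that $\lambda^{\mathfrak{T}}$ charges non-empty open sets, and the Dirichlet-form conjugation identifying $B^{ISE}=\phi_{\mathfrak{T}}(B^{CRT})$, are details the paper leaves implicit but are exactly the right ingredients.
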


Since the map $\phi_{\mathfrak{T}}:\mathfrak{T}\to \R^d$ is continuous for $M$-a.e.~spatial trees $(\mathfrak{T},\phi_{\mathfrak{T}})$ the law of $B^{ISE}$ is, $M$-a.s., a well-defined probability measure on $C(\R_+,\R^d)$.

\subsubsection{Approximating the ISE and the $B^{ISE}$ using graph spatial trees}\label{sect_KISE}

It will be useful for us to approximate the ISE (and the $B^{ISE}$) by a graph spatial tree (and a process on this graph spatial tree). Indeed, the topological structure of the graph spatial tree is much simpler which makes it easier to study. This is an idea that was already used by Croydon in Section 8 of~\cite{Croydon_arc} (building on ideas he developed in~\cite{Croydon_crt}).

Consider $\mathfrak{T}$ a realization of the CRT and $(U_i)_{i\in \N}$ chosen according to $(\lambda^{\mathfrak{T}})^{\otimes \N}$.
Fix $K\in \N$ . We can use the construction described in Section~\ref{sect_graph_tree} to define a graph spatial tree, which we call $K$-ISE and denote $(\mathfrak{T}^{(K)},d_{\mathfrak{T}^{(K)}},\phi_{\mathfrak{T}^{(K)}})$, where $\mathfrak{T}^{(K)}$ is called $K$-CRT \hfff{kcrt}. We recall that this object comes with a probability measure $\lambda_{\phi_{\mathfrak{T}^{(K)}}(\mathfrak{T}^{(K)})}$, which is the normalized Lebesgue measure on $\mathfrak{T}^{(K)}$. For the sake of simplicity we will denote $\lambda_{\phi_{\mathfrak{T}^{(K)}}(\mathfrak{T}^{(K)})}$ as $\lambda_{\frak{T}}^{(K)}$.

 It is also interesting to note that $\mathfrak{T}^{(K)}$ has no point of degree more than 3, indeed, by Theorem 4.6 in~\cite{DuLG}, it is known that $\Xi$-a.s.~for any $x\in \mathfrak{T}$ the set $\mathfrak{T}\setminus \{x\}$ has at most three connected components.

Once again, Proposition~\ref{prop_def_process} allows us to define a Brownian motion \hfff{bcrtk}
$B^{(K)}$ in $(\frak{T}^{(K)},d_{\frak{T}},\lambda_{\frak{T}}^{(K)})$, where $\lambda_{\frak{T}^{(K)}}$ is the Lebesgue measure in $\frak{T}^{(K)}$ normalized to be a probability measure. Also, we define the Brownian motion $B^{K-ISE}$ \hfff{bisek} on the $K$-ISE $(\phi^{(K)}(\mathfrak{T}^{(K)}),d_{\phi^{(K)}(\mathfrak{T}^{(K)})},\lambda_{\phi^{(K)}(\mathfrak{T}^{(K)})})$. It can be shown (in essence equation (8.3) of~\cite{Croydon_arc}) that

\begin{proposition} \label{prop_approx_bisek}
We have that $B^{K-ISE}$ converges to $B^{ISE}$ as $K\to \infty$, in distribution  in the topology of uniform convergence (over compact sets) in $C(\R_+,\R^d)$ for $M \otimes (\lambda^{\mathfrak{T}})^{\otimes \N}$-a.e.~realization of $(\mathfrak{T},d_{\mathfrak{T}},\phi_{\mathfrak{T}},(U_i)_{i\in \N})$.
\end{proposition}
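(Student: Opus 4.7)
The plan is to derive this by pushing forward a convergence statement at the level of the CRT and then invoking the isometry between $(\mathfrak{T},d_{\mathfrak{T}})$ and $(\phi_{\mathfrak{T}}(\mathfrak{T}),d_{\phi_{\mathfrak{T}}(\mathfrak{T})})$ that holds for $d\geq 8$. So the first step is to establish that $B^{(K)}\to B^{CRT}$ in distribution, uniformly on compacts, for $\Xi\otimes(\lambda^{\mathfrak{T}})^{\otimes \N}$-a.e.\ realisation of $(\mathfrak{T},(U_i)_{i\in\N})$. Once this is in hand, continuity of $\phi_{\mathfrak{T}}$ combined with the identifications $B^{ISE}=\phi_{\mathfrak{T}}(B^{CRT})$ and $B^{K-ISE}=\phi_{\mathfrak{T}^{(K)}}(B^{(K)})$ provided by Proposition~\ref{propdef_BISE} (and its analogue for the reduced subtree) will yield the statement via the continuous mapping theorem applied in $C(\R_+,\R^d)$.

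For the CRT-level convergence $B^{(K)}\to B^{CRT}$, the idea is to verify the hypotheses of the abstract convergence result for processes associated with resistance forms on compact real trees, as developed in~\cite{Croydon_arc,Croydon_crt} (see in particular Section~8 of~\cite{Croydon_arc}, which is exactly where equation (8.3) sits). The three ingredients to check are: (i) $(\mathfrak{T}^{(K)},d_{\mathfrak{T}^{(K)}})\to(\mathfrak{T},d_{\mathfrak{T}})$ in the Gromov--Hausdorff sense as $K\to\infty$; (ii) the normalised length measures $\lambda^{(K)}_{\mathfrak{T}}$ converge to $\lambda^{\mathfrak{T}}$ in the associated Prohorov-type topology (viewing all spaces as isometrically embedded in a common metric space); and (iii) the root and a fixed starting point behave well under this embedding. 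Ingredient~(i) follows because the uniform samples $(U_i)$ are a.s.\ dense in $\mathfrak{T}$, so the Hausdorff distance between $\mathfrak{T}^{(K)}$ and $\mathfrak{T}$ tends to $0$. Ingredient~(iii) is immediate since the root is preserved by the reduced subtree construction.

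The most delicate step is ingredient~(ii), the convergence of measures. Here $\lambda^{(K)}_{\mathfrak{T}}$ is a normalised length measure concentrated on a finite skeleton, while $\lambda^{\mathfrak{T}}$ is the pushforward of Lebesgue measure on $[0,1]$ under the quotient map; these are in principle quite different kinds of object. The route is to compare each of them with the empirical measure $\frac{1}{K}\sum_{i=1}^K\delta_{U_i}$. On one hand, the strong law of large numbers (for the sample $(U_i)$ drawn from $\lambda^{\mathfrak{T}}$) gives $\frac{1}{K}\sum_{i=1}^K\delta_{U_i}\to\lambda^{\mathfrak{T}}$ weakly. On the other hand, one needs to show that after appropriate renormalisation $\lambda^{(K)}_{\mathfrak{T}}$ is close in Prohorov distance to this empirical measure; this is where estimates on the length of the $K$-reduced subtree of the CRT (as in Aldous~\cite{Al3} and Croydon~\cite{Croydon_crt}) are invoked to control how much mass is placed on the long edges as opposed to near the sampled leaves.

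With these three inputs, the abstract convergence theorem for processes on resistance spaces (whose Dirichlet forms $\tfrac{1}{2}\mathcal{E}_{\mathfrak{T}^{(K)}}$ are local and regular by Proposition~\ref{prop_def_process}) delivers the weak convergence of $B^{(K)}$ to $B^{CRT}$ in $C(\R_+,\mathfrak{T})$, uniformly on compacts. Finally, Proposition 3.5 of~\cite{Croydon_arc} gives, for $d\geq 8$, that $\phi_{\mathfrak{T}}$ is an isometric embedding on its range and $M$-a.s.\ continuous; this is enough to transfer the convergence from $C(\R_+,\mathfrak{T})$ to $C(\R_+,\R^d)$ by the continuous mapping theorem, yielding the proposition. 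The principal obstacle, as indicated above, is the measure-convergence step~(ii), since the two families of measures are defined by genuinely different mechanisms and the comparison requires quantitative control on the skeleton length of the CRT.
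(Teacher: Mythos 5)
The paper itself offers no proof of this proposition; it simply attributes it to equation (8.3) of~\cite{Croydon_arc}, so there is no in-paper argument against which to compare your sketch. What you have written is a plausible reconstruction of the argument that must underlie the citation, and the high-level decomposition is the right one: establish $B^{(K)}\to B^{CRT}$ by verifying Gromov--Hausdorff and measure convergence for the $K$-skeletons, then push forward through $\phi_{\mathfrak{T}}$ using the identifications of Proposition~\ref{propdef_BISE} (and its analogue for $\mathfrak{T}^{(K)}$, which is indeed an isometry onto its image when $d\geq 8$ because $\phi_{\mathfrak{T}^{(K)}}$ is the restriction of the injective map $\phi_{\mathfrak{T}}$).

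Two points deserve care. First, in step (ii) the route via the empirical measure $\frac{1}{K}\sum_i\delta_{U_i}$ is not obviously the most efficient intermediary: the normalised length measure $\lambda^{(K)}_{\mathfrak{T}}$ places a substantial fraction of its mass on \emph{interior} edges of $\mathfrak{T}^{(K)}$, not just on the terminal branches ending at the $U_i$, so one cannot simply argue that each terminal branch carries $\approx 1/K$ of the length. What saves the argument is that the $U_i$ become dense, so for fixed $\delta>0$ and $K$ large the entire skeleton lies in $\bigcup_i B(U_i,\delta)$; but one then still needs a quantitative bound ensuring that $\lambda^{(K)}(B(U_i,\delta))$ is comparable to the empirical mass $\#\{j:U_j\in B(U_i,\delta)\}/K$, uniformly over $i$. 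This is exactly where the Aldous/Croydon estimates on the $K$-reduced CRT come in, and you are right to flag it as the crux; but the comparison is not just ``close in Prohorov distance to the empirical measure'' and requires this extra balancing estimate, which your sketch currently leaves implicit. Second, your invocation of ``an abstract convergence theorem for processes on resistance spaces'' is the right framework, but since all of $\mathfrak{T}^{(K)}$ sit inside the common ambient space $\mathfrak{T}$ with a fixed resistance metric, the hypotheses actually reduce to (a) Hausdorff convergence of $\mathfrak{T}^{(K)}\uparrow\mathfrak{T}$ inside $\mathfrak{T}$, and (b) weak convergence $\lambda^{(K)}_{\mathfrak{T}}\to\lambda^{\mathfrak{T}}$ on $\mathfrak{T}$, so the Gromov--Hausdorff--Prohorov formalism with a common embedding space that you describe is overkill but not wrong. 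Modulo these refinements, the outline is sound.
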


\section{Abstract convergence theorem}\label{sect_abstract_cvg_thm}

\subsection{Construction of the skeleton of a graph}\label{sect_constr_tnk}

\subsubsection{Decomposing the graph along cut-points}

Let $G$ be a rooted finite graph that is connected. 

\begin{definition}\label{def_cut_point}
We call cut-bond any edge $e\in E(G)$ whose removal disconnects G. By definition only one of the endpoints of a cut-bond is connected to the root and any such point is called a cut-point. \end{definition}
We denote $V_{\text{cut}}(G)$ the set of cut-points of $G$, which we assume  to be non-empty.

Let us now consider a sequence  $(x_i)_{i\in \N}$ of points $V_{\text{cut}}(G)$. Fix $K\in \N$, we construct the graph $G(K)$ in the following manner
\begin{enumerate}
\item the vertices of $G(K)$ are the set of all cut-points that lie on a path the root to an $x_i$ for $i\leq K$,
\item  two vertices of $G(K)$ are adjacent if there exists a path connecting them which does not use any cut-point.
\end{enumerate}

The new graph $G(K)$ will be rooted at $\text{root}^*$ \hfff{rootstar} which is the first cut-point on the path from the root to $x_0$.

It is elementary to notice that this graph is composed of complete graphs glued together, indeed the removal of all cut-bonds in $G$ results in a graph several connected components. Those connected components with the cut-bond that link then are called bubbles. All cut-points corresponding to cut-bonds with at least one end in the same bubble are inter-connected.

\begin{figure}
  \includegraphics[width=\linewidth]{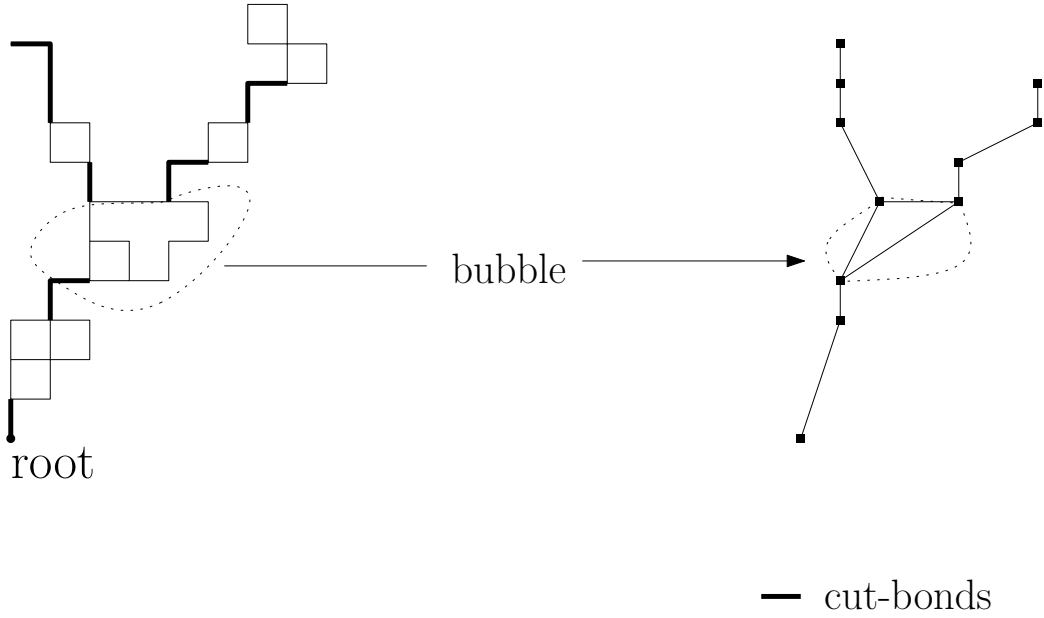}
  \caption{Construction of $G(K)$, obtained by a sequence $(x_i)_{i\in \N}$ covering the whole graph}
\end{figure}

\begin{definition}\label{def_thin}
We will say that a graph $G(K)$ is thin if it does not contain any subgraph that is a complete graph apart from segments and triangles. 
\end{definition}

We would typically expect $G$ to be thin if $G$ has many cut-bonds and the random variables $V_i$ are uniformly distributed. 

\subsubsection{Approximating a thin graph by a graph spatial tree}\label{sect:skeleton}

Let us assume that $G(K)$ is thin. We are now going to perform a technical operation, that will be helpful to complete our proofs. In essence we are trying to build a graph spatial tree that will approximate $G(K)$ well.

We want to turn the triangles  present in $G(K)$ into stars in order to turn out thin graph into a tree, this procedure will add one point for every triangle present in the graph.
 
\vspace{0.5cm}

{\it Step 1: Turning $G(K)$ into a tree $\T^{(G,K)}$}
 
\vspace{0.5cm}

 For every triangle $(x,y),(y,z),(z,x)\in E(G(K))$, we remove the edges $(x,y),(y,z),(z,x)$ and we introduce a new vertex $v_{x,y,z}$ and new edges $(x,v_{x,y,z})$, $(y,v_{x,y,z})$, $(z,v_{x,y,z})$.   We denote $\T^{(G,K)}$, the tree obtained by this construction. 
 
 We denote $V(\T^{(G,K)})$ the vertices of $\T^{(G,K)}$ and $V^*(\T^{(G,K)})$ the vertices which are not of the form $v_{x,y,z}$ (which are actually the vertices of $G(K)$). 
 
 Similarly, we denote $E(\T^{(G,K)})$ the edges of $\T^{(G,K)}$ and $E^*(\T^{(G,K)})$ \hfff{estar} the edges which are not of the form $(x,v_{x,y,z}),(y,v_{x,y,z}),(z,v_{x,y,z})$.

Finally, for $x,y\in V^\ast(\T^{(G,K)})$, we write $x\sim^*y$ if there exists no $z\in  V^*(\T^{(G,K)})$ which lies on the path from $x$ to $y$. This means that $x$ and $y$ were neighbours before the star-triangle transformation, or equivalently that they are connected by a bubble (see for example $x$, $y$ and $z$ in Figure 2).

\begin{figure}
  \includegraphics[width=\linewidth]{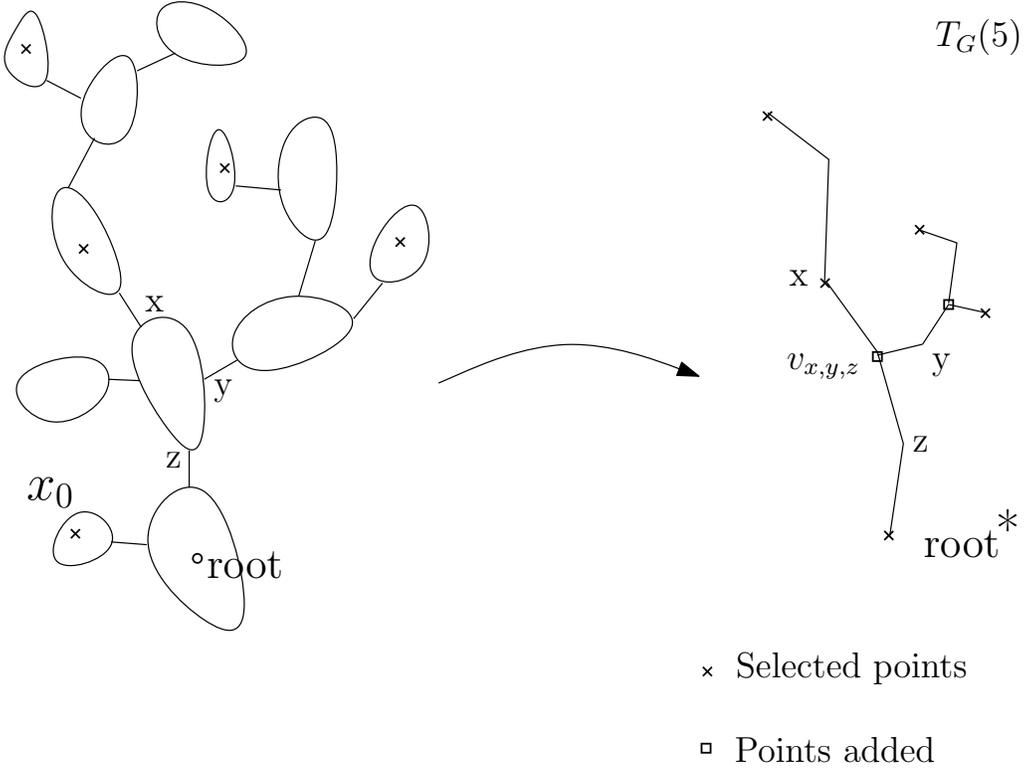}
  \caption{Construction of $T^{(G,5)}$ from a graph $G$. Even though the bubble of $x$, $y$ and $z$ has four neighbouring bubbles, the graph is still thin because the bubble on the left does not lead to a selected point $x_i$}
\end{figure}

 Since the tree $\T^{(G,K)}$ is rooted (at $\text{root}^*$) it comes with a natural notion of ancestry. For $x\in \T^{(G,K)}$, we denote $\overrightarrow{\T^{(G,K)}_{x}}$, the set of points of $\T^{(G,K)}$ which are descendants of $x$.

\vspace{0.5cm}

{\it Step 2: Turning $\T^{(G,K)}$ into a real tree by adding a metric}

\vspace{0.5cm}

The tree $\T^{(G,K)}$ comes with a natural metric by setting
\begin{enumerate}
\item for $(x,y)\in E^*(\T^{(G,K)})$, we set $d_{\T^{(G,K)}}(x,y)=d_{G}(x,y)$,
\item for any triple of edges $(x,v_{x,y,z}),(y,v_{x,y,z}),(z,v_{x,y,z})$, where $x$ is the ancestor of $y$ and $z$, we set \[d_{\T^{(G,K)}}(x,v_{x,y,z})=\frac{d_G(x,y)+d_G(x,z)-d_G(z,y)}{2},\] \[d_{\T^{(G,K)}}(y,v_{x,y,z})=\frac{d_G(x,y)+d_G(y,z)-d_G(x,z)}{2}\] and \[d_{\T^{(G,K)}}(z,v_{x,y,z})=\frac{d_G(x,z)+d_G(y,z)-d_G(x,y)}{2}.\]
Note that this assignment of distances keeps consistency in the sense that \[d_G(x,y)=d_{\T^{(G,K)}}(x,v_{x,y,z})+d_{\T^{(G,K)}}(y,v_{x,y,z}),\] \[d_G(x,z)=d_{\T^{(G,K)}}(x,v_{x,y,z})+d_{\T^{(G,K)}}(z,v_{x,y,z})\] and \[d_G(y,z)=d_{\T^{(G,K)}}(y,v_{x,y,z})+d_{\T^{(G,K)}}(z,v_{x,y,z}).\]
\item the distance grows linearly along an edge. 
\end{enumerate}

Our choice for the distances in the second part is arbitrary but it will not have an significant impact on our proof. It can be noted that this distance conserves the distance from $\text{root}^*$ to any point in $V^*(\T^{(G,K)})$.

\vspace{0.5cm}

{\it Step 3: Assigning a spatial location to the points in $\T^{(G,K)}$}

\vspace{0.5cm}

Finally we want to view our tree as a spatial tree embedded in $\R^d$, i.e.~we want to find an embedding of the edges into $\R^d$.

 Any vertex of $V^*(\T^{(G,K)})$ is assigned its original location in $G$. Moreover the vertices $v_{x,y,z}$ are mapped to the barycenter of  $x$, $y$ and $z$. We write $\phi_{G(K)}$ this map.
 
  If $(x,y)\in E(\T^{(G,K)})$, then the point $z\in[x,y]$ which is at a $d_{\T^{(G,K)}}$-distance $\alpha d_{\T^{(G,K)}}(x,y)$ along the edge $(x,y)$ is mapped to the point which is at distance $\alpha d_{\Z^d}(\phi_{G(K)}(x),\phi_{G(K)}(y))$ along the $\R^d$-geodesic between $\phi_{G(K)}(x)$ and $\phi_{G(K)}(y)$. This extends  $\phi_{G(K)}$ to a map from $\T^{(G,K)}$ to $\R^d$. 
  
  In particular the notation $\phi_{G(K)}(e)$, for $e\in E(\T^{(G,K)})$, corresponds to a segment of $\R^d$.

\vspace{0.5cm}

\subsubsection{A  natural resistance metric on the skeleton}

Let us now endow $\T^{(G,K)}$ with a resistance metric. We refer the reader to~\cite{lyons2005probability} for a background on resistances, time reversibility and electrical network theory which are central notions for the remainder of the paper.

First, for all $(x,y)\in E^*(\T^{(G,K)})$, we set $\reff^{\T^{(G,K)}}(x,y)$ as the effective resistance between $x$ and $y$ in the graph $G$ (where edges in $G$ have resistance $1$).

Let us consider a triangle $(x,y),(y,z),(z,x)\in E(G(K))$. We denote $P_x^G$ the law of a simple random walk on the graph $G$ started at $x$. For any set $A\in V(G)$, let 
\begin{equation}\label{eq:defofTT+}
T_A=\inf\{l\geq0: X_l\in A\}\quad \text{ and } \quad T^+_A=\inf\{l>0: X_l\in A \}.
\end{equation}
If $A=\{x\}$ we write $T_x,T^+_x$ instead of $T_{\{x\}}$, $T^+_{\{x\}}$.
 We set
\[\reff^{G(K)}(x,y)^{-1}:=\pi(x)P_{x}^G[T_y<T_z\wedge T^+_x]\]
where $\pi$ is number of neighbors of $x$ in $G$, which is the invariant measure associated to unit resistances on $G$ (note that this procedure defines the resistances of all three edges corresponding to a triangle).
\[\reff^{G(K)}(x,z)^{-1}:=\pi(x)P_{x}[T_z<T_y\wedge T^+_x]\]
and
\[\reff^{G(K)}(y,z)^{-1}:=\pi(y)P_{y}[T_z<T_x\wedge T^+_y].\]

We could have defined the corresponding quantities $\reff^{G(K)}(y,x)$, $\reff^{G(K)}(z,x)$ and $\reff^{G(K)}(z,x)$ in an analogous way and, by time reversibility we would have obtained that $\reff^{G(K)}(x,y)=\reff^{G(K)}(y,x),\reff^{\omega_n}(x,z)=\reff^{G(K)}(z,x)$ and $\reff^{G(K)}(y,z)=\reff^{G(K)}(z,y)$.

Next, it only remains to define the values of $\reff$ for the edges of $\T^{(G,K)}$ containing the artificial vertex $v_{x,y,z}$ using the star-triangle transformation. That is
\[\reff^{\T^{(G,K)}}(x,v_{x,y,z})=\frac{\reff^{G(K)}(x,y)\reff^{G(K)}(x,z)}{\reff^{G(K)}(x,y)+\reff^{G(K)}(y,z)+\reff^{G(K)}(z,x)},\]
\[\reff^{\T^{(G,K)}}(y,v_{x,y,z})=\frac{\reff^{G(K)}(x,y)\reff^{G(K)}(y,z)}{\reff^{G(K)}(x,y)+\reff^{G(K)}(y,z)+\reff^{G(K)}(z,x)}\]
and
\[\reff^{\T^{(G,K)}}(z,v_{x,y,z})=\frac{\reff^{G(K)}(x,z)\reff^{G(K)}(y,z)}{\reff^{G(K)}(x,y)+\reff^{G(K)}(y,z)+\reff^{G(K)}(z,x)}.\]

Taking $x,y \in V(\T^{(G,K)})$, we know that there is a unique simple path $x_0,\ldots, x_l$ in $V(\T^{(G,K)})$ from $x$ to $y$ and this allows us to  set $\reff^{\T^{(G,K)}}(x,y)=\sum_{i=0}^{l-1} \reff^{\T^{(G,K)}}(x_i,x_{i+1})$. This is the natural definition in view of the law of resistance in series and the fact that points in $V^*(\T^{(G,K)})$ are cut-points.

The resistance defines a metric on $V(\T^{(G,K)})$ that we denote $d^{\text{res}}_{\T^{(G,K)}}$. 

\begin{remark}Those definitions were chosen so as to have the following property: for all $x,y \in V^*(\T^{(G,K)})$, we have
\[
\reff^{G}(x, y)=\reff^{\T^{(G,K)}}(x, y),
\]
a fact which is proved in~\cite{BCFa}.
\end{remark}


\subsubsection{Adding a measure associated to the volume of the graph}\label{sect_mu}

We are going to add a measure to our graph $\T^{(G,K)}$.

For any $x\in G$, let $\pi^{(G,K)}(x)$ be the unique $v\in V^*(\T^{(G,K)})$ separating $x$ from the origin and such that for any $v'\in V^*(\T^{(G,K)})$ with $v'$ separating $x$ from the origin and $v'\neq v$ we have that $v' \prec v$. That is, when going from $\text{root}^*$ to $x$, the point $\pi^{(G,K)}(x)$ is the last cut-point crossed before reaching $x$. In the case where $x$ is not separated from the origin by a cut-bond, i.e.~$x$ is in the bubble of the origin, then we set $\pi^{(G,K)}(x)=\text{root}^*$ by convention.
.

Now for $x\in V^*(\T^{(G,K)})$, let $v_{\T^{(G,K)}}(x):=\#\{(y,z)\in E(G): \pi^{(G,K)}(y)=x \text{ and } y\neq x\}$. We use this to define a measure on $V(\T^{(G,K)})$.
\[
\mu^{(G,K)}:=\sum_{x\in V^*(\T^{(G,K)})} v_{\T^{(G,K)}}(x)\delta_x.
\]

\subsubsection{Another way of viewing $\T^{(G,K)}$ as graph spatial tree}\label{sect_mathfrak}

For our future purpose it will be convenient to be able to introduce a reduced version of $\T^{(G,K)}$ where we view it as a graph spatial tree with a number of vertices between $K+1$ and $2K+1$ (whereas $\T^{(G,K)}$ typically has a high number of vertices if $G$ is large). This distinction will be important for the Definition~\ref{def_condG}.

It will be a graph spatial tree  denoted $(\mathfrak{T}^{(G,K)},d_{\mathfrak{T}^{(G,K)}},\phi_{\mathfrak{T}^{(G,K)}})$ which is obtained by a  procedure similar to that of Section~\ref{sect_graph_tree}. In the notations of that section this spatial graph is $((\T^{(G,K)})^{K,(x_i)},d_{(\T^{(G,K)})^{K,(x_i)}}, \phi_{(\T^{(G,K)})^{K,(x_i)}})$.

In words this simply means we restrict the tree structure $\T^{(G,K)}$ to the subgraph obtained from the points $\text{root}^*$, $x_0,\ldots,x_K$ and the branching points that these points created. Hence, the vertices of $\mathfrak{T}^{(G,K)}$ are 
\[
V(\mathfrak{T}^{(G,K)}):=\{\text{root}^*,x_0,\ldots,x_K\} \cup (V(\T^{(G,K)})\setminus V^*(\T^{(G,K)})),
\]
 (the set on the right-hand side of the union being the branching points) and the edges $E(\mathfrak{T}^{(G,K)})$ are the ones induced by the tree structure of $\T^{(G,K)}$.

\begin{remark}\label{rem_not_depend} It is important to note that the distance, the resistance distance and the embedding we assign to $\mathfrak{T}^{(G,K)}$ coincide with those assigned to $\T^{(G,K)}$. This will allow us to use, e.g., $d_{\T^{(G,K)}}$ to signify $d_{\mathfrak{T}^{(G,K)}}$.\end{remark}

\subsection{Setting for the abstract theorem} \label{sect_model_abstract}


In this section, we will consider a sequence of random graphs $(G_n)_{n\in \N}$ chosen under a measure ${\bf P}_n$, which in practice will be  large critical structures. The $n$ will quantify the order of the volume of $G_n$, which in turn means (in the universality class we are interested in) that the intrinsic distances between points in $G_n$ is of the order of $n^{1/2}$ and the extrinsic distances are of the order $n^{1/4}$. Our eventual goal is to study $(X^{G_n}_m)_{m\in \N}$ which is the simple random walk on $G_n$.

Our construction will rely on a sequence of i.i.d.~random variables $(V_i^n)_{i\in \N}$ supported on cut-points. In practice this sequence should be asymptotically close to uniform random variables.

\begin{definition}\label{def_augmented} \hfff{gv} For $n\in \N$, we say that $(G_n,(V_i^n)_{i\in \N})_{n\in \N}$ is a sequence of random augmented graphs. \end{definition}

Fix $K\in \N$. If the graph $G_n(K)$ constructed from $(G_n,(V_i^n)_{i\in \N})_{n\in N}$ is thin, then the construction of the skeleton of the previous section can be carried out. In order to lighten the notations, we will write \hfff{mass} $G^{(n,K)}$, $\T^{(n,K)}$, $\mathfrak{T}^{(n,K)}$, $V^*(\T^{(n,K)})$, $\phi^{(n,K)}$, $v^{(n,K)}$, $\reff^{(n,K)}$ and $\pi^{(n,K)}$ for $G_n(K)$, $\T_{G_n(K)}$, $\mathfrak{T}^{(G_n,K)}$, $V^*(\T^{(G_n,K)})$, $\phi_{G_n(K)}$,  $v_{\T_{G_n(K)}}$, $\reff^{\T^{(G,K)}}$ and $\pi^{(G_n,K)}$ and we also introduce the rescaled quantities $d^{(n,K)}(\cdot,\cdot)$, $d^{(n,K)}_{\text{res}}(\cdot,\cdot)$ and $\mu^{(n,K)}$ for $n^{-1/2}d_{\T_{G_n(K)}}(\cdot,\cdot)$, $n^{-1/2}d_{\T_{G_n(K)}}^{\text{res}}(\cdot,\cdot)$ and $n^{-1}\mu^{(G_n,K)}$. All those quantities were defined in Section~\ref{sect_constr_tnk}.

We recall that $d^{(n,K)}(\cdot,\cdot)$ (resp.~$\phi^{(n,K)}$) is a distance on (resp.~embedding of) $\mathfrak{T}^{(n,K)}$ because of Remark~\ref{rem_not_depend}.


\subsubsection{Asymptotically thin graphs}

For any $x\in V^*(T_n(K))$, we call $K$-sausage of $x$ the set $\{y\in G_n,\ \pi^{(n,K)}(y)=x\}$.

  \begin{figure}
  \includegraphics[width=\linewidth]{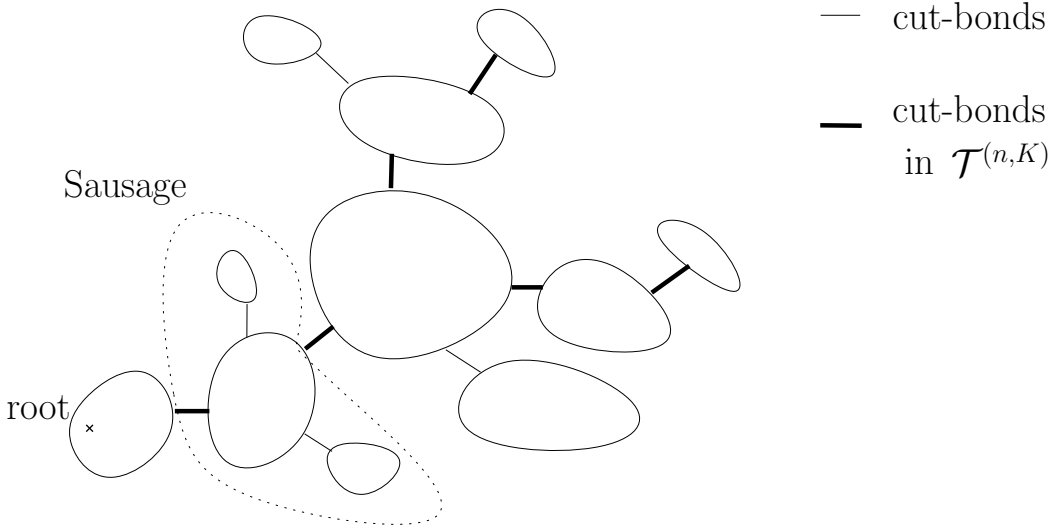}
  \caption{The sausage corresponding to a specific cut-bond.}
\end{figure}

 Note that a sausage is typically much large than the corresponding bubble because it also contains bubbles of $G_n$ which are not in $T_n(K)$. We introduce\hfff{delta1}
\begin{equation}\label{eq:defofdelta}	
\Delta^{(n,K)}_{\Z^d}:=\max_{x\in V^*(T_n(K))} \text{Diam}_{\Z^d}(\{y\in G_n,\ \pi^{(n,K)}(y)=x\}),
\end{equation}
where $\text{Diam}_{\Z^d}(A):=\max\{d_{\Z^d}(x,y):x,y\in A\}$, for any $A\subset \Z^d$.
We also introduce\hfff{delta2}
\begin{equation}\label{eq:defofdeltaintr} 
\Delta^{(n,K)}_{G_n}:=\max_{x\in V^*(\T_n^{(n,K)})} \text{Diam}_{G_n}(\{y\in G_n,\ \pi^{(n,K)}(y)=x\}),
\end{equation}
where $\text{Diam}_{G_n}(A):=\min\{d_{G_n}(x,y):x,y\in A\}$ for any $A\subset \Z^d$, and $d_{G_n}$ is the graph distance in $G_n$. 
In our context, we want to extend the definition of thin graph (see Definition~\ref{def_thin}).
\begin{definition}\label{def_athin}
We say that a sequence of random augmented graphs $(G_n,(V_i^n)_{i\in \N})_{n\in N}$ is asymptotically thin if  
\begin{enumerate}
\item for all $K\in \N$, we have
\[
\lim_{n\to \infty} {\bf P}_n[G^{(n,K)}\text{ is thin}]=1.
\]
\item for all $\epsilon>0$, we have
\[ 
\lim_{K\to \infty} \sup_{n\in \N} {\bf P}_n\Bigl[n^{-1/4}\Delta^{(n,K)}_{\Z^d} >\epsilon\Bigr]=0
\]
and
\[ 
\lim_{K\to \infty} \sup_{n\in \N} {\bf P}_n\Bigl[n^{-1/2}\Delta^{(n,K)}_{G_n} >\epsilon\Bigr]=0.
\]

\end{enumerate}
\end{definition}

A sequence of random graphs will typically be asymptotically thin if they contain, in the limit, a dense set of cut-points. This hypothesis should be verified in all models that are in the universality class of the ISE. 

The first part of the definition states that it is unlikely for four larges branches to emanate from the same bubble. It should be noted that in the ISE this property is verified (see Section~\ref{sect_KISE}). The second part of the condition is related to the fact that there are no parts of $G_n$ that have macroscopic ($\Z^d$ and intrinsic) length but where there are no cut-points.

A key property of asymptotically thin graphs is that the $K$-skeleton is, for large $K$, a good approximation of the whole graph as proved in the companion paper~\cite{BCFa}.

\begin{remark}\label{rem_abuse_tnk} If a sequence $(G_n,(V_i^n)_{i\in \N})_{n\in \N}$ is asymptotically thin, then the notation $\T^{(n,K)}$ and $\mathfrak{T}^{(n,K)}$ make sense with probability going to $1$ since these objects can be constructed with the methods of Section~\ref{sect_constr_tnk}. The conditions which will involve $\T^{(n,K)}$ and $\mathfrak{T}^{(n,K)}$ (condition $(G)$ of definition~\ref{def_condG} and condition $(V)$ of definition~\ref{def_condV}) are all asymptotical in $n$. Hence, they are not affected by the fact that $\T^{(n,K)}$ is not defined on an event of small probability. We will thus allow ourselves a slight abuse of notation in the statement of these conditions. \end{remark}

\subsection{Conditions for convergence towards $B^{ISE}$}

We are now going to introduce three conditions that are the central hypotheses for our abstract theorem.

\subsubsection{Condition (G): asymptotic shape of the graph}\label{sect_condG}

Let us define a distance $D$ on graph spatial trees (defined in Section~\ref{sect_graph_tree}). Here, we follow Section 7 of~\cite{Croydon_arc}.

For $(T,d,\phi)$ a graph spatial tree, write $T^*$ for the shape of the tree ($T$ with a prescribed ordering) and $\abs{e_1},\ldots,\abs{e_l}$ for the lengths of the edges (according to the lexicographical order of $T^*$).

Take two graph spatial trees $(T,d,\psi)$ and $(T',d',\psi ')$. If $T^*\neq T'^*$, then  we set $d_1(T,T')=\infty$ and otherwise we set
\begin{equation}\label{eq:defofd1}
d_1(T,T'):=\sup_{i} \abs{\abs{e_i}-\abs{e_i'}}.
\end{equation}

Now if $T^*=T'^*$, we have a homeomorphism $\Upsilon_{T,T'}:\psi(T)\to \psi(T')$ such that  if $x\in \psi(T)$ is at a distance $\alpha \abs{e}$ along the edge $e$ is mapped to the point $x'\in \psi'(T')$ which is at distance $\alpha \abs{e'}$ along the corresponding edge $d'$. We then set 
\[
d_2(T,T'):=\sup_{x\in \psi(T)} d_{\R^d}(\psi(x), \psi'(\Upsilon_{T,T'}(x))).
\]
This yields a metric 
\begin{equation}\label{eq:defofD}
D((T,d,\psi),(T',d',\psi ')):=(d_1(T,T')+d_2(T,T'))\wedge 1
\end{equation}
 on graph spatial trees. The importance of this metric stems from the following result, see Lemma 7.1.~in~\cite{Croydon_arc} (recall that $\lambda_T$ is the normalized Lebesgue measure on the tree).

\begin{proposition}\label{prop_metrictoBrownian}
Suppose $((T_n^{'},d_n^{'},\psi_n^{'}))_{n\in \N}$ is a sequence of graph spatial trees that converge with respect to the metric $D$ to a graph spatial tree $(T^{'},d^{'},\psi^{'})$. For each $n$, let $B^n$ be the Brownian motion on the spatial tree $(T_n^{'},d_n^{'},\lambda_{T_n^{'}})$ and let $B$ be the Brownian motion on the spatial tree $(T^{'},d^{'},\lambda_{T^{'}})$ (defined with Proposition~\ref{prop_def_process}). Then $(\psi_n(B^n))_{n\in \N}$ converges to $(\psi(B))$ in distribution for the topology of uniform convergence on compact sets.
\end{proposition}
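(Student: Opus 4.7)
The plan is to first reduce to the case of a common combinatorial shape, then prove convergence of the Brownian motions on the abstract trees via a Dirichlet-form stability argument, and finally push through the embeddings with a continuous mapping theorem. Since $d_1(T_n',T')=\infty$ whenever the shapes differ, the hypothesis $D((T_n',d_n',\psi_n'),(T',d',\psi'))\to 0$ forces $(T_n')^{*}=(T')^{*}$ for all $n$ sufficiently large, so I would restrict to such $n$ and denote the common ordered shape by $T^{*}$, writing $\ell_i^n:=|e_i^n|$ and $\ell_i:=|e_i|$ for the edge lengths; the condition $d_1(T_n',T')\to 0$ then gives $\ell_i^n\to \ell_i$ for every $i$.

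Next, to pull everything back to a common space, let $\tilde\Upsilon_n:T_n'\to T'$ be the shape-preserving piecewise-linear homeomorphism that rescales each edge of $T_n'$ linearly onto the corresponding edge of $T'$, so that $\psi_n'\circ \tilde\Upsilon_n^{-1}$ is the analogue on $T'$ of the map $\Upsilon_{T_n',T'}$ used in the definition of $d_2$. Set $\tilde B^n:=\tilde\Upsilon_n(B^n)$; this is the Markov process on $T'$ whose Dirichlet form is $(\tfrac{1}{2}\mathcal{E}_n,\mathcal{F})$ acting on $L^2(T',\mu_n)$, where $\mathcal{E}_n$ is the resistance form on $T'$ obtained by assigning edge resistances $\ell_i^n$ and $\mu_n:=(\tilde\Upsilon_n)_{*}\lambda_{T_n'}$ has piecewise constant density with respect to $\lambda_{T'}$. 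Because $\ell_i^n\to \ell_i$, the densities $d\mu_n/d\lambda_{T'}$ converge uniformly to $1$ and the forms $\mathcal{E}_n$ converge to $\mathcal{E}$ in the Mosco sense on $L^2(T',\lambda_{T'})$. The standard translation from Mosco convergence of Dirichlet forms to convergence of the associated semigroups, and hence of the associated Hunt processes, on the compact state space $T'$ then yields that $\tilde B^n\to B$ in distribution in the topology of uniform convergence on compact time intervals.

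Finally, the condition $d_2(T_n',T')\to 0$ is precisely the statement that $\psi_n'\circ \tilde\Upsilon_n^{-1}$ converges to $\psi'$ uniformly on $T'$. Writing $\psi_n'(B^n)=(\psi_n'\circ \tilde\Upsilon_n^{-1})(\tilde B^n)$ and combining the Skorohod representation theorem with the continuous mapping theorem (uniform convergence of continuous functions composed with uniform convergence of the paths) yields $\psi_n'(B^n)\to \psi'(B)$ in distribution for the topology of uniform convergence on compact sets in $C(\R_+,\R^d)$. The main obstacle is the Mosco-convergence step: although the Brownian motion on a graph spatial tree is plausibly continuous in its edge lengths, a rigorous proof has to handle both the rescaling of each edge and the reflection/transmission boundary behaviour at every branching vertex simultaneously, which is exactly where the Kigami resistance-form machinery is required and where one leans on the detailed analysis already carried out in Section 7 of \cite{Croydon_arc}.
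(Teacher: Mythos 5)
The paper does not actually prove this proposition; it is a direct citation of Lemma~7.1 in Croydon's work \cite{Croydon_arc}, so there is no ``paper's own proof'' to compare against. Your proposal is therefore best judged as a sketch of what such a proof would look like.

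Your reduction is sound: $D\to 0$ forces the combinatorial shapes to agree eventually, the edge lengths to converge, and the pullback $\tilde\Upsilon_n$ is the right device. Your computation of the pulled-back Dirichlet form is also correct: on edge $e_i$ of $T'$ the map rescales by $\ell_i/\ell_i^n$, so the form $\mathcal{E}_n(f,f)=\sum_i (\ell_i/\ell_i^n)\int_{e_i}|f'|^2$ indeed has total edge resistance $\ell_i^n$ on $e_i$, and the pushforward measure $\mu_n$ has density $\tfrac{1}{|T_n'|}\cdot\tfrac{\ell_i^n}{\ell_i}\to\tfrac{1}{|T'|}$ uniformly.

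The one place where you are too glib is the sentence asserting that ``the standard translation from Mosco convergence of Dirichlet forms to convergence of the associated semigroups, and hence of the associated Hunt processes'' yields path-space convergence. Mosco convergence alone gives strong $L^2$-convergence of resolvents (hence of $L^2$-semigroups); it does \emph{not} by itself yield weak convergence of the processes in $C(\R_+,T')$. One additionally needs (i) that the semigroups improve regularity, so that $L^2$-convergence upgrades to pointwise convergence of transition kernels and hence to convergence of finite-dimensional distributions, and (ii) a tightness estimate for the laws on path space. Both are available here because the state space is compact, the limiting edge lengths $\ell_i$ are strictly positive, and consequently the resistance metrics $R_n$ on $T'$ induced by $\mathcal{E}_n$ are uniformly comparable to $R$; from this one gets uniform modulus-of-continuity bounds via standard resistance-form estimates. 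You do acknowledge that this is where the work lies and defer to Croydon's Section~7, which is honest, but as written that key step is asserted rather than proved -- it is precisely the content of the lemma you are citing. The final step (composing with $\psi_n'\circ\tilde\Upsilon_n^{-1}\to\psi'$ uniformly, Skorohod representation, continuous mapping) is correct and routine.
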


This distance allows us to define our first condition (relevant definitions can be found at Definition~\ref{def_augmented}, Definition~\ref{def_athin} and Remark~\ref{rem_abuse_tnk}).

\begin{definition}\label{def_condG} Condition (G): We say that a sequence of asymptotically thin random augmented graphs $(G_n,(V_i^n)_{i\in \N})_{n\in N}$ satisfies condition $(G)_{\sigma_d,\sigma_{\phi}}$ if there exists $\sigma_d, \sigma_{\phi}>0$ such that for all $K\in \N$, the sequence of graph spatial trees $((\mathfrak{T}^{(n,K)},d^{(n,K)}(\cdot,\cdot),n^{-1/4}\phi^{(n,K)}))_{n\in \N}$ converges weakly to a rescaled $K$-ISE $(\mathfrak{T}^{(K)},\sigma_{d} d_{\mathfrak{T}^{(K)}},\sigma_{\phi} \sqrt{\sigma_{d} } \phi_{\mathfrak{T}^{(K)}})$ in the topology induced by $D$.
\end{definition}

This condition states that for all $K\in \N$, the $K$-skeleton of our sequence of random graphs resembles the $K$-skeleton of the ISE.
The constant $\sigma_d$ is the ratio between the rescaled graph distance in $G_n$ and the canonical distance in the CRT. The constant $\sigma_{\phi}$ is the diffusivity of the embedding per unit of graph-distance. 

\begin{remark} It is very important to stress that in condition (G), the topology induced by $D$ imposes a condition on the convergence of the distances of only a finite number of points (between $1+K$ and $1+2K$). This is where the distinction between $\mathfrak{T}^{(n,K)}$ and $\T^{(n,K)}$ makes a big difference. \end{remark}

\begin{remark} The requirement that $T$ and $T'$ have the same shape seems to be very strong, but we believe that in practice this condition will always be verified. We recall that we only require that the skeleton trees, as in Figure 2, (which a finite number of points between $1+K$ and $1+2K$) have the same shape when $n$ is large. \end{remark}

\subsubsection{Condition (V): Uniform distribution of the volume}

We introduce $\lambda^{(n,K)}$  \hfff{lambda} the Lebesgue measure on $(\T^{(n,K)},d^{(n,K)})$, normalized to have total mass $1$. This measure is well defined because $\T^{(n,K)}$ is  a graph spatial tree which can always be equipped with a Lebesgue probability measure (see Section~\ref{sect_graph_tree}). This naturally induces a Lebesgue probability measure on $\mathfrak{T}^{(n,K)}$ (see Section~\ref{sect_mathfrak} and Remark~\ref{rem_not_depend}).

 Recall that $\overrightarrow{\T^{(n,K)}_{x}}$ are the descendants of $x$ (including $x$ itself) in $\T^{(n,K)}$ \hfff{arrowt} and the definition of $\mu^{(n,K)}$ in Section~\ref{sect_mu}.  Our second condition states that

\begin{definition}\label{def_condV} Condition (V): We say that a sequence of asymptotically thin random augmented graphs  $(G_n,(V_i^n)_{i\in \N})_{n\in N}$ satisfies condition $(V)_{\nu}$ if  there exists $\nu>0$ such that for $\epsilon>0$
\[
\lim_{K\to \infty}\limsup_{n\to \infty} {\bf P}_n \Bigl[\sup_{x\in V(\T^{(n,K)})} \abs{\nu\lambda^{(n,K)}\Bigl(\overrightarrow{\T^{(n,K)}_{x}}\Bigr)-\mu^{(n,K)}\Bigl(\overrightarrow{\T^{(n,K)}_{x}}\Bigr)}>\epsilon\Bigr] =0.
\]
\end{definition}

Intuitively this condition says that the volume of the graph is asymptotically uniformly distributed over the graph and that the total volume is of order $\nu n$. 

\subsubsection{Condition (R): the linearity of the resistance}

\begin{definition} Condition (R): We say that a sequence of asymptotically thin random augmented graphs $(G_n,(V_i^n)_{i\in \N})_{n\in N}$ satisfies condition $(R)_{\rho}$ if there exists $\rho>0$ such that for all $\epsilon>0$ and for all $i\in \N$
\[
\lim_{n\to \infty} {\bf P}_n\Bigl[ \abs{\frac{R^{G_n}(0,V_i^n)}{d^{G_n}(0,V_i^n)}-\rho}>\epsilon\Bigr]=0.
\]
\end{definition}

Intuitively this condition says that the resistance distance is asymptotically almost proportional to the graph distance.

\subsection{Statement of the abstract convergence theorem and application}\label{sect_abstract_thm}

We are finally able to state our abstract convergence theorem
\begin{theorem}\label{thm_abstract}
Consider a sequence of asymptotically thin random graphs $(G_n,(V_i^n)_{i\in \N})_{n\in N}$ chosen under ${\bf P}_n$ which verifies conditions $(G)_{\sigma_d,\sigma_{\phi}}$, $(V)_{\nu}$ and $(R)_{\rho}$. Denoting $(X^{G_n}_m)_{m\in \N}$ the simple random walk on $G_n$ started at $0$, we have that 
\[
( n^{-1/4}X_{tn^{3/2}}^{G_n})_{t\geq 0} \to (\sigma_{\phi}\sqrt{\sigma_d} B^{ISE}_{(\rho \nu \sigma_d)^{-1}t})_{t\geq 0}.
\]
The convergence is annealed and occurs in the topology of uniform convergence over compact sets.
\end{theorem}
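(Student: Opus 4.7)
The plan is a two-scale approximation, in $K$ and in $n$. For each fixed $K\in\N$, one first reduces the simple random walk $X^{G_n}$ to an associated process on the skeleton $\mathfrak{T}^{(n,K)}$; then sends $n\to\infty$ for fixed $K$ to obtain convergence to the embedded Brownian motion on the $K$-ISE; and finally lets $K\to\infty$ using Proposition~\ref{prop_approx_bisek}.

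For the reduction step, observe $X^{G_n}$ only at times when it visits a vertex of $V(\mathfrak{T}^{(n,K)})$. The resulting trace process is a reversible Markov chain whose effective resistances between neighbouring cut-points are, by the remark in Section~\ref{sect:skeleton} and the construction of $\reff^{\T^{(n,K)}}$, precisely the resistances assigned to the skeleton; its reversible measure is (proportional to) $\mu^{(n,K)}$. This is exactly the skeleton walk associated with the pair $(\reff^{\T^{(n,K)}}, \mu^{(n,K)})$. Asymptotic thinness controls the error of this trace-and-embed approximation: $\Delta^{(n,K)}_{\Z^d} = o(n^{1/4})$ with high probability (uniformly in $n$, as $K\to\infty$) kills the spatial error at scale $n^{-1/4}$, while $\Delta^{(n,K)}_{G_n} = o(n^{1/2})$, combined with $(R)_\rho$ via the commute-time identity $\mathbb{E}_u[\tau_v]+\mathbb{E}_v[\tau_u]=2|E(G_n)|\reff^{G_n}(u,v)$, kills the time error at scale $n^{-3/2}$.

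For the limit at fixed $K$, the three conditions combine as follows. Condition $(G)_{\sigma_d,\sigma_\phi}$ gives $D$-convergence of $(\mathfrak{T}^{(n,K)}, d^{(n,K)}, n^{-1/4}\phi^{(n,K)})$ to the rescaled $K$-ISE $(\mathfrak{T}^{(K)}, \sigma_d d_{\mathfrak{T}^{(K)}}, \sigma_\phi\sqrt{\sigma_d}\,\phi_{\mathfrak{T}^{(K)}})$. Condition $(R)_\rho$, together with the additivity of resistance along the skeleton (the points of $V^*(\T^{(n,K)})$ being cut-points of $G_n$), upgrades this to convergence of the rescaled resistance metric $n^{-1/2}d^{\text{res}}_{\T^{(n,K)}}$ to $\rho\sigma_d d_{\mathfrak{T}^{(K)}}$. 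Condition $(V)_\nu$ gives $n^{-1}\mu^{(n,K)}\to \nu\lambda_{\mathfrak{T}^{(K)}}$ uniformly over subtrees $\overrightarrow{\T^{(n,K)}_x}$. By an enhancement of Proposition~\ref{prop_metrictoBrownian} accommodating a non-Lebesgue reversible measure and a resistance metric proportional (not equal) to the intrinsic metric, a resistance-form convergence argument in the spirit of Kigami and Croydon then yields that the skeleton process with space rescaled by $n^{-1/4}$ and time by $n^{-3/2}$ converges to $(\sigma_\phi\sqrt{\sigma_d}\,B^{K-ISE}_{(\rho\nu\sigma_d)^{-1}t})_{t\geq 0}$. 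The time-change constant factorises cleanly: $\rho$ comes from the resistance-to-distance ratio, $\nu$ from the total mass of $\mu^{(n,K)}$, and $\sigma_d$ from the graph-to-CRT rescaling of distance.

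Finally, Proposition~\ref{prop_approx_bisek} gives $B^{K-ISE}\to B^{ISE}$ as $K\to\infty$, and a standard diagonal argument exploiting uniformity of asymptotic thinness in $n$ concludes. The main obstacle is the first step: controlling simultaneously, uniformly in $n$, the position and the clock of $X^{G_n}$ inside all sausages that are not resolved at the $K$-skeleton level, so that the cumulative time spent in such excursions becomes negligible at scale $n^{3/2}$ as $K\to\infty$. This requires coupling the typical excursion times to local effective resistances via $(R)_\rho$ while using the $\Delta^{(n,K)}_{G_n}$ half of asymptotic thinness for worst-case control; this is the technical core of the companion paper~\cite{BCFa}.
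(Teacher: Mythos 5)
The paper does not prove Theorem~\ref{thm_abstract}. It is stated and immediately attributed to the companion paper: the abstract says the result ``is obtained by applying a general abstract convergence theorem (Theorem~\ref{thm_abstract}) which is proved in~\cite{BCFa}'', the introduction repeats this, and Remark~\ref{rem_abuse_tnk} and the surrounding text invoke \cite{BCFa} for the key skeleton-approximation property. So there is no in-paper proof to check your sketch against, and the only honest verdict is that the comparison cannot be made within this document. You yourself flag this at the end, which is the right instinct.

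That said, a few comments on the sketch as a standalone outline. The general architecture (trace on the $K$-skeleton, combine $(G)$, $(R)$, $(V)$ to pass to a limit at fixed $K$, then let $K\to\infty$ via Proposition~\ref{prop_approx_bisek}, with asymptotic thinness providing the uniform-in-$n$ error control) is consistent with everything the paper says about what \cite{BCFa} does, and the way you unpack the time-change constant $(\rho\nu\sigma_d)^{-1}$ into its three sources is exactly the factorisation the theorem's normalisation suggests. Two points deserve more care. First, the reversible measure of the trace chain on $V(\mathfrak{T}^{(n,K)})$ is the degree measure $\pi$ restricted to those vertices, not $\mu^{(n,K)}$; the measure $\mu^{(n,K)}$ (total edge-volume of sausages) instead controls the time change between the trace chain's jump clock and real time, via occupation-time identities. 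Conflating these hides the main technical step, which is showing that the real-time clock of $X^{G_n}$ between skeleton visits concentrates around the sausage volume. Second, the ``enhancement of Proposition~\ref{prop_metrictoBrownian}'' you invoke is doing a lot of work: Croydon's Lemma 7.1 is stated for graph spatial trees with the normalised Lebesgue measure and with the edge lengths themselves as resistances, whereas here the resistance metric and the intrinsic metric are only proportional in the limit, and the relevant measure is $\mu^{(n,K)}$ rather than $\lambda^{(n,K)}$; justifying the replacement requires the full resistance-form machinery, which is indeed where the companion paper earns its keep. These are not errors so much as places where a complete argument would need to be supplied, and since the paper delegates that argument wholesale to~\cite{BCFa}, your sketch cannot be graded as right or wrong relative to this paper's own reasoning.
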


Let us explain how to use this theorem in the case of simple random walks on large critical branching random walks (defined in Section~\ref{sect_model}). 

\begin{definition}\label{defpi}
 We define a function $\pi_n(x)$ for  $x\in \phi_{\T^{\text{GW}}_n}(\T^{\text{GW}}_n)$ and which is such that $\pi_n(x)$ is the first cut-point encountered on any path from $x$ to $0$.  \hfff{pin}
 
 This function is not well defined for vertices $x$ which can be reached from $0$ without crossing any cut-point. For such vertices, we extend the notation $\pi_n(x)$ by arbitrarily setting $\pi_n(x)=z$ where $z$ is the cut-point which is closest to the origin in the metric induced by $\phi_{\T^{\text{GW}}_n}(\T^{\text{GW}}_n)$ (in case there are several such points chose one according to some predetermined order on $\Z^d$). \end{definition}

\vspace{0.2cm}

{\it Our choice of augmented random graphs}

\vspace{0.2cm}

 The sequence of augmented random graphs we will use to apply Theorem \ref{thm_abstract} is $(\omega_n,(\pi_{n}(\phi_{\T^{\text{GW}}_n}(U^n_i)))_{i\in\N})_{n\in \N}$ where $(U_i^n)_{i\in \N}$ is a sequence i.i.d.~random variables uniformly distributed on $\T^{\text{GW}}_n$. This leaves us with four conditions to verify.
 
{\it Asymptotic thinness}

\vspace{0.2cm}

The asymptotic thinness follows from estimates on the CRT and tail estimates for the distance between cut-points (see Section~\ref{sect_be_brw} in the Appendix).  Our precise statement can be found in Lemma~\ref{lem:asymthin}.

\vspace{0.2cm}

{\it Condition (G)}

\vspace{0.2cm}

The proof of Condition $(G)$ essentially follows from estimates in~\cite{JM}. Our precise statement is available at Lemma~\ref{lem:condG} in which we prove $(G)_{\sigma\sigma_G,\sigma_G^{-1/2}}$ where $\sigma$ and $\sigma_G$ are respectively defined in Theorem~\ref{thm:mainannealed} and in~\eqref{def_sigmaG}.

\vspace{0.2cm}

{\it Condition (V)}

\vspace{0.2cm}

The proof of Condition $(V)$ is very closely related to a recent result in~\cite{LGL}.
\begin{theorem}\label{teo:legalllin}
Assume ${\bf E}[Z^2]<\infty$ and $d\geq 5$. Let $v_1,\dots v_n$ be the vertices of $\T^{\text{GW}}_n$ labeled in increasing lexicographical order. Let $\text{vol}_n(j)=\{\phi_{\T^{\text{GW}}_n}(v_i),\ i=1,\dots, j\}$. Then, there exists $\nu'>0$ such that for all $a\in [0,a]$, we have that
\[
\frac 1n \abs{\text{vol}_n(\lfloor an\rfloor)} \to_{n\to \infty} a \nu',
\]
in probability, where $\omega_n$ is chosen under ${\bf P}_n$.
\end{theorem}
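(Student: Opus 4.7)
The approach is the second moment method. Write
\[
\abs{\text{vol}_n(\lfloor an\rfloor)} = \sum_{i=1}^{\lfloor an \rfloor} \mathbf{1}\bigl\{\phi_{\T^{\text{GW}}_n}(v_i) \neq \phi_{\T^{\text{GW}}_n}(v_j)\text{ for all }j<i\bigr\},
\]
so the claim reduces to controlling the first two moments of this sum of ``first visit'' indicators. The basic ingredient is the local central limit theorem: for two vertices $u,v\in \T^{\text{GW}}_n$, the difference $\phi_{\T^{\text{GW}}_n}(u) - \phi_{\T^{\text{GW}}_n}(v)$ is the endpoint of a simple random walk of length $d_{\T^{\text{GW}}_n}(u,v)$, whence
\[
{\bf P}_n\bigl(\phi_{\T^{\text{GW}}_n}(u) = \phi_{\T^{\text{GW}}_n}(v)\bigm|\T^{\text{GW}}_n\bigr) \le C\, d_{\T^{\text{GW}}_n}(u,v)^{-d/2}
\]
for pairs of appropriate parity.

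Combining this bound with the convergence of $\T^{\text{GW}}_n$ (rescaled by $\sqrt n$) to the CRT yields the key first-moment estimate. Since the distance between two uniform vertices of $\T^{\text{GW}}_n$, rescaled by $\sqrt n$, converges to the CRT distance whose density vanishes linearly at the origin, the expected number of pairs at tree-distance $k$ is of order $nk$. Summing $nk \cdot k^{-d/2}$ over $k$ gives a convergent sum for $d\ge 5$, leading to $\sum_{1\le i<j\le n}{\bf P}_n(\phi(v_i)=\phi(v_j))=O(n)$, and hence $n^{-1}{\bf E}_n[\abs{\text{vol}_n(n)}]\to\nu'$ for some $\nu'\in(0,1]$. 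Applying the same estimate to the initial segment of $\lfloor an\rfloor$ vertices should give $n^{-1}{\bf E}_n[\abs{\text{vol}_n(\lfloor an\rfloor)}]\to a\nu'$, provided one knows that the density of ``first'' vertices along the lex-order is asymptotically uniform.

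For the second moment, one bounds the centred quantity
\[
\sum_{i,j\le\lfloor an\rfloor}\bigl[{\bf P}_n(v_i\text{ and }v_j\text{ both first}) - {\bf P}_n(v_i\text{ first}){\bf P}_n(v_j\text{ first})\bigr]
\]
by a three- and four-point collision estimate, where the local CLT bound is applied to each of the tree-paths joining the relevant vertices. For $d\ge 5$, the resulting triple sum is $o(n^2)$, and Chebyshev's inequality then gives the claimed convergence in probability. The principal obstacle is the identification of the slope as exactly $\nu'$ independent of $a$: this reduces to showing that the asymptotic probability that a vertex $v_i$ is first at its location does not depend on its lex-rank $i$, and is where a careful exchangeability argument, comparing the lex-ordered initial segment to a uniformly sampled subset of the tree of the same size, plays the central role.
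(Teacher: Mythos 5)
This theorem is cited from \cite{LGL} and not proved in the present paper (see the remark at the start of Section~7), so your attempt should be compared against the method of that reference, which---like the resistance argument in Section~4 here---proceeds via local limits and ergodicity of infinite branching random walks rather than a direct moment computation. Your outline correctly reduces the claim to stationarity of the first-visit indicator, and the local CLT bound combined with the linear count of tree-distance-$k$ pairs does give the expected number of collisions as $O(n)$ for $d\ge5$. This alone, however, yields neither the existence of the limit nor $\nu'>0$; both need more (the latter, for instance, requires noticing that a vertex avoids all of its boundedly many distance-$2$ relatives with probability bounded away from $0$).

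The genuine gap is the mechanism you propose for the key identity $\lim_n n^{-1}{\bf E}_n\abs{\text{vol}_n(\lfloor an\rfloor)}=a\nu'$, namely ``comparing the lex-ordered initial segment to a uniformly sampled subset of the tree of the same size.'' These two sets are not comparable. For a uniform $\lfloor an\rfloor$-subset, a tree-close pair---and the collision count is dominated by such pairs---is retained with probability of order $a^2$, so the expected collision count is $O(a^2n)$ and the per-selected-vertex density of fresh spatial sites is $1-O(a)$, tending to $1$ as $a\to0$; the theorem asserts an $a$-independent density $\nu'<1$ for the lex prefix. These two slopes cannot agree, so no exchangeability can make the objects interchangeable. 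What the proof needs instead is a local stationarity statement: the law of the fringe of the tree around a bulk vertex $v_i$, together with the portion of the depth-first traversal preceding it, converges to a stationary limit object (an IICBRW-type environment), and the first-visit probability is an approximately local functional of that limit. Establishing this is precisely the work of \cite{LGL}, and it is the same genre of transfer and ergodicity argument the present paper develops in Section~4 for the resistance; a second-moment bound can close the deal afterwards, but it does not substitute for the local-limit step.
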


Our precise statement is Proposition~\ref{lem:condV}, which proves $(V)_{\nu}$ where $\nu$ is defined at~\eqref{def_nu}.

\vspace{0.2cm}

{\it Condition (R)}

\vspace{0.2cm}

Finally condition $(R)$ is proved in this paper and constitutes an important part of the proof (see Section~\ref{sect_resistance}). The method used for proving condition $(R)$ in this context has some chance of being adapted to more complex models although this would require significant work. 

Our precise statement for condition $(R)$ resembles the following 
\begin{theorem}\label{theorem_res}
Assume ${\bf E}[Z^2]<\infty$ and $d>14$. Then there exists $\rho>0$ such that for all $\epsilon>0$
\[
{\bf P}_n\Bigl[\abs{\frac{R^{\omega_n}(0,\phi_{\T^{\text{GW}}_n}(U_1^n))}{d^{\omega_n}(0,\phi_{\T^{\text{GW}}_n}(U_1^n))}-\rho}>\epsilon\Bigr]\to 0,
\]
where $U_1^n$ is a uniformly chosen point in $\T^{\text{GW}}_n$ where $\rho$ is defined at~\eqref{def_rho}.
\end{theorem}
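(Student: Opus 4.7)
The plan is to use the bi-infinite branching random walk $\omega_\infty$ from Section~\ref{section_bi_inf} as a stationary reference object in which a limiting resistance per unit intrinsic length can be defined, and then to transfer this linearity back to $\omega_n$ via a spine decomposition. The bi-infinite model carries a two-sided backbone $(x_k)_{k\in\Z}$ along which the law is shift-invariant and ergodic, with i.i.d.~critical Galton-Watson subtrees grafted at each spine vertex and embedded in $\Z^d$ by independent jumps.

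First I would study the array $R_{j,k}:=R^{\omega_\infty}(x_j,x_k)$ for $j<k$. Since effective resistance is a metric, it satisfies the triangle inequality $R_{j,k}\leq R_{j,l}+R_{l,k}$, so Kingman's subadditive ergodic theorem applied to this stationary, bounded (by $k-j$) array yields
\[
\frac{R^{\omega_\infty}(x_0,x_k)}{k}\ \longrightarrow\ \rho
\]
almost surely as $k\to\infty$, for some deterministic constant $\rho \in [0,1]$. The lower bound $\rho>0$ is obtained by a Nash-Williams argument: the backbone admits cut-bonds of positive asymptotic density $\rho_1$ (see~\eqref{def_rho1}) because far-apart subtrees rarely meet in $\Z^d$ in high dimension, and each backbone cut-bond contributes unit resistance in series, so $R^{\omega_\infty}(x_0,x_k) \geq (\rho_1 + o(1))k$ almost surely.

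To transfer this to the conditioned finite model, I would view the ancestral line from $0$ to $U_1^n$ in $\T^{\text{GW}}_n$ as a size-biased spine of length $\Theta(\sqrt n)$, around which the attached subtrees and their $\Z^d$-embeddings look locally like those of $\omega_\infty$ (a standard Aldous/Kesten spine decomposition of large critical Galton-Watson trees). Coupling a window of $K$ consecutive spine steps in $\omega_n$ with the analogous window in $\omega_\infty$ with vanishing total-variation error, then summing over the $\Theta(\sqrt n/K)$ windows, a law of large numbers gives
\[
R^{\omega_n}(0,\phi_{\T^{\text{GW}}_n}(U_1^n)) \,=\, \rho\, d^{\T^{\text{GW}}_n}(0,U_1^n)\,+\, o_{{\bf P}_n}(\sqrt n),
\]
first $n\to\infty$, then $K\to\infty$. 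The parallel statement $d^{\omega_n}(0,\phi_{\T^{\text{GW}}_n}(U_1^n))=d^{\T^{\text{GW}}_n}(0,U_1^n)+o_{{\bf P}_n}(\sqrt n)$ follows because in high dimension most tree-edges on the ancestral line are genuine cut-bonds of $\omega_n$, so long-range loops cannot shorten the graph distance by a macroscopic amount. Since $d^{\T^{\text{GW}}_n}(0,U_1^n)$ is of order $\sqrt n$, dividing the two asymptotics gives convergence in probability of the ratio to $\rho$.

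The main obstacle is the high-dimensional non-intersection estimate that underpins both the coupling and the ``graph-geodesic equals tree-path'' statement: one must show that the attached subtrees at spine vertices far apart in intrinsic time rarely meet in $\Z^d$, so that the local geometry around each spine window can be decoupled from the rest of $\omega_\infty$ (and, in the finite model, from the rest of $\omega_n$), and moreover that long-range loops create no macroscopic shortcuts. This is where the hypothesis $d>14$ enters, through two-point and triangle estimates of lace-expansion type (cf.~\cite{Hara}) that quantify how unlikely intersections between the $\Z^d$-images of separated subtrees are. Once these geometric bounds are in place the subadditive ergodic argument on $\omega_\infty$ and the $\sqrt n$-window LLN on $\omega_n$ are robust; without them neither step is available, consistent with the failure of the result for $d<6$ (see~\cite{JN}).
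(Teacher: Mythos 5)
Your overall architecture — a stationary two-sided reference model, an ergodic theorem for growth along its backbone, then a transfer to ${\bf P}_n$ — is the paper's architecture. Kingman's subadditive theorem in place of the paper's route (observing that resistance and embedded distance are \emph{exactly} additive along pivotal points, then applying Birkhoff) would also produce a constant, though less explicitly and without the handle on individual sausages that the transfer needs. The genuine error is in your denominator. You assert $d^{\omega_n}(0,\phi_{\T^{\text{GW}}_n}(U_1^n)) = d^{\T^{\text{GW}}_n}(0,U_1^n) + o_{{\bf P}_n}(\sqrt n)$, i.e.\ that the embedded graph distance and the tree distance agree to first order. This is false for every $d\geq 3$: the spine is a simple random walk with a fixed positive probability of revisiting a spatial location within a few steps, so a positive \emph{density} of spine vertices admit a strictly shorter graph geodesic than the tree path, and $d^{\omega_n}/d^{\T^{\text{GW}}_n}$ converges to a constant $\rho_2\leq 1$ (defined at~\eqref{def_rho2}) that is in general strictly less than one. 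The theorem's ratio is $R^{\omega_n}/d^{\omega_n}$, and the correct limit is $\rho_1/\rho_2=\rho$ of~\eqref{def_rho}, not your Kingman constant $\rho_1$. One must run the \emph{same} two-sided ergodic argument for $d_{\omega_{-\infty,\infty}}(0,\alpha(n))/n$ as for the resistance — this is why the paper systematically carries $\rho_1$ and $\rho_2$ in parallel through Lemmas~\ref{simple_LLN1}, \ref{simple_LLN}, \ref{res_techhh} and Proposition~\ref{prop_res00}; a ``most spine edges are cut-bonds'' heuristic does not replace it, because the shortcuts in question sit \emph{inside} bubbles, not between them.

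Beyond that, the transfer to ${\bf P}_n$ is under-specified in ways that are not cosmetic. Effective resistance is a global functional: coupling a window of spine steps to the infinite model does not, by itself, control contributions from the rest of the tree. The paper compares the true resistance $R$ with a truncated resistance $\overline R$ computed from the first $k$ levels only, and shows they agree to $o(\sqrt n)$; this is where the sausage-control estimates (Lemmas~\ref{close_cut_point}, \ref{neglect_bubble}, \ref{go_france_boo_germany}) actually do work, and it is where the hypothesis $d>14$ is consumed. Moreover the passage from the infinite model to $\{\abs{\T^{\text{GW}}}=n\}$ is carried out in three distinct conditioning steps — $\{H(\T)\geq n\}$, then $\{\abs{\T}\geq n\}$, then $\{\abs{\T}=n\}$ — each requiring a separate absolute-continuity comparison (the Radon–Nikodym factor of Lemma~\ref{transfer_cvg}, the monotone-event transfer of Lemma~\ref{transfer_monotone}, and the Kemperman-formula estimate in the proof of Proposition~\ref{prop_res00}). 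A windowwise total-variation coupling supplies none of these; without them the ${\bf P}_n$-statement does not follow.
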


The precise statement of $(R)_{\rho}$ is Proposition~\ref{prop_res0}.

\vspace{0.2cm}

\begin{proof}[Proof of Theorem~\ref{thm:mainannealed}]

We refer to the definitions of $\rho_1$, $\rho$, $\sigma_G$, $\nu$ and  $\sigma$ at~\eqref{def_rho1}, \eqref{def_rho}, \eqref{def_sigmaG}, \eqref{def_nu} and Theorem~\ref{thm:mainannealed} respectively. We can see that $\rho=\rho_1\sigma_G^{-1}$.

The results stated in this section imply that our model verifies asymptotic thinness, conditions $(G)_{\sigma,\sigma_G}$, $(V)_{\nu}$ and $(R)_{\rho}$. 

Hence~Theorem~\ref{thm:mainannealed} follows from the abstract convergence result (Theorem~\ref{thm_abstract}). 
\end{proof}

\section{An infinite and a bi-infinite version of critical branching random walks} \label{section_bi_inf}

In this section, we will assume that ${\bf E}[Z^2]<\infty$.

We are going to construct an infinite and a bi-infinite version of critical branching random walks. This construction will prove useful because
\begin{enumerate}
\item the bi-infinite version of critical branching random walks has good ergodic properties (see Section~\ref{sect_ergo}),
\item certain results for infinite critical branching random walks can be transferred to the case of the large critical branching walks (see Section~\ref{sect_transfer}).
\end{enumerate}

In short, these properties will allow us to obtain, by ergodicity, results similar to the law of large numbers on large critical branching random walks. This will be critical to show that the resistance distance and the usual distance are proportional.

\subsection{Definition}

In this section, it will be convenient to work under a different conditioning than ${\bf P}_n$ and transfer our results to ${\bf P}_n$ (see Section~\ref{proof_res}). For this we introduce ${\bf Q}_n:={\bf P}[ \cdot \mid H(\T^{\text{GW}})\geq n]$. The tree chosen under that measure will be denoted $\T^{\text{GW}}_n$ as it was under ${\bf P}_n$, this poses no issue since in the index $n$ is simply a notational reminder of the fact that we work with a tree $\T$ conditioned to be large.

We will also need infinite and bi-infinite critical branching random walks. In order to do this, we will simply need to define infinite (resp.~bi-infinite) critical trees. Indeed, given an infinite (countable) tree $T$, we can use the standard simple random walk embedding $\phi_T$ described in the introduction (see Section~\ref{sect_model}) to obtain spatial tree which corresponds to the branching random walk  associated to $T$.

Consider our critical Galton-Watson tree obtained from $Z$. We denote $p_k={\bf P}[Z=k]$  and we define $\tilde{Z}$ the size-biased version of $Z$ by setting ${\bf P}[\tilde{Z}=k]=kp_k$ for all $k$ (which is a probability distribution since ${\bf E}[Z]=1$).  Our assumption that ${\bf E}[Z^2]<\infty$ guarantees that ${\bf E}[\tilde{Z}]<\infty$.

We start by introducing the incipient  infinite critical Galton-Watson tree (or IICGW), denoted $\T^{\text{GW}}_{\infty}$ and the  corresponding measure will be denoted ${\bf P}_{\infty}$. This object was constructed in~\cite{kesten1986subdiffusive} as a weak limit for large critical Galton-Watson trees.  For our purpose it is important to recall Lemma 2.2 from~\cite{kesten1986subdiffusive} which gives a very useful way to construct $\T^{\text{GW}}_{\infty}$. This construction will serve as our definition.

\begin{definition}
Let us construct a two-type Galton-Watson tree with the following offspring rules
\begin{enumerate}
\item the origin being of type $A$,
\item all points of type $A$ have $1$ offspring of type $A$ and a random number of offspring of type $B$ distributed as $\tilde{Z}-1$,
\item all points of type $B$ have random number of offspring of type $B$ distributed as $Z$.
\end{enumerate}

The tree $\T^{\text{GW}}_{\infty}$ obtained is this manner is called incipient infinite critical Galton-Watson tree. It has one unique infinite line of descent (corresponding to the points of type $A$) which we call backbone. 
\end{definition}

Next, we want to introduce a bi-infinite version of the IICGW.

\begin{definition} Consider two independent IICGWs $\T^{\text{GW},(1)}_{\infty}$ and $\T^{\text{GW},(2)}_{\infty}$. Connect their roots by one edge and define the root of the resulting tree to be the root of $\T^{\text{GW},(1)}_{\infty}$. The resulting tree is called incipient incipient bi-infinite critical Galton-Watson tree (IBICGW), is denoted $\T^{\text{GW}}_{-\infty,\infty}$ with the corresponding measure being ${\bf P}^{-\infty,\infty}$. 

Similarly to the case of the IICGW, we obtain a unique bi-infinite line of descent, which we will again refer to as a backbone.
\end{definition}

\begin{figure}
  \includegraphics[width=\linewidth]{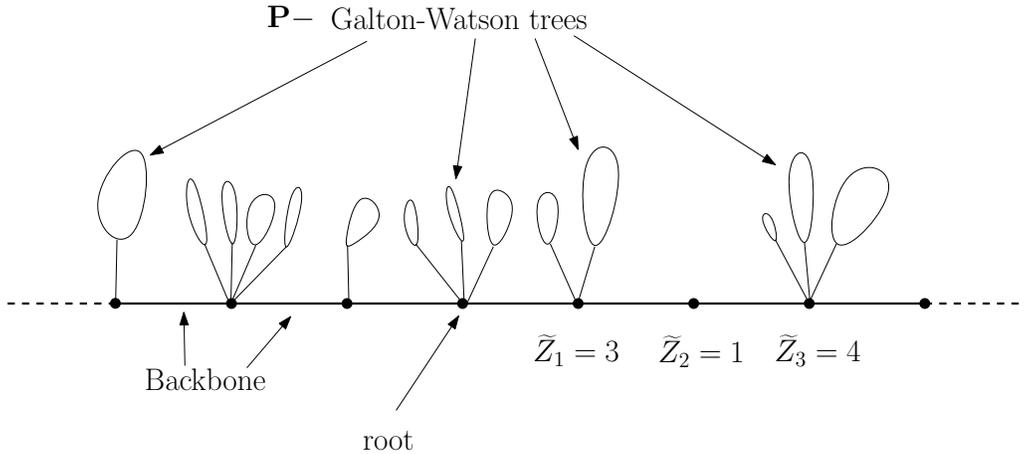}
  \caption{The IBICGW is invariant by translation by construction. By keeping only the vertices and edges on the right of the root we would have an IICGW}
\end{figure}

As we mentioned, once we constructed critical branching random walks we can obtain the corresponding branching random walks. We will use the same notation for the probability spaces of the random tree and the associated random spatial tree.

\begin{definition}
We write  ${\bf Q}_n$ (resp.~${\bf P}_{\infty}$, resp.~${\bf P}^{-\infty,\infty}$) for the probability measure associated to  the branching random walk $\omega_n$ on the Galton-Watson conditioned to have height $n$ (resp.~$\omega_{\infty}$ on the IICGW, resp.~$\omega_{-\infty,\infty}$ on the IBICGW) which we will be abbreviated LCBRW (resp.~IICBRW, resp.~IBICBRW).
\end{definition}

\begin{remark}
We will commit a small abuse of notation by using LCBRW for any conditioning of the form, $\abs{\T}\geq n$, $\abs{\T}=n$, $H(\T)\geq n$ and $H(\T)=n$.
\end{remark}

The distributions of  $\T^{\text{GW}}_{\infty}$ and $\T^{\text{GW}}_{-\infty,\infty}$ are explicit by their definition. Furthermore, the distribution of $\T^{\text{GW}}_n$ is also explicit (see Lemma 2.1 in~\cite{geiger1999elementary} for the description of the law of a tree of height $n$). This can be used to verify the following stochastic domination result
\begin{equation}\label{stoch_dom}
\T^{\text{GW}}_n \prec \T^{\text{GW}}_{\infty} \prec \T^{\text{GW}}_{-\infty,\infty} \text{ and } \omega_n\prec \omega_{\infty} \prec \omega_{-\infty,\infty}.
\end{equation}

Those stochastic dominations will be used to create a coupling between those trees and graphs. For the coupling between $\T^{\text{GW}}_{\infty}$ and $\T^{\text{GW}}_{-\infty,\infty} $ the backbones will also be coupled.

\subsection{Representation of the IICBRW and the IBICBRW} \label{sect_res_est_not}

Let us denote $\mathcal{B}(x)$ a random spatial tree obtained by
\begin{enumerate}
\item choosing modified Galton-Watson tree $\T^{\text{GW}}_{*}(x)$ where the root has $\tilde{Z}-1$ offspring, all of which reproduce as a standard critical ${\bf P}$-Galton-Watson tree, 
\item the root is mapped to $x$ and for the rest of the tree we use  the standard simple random walk embedding described in the introduction (see Section~\ref{sect_model}).
\end{enumerate} 

In the sequel $\mathcal{B}(x)$ and $\mathcal{B}(y)$ will denote, unless stated otherwise, two independent copies this process started at $x$ and $y$ respectively.

 This notation is useful because in the case of the IICBRW, the graph $\omega_{\infty}$ is composed by two parts
\begin{enumerate}
\item  the backbone which is just the trajectory of a simple random walk and we will typically denote it $(\alpha(n))_{n\geq 0}$.
\item the parts discovered by the critical Galton-Watson trees emanating from a point $\alpha(n)$ ($n\in \N$) of the backbone which has the same law as $\mathcal{B}(\alpha(n))$.
\end{enumerate}

For the case of the IBICBRW, we will use a similar decomposition with the notation $(\alpha(n))_{n\in \Z}$.

Fix $A\subset \Z$. Given, a nearest-neighbour trajectory $(x_n)_{n\in \Z}$, we denote $x_{A}$ the sequence $(x_n)_{n\in A}$. Furthermore, we define the random variable $\mathcal{B}(x_A)=\cup_{n\in A} \mathcal{B}^{(n)}(x_n)$, where $\mathcal{B}^{(k)}(x)$ are random variables with law $\mathcal{B}(x)$ that are independent in $k$.

Those notations will give us a standard representation for the IICBRW and the IBICBRW described in the following proposition (whose proof we omit).
\begin{proposition} If $(\alpha(n))_{n\in \Z}$ is a simple random walk then the set $\mathcal{B}(\alpha([0,\infty)))$ has the law of the range of a IICBRW. Here, we implicitly assume that the simple random walk is independent of the random variables $\mathcal{B}(x)$ used to defined $\mathcal{B}(\alpha([0,\infty)))$.

 Similarly, the set $\mathcal{B}(\alpha((-\infty,\infty))))$ has the law of the random of a IBICBRW.
\end{proposition}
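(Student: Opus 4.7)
The plan is a direct verification: decompose the IICBRW along the backbone of the IICGW and match the pieces to the $\mathcal{B}^{(n)}(\alpha(n))$. The two-type description recalled above makes this decomposition immediate---the type-$A$ vertices form the backbone $\beta(0), \beta(1), \ldots$, while at each $\beta(n)$ there are $\tilde{Z}_n - 1$ type-$B$ children, each the root of an independent standard critical ${\bf P}$-Galton-Watson subtree.

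First I would set $\alpha(n) := \phi_{\T^{\text{GW}}_\infty}(\beta(n))$. The backbone edges each carry an independent SRW step (by the construction of the embedding in Section~\ref{sect_model}), so $(\alpha(n))_{n\geq 0}$ is a simple random walk started at $0$. Next, letting $S_n$ be the subtree of $\T^{\text{GW}}_\infty$ rooted at $\beta(n)$ consisting of $\beta(n)$ together with its type-$B$ descendants, the branching property of the two-type tree gives that the $(\tilde{Z}_n, S_n)$ are i.i.d.~in $n$ and that each $S_n$ has exactly the distribution of the tree $\T^{\text{GW}}_{*}(x)$ appearing in the definition of $\mathcal{B}(x)$ (a root with $\tilde{Z}-1$ children, each of which generates a standard critical ${\bf P}$-Galton-Watson tree). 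Since the backbone edges and the edges of the various $S_n$ partition $E(\T^{\text{GW}}_\infty)$ disjointly, the i.i.d.~SRW-step labels on these disjoint edge sets are mutually independent; mapping $\beta(n) \mapsto \alpha(n)$, one concludes that conditional on $(\alpha(n))_{n\geq 0}$ the spatial trees $\phi_{\T^{\text{GW}}_\infty}(S_n)$ are independent with respective laws $\mathcal{B}(\alpha(n))$.

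Taking the union over $n\geq 0$ then yields that $\omega_\infty$ has the same law as $\bigcup_{n\geq 0} \mathcal{B}^{(n)}(\alpha(n)) = \mathcal{B}(\alpha([0,\infty)))$, which proves the first claim. The bi-infinite case is identical once one observes that the IBICGW built from two independent IICGWs has a bi-infinite backbone whose embedding is a bi-infinite SRW: the single edge connecting the two roots contributes a symmetric SRW step, and this joins the two one-sided SRWs arising from the two copies into a single bi-infinite SRW with $\alpha(0)=0$; the hanging subtrees are then again i.i.d.~of law $\mathcal{B}(\alpha(n))$ by the same argument. The only genuinely nontrivial point in either case is the bookkeeping of edge-disjointness, which guarantees that all the independence demanded by the right-hand side comes for free from the i.i.d.~nature of the edge-labelling in the SRW embedding; everything else is just a change of variables between the explicit two-type description on the left and the spine-plus-subtree description on the right.
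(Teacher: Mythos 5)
Your proof is correct, and since the paper explicitly omits its own proof of this proposition, your direct verification via Kesten's two-type description supplies exactly the argument the authors presumably had in mind: identify the backbone embedding with the SRW $\alpha$, identify the subtree hanging at $\beta(n)$ via its type-$B$ children with an independent copy of $\T^{\text{GW}}_*$, use edge-disjointness to get the independence, and take unions. One small imprecision worth flagging: when you set $S_n$ to be ``$\beta(n)$ together with its type-$B$ descendants,'' the literal reading would include type-$B$ descendants of $\beta(n+1)$, $\beta(n+2)$, \ldots; what you want (and clearly mean, since otherwise the edge sets would not be disjoint) is $\beta(n)$ together with its type-$B$ \emph{children} and all \emph{their} descendants. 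With that reading, the edge sets of the $S_n$ together with the backbone edges do partition $E(\T^{\text{GW}}_\infty)$, the conditional independence is immediate from the i.i.d.~edge labelling, and the translation invariance $\mathcal{B}(x)\stackrel{d}{=}x+\mathcal{B}(0)$ closes the argument. The bi-infinite case goes through verbatim once you observe, as you do, that the connecting edge between the two roots supplies the extra SRW step making $(\alpha(n))_{n\in\Z}$ a bi-infinite walk.
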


\subsection{Ergodicity properties of the IBICBRW}\label{sect_ergo}

Let us consider $\omega_{-\infty,\infty}$ a IBICBRW, arising from a sequence $((\alpha(n))_{n\in \Z}, (\mathcal{B}^{(n)}(x))_{n\in \Z,x\in \Z^d})$. We define, for $k\in \Z$, the shift $\theta_k$ acting on $\omega_{-\infty,\infty}$ by defining
\[
\omega_{-\infty,\infty}\circ \theta_k=((\alpha(n+k)-\alpha(k))_{n\in \Z}, (\mathcal{B}^{(n+k)}(x))_{n\in \Z,x\in \Z^d}),
\]
which intuitively means that we are witnessing $\omega_{-\infty,\infty}$ after a re-centering at $\alpha(n)$. We typically write $\theta$ instead of $\theta_1$. 

Since in $((\alpha(n))_{n\in \Z}, (\mathcal{B}^{(n)}(x))_{n\in \Z,x\in \Z^d})$, $(\alpha_n)_{n\in \N}$ is a simple random walk and all $(\mathcal{B}^{(n)}(x))_{n\in \Z,x\in \Z^d})$ are independent and identically distributed (up to a spatial translation), we can easily see the following.

\begin{lemma}\label{ergo_shift0} The shift $\theta$ on $\omega_{-\infty,\infty}$ is ergodic with respect to ${\bf P}^{-\infty,\infty}$. \end{lemma}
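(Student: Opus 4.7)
The plan is to realize the shift $\theta$ as a factor of a Bernoulli shift on a product space, which is automatically ergodic (in fact mixing).

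First, I would repackage the driving randomness as an i.i.d.\ sequence. For each $n \in \Z$, define
\[
\xi_n := \bigl(\alpha(n+1)-\alpha(n),\; (\mathcal{B}^{(n)}(x))_{x\in \Z^d}\bigr),
\]
viewed as a random element of $\Z^d \times \mathcal{S}^{\Z^d}$, where $\mathcal{S}$ is the space of rooted spatial trees. Under ${\bf P}^{-\infty,\infty}$, the family $(\xi_n)_{n\in \Z}$ is i.i.d., because the backbone increments $\alpha(n+1)-\alpha(n)$ form an i.i.d.\ sequence of simple random walk steps, the collection $(\mathcal{B}^{(n)}(x))_{n\in\Z,x\in\Z^d}$ is i.i.d.\ in $n$ (up to spatial translation), and the two are independent. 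Thus the bi-infinite sequence $(\xi_n)_{n\in\Z}$ is a Bernoulli system under the shift $S$ defined by $(S\xi)_n := \xi_{n+1}$; by Kolmogorov's zero-one law (or by the standard fact that Bernoulli shifts are mixing) $S$ is ergodic.

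Next, I would exhibit a measurable factor map $\Phi$ from $(\xi_n)_{n\in\Z}$ onto $\omega_{-\infty,\infty}$ that intertwines $S$ with $\theta$. Set $\alpha^\Phi(0) := 0$, then recover the backbone by the partial sums
\[
\alpha^\Phi(n) := \sum_{k=0}^{n-1}\xi_k^{(1)} \quad (n\geq 1),\qquad \alpha^\Phi(n) := -\sum_{k=n}^{-1}\xi_k^{(1)} \quad (n\leq -1),
\]
and attach $\mathcal{B}^{(n)}(\cdot)$ at $\alpha^\Phi(n)$ for each $n$. Using the representation of Section~\ref{sect_res_est_not}, $\Phi(\xi)$ has the law of $\omega_{-\infty,\infty}$. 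Equivariance $\Phi \circ S = \theta \circ \Phi$ is essentially a bookkeeping check: the backbone reconstructed from $S\xi$ is $(\alpha^\Phi(n+1) - \alpha^\Phi(1))_{n\in\Z}$ and the attached tree family is $(\mathcal{B}^{(n+1)}(x))_{n\in\Z,x\in\Z^d}$, which is exactly $\theta$ applied to $\Phi(\xi)$ per the definition recalled above.

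Finally, I would conclude by the general principle that a measurable factor of an ergodic system is ergodic: if $A$ is $\theta$-invariant, then $\Phi^{-1}(A)$ is $S$-invariant, hence has Bernoulli-probability $0$ or $1$, forcing ${\bf P}^{-\infty,\infty}(A)\in\{0,1\}$. There is no genuine obstacle in this proof; the only point requiring care is the equivariance check, since the trees $\mathcal{B}^{(n+k)}$ in the definition of $\theta_k$ are relabeled in $n$ but not spatially translated, and one must confirm that this matches what $\Phi \circ S$ produces. Once the indexing is written out this is immediate, and the lemma follows.
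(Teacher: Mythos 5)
Your proposal is correct and takes essentially the same approach as the paper, which simply observes that the backbone increments and the tree families $(\mathcal{B}^{(n)})_{n\in\Z}$ form an i.i.d.\ sequence, so the shift is a Bernoulli shift and hence ergodic. You have merely made precise, via the factor-map and equivariance check, what the paper asserts can be ``easily seen''; the re-indexing computation confirming $\Phi\circ S=\theta\circ\Phi$ is exactly the content behind the paper's one-line justification.
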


Given a IBICBRW, we say that $\alpha(i)$ is a pivotal point if $\{\mathcal{B}(\alpha((-\infty, i]))\cap \mathcal{B}(\alpha([i+1,\infty))) \neq \emptyset\}$. This means that if we remove a pivotal point from the graph $\omega_{-\infty,\infty}$, it is split into two infinite connected components.

\begin{remark}\label{pivotal_cut} It is obvious that any pivotal point is a cut-point, since they disconnect the graph, but the converse is not true in general. \end{remark}

\begin{figure}
  \includegraphics[width=0.75\linewidth]{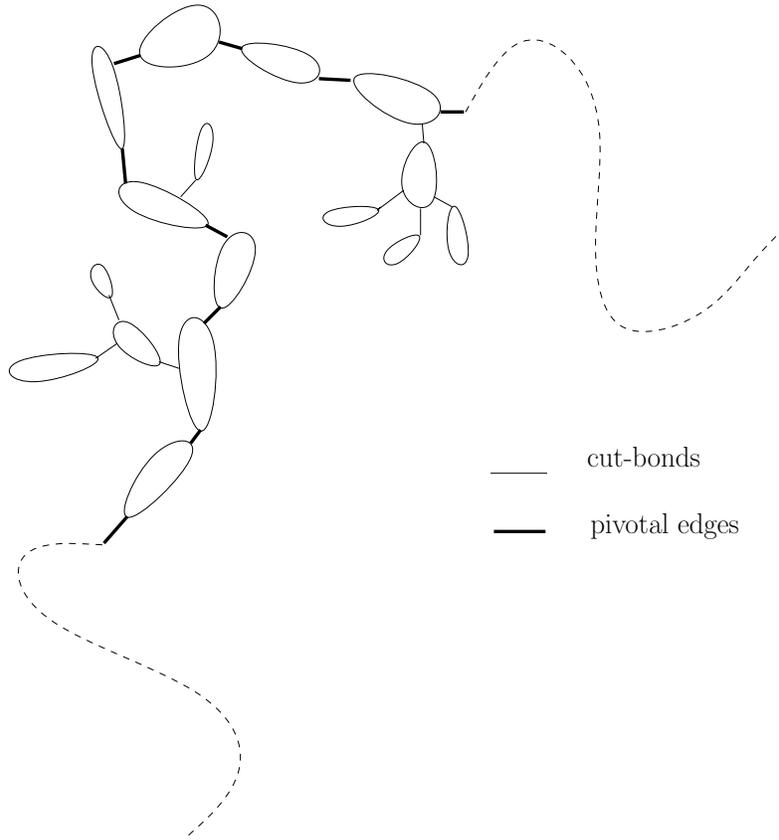}
  \caption{A representation of the IBICBRW in high dimensions.}
\end{figure}

Pivotal points exists by the following lemma whose proof is deferred to the appendix (see Section~\ref{proof_pivotal}).

\begin{lemma}\label{cutp_2}
For $d>10$, we have
\[
{\bf P}^{-\infty,\infty}[0\text{ is a pivotal point}]>0.
\]
\end{lemma}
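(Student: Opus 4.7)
The plan is to use the explicit representation of $\omega_{-\infty,\infty}$ from Section~\ref{sect_res_est_not} as a bi-infinite simple random walk backbone $(\alpha(n))_{n \in \Z}$ with conditionally independent bushes $\mathcal{B}^{(n)}(\alpha(n))$ attached at each backbone vertex. Under this representation, ``$0$ is a pivotal point'' amounts to the two half-ranges $\bigcup_{i \le 0}\mathcal{B}^{(i)}(\alpha(i))$ and $\bigcup_{j \ge 1}\mathcal{B}^{(j)}(\alpha(j))$ being vertex-disjoint in $\Z^d$ (so that removing $\alpha(0)$ disconnects $\omega_{-\infty,\infty}$ into two infinite components, in agreement with the geometric interpretation given in the paper). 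I would prove this event has positive probability by combining a first-moment estimate on the expected intersection with a local conditioning argument.

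For the first-moment step, each $\mathcal{B}^{(n)}$ is a Galton-Watson tree with $\tilde Z-1$ offspring at the root and critical offspring $Z$ elsewhere, embedded via SRW jumps, and its Green function satisfies
\[
G^{\mathcal{B}}(x,z) := {\bf E}\bigl[\#\{u\in \mathcal{B}^{(n)} : \phi(u)=z\} \mid \alpha(n)=x\bigr] \le C\, G^{SRW}(x,z) \le \frac{C}{(1+|z-x|)^{d-2}}, \qquad d>2.
\]
Conditioning on the backbone and using independence of the bushes together with the standard convolution estimate $(G^{SRW}*G^{SRW})(w) \le C(1+|w|)^{-(d-4)}$ for $d>4$, I obtain
\[
{\bf E}\bigl[|\mathcal{B}(\alpha((-\infty,0]))\cap \mathcal{B}(\alpha([1,\infty)))|\bigr] \le C\sum_{i\le 0,\,j\ge 1} {\bf E}_\alpha\bigl[(1+|\alpha(j)-\alpha(i)|)^{-(d-4)}\bigr] \le C\sum_{n\ge 1} n^{1-(d-4)/2},
\]
using the local CLT bound ${\bf E}[(1+|\alpha(n)|)^{-(d-4)}] \asymp n^{-(d-4)/2}$ and the fact that there are exactly $n$ pairs $(i,j)$ with $i\le 0$, $j\ge 1$ and $j-i=n$. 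The sum is finite for $d>8$, and a fortiori for $d>10$.

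To convert finite expectation into strictly positive probability of emptiness, I introduce a truncation $N$ and split the intersection into a ``far'' part (pairs with $\max(|i|,j)>N$) and a ``local'' part (pairs with $|i|,j\le N$). By the previous bound and dominated convergence, the expected far contribution is at most $\epsilon$ for $N$ large. For the local part, I condition on a positive-probability event ruling out any local intersection, namely the intersection of (i) a backbone configuration in which $\alpha([-N,0])$ and $\alpha([1,N])$ are vertex-disjoint (of positive probability via an explicit straight-path configuration of $2N$ prescribed steps), and (ii) a configuration in which the $2N+1$ local bushes all have small bounded range (all singletons if ${\bf P}[Z=1]>0$, or otherwise a suitable bounded-radius event with positive probability by standard Galton-Watson tail estimates). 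Since the local and far bushes are conditionally independent given the backbone, Markov's inequality on the far part then yields ${\bf P}[0\text{ is pivotal}] \ge c_{\text{loc}} - \epsilon > 0$ for $N$ large.

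The main obstacle is the local conditioning step. The first-moment bound is not quantitatively small for nearby pairs of bushes, so one must construct a positive-probability event directly excluding any local intersection, and this event has to be robust to the specifics of the offspring law $Z$ (handling in particular both the ${\bf P}[Z=1]>0$ and ${\bf P}[Z=1]=0$ cases). Stitching the local event together with the tail estimate while extracting a strictly positive lower bound is the delicate part, and presumably forces the stricter $d>10$ threshold in the statement compared with the $d>8$ threshold of the pure first-moment calculation.
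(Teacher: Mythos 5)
Your route is genuinely different from the paper's. The paper proves this with an Erd\H{o}s--Taylor type renewal argument (following \cite{erdHos1960some}): it defines $r(n)={\bf P}[\mathcal{B}(\alpha((-\infty,0]))\cap\mathcal{B}(\alpha([1,n]))=\emptyset]$ and $s(k)={\bf P}[\mathcal{B}(\alpha((-\infty,0]))\cap\mathcal{B}(\alpha(k))\neq\emptyset]$, establishes $\sum_k s(k)<\infty$ via Lemma~\ref{cutp_1} (that is where $d>10$ enters), and then extracts $\alpha:=\lim_n r(n)>0$ from the renewal-type relation $\sum_{k=1}^n s(k)r(n-k)=1-r(n)$ together with the monotonicity of $r$. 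No explicit local configuration is built. Your first-moment computation (Green-function convolution bound plus the local CLT) is correct and indeed shows the expected total intersection count is finite already for $d>8$; so far so good.

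The truncation-plus-local-conditioning step has, however, a genuine gap. Write $c_{\text{loc}}(N)={\bf P}[D_N]$ for your local event (a prescribed self-avoiding $2N$-step backbone path and $2N+1$ singleton bushes) and $\epsilon(N)$ for the tail of the first-moment series beyond scale $N$. Your displayed conclusion ${\bf P}[0\text{ pivotal}]\ge c_{\text{loc}}-\epsilon$ is the union bound ${\bf P}[D_N\cap E_{\text{far}}^c]\ge{\bf P}[D_N]-{\bf P}[E_{\text{far}}]$; it is valid, but it can never be made positive by taking $N$ large, because $c_{\text{loc}}(N)$ decays exponentially in $N$ (it is the probability of $2N$ prescribed random-walk steps times $2N+1$ prescribed bush events) while $\epsilon(N)$ decays only polynomially, of order $N^{-(d-8)/2}$. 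For every $N$ past a threshold, $c_{\text{loc}}(N)-\epsilon(N)<0$, and for small $N$ the unconditional far-intersection expectation is not small; there is no $N$ that works. The repair is to replace the unconditional Markov bound by a \emph{conditional} one: show directly that ${\bf E}[\#\text{far intersections}\mid D_N]\to 0$ as $N\to\infty$, and then conclude ${\bf P}[0\text{ pivotal}]\ge{\bf P}[D_N]\,{\bf P}[E_{\text{far}}^c\mid D_N]\ge\tfrac12\,{\bf P}[D_N]>0$ for $N$ large. The conditional estimate does hold (the two far half-backbones start $2N$ apart under $D_N$, and one can redo the Green-function convolution bounds with this shift) but it requires a separate computation which your proposal does not carry out and which the crude dichotomy far/local as stated does not deliver. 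A secondary remark: your speculation that the stricter $d>10$ threshold would be ``forced'' by the local conditioning step is not right; the paper's $d>10$ comes from a rather loose application of Lemma~\ref{cutp_1} to bound $s(k)$, and a sharper direct estimate on $s(k)$ of order $k^{-(d-6)/2}$ would let both approaches through at $d>8$.
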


The existence of such pivotal points is natural in light of the following estimate (which is also proved in the appendix, see Section~\ref{proof_intersect})
\begin{lemma}\label{cutp_1}
Fix $d\geq 3$. For $(\alpha(n))_{n\in \Z}$ a simple random walk started at $0$, we have,
\[
{\bf P}^{-\infty,\infty}[\mathcal{B}(\alpha((-\infty, 0)))\cap \mathcal{B}(\alpha((R,\infty))) \neq \emptyset] \leq C R^{-(d-8)/2}.
\]
\end{lemma}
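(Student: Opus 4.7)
The strategy is a direct first moment estimate after a union bound over the backbone indices. Writing $\mathcal{B}^{(n)}(\alpha(n))$ for the independent side-branch trees attached at each backbone point,
\[
{\bf P}^{-\infty,\infty}\bigl[\mathcal{B}(\alpha((-\infty,0)))\cap\mathcal{B}(\alpha((R,\infty)))\neq\emptyset\bigr] \leq \sum_{n<0}\sum_{m>R} {\bf P}\bigl[\mathcal{B}^{(n)}(\alpha(n)) \cap \mathcal{B}^{(m)}(\alpha(m)) \neq \emptyset\bigr].
\]
For each pair $(n,m)$, Markov's inequality together with the conditional independence of $\mathcal{B}^{(n)}$ and $\mathcal{B}^{(m)}$ given the backbone $\alpha$ yields
\[
{\bf P}\bigl[\mathcal{B}^{(n)}(\alpha(n)) \cap \mathcal{B}^{(m)}(\alpha(m)) \neq \emptyset \,\big|\, \alpha\bigr] \leq \sum_{z \in \Z^d} {\bf P}[z \in \mathcal{B}(\alpha(n)) \mid \alpha]\,{\bf P}[z \in \mathcal{B}(\alpha(m)) \mid \alpha].
\]

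The one-point hit probability is controlled by the branching random walk Green function. Since the modified Galton--Watson tree $\T^{\mathrm{GW}}_\ast$ has expected population $\sigma_Z^2$ at every generation $k \geq 1$, and the position of each generation-$k$ particle is distributed as an independent SRW at time $k$ from the root, one has
\[
{\bf P}[z \in \mathcal{B}(x)] \leq \mathbb{E}\bigl[\#\{u \in \T^{\mathrm{GW}}_\ast(x) : \phi(u) = z\}\bigr] = \sigma_Z^2 \sum_{k\geq 1} p_k^{\mathrm{SRW}}(x,z) \leq \frac{C}{(1\vee|x-z|)^{d-2}},
\]
by the standard Green's function bound for the SRW in $d\geq 3$. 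Combined with the classical convolution estimate $\sum_z (1\vee|x-z|)^{-(d-2)}(1\vee|y-z|)^{-(d-2)} \leq C(1\vee|x-y|)^{-(d-4)}$ (valid for $d>4$), this produces the conditional bound
\[
{\bf P}\bigl[\mathcal{B}^{(n)}(\alpha(n)) \cap \mathcal{B}^{(m)}(\alpha(m)) \neq \emptyset \,\big|\, \alpha\bigr] \leq \frac{C}{(1\vee|\alpha(n)-\alpha(m)|)^{d-4}}.
\]

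It then remains to average over the backbone and sum over the pairs. A standard local CLT computation yields $\mathbb{E}\bigl[(1\vee|\alpha(n)-\alpha(m)|)^{-\beta}\bigr] \leq C|n-m|^{-\beta/2}$ for every $\beta<d$, since $\alpha(n)-\alpha(m)$ is a SRW of $|n-m|$ steps with a Gaussian local limit. Taking $\beta = d-4$ and substituting $a=-n\geq 1$, $b=m\geq R+1$,
\[
\sum_{n<0}\sum_{m>R} \mathbb{E}\bigl[(1\vee|\alpha(n)-\alpha(m)|)^{-(d-4)}\bigr] \leq C\sum_{a\geq 1}\sum_{b\geq R+1} \frac{1}{(a+b)^{(d-4)/2}}.
\]
For $d>8$, summing in $a$ first gives $Cb^{-(d-6)/2}$, and then summing in $b\geq R+1$ produces the desired $CR^{-(d-8)/2}$; for $d\leq 8$ the stated bound is trivial. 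The only real technical inputs are the Green function asymptotics for $\mathcal{B}(x)$ and the associated convolution identity, both of which are standard for finite-variance critical branching random walks in $d\geq 3$, so no serious obstacle arises and the proof is purely first-moment.
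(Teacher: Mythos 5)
Your proof is correct and follows essentially the same route as the paper's: a union bound over backbone indices, a first-moment bound giving the pair-intersection estimate $q(x,y)\leq C\abs{x-y}^{-(d-4)}$ (the paper computes $\sum_{i,j} p_{i+j}(x,y)=\sum_k k\,p_k(x,y)$ directly, which is the same convolution $G\star G$ you carry out via one-point functions), and a final averaging over backbone positions yielding $\abs{n-m}^{-(d-4)/2}$, summed over $n<0$ and $m>R$. The only differences are cosmetic (the paper separates the two backbone sums into Lemma~\ref{cutp_0} and a second union bound, rather than doing the double sum in one step), and your explicit note that the claim is trivial for $d\leq 8$ fills a step the paper leaves implicit.
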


Lemma~\ref{ergo_shift0} and Lemma~\ref{cutp_2} imply that there are infinitely many pivotal points with ${\bf P}^{-\infty,\infty}$-probability $1$. Hence, there is a well defined increasing sequence times of pivotal points $(\gamma_i)_{i\in \Z}$ where  $\gamma_1$ is the smallest positive (in the sense of time for $\alpha(n)$). We also define $\Theta=\theta_{\gamma_1}$, then the ergodicity of the shift $\theta$ implies the following.

\begin{lemma}\label{ergo_shift} The shift $\Theta$ on $\T^{\text{GW}}_{-\infty,\infty}$ is ergodic with respect to ${\bf P}^{-\infty,\infty}[\;\cdot \mid 0\text{ is a pivotal point}]$. \end{lemma}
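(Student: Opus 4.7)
The plan is to recognize this as a standard application of Kakutani's induced transformation theorem, with the base system $(\Omega, {\bf P}^{-\infty,\infty}, \theta)$ and the induced set $A = \{0 \text{ is a pivotal point}\}$.

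First I would verify the hypotheses. The transformation $\theta$ is measure preserving on $(\Omega,{\bf P}^{-\infty,\infty})$: indeed, the backbone $(\alpha(n))_{n\in\Z}$ is a two-sided simple random walk (whose law is invariant under shifts of the index), and the i.i.d.\ families $(\mathcal{B}^{(n)}(x))_{n\in\Z}$ are clearly invariant under $\theta$ once the basepoint $\alpha(n)$ is shifted to $\alpha(n+1)-\alpha(1)$. Ergodicity of $\theta$ is given by Lemma~\ref{ergo_shift0}. Lemma~\ref{cutp_2} (which uses $d>10$) gives ${\bf P}^{-\infty,\infty}(A)>0$, so $A$ is a legitimate base for the Kakutani construction.

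Next I would identify $\Theta$ as the induced map. The property ``$n$ is a pivotal point'' is by construction shift-compatible, meaning $\theta_n^{-1} A = \{\omega : n \text{ is a pivotal point of }\omega\}$; this is immediate from the definition of pivotality and the fact that $\theta_n$ re-centres the backbone at $\alpha(n)$. Consequently $\gamma_1(\omega) = \min\{n\ge 1 : \theta_n\omega \in A\}$ is precisely the first return time of $\omega$ to $A$ under the $\Z$-action generated by $\theta$, and $\Theta = \theta_{\gamma_1}$ is the induced transformation $\theta_A$ on $A$. Poincar\'e recurrence (a consequence of the ergodicity and measure-preservation of $\theta$ together with ${\bf P}^{-\infty,\infty}(A)>0$) ensures $\gamma_1 < \infty$ for ${\bf P}^{-\infty,\infty}[\,\cdot \mid A]$-a.e.\ $\omega$, so $\Theta$ is well defined almost surely.

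Finally, Kakutani's theorem on induced transformations states that if $\theta$ is an ergodic measure-preserving transformation of $(\Omega,\mu)$ and $A$ is measurable with $\mu(A)>0$, then the first-return map $\theta_A$ is a measure-preserving ergodic transformation of $(A, \mu(\cdot\mid A))$. Applying this with $\mu = {\bf P}^{-\infty,\infty}$ and $A = \{0\text{ is pivotal}\}$ gives the stated ergodicity of $\Theta$ with respect to ${\bf P}^{-\infty,\infty}[\,\cdot \mid 0\text{ is a pivotal point}]$. There is no serious obstacle here: the only subtlety is the verification of the shift-compatibility of ``pivotal'', and this is immediate from the definition since pivotality is expressed purely in terms of the graphs $\mathcal{B}(\alpha((-\infty,i]))$ and $\mathcal{B}(\alpha([i+1,\infty)))$, which are precisely the objects that $\theta_k$ re-indexes in the natural way.
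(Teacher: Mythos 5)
Your proof is correct and takes essentially the same approach the paper intends: the paper simply asserts that ``the ergodicity of the shift $\theta$ implies'' the lemma, which is precisely the appeal to Kakutani's induced-transformation theorem that you spell out, with the base ergodicity supplied by Lemma~\ref{ergo_shift0} and positivity of $\mathbf{P}^{-\infty,\infty}[0\text{ is pivotal}]$ supplied by Lemma~\ref{cutp_2}. The only thing you add is the (correct and worth-checking) verification that pivotality is shift-compatible so that $\gamma_1$ is genuinely the first return time to $A$ and $\Theta=\theta_A$.
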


We can turn previous estimates into a bound on the tail of the distance between pivotal points. This will be done using a strategy from~\cite{bolthausen2003cut}
\begin{lemma}\label{tail_gamma}
Fix $d\geq 3$. For any $i\in \Z$, we have
\[
{\bf P}^{-\infty,\infty}[\gamma_{i+1}-\gamma_i\geq n]<C (\log n)^{(d-6)/2}n^{-(d-8)/2}.
\]
\end{lemma}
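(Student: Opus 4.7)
I would combine the stationarity of pivotal gaps with the long-range intersection bound of Lemma~\ref{cutp_1}, following the dyadic strategy introduced by Bolthausen--Sznitman. By Lemma~\ref{ergo_shift}, the sequence $\{\gamma_{k+1}-\gamma_k\}_{k\in\Z}$ is stationary under ${\bf P}^{-\infty,\infty}[\,\cdot\mid 0\text{ is pivotal}]$, so it suffices to estimate the probability that no integer $j\in\{1,\ldots,n\}$ is a pivotal point. Non-pivotality at $j$ is exactly the event
\[
A_j:=\{\mathcal{B}(\alpha((-\infty,j]))\cap\mathcal{B}(\alpha([j+1,\infty)))\neq\emptyset\},
\]
so the task becomes bounding $g(n):={\bf P}^{-\infty,\infty}\bigl[\bigcap_{j=1}^n A_j\bigr]$.

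Next I would split $[1,n]$ dyadically and, at each scale $\ell$, use Lemma~\ref{cutp_1} (localised to finite time windows, which only costs constants since the independent increments of $\alpha$ and of the attached $\mathcal{B}^{(\cdot)}$'s let one restart the estimate) to bound the probability that two blocks of length $n/2^\ell$ separated by a gap $R$ carry intersecting branching walks by $CR^{-(d-8)/2}$. Starting at the coarsest scale, the midpoint $m=\lfloor n/2\rfloor$ being non-pivotal decomposes $A_m$ into four intersection events among the blocks $(-\infty,0]$, $[1,m]$, $[m+1,n]$ and $[n+1,\infty)$. Three of these events are directly bounded by $Cn^{-(d-8)/2}$, and only the intersection between the two adjacent middle halves fails to close up immediately; in that case one recurses into the two half-intervals and applies the same dichotomy.

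Iterating this dichotomy across the $\log_2 n$ dyadic levels produces a recursion of the form
\[
g(n)\leq C n^{-(d-8)/2}+\Phi(n)\cdot g(n/2),
\]
where the combinatorial factor $\Phi(n)$ counts how many chains of short-range intersections can realise the ``adjacent middle'' case at a given level. Tracking this factor through the recursion and summing the resulting geometric-type series at the top scale yields the claimed bound $g(n)\leq C(\log n)^{(d-6)/2}n^{-(d-8)/2}$.

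\emph{Main obstacle.} The delicate point is the last step: quantifying the correlations between intersection events living at different dyadic scales and extracting the correct power of the logarithm. Concretely, one has to verify that only a logarithmic number of dyadic levels can contribute short-range intersections before the long-range tail from Lemma~\ref{cutp_1} takes over, with each contributing level costing at most a factor polynomial in $\log n$ with exponent $(d-6)/2$. This is the heart of the Bolthausen--Sznitman chaining argument, here adapted to branching random walks, where the shift from the random-walk critical exponent $(d-4)/2$ to the branching exponent $(d-8)/2$ reflects Lemma~\ref{cutp_1}.
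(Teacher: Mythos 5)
Your starting point is the same as the paper's: by stationarity under ${\bf P}^{-\infty,\infty}[\,\cdot\mid 0\text{ pivotal}]$ and the positivity from Lemma~\ref{cutp_2}, it suffices to bound the probability of finding no pivotal point in $\{1,\dots,n\}$. But after that your dyadic recursion has a genuine gap, and in fact the paper's strategy (which is what Bolthausen--Sznitman actually do, in a linear rather than dyadic form) is structurally different in a way that matters.

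The step that fails is the claimed recursion $g(n)\leq Cn^{-(d-8)/2}+\Phi(n)\,g(n/2)$. After discarding the three long-range intersection terms in the decomposition of $A_m$, you are left with the event
\[
\bigl\{\mathcal{B}(\alpha([1,m]))\cap\mathcal{B}(\alpha([m+1,n]))\neq\emptyset\bigr\}\;\cap\;\bigcap_{j\neq m}A_j,
\]
and nothing here resembles $g(n/2)$. The events $A_j$ for $j<m$ still involve the whole ray $\mathcal{B}(\alpha([j+1,\infty)))$, hence the branches attached to $[m+1,n]$ and beyond, and symmetrically for $j>m$. So the two halves never decouple, the ``combinatorial factor'' $\Phi(n)$ has no concrete definition, and you have no mechanism to extract a power of a fixed constant less than one as you descend through scales. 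The acknowledged ``main obstacle'' is precisely the missing idea, not a technicality.

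What makes the paper's argument close is a \emph{localization} step that your scheme never performs. The paper partitions $\{1,\dots,n\}$ into $J\sim\log n$ blocks of equal length $R=n/(3J)$ with endpoints $k_j$, union-bounds over the long-range events $\{\mathcal{B}(\alpha((-\infty,k_j]))\cap\mathcal{B}(\alpha([k_{j+1}+1,\infty)))\neq\emptyset\}$ (total cost $CJ\,R^{-(d-8)/2}$ by Lemma~\ref{cutp_1}), and then observes that \emph{on the complement of all those events}, non-pivotality at each boundary $k_j$ must be witnessed by an intersection involving only the two adjacent blocks, i.e.~$\mathcal{B}(\alpha([k_{j-1}+1,k_j]))\cap\mathcal{B}(\alpha([k_j+1,k_{j+1}]))\neq\emptyset$. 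For odd $j$ these local events use pairwise disjoint pieces of the backbone and attached branches, hence are independent, each of probability at most $1-{\bf P}^{-\infty,\infty}[0\text{ pivotal}]<1$. This yields an explicit bound $(1-p)^J$ on the ``all local intersections'' term, and choosing $J=C\log n$ with $C$ large kills it, after which $J\cdot R^{-(d-8)/2}\sim(\log n)^{(d-6)/2}n^{-(d-8)/2}$ gives the stated estimate. Your dyadic recursion has no analogue of either the localization of non-pivotality to adjacent blocks or the independence across alternating blocks, and without these there is no route to the exponential decay needed to close the argument.
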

\begin{proof}
Fix an integer $R\geq 1$. We write
\[
k_j=1+Rj, \text{for any $j\geq 0$}.
\]

We can see that for any $J\geq 1$, we have
\begin{align}\label{sdfsdf}
& {\bf P}^{-\infty,\infty}[\gamma_1\geq k_{2J}]\\\nonumber  \leq & \sum_{0\leq j \leq 2J+1}{\bf P}^{-\infty,\infty}\bigl[\mathcal{B}(\alpha((-\infty,k_j]))\cap \mathcal{B}(\alpha([k_{j+1}+1,\infty)))= \emptyset\bigr]+ \\\nonumber
     &  {\bf P}^{-\infty,\infty}\bigl[\text{for all $0\leq j<2J+1$, } \mathcal{B}(\alpha((-\infty,k_j]))\cap \mathcal{B}(\alpha([k_{j+1}+1,\infty)))\neq \emptyset,\ \gamma_1\geq k_{2J}\bigr]   \\\nonumber
     \leq & C (2J+1) R^{-(d-8)/2} + \\ \nonumber & {\bf P}^{-\infty,\infty}\bigl[\text{for all $0\leq j<2J+1$, } \mathcal{B}(\alpha((-\infty,k_j]))\cap \mathcal{B}(\alpha([k_{j+1}+1,\infty)))= \emptyset,\ \gamma_1\geq k_{2J}\bigr] ,
     \end{align}
     where we used Lemma~\ref{cutp_1} in the last line. 
     
     Note that whenever   $\gamma_1\geq k_{2J}$ and  $\mathcal{B}(\alpha((-\infty,k_j]))\cap \mathcal{B}(\alpha([k_{j+1}+1,\infty)))= \emptyset$ for all $0\leq j<2J+1$, then for any $j\in [1,2J]$ we have
     \[
     \emptyset \neq \mathcal{B}(\alpha((-\infty,k_j]))\cap \mathcal{B}(\alpha([k_j+1,\infty)))=\mathcal{B}(\alpha([k_{j-1}+1,k_j]))\cap \mathcal{B}(\alpha([k_j+1,k_{j+1}])).
     \]

     By independence of the previous events for odd $j$s (and using translation invariance along the backbone)  this means that
     \begin{align*}
 &   {\bf P}^{-\infty,\infty}\bigl[\text{for all $0\leq j<2J+1$, } \mathcal{B}(\alpha((-\infty,k_j]))\cap \mathcal{B}(\alpha([k_{j+1}+1,\infty)))\neq \emptyset,\ \gamma_1\geq k_{2J}\bigr] \\
 \leq & {\bf P}^{-\infty,\infty}\bigl[  \mathcal{B}(\alpha((-R+1,0]))\cap \mathcal{B}(\alpha([1,R)))\neq \emptyset\bigr]^J\leq {\bf P}^{-\infty,\infty}[0\text{ is a pivotal point}]^J.
   \end{align*}

Recall that ${\bf P}^{-\infty,\infty}[0\text{ is a pivotal point}]>0$ by Lemma~\ref{cutp_2}, so taking $J=\lfloor \frac d{-\ln P[0\text{ is a pivotal point}]} \ln n \rfloor$ and $R=\lfloor \frac{n}{3J} \rfloor$, the previous equation, along with~\eqref{sdfsdf}, implies that
   \[
   {\bf P}^{-\infty,\infty}[\gamma_1\geq k_{2J}]\leq C (\ln n)^{(d-6)/2} n^{-(d-8)/2} + n^{-d}.
   \]

The proof follows by the fact that $\gamma_{i+1}-\gamma_i$ is distributed as $\gamma_1$ under ${\bf P}^{-\infty,\infty}[\cdot \mid 0\text{ is a pivotal point}]$ and that ${\bf P}^{-\infty,\infty}[0\text{ is a pivotal point}]>0$ by Lemma~\ref{cutp_2}.
\end{proof}

Using this lemma, we can show that pivotal points cannot be far, this implies that the distance and the resistance between pivotal points cannot be macroscopic. In the next section, we will use this result to show that $K$-sausages in large critical random trees cannot be long.
\begin{lemma}\label{neglect_bubble}
For any $d>10$, we have
\[
{\bf P}^{-\infty,\infty}\Bigl[\max_{i\in [-n,n]} (\gamma_{i+1}-\gamma_i) \geq n^{1-\epsilon}\Bigr]=o(n^{-\frac{d-10}2-\epsilon}),
\]
for some $\epsilon>0$.

As a consequence, for $d>10$
\[
{\bf P}^{-\infty,\infty}\Bigl[\max_{i\in [-n,n]} R_{\text{eff}}^{-\infty,\infty}(\alpha(\gamma_{i+1}),\alpha(\gamma_i)) \geq n^{1-\epsilon}\Bigr]=o(n^{-\frac{d-10}2-\epsilon}),
\]
where $R_{\text{eff}}^{-\infty,\infty}(\cdot,\cdot)$ denotes the resistance in the graph $\omega_{-\infty,\infty}$. Furthermore, we have
\[
{\bf P}^{-\infty,\infty}\Bigl[\max_{i\in [-n,n]} d_{\omega_{-\infty,\infty}}(\alpha(\gamma_{i+1}),\alpha(\gamma_i)) \geq n^{1-\epsilon}\Bigr]=o(n^{-\frac{d-10}2-\epsilon}),
\]
for some $\epsilon>0$.
\end{lemma}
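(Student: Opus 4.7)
The three displays can be reduced to a single statement about the backbone by an elementary geometric remark, and the first display will then follow from a union bound combined with Lemma \ref{tail_gamma}.

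First, I would settle the implications. In the IBICBRW, consecutive backbone vertices $\alpha(j), \alpha(j+1)$ are joined by an edge of the graph $\omega_{-\infty,\infty}$ (coming from the tree edge that follows the backbone), so the backbone provides an explicit path of length $\gamma_{i+1} - \gamma_i$ between $\alpha(\gamma_i)$ and $\alpha(\gamma_{i+1})$ in $\omega_{-\infty,\infty}$. Consequently,
\[
R_{\text{eff}}^{-\infty,\infty}(\alpha(\gamma_i), \alpha(\gamma_{i+1})) \;\leq\; d_{\omega_{-\infty,\infty}}(\alpha(\gamma_i), \alpha(\gamma_{i+1})) \;\leq\; \gamma_{i+1} - \gamma_i,
\]
where the first inequality uses the standard comparison of effective resistance with the resistance of a unit-resistor path. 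Therefore the events controlling the resistance and the graph distance are contained in the event controlling the backbone distance, and the second and third displays follow from the first with the same quantitative bound.

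For the backbone statement, the plan is a straightforward union bound. By Lemma \ref{ergo_shift} and stationarity under $\Theta$, each increment $\gamma_{i+1}-\gamma_i$ is distributed as $\gamma_1$ under $\PR^{-\infty,\infty}[\,\cdot\mid 0\text{ is a pivotal point}\,]$, whose total mass is bounded away from $0$ by Lemma \ref{cutp_2}. Thus, up to absorbing a multiplicative constant into the $C$ of Lemma \ref{tail_gamma},
\[
\PR^{-\infty,\infty}\!\left[\max_{i\in[-n,n]}(\gamma_{i+1}-\gamma_i)\geq n^{1-\epsilon}\right]\;\leq\;(2n+1)\,C\,(\log n)^{(d-6)/2}\,n^{-(1-\epsilon)(d-8)/2}.
\]
The exponent of $n$ on the right equals $1-(1-\epsilon)(d-8)/2 = -(d-10)/2 + \epsilon(d-8)/2$, so the bound is of the form $(\log n)^{(d-6)/2}\,n^{-(d-10)/2 + \epsilon(d-8)/2}$.

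The delicate point, and the main obstacle, is tuning $\epsilon$ so that this bound matches the desired $o(n^{-(d-10)/2-\epsilon})$: the union bound alone lands at exponent essentially $-(d-10)/2$, and squeezing out the extra $-\epsilon$ (and absorbing the logarithmic factor) requires either choosing the $\epsilon$ in the exponent of the threshold $n^{1-\epsilon}$ distinct from and much smaller than the $\epsilon$ in the target rate, or refining Lemma \ref{tail_gamma} (for instance by a more careful handling of the straddling-block decomposition used in its proof, keeping track of a slightly sharper power). Once the exponents are reconciled in this manner, the first display holds, and by the first paragraph the remaining two displays follow immediately with the same exponent.
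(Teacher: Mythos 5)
Your reduction of the second and third displays to the first is exactly the paper's argument: Rayleigh's monotonicity and the observation that the backbone furnishes a path of length $\gamma_{i+1}-\gamma_i$ give
$R_{\text{eff}}^{-\infty,\infty}(\alpha(\gamma_i),\alpha(\gamma_{i+1}))\leq d_{\omega_{-\infty,\infty}}(\alpha(\gamma_i),\alpha(\gamma_{i+1}))\leq \gamma_{i+1}-\gamma_i$. The union bound against Lemma~\ref{tail_gamma} is also in substance what the paper does; the paper phrases it as a finite-moment estimate (display~\eqref{moment_gamma}) followed by Markov's inequality and then the union bound, which is equivalent. So your route is the same as the paper's.

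The ``delicate point'' you flag, however, is not a tuning difficulty that a different choice of $\epsilon$ can resolve; it is an impossibility for this method, and the fix you suggest cannot succeed. As you correctly computed, the union bound yields an exponent $-\tfrac{d-10}{2}+\epsilon\tfrac{d-8}{2}$, which is strictly larger than $-\tfrac{d-10}{2}$ for every $\epsilon>0$. If one uses a threshold exponent $\epsilon_1$ distinct from the target exponent $\epsilon_2$, the requirement becomes $\epsilon_1\tfrac{d-8}{2}<-\epsilon_2$, which has no solution with $\epsilon_1,\epsilon_2>0$ and $d>8$. A refinement of Lemma~\ref{tail_gamma} cannot help either: the polynomial rate $(d-8)/2$ there is driven by the two-point intersection estimate (Corollary~\ref{bound_q}), and the block argument cannot do better than match it. In short, no union-bound argument of this type produces a rate strictly faster than $n^{-(d-10)/2}$, so the statement as written cannot be proved this way (and, for the record, the paper's own step has the same defect: the claimed finite moment of order $\tfrac{1}{1-\epsilon}\tfrac{d-8}{2}$, which exceeds $\tfrac{d-8}{2}$, does not follow from Lemma~\ref{tail_gamma}; it should read $(1-\epsilon)\tfrac{d-8}{2}$, and even then the union bound gives only the ``wrong-sided'' exponent). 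The honest conclusion your computation establishes is $O\bigl((\log n)^{(d-6)/2}n^{-\frac{d-10}{2}+\epsilon\frac{d-8}{2}}\bigr)$. You should have concluded that the displayed lemma contains a sign error (the $-\epsilon$ should appear with the opposite sign), and then checked that this weaker bound is all that is actually used downstream: the applications in Section~\ref{sect_res_est} need only that the probability vanishes, and Lemma~\ref{close_cut_point} applies the bound at scale $n^{1/2}$, needing $o(n^{-1})$ under $d>14$, which the corrected rate supplies once $\epsilon$ is small enough. Ending with ``once the exponents are reconciled in this manner'' leaves a genuine gap, since they cannot be.
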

\begin{proof}
From Lemma~\ref{tail_gamma}, we can deduce that there exists $\epsilon>0$ such that
\begin{equation}\label{moment_gamma}
{\bf E}^{-\infty,\infty}[(\gamma_{i+1}-\gamma_i)^{\frac 1{1-\epsilon}\frac{d-8}2}]<\infty.
\end{equation}

Then, we can use Markov's inequality and a simple union bound to obtain the first part of the lemma.

 For the second part, it is enough to see that by Rayleigh's monotonicity principle (see~\cite{lyons2005probability}) we have
\[
 R_{\text{eff}}^{-\infty,\infty}(\alpha(\gamma_{i+1}),\alpha(\gamma_i))\leq \gamma_{i+1}-\gamma_i,
 \]
 and for the last part we simply notice that the embedded distance is smaller than the distance on the tree
 so $d_{\omega_{-\infty,\infty}}(\alpha(\gamma_{i+1}),\alpha(\gamma_i))\leq \gamma_{i+1}-\gamma_i$.
  \end{proof}

\subsection{Transferring results from the infinite branching random walks to large finite ones}\label{sect_transfer}

In this part, we are going to focus on showing how to turn results obtained for the IICBRW (or a very similar random object) into results for the LCBRW.
\begin{enumerate}
\item The first result allows to transfer estimates on any events which are measurable  with respect to the restriction of the tree (and its embedding) to its first $(1-\epsilon)n$ levels. The drawback of this method is that is does not allow us to express any results about cut-points, which can only identified by looking at the whole tree.
\item The second result, which is more intricate, allows us to transfer results about certain types of increasing events.
\end{enumerate}

\subsubsection{A first transfer result}

Let us discuss the first result from the IICBRW to the LCBRW conditioned on height.

 Fix some $\epsilon>0$. If $A$ is an event measurable with respect to the restriction of the tree (and its embedding) to its first $\lfloor (1-\epsilon)n\rfloor $ levels, then we know that (see (2.12) and (2.15) in~\cite{van2006infinite})
\[
{\bf P}_{\infty}[A]={\bf E}[\1{A} Z_{(1-\epsilon)n}],
\]
where $Z_{(1-\epsilon)n}$ is the cardinal of the generation $\lfloor (1-\epsilon) n \rfloor$ of $\T^{\text{GW}}$ and
\[
{\bf Q}_n[A]={\bf E}\Bigl[\1{A}\frac{1-(1-\eta_{\epsilon n})^{Z_{(1-\epsilon)n}}}{\eta_{n}}\Bigr],
\]
where $\eta_k={\bf P}[H(\T^{\text{GW}})\geq k]$ and ${\bf Q}_n={\bf P}[\cdot \mid H(\T^{\text{GW}})\geq n ]$. Since $\eta_k \sim \frac 2{{\bf E}[Z_1^2]} k^{-1}$ (see~\cite{kesten1966galton}) we can see that for $n$ large enough
\[
\frac{1-(1-\eta_{ \epsilon n})^{Z_{(1-\epsilon)n}}}{\eta_{n}}\leq \frac{\eta_{\epsilon n}}{\eta_n}Z_{(1-\epsilon)n}\leq \frac 2{\epsilon} Z_{(1-\epsilon)n}.
\]

Hence, we immediately obtain the following result
\begin{lemma}\label{transfer_cvg}
If $A_{(1-\epsilon)n}$ is an event measurable with respect to the restriction of the tree (and its embedding) to its first $(1-\epsilon)n$ levels for some $\epsilon>0$, then
\[
\text{ if }\lim_{n\to \infty} {\bold P}_{\infty}[A_{(1-\epsilon)n}]= 0\text{, then } \lim_{n\to \infty} {\bold Q}_{n}[A_{(1-\epsilon)n}]= 0.
\]
\end{lemma}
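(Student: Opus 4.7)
The plan is to observe that both $\mathbf{P}_\infty$ and $\mathbf{Q}_n$, restricted to the $\sigma$-algebra generated by the first $\lfloor(1-\epsilon)n\rfloor$ levels of the tree (together with its embedding), admit explicit Radon–Nikodym derivatives with respect to the unconditioned law $\mathbf{P}$, and to compare these two densities directly. The two identities are already recalled in the paragraph preceding the lemma:
\[
\mathbf{P}_\infty[A_{(1-\epsilon)n}]=\mathbf{E}\bigl[\mathbf{1}_{A_{(1-\epsilon)n}}\,Z_{(1-\epsilon)n}\bigr],
\qquad
\mathbf{Q}_n[A_{(1-\epsilon)n}]=\mathbf{E}\!\left[\mathbf{1}_{A_{(1-\epsilon)n}}\,\frac{1-(1-\eta_{\epsilon n})^{Z_{(1-\epsilon)n}}}{\eta_n}\right].
\]
The first follows from Kesten's size-biased construction of $\T^{\text{GW}}_\infty$ restricted to finite depth; the second follows by conditioning on the level-$\lfloor(1-\epsilon)n\rfloor$ generation and using that survival to level $n$, given $Z_{(1-\epsilon)n}=k$ independent subtrees, has probability $1-(1-\eta_{\epsilon n})^k$.

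Next, I would use the elementary bound $1-(1-x)^k\le kx$ for $x\in[0,1]$ and $k\in\mathbb{N}$ together with Kesten's classical asymptotic $\eta_k\sim 2/(\sigma_Z^2 k)$ to obtain, for $n$ large enough,
\[
\frac{1-(1-\eta_{\epsilon n})^{Z_{(1-\epsilon)n}}}{\eta_n}\;\le\;\frac{\eta_{\epsilon n}}{\eta_n}\,Z_{(1-\epsilon)n}\;\le\;\frac{2}{\epsilon}\,Z_{(1-\epsilon)n}.
\]
Inserting this pointwise bound into the expression for $\mathbf{Q}_n[A_{(1-\epsilon)n}]$ and comparing with the formula for $\mathbf{P}_\infty[A_{(1-\epsilon)n}]$ yields
\[
\mathbf{Q}_n[A_{(1-\epsilon)n}]\;\le\;\frac{2}{\epsilon}\,\mathbf{E}\bigl[\mathbf{1}_{A_{(1-\epsilon)n}}\,Z_{(1-\epsilon)n}\bigr]\;=\;\frac{2}{\epsilon}\,\mathbf{P}_\infty[A_{(1-\epsilon)n}].
\]
Letting $n\to\infty$ and using the assumption $\mathbf{P}_\infty[A_{(1-\epsilon)n}]\to 0$ concludes the proof.

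There is really no serious obstacle here: the two change-of-measure identities are standard and the comparison is purely elementary. The only small point to be careful about is that the event $A_{(1-\epsilon)n}$ must indeed be measurable with respect to the first $\lfloor(1-\epsilon)n\rfloor$ levels, since otherwise both Radon–Nikodym identities would need to be modified. This is precisely the hypothesis of the lemma, and it is what makes the argument go through so cleanly; it is also the reason the result cannot be applied to events involving cut-points, as emphasized in the remark following the statement.
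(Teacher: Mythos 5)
Your argument reproduces the paper's own proof essentially verbatim: the same two change-of-measure identities, the same bound $1-(1-\eta_{\epsilon n})^{k}\le k\,\eta_{\epsilon n}$, and the same use of Kesten's asymptotic $\eta_k\sim 2/(\sigma_Z^2 k)$ to obtain $\mathbf{Q}_n[A_{(1-\epsilon)n}]\le \frac{2}{\epsilon}\,\mathbf{P}_\infty[A_{(1-\epsilon)n}]$ for $n$ large. The proposal is correct and matches the paper's approach.
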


\subsubsection{Defining the monotone representation of a marked tree}\label{sect:monotone}

Before presenting the second transfer result we will require some additional notations. These definitions are inspired from a paper  of Aldous~\cite{Aldous_fringe} (although we use a slightly different terminology)

We start by considering a finite rooted tree $T$ and $x\in T$, we will call this a marked tree. Denote $x_0,x_1,\ldots, x_{k}$ the unique simple path from $x$ to the root ($x_0=x$ and $x_k=\text{root}$). In practice, this will be viewed as a backbone.

Furthermore consider $\tilde{T}_x(x_i)$ the tree obtained by considering the subtree rooted at $x_i$ composed of all vertices disconnected from $x$ when removing the edge $[x_i,x_{i-1}]$ with the particular case that $\tilde{T}_x(x)$ is the entire tree rooted at $x$. With those definitions, we can see that $\tilde{T}_x(x_{i+1})$ is a subtree of $\tilde{T}_x(x_{i})$.
See Figure 6.

\begin{figure}
  \includegraphics[width=\linewidth]{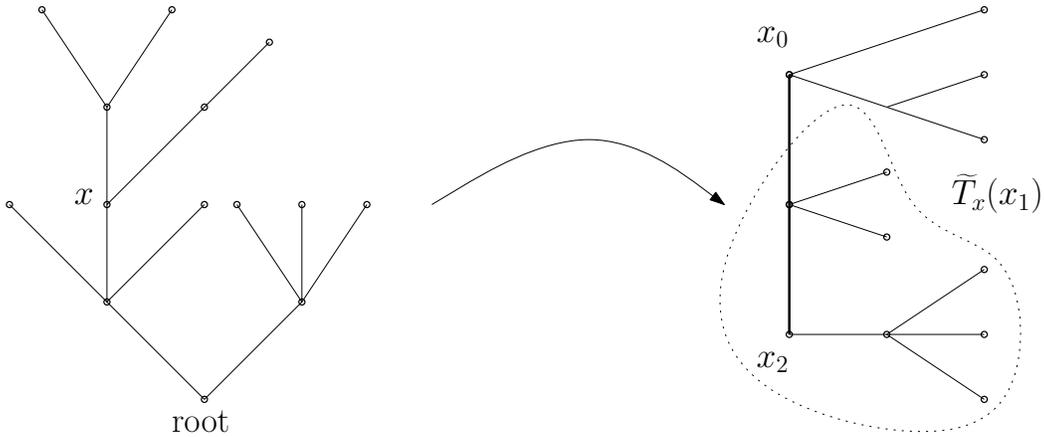}
  \caption{A picture showing the construction of $BB(T,x)$}
\end{figure}

Finally, we introduce $BB(T,x)=(\tilde{T}_x(x_k), \ldots, \tilde{T}_x(x))$, which is an increasing sequence of trees. We will call this the monotone representation of a marked tree. 

Now, if we are given a rooted infinite tree $T_{\infty}$ with a single infinite simple path, such as an IICBRW (but not the IBICBRW), we can introduce a similar definition: We denote $\text{root}=x_0,x_1,\ldots,$ the infinite path (or backbone) started from the root. Consider $T(x_i)$ the tree rooted at $x_i$ obtained by keeping only the vertices that can be reached from $x_i$ without using the edge $[x_i,x_{i+1}]$.  Then, we can view $T_{\infty}$ as a marked tree by writing is as $(T(x_0),T(x_1),\ldots)$. Let us emphasize that contrary to the finite marked tree case, the root does not change during this transformation.

\subsubsection{A key formula linking LCGW and the IICGW}

We will now present a result from Section 4 of~\cite{Aldous_fringe} that will be central for us.

Define a kernel on finite rooted trees $Q(T,S)$ as follows. In a tree $S$, we denote the children of the root by $v_1,\ldots,v_d$ and the tree obtained by considering the descendants of those vertices by $f(S,v_1),\ldots,f(S,v_d)$. Then we define
\[
Q(S,T)=\sum_{i} \1{f(s,v_i)=T},
\]
which counts the number of subtrees of $S$ rooted at distance $1$ of the root of $S$ that are isomorphic to $T$.
 This extends in a natural way to a kernel $Q^{\infty}$ to infinite marked trees if we set
 \[
 Q^{\infty}((T_0,T_1,\ldots),(T_{-1},T_0,T_1,\ldots))=Q(T_0,T_{-1}).
 \]
 
Let us define a measure ${\bf P}_{\infty,+}$ on infinite marked trees yielding a tree  $\T^{\text{GW}}_{\infty,+}$  obtained by taking a ${\bf P}$-Galton-Watson tree, adding an edge to the root at the end of which an independent IICBRW is added. By equation (35) of~\cite{Aldous_fringe}, we know that for any possible marked tree $(T_0,\ldots,T_k)$
\begin{equation}\label{eq_aldous}
{\bf P}_{\infty,+}[\T^{\text{GW}}_{\infty,+,(k)}=(T_0,\ldots,T_k)]={\bf P}[\T^{\text{GW}}=T_k] \prod_{i=1}^k Q(T_i,T_{i-1}),
\end{equation}
where $\T^{\text{GW}}_{\infty,+,(k)}$ denotes the $k$ first components of the representation of the infinite marked tree $\T^{\text{GW}}_{\infty,+}$. 

For the reader aiming to read~\cite{Aldous_fringe} in order to understand equation (35) (which is stated in a general setting) we will explain, without detailing all the terms, why this formula holds.  This explanation may not make much sense on its own but will help the reader navigate~\cite{Aldous_fringe} quickly. The key observation is that ${\bf P}_{\infty,+}$  is the unique measure invariant under the kernel $Q^{\infty}$ associated, in the sense of equation (34) in~\cite{Aldous_fringe}, to the asymptotic fringe distribution (see Section 2 of~\cite{Aldous_fringe}) of ${\bf Q}_n$ which for Galton-Watson trees simply turns out to be ${\bf P}$ (see Lemma 9 in~\cite{Aldous_fringe}).

\subsubsection{Second transfer result}

Let us now present the proposition allowing us to transfer estimates on increasing events obtained on the IICBRW to the case of LCBRW conditioned on size. 

We consider a family of events of the form $\mathfrak{A}_n(\overline{T},k)$ which is  measurable with respect to a marked tree $\overline{T}$ and integers $n,k$ (in pratice $\overline{T}$ and $k$ will be random and $n$ will simply be an index). We say that an event $\mathfrak{A}_n(\overline{T},k)$ is 
\begin{enumerate}
\item increasing in the $k$-component, if for any  $\overline{T}$ and $n$, we have $\mathfrak{A}_n(\overline{T},k) \subset \mathfrak{A}_n(\overline{T},k')$ when $k\leq k'$,
\item increasing in the marked tree component, if for any $n$ and $k$, if  $\mathfrak{A}_n(\overline{T},k) \subset \mathfrak{A}_n(\overline{T}',k)$ when $\overline{T}\leq\overline{T}'$,
\end{enumerate}
where we say that $\overline{T}=(T_0,\ldots,T_l)$ is a smaller than  $\overline{T}'=(T_0',\ldots,T_{l'}')$ if $l\leq l'$ and for every $j\leq l$ we have $T_j\subset T_j'$.

The next lemma essentially says that increasing events that rarely occur on the first $n$ vertices of the backbone of $\T^{\text{GW}}_{\infty,+}$ will only rarely occur on the marked tree obtained by taking $\T^{\text{GW}}_n$ and marking a uniform point $U_1^n$ of $\T^{\text{GW}}_n$ (in spirit turning the path from the root of $\T^{\text{GW}}_n$ to $U_1^n$ into a backbone). More precisely,
\begin{lemma}\label{transfer_monotone}
We denote $\mathfrak{A}_n(\overline{T},k)$ an event which is  measurable with respect to a marked tree $\overline{T}$ and integers $n,k$. Let us assume that, for fixed $n\in \N$, the event is increasing in the $\overline{T}$ and the $k$-components. Furthermore, assume that there exists a function $f(n)\to \infty$ such that
\[
{\bf P}_{\infty,+}[ \mathfrak{A}_n(\T^{\text{GW}}_{\infty,+},n^{1/2}f(n))] =o(n^{-1}),
\]
then we have
\[
\lim_{n\to \infty} {\bf P}_n[ \mathfrak{A}_n(BB(\T^{\text{GW}}_n,U_1^n) ,\abs{U_1^n})] =0,
\]
where $U_1^n$ is a uniformly chosen point on $\T^{\text{GW}}_n$.
\end{lemma}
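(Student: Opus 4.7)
The plan is to bound the target probability by a first-moment expansion over the vertices of $\T^{\text{GW}}_n$, convert the resulting count into a probability under ${\bf P}_{\infty,+}$ via Aldous's identity~\eqref{eq_aldous}, and then apply the hypothesis.

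First I would write
\[
n\cdot{\bf P}_n\bigl[\mathfrak{A}_n(BB(\T^{\text{GW}}_n,U_1^n),|U_1^n|)\bigr]={\bf E}_n\Bigl[\sum_{x\in\T^{\text{GW}}_n}\mathbf{1}_{\mathfrak{A}_n(BB(\T^{\text{GW}}_n,x),|x|)}\Bigr]
\]
and set $k^\ast:=\lfloor n^{1/2}f(n)\rfloor$. Because $H(\T^{\text{GW}}_n)$ is of order $n^{1/2}$ under ${\bf P}_n$ while $f(n)\to\infty$, one has $H(\T^{\text{GW}}_n)\leq k^\ast$ with probability tending to $1$; the monotonicity of $\mathfrak{A}_n$ in its integer argument then gives $\mathbf{1}_{\mathfrak{A}_n(BB,|x|)}\leq \mathbf{1}_{\mathfrak{A}_n(BB,k^\ast)}$ for every $x\in\T^{\text{GW}}_n$ on that event, so it is enough to control ${\bf E}_n\bigl[\sum_{x}\mathbf{1}_{\mathfrak{A}_n(BB(\T^{\text{GW}}_n,x),k^\ast)}\bigr]$.

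The heart of the argument is a combinatorial identity: for any admissible $(T_0,\ldots,T_k)$ with $|T_k|=n$, the number of vertices $x\in T_k$ at generation $k$ such that $BB(T_k,x)=(T_0,\ldots,T_k)$ equals exactly $\prod_{i=1}^k Q(T_i,T_{i-1})$ (at each level one must pick one of the $Q(T_i,T_{i-1})$ children of the current root whose descendant subtree coincides with $T_{i-1}$). Combined with~\eqref{eq_aldous} this yields
\[
{\bf E}_n\Bigl[\sum_{x:|x|=k}\mathbf{1}_{BB(\T^{\text{GW}}_n,x)=(T_0,\ldots,T_k)}\Bigr]=\frac{{\bf P}_{\infty,+}\bigl[\T^{\text{GW}}_{\infty,+,(k)}=(T_0,\ldots,T_k)\bigr]\,\mathbf{1}_{|T_k|=n}}{{\bf P}[|\T^{\text{GW}}|=n]}.
\]
Summing over admissible sequences on which $\mathfrak{A}_n(\,\cdot\,,k^\ast)$ holds, using monotonicity in the marked-tree argument to replace the truncation $\T^{\text{GW}}_{\infty,+,(k)}$ by the full $\T^{\text{GW}}_{\infty,+}$, and finally summing over $k$ (while observing that $|T_k(\T^{\text{GW}}_{\infty,+})|$ is strictly increasing in $k$, so that $\{|T_k|=n\}$ holds for at most one value of $k$) I arrive at
\[
\sum_{k\geq 0}{\bf E}_n\Bigl[\sum_{x:|x|=k}\mathbf{1}_{\mathfrak{A}_n(BB(\T^{\text{GW}}_n,x),k^\ast)}\Bigr]\leq \frac{{\bf P}_{\infty,+}\bigl[\mathfrak{A}_n(\T^{\text{GW}}_{\infty,+},k^\ast)\bigr]}{{\bf P}[|\T^{\text{GW}}|=n]}.
\]

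The numerator is $o(n^{-1})$ by assumption, while the classical local estimate ${\bf P}[|\T^{\text{GW}}|=n]\sim c\,n^{-3/2}$ (valid under ${\bf E}[Z^2]<\infty$ and the implicit aperiodicity of ${\bf P}_n$) bounds the denominator from below, so the right-hand side is $o(n^{1/2})$; dividing by the outer factor $n$ and adding the negligible height-overshoot term ${\bf P}_n[H(\T^{\text{GW}}_n)>k^\ast]$ gives the lemma. I expect the combinatorial identification of the counting factor with the product $\prod_{i=1}^k Q(T_i,T_{i-1})$ appearing in~\eqref{eq_aldous} to be the main obstacle; once it is in place, the remainder is routine first-moment bookkeeping combined with a standard CRT-type height tail bound.
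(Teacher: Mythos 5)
Your proof is correct, and it takes a genuinely different (and cleaner) route than the paper's. Both proofs share the starting block: the height truncation $H(\T^{\text{GW}}_n)\leq k^\ast=\lfloor n^{1/2}f(n)\rfloor$, the combinatorial identity counting $\#\{x:|x|=k,\,BB(T_k,x)=(T_0,\ldots,T_k)\}=\prod_{i=1}^k Q(T_i,T_{i-1})$, and Aldous's formula~\eqref{eq_aldous} together with ${\bf P}[\abs{\T^{\text{GW}}}=n]\sim Cn^{-3/2}$. The divergence is in how the sum over backbone levels $k$ is handled. The paper keeps the events ${\bf P}_n[BB=(T_0,\ldots,T_k),A(n)]$ and bounds the sum $\sum_{k=1}^{n^{1/2}f(n)}{\bf P}_{\infty,+}[\mathfrak{A}_n(\T^{\text{GW}}_{\infty,+,(k)},k)]$ crudely by the number of terms, paying a factor $n^{1/2}f(n)$; to absorb that factor the paper needs the initial bootstrap that upgrades $o(n^{-1})$ to $\leq n^{-1}f(n)^{-2}$. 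You instead retain the constraint $\{\abs{T_k}=n\}$ inside the ${\bf P}_{\infty,+}$-probability and observe that, because $\abs{T_0}<\abs{T_1}<\cdots$ strictly (each $T_{k-1}$ is a fringe subtree hanging off a child of the root of $T_k$), the events $\{\abs{T_k}=n\}$ are pairwise disjoint in $k$, so the sum $\sum_k{\bf P}_{\infty,+}[\mathfrak{A}_n(\T^{\text{GW}}_{\infty,+},k^\ast),\abs{T_k}=n]$ collapses to at most $\;{\bf P}_{\infty,+}[\mathfrak{A}_n(\T^{\text{GW}}_{\infty,+},k^\ast)]$. This simultaneously removes the bootstrap step and the $n^{1/2}f(n)$ loss: you get ${\bf P}_n[\mathfrak{A}_n,\,H\leq k^\ast]\leq o(n^{-1})/(n\cdot cn^{-3/2})=o(n^{-1/2})$, which is even sharper than what the paper obtains. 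What the paper's route buys is that it is slightly more mechanical (a union bound plus a monotone bootstrap); what yours buys is a tighter estimate and a more transparent proof, since the disjointness observation is exactly the structural reason the first-moment bound does not lose a factor of the height.
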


\begin{proof}
We start  by noticing that by our hypothesis, we know that 
\[
{\bf P}_{\infty,+}[ \mathfrak{A}_n(\T^{\text{GW}}_{\infty,+}, n^{1/2}f(n))]\leq n^{-1}g(n)^{-1},
\]
 for functions $f(n)$ and $g(n)$ going to infinity.
 \[
 {\bf P}_{\infty,+}[ \mathfrak{A}_n(\T^{\text{GW}}_{\infty,+},n^{1/2}(g(n)^{1/2}\wedge f(n)))]\leq n^{-1}g(n)^{-1},
 \]
 so we can bootstrap our hypothesis to see that we can assume that there exists a function $f(n)$ such that $f(n)\to \infty$ and
\begin{equation}\label{bootstrap}
\lim_{n\to \infty} {\bf P}_{\infty,+}[ \mathfrak{A}_n(\T^{\text{GW}}_{\infty,+},n^{1/2}f(n))] \leq n^{-1}f(n)^{-2}.
\end{equation}

 We introduce $A(n)=\{H(\T^{\text{GW}}_n)\leq n^{1/2}f(n)\}$. We have that 
\begin{equation}\label{height_width}
{\bf P}_n[A(n)^c]=o(1),
\end{equation}
which can be deduced from a standard computation based on the fact that since ${\bf E}[Z^2]<\infty$ we have ${\bf P}[H(\T^{\text{GW}})\geq h \mid \abs{\T^{\text{GW}}}=n] \leq C \exp(-ch^2/n)$ for all $n,h\geq 1$ (see~\cite{adj}).

Fix a monotone representation $(T_0,\ldots T_k)$. If we have $BB(\T^{\text{GW}}_n,U_1^n)=(T_0,\ldots, T_k)$, then necessarily $\T^{\text{GW}}_n=T_k$, and $U_1^n$ was chosen as one point among $\prod_{i=1}^k Q(T_i,T_{i-1})$. This means that 
\begin{align*}
{\bf P}_n[BB(\T^{\text{GW}}_n,U_1^n)=(T_0,\ldots, T_k)]&=\frac 1{\abs{T_k}} {\bf P}_n[\T^{\text{GW}}_n=T_k] \prod_{i=1}^k Q(T_i,T_{i-1})\\
            &\leq \frac 1{n} \frac 1{{\bf P}[\abs{\T^{\text{GW}}}= n]} {\bf P}[\T^{\text{GW}}=T_k] \prod_{i=1}^k Q(T_i,T_{i-1}) \\
            &= \frac 1{n} \frac 1{{\bf P}[\abs{\T^{\text{GW}}}=n]} {\bf P}_{\infty,+}[\T^{\text{GW}}_{\infty,+,(k)}=(T_0,\ldots,T_k)],
\end{align*}
where we used~\eqref{eq_aldous}.

This implies, using that  ${\bf P}[\abs{\T^{\text{GW}}}= n]\sim C n^{-3/2}$ (see Lemma 2.1.4.~in~\cite{Kolchin}), the following inequality
\[
{\bf P}_n[BB(\T^{\text{GW}}_n,U_1^n)=(T_0,\ldots, T_k),A(n)]\leq C n^{1/2}{\bf P}_{\infty,+}[\T^{\text{GW}}_{\infty,+,(k)}=(T_0,\ldots,T_k)].
\]

Notice that on $A(n)$, we have $H(\T^{\text{GW}}_n)\leq n^{1/2}f(n)$, so that $BB(\T^{\text{GW}}_n,U_1^n)$ has to be a subset of $(T_0,\ldots, T_{n^{1/2}f(n)})$ for some monotone representation of length $n^{1/2}f(n)$. We can use this and the previous equation to compute 
\begin{align*}
 &{\bf P}_n[\mathfrak{A}_n(BB(\T^{\text{GW}}_n,U_1^n) ,\abs{U_1^n}),A(n)] \\
 =&\sum_{k=1}^{n^{1/2}f(n)} \sum_{(T_0,\ldots, T_k)} {\bf  P}_n[BB(\T^{\text{GW}}_n,U_1^n)=(T_1,\ldots,T_k),A(n)] \1{ \mathfrak{A}_n((T_0,\ldots,T_k),k)} \\
 \leq & C n^{1/2}\sum_{k=1}^{n^{1/2}f(n)} \sum_{(T_0,\ldots, T_k)} {\bf P}_{\infty,+}[\T^{\text{GW}}_{\infty,+,(k)}=(T_0,\ldots,T_k)] \1{ \mathfrak{A}_n((T_0,\ldots,T_k),k)} \\
 = &C n^{1/2}\sum_{k=1}^{n^{1/2}f(n)}  {\bf P}_{\infty,+}[\mathfrak{A}_n(\T^{\text{GW}}_{\infty,+,(k)},k)],
 \end{align*}
 and using the monotonicity properties of $\mathfrak{A}_n$, we see that for $k\leq n^{1/2}f(n)$ we have 
\[
\mathfrak{A}_n(\T^{\text{GW}}_{\infty,+,(k)},k) \leq \mathfrak{A}_n(\T^{\text{GW}}_{\infty,+}, n^{1/2}f(n)) ,
\]
hence the two previous equations imply that 
\[
{\bf P}_n[\mathfrak{A}_n(BB(\T^{\text{GW}}_n,U_1^n) ,\abs{U_1^n}),A(n)] \leq Cn f(n){\bf P}_{\infty,+}[  \mathfrak{A}_n(\T^{\text{GW}}_{\infty,+}, nf(n))] \leq C  f(n)^{-1},
\]
where we used~\eqref{bootstrap}. This implies the result because of~\eqref{height_width} and the fact that $f(n)\to \infty$.
\end{proof}

Using a similar, but simpler, proof, we can also prove that
\begin{lemma}\label{mono2}
If
\[
\lim_{n\to \infty}{\bf P}_{\infty,+}[ \mathfrak{A}_n(\T^{\text{GW}}_{\infty,+},nf(n))] =0,
\]
then
\[
\lim_{n\to \infty} {\bf Q}_n[ \mathfrak{A}_n(BB(\T^{\text{GW}}_n,U_1^n) ,\abs{U_1^n})] =0,
\]
where $U_1^n$ is a uniformly chosen point on $\T^{\text{GW}}_n$.
\end{lemma}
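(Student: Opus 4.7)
The plan is to parallel the proof of Lemma~\ref{transfer_monotone}, substituting the height-conditioning ${\bf Q}_n$ for the size-conditioning ${\bf P}_n$ and the Kesten asymptotic ${\bf P}[H(\T^{\text{GW}}) \geq n] \sim c/n$ for the Kolchin asymptotic ${\bf P}[|\T^{\text{GW}}| = n] \sim Cn^{-3/2}$. First I would introduce the height-truncation event $A(n) = \{H(\T^{\text{GW}}_n) \leq nf(n)\}$; since, under ${\bf Q}_n$, the rescaled height $H(\T^{\text{GW}}_n)/n$ converges in distribution to a finite random variable, ${\bf Q}_n[A(n)^c] \to 0$ for any $f(n) \to \infty$, and on $A(n)$ the depth $\abs{U_1^n}$ of the uniform point is at most $nf(n)$.

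Next, for a fixed monotone representation $(T_0, \ldots, T_k)$ with $H(T_k) \geq n$, Aldous' identity~\eqref{eq_aldous} together with the uniform choice of $U_1^n$ among $|T_k|$ vertices yields
\[
{\bf Q}_n[BB(\T^{\text{GW}}_n, U_1^n) = (T_0, \ldots, T_k)] = \frac{{\bf P}_{\infty,+}[\T^{\text{GW}}_{\infty,+,(k)} = (T_0, \ldots, T_k)]}{|T_k| \cdot {\bf P}[H(\T^{\text{GW}}) \geq n]}.
\]
Because $H(T_k) \geq n$ forces $|T_k| \geq n+1$ and ${\bf P}[H \geq n] \geq c/n$, the prefactor is bounded by an absolute constant---a gain of a factor $n^{1/2}$ over the analogous bound in Lemma~\ref{transfer_monotone}. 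Summing over all $(T_0, \ldots, T_k)$ and over $k \leq nf(n)$, and then applying the monotonicity of $\mathfrak{A}_n$ in both the marked-tree and the $k$ arguments, I obtain
\[
{\bf Q}_n[\mathfrak{A}_n(BB(\T^{\text{GW}}_n, U_1^n), \abs{U_1^n}), A(n)] \leq C \cdot nf(n) \cdot {\bf P}_{\infty,+}[\mathfrak{A}_n(\T^{\text{GW}}_{\infty,+}, nf(n))].
\]

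Finally, I would apply the bootstrap trick from the proof of Lemma~\ref{transfer_monotone}: since the right-hand probability tends to zero, there is a divergent $g(n)$ with this probability bounded by $g(n)^{-1}$, and by replacing $f$ with $\tilde f := f \wedge g^{1/2}$ (still $\to\infty$) one can invoke monotonicity to sharpen the bound. The main obstacle is matching the bootstrap function against the $nf(n)$ prefactor: closing this gap requires exploiting that, under ${\bf P}_{\infty,+}$, the truncation $|T_k|$ is typically of order $k^2$ (reflecting the $1/2$-stable size tail of critical GW trees), so that the weighted quantity $\sum_k {\bf E}_{\infty,+}[|T_k|^{-1}\mathbf 1_{\mathfrak{A}_n(\cdot,k)}]$ can be estimated much better than the pessimistic term-by-term bound $|T_k|\geq n+1$ suggests. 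This refinement, together with the standard bootstrap to upgrade the qualitative ``$\to 0$'' into a quantitative decay rate on a slower scale, is the technically delicate step, and constitutes the precise analog of the bootstrap appearing in Lemma~\ref{transfer_monotone}.
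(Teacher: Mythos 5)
Your setup is correct and parallel to the paper's proof of Lemma~\ref{transfer_monotone}: the truncation $A(n) = \{H(\T^{\text{GW}}_n) \leq nf(n)\}$ with ${\bf Q}_n[A(n)^c]\to 0$ (since ${\bf P}[H\geq m]\sim c/m$ makes ${\bf Q}_n[H\geq nf(n)]\sim 1/f(n)$), the Aldous identity~\eqref{eq_aldous} combined with ${\bf Q}_n[\T^{\text{GW}}_n = T_k] = {\bf P}[\T^{\text{GW}}=T_k]\mathbf{1}\{H(T_k)\geq n\}/{\bf P}[H\geq n]$, and the observation that $\abs{T_k}\geq n+1$ together with ${\bf P}[H\geq n]\geq c/n$ make the prefactor $\bigl(\abs{T_k}\,{\bf P}[H\geq n]\bigr)^{-1}$ bounded by a constant. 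These steps are sound and mirror exactly the role of the Kolchin asymptotic under ${\bf P}_n$.

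The problem, which you yourself flag, is that after summing over $k\leq nf(n)$ and over monotone representations you only reach $C\,nf(n)\,{\bf P}_{\infty,+}[\mathfrak{A}_n(\T^{\text{GW}}_{\infty,+},nf(n))]$, and the bootstrap trick produces at best a $\tilde f(n)\to\infty$ with ${\bf P}_{\infty,+}[\mathfrak{A}_n(\cdot,n\tilde f(n))]\leq \tilde f(n)^{-2}$, leaving $C n/\tilde f(n)\to\infty$. Under the hypothesis ``$\to 0$'' (rather than ``$=o(n^{-1})$'' as in Lemma~\ref{transfer_monotone}) this simply does not close: the crude bound $\abs{T_k}\geq n+1$ applied term-by-term to each of the $\sim nf(n)$ values of $k$ is too lossy by a factor of $n$. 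Your suggestion that $\abs{T_k}$ is typically of order $k^2$ points in the right direction --- indeed, summing your formula over all $k$ and all $(T_0,\dots,T_k)$ and using $\sum_k{\bf Q}_n[\abs{U_1^n}=k]=1$ gives the exact mass identity
\[
\sum_{k\geq 0}{\bf E}_{\infty,+}\Bigl[\frac{\mathbf{1}\{H(T_k)\geq n\}}{\abs{T_k}}\Bigr]={\bf P}[H(\T^{\text{GW}})\geq n],
\]
which shows the random weight $W_n:=\sum_{k\leq nf(n)}\mathbf{1}\{H(T_k)\geq n\}/\abs{T_k}$ has the right order $\sim 1/n$ on average. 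But to go from ${\bf P}_{\infty,+}[\mathfrak{A}_n(\cdot,nf(n))]\to 0$ to ${\bf E}_{\infty,+}[\mathbf{1}_{\mathfrak{A}_n}W_n]/{\bf P}[H\geq n]\to 0$ one needs a uniform integrability (or decorrelation) statement for $W_n/{\bf P}[H\geq n]$, and the only deterministic control available, $W_n\leq 1+\log f(n)$, is far too weak for Cauchy--Schwarz to give that. This is the genuine missing step; your proposal leaves it as a heuristic, so the argument as written is not complete.
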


\subsection{An important application obtained by transferring results from the IBICBRW to the LCBRW}

\begin{definition}\label{def_strongcut}
We say that a point $x\in \T^{\text{GW}}_n$ has a loopless image if $\{\phi_{\T^{\text{GW}}_n}(y),\ y \preceq x \} \cap \{ \phi_{\T^{\text{GW}}_n}(y),\ y \nprec x \text{ and } x\neq y\} =\emptyset$, where $\prec$ stands for the genealogical order on $\T^{\text{GW}}_n$.
\end{definition}

\begin{remark}\label{rem_strongcut} Assume that $x$ has a loopless image, then $\phi_{\T^{\text{GW}}_n}(x)$ is a cut-point.\end{remark}

 Let us consider $\T^{\text{GW}}_n$ and $x\in \T^{\text{GW}}_n$, we denote $\overline{\pi}_{\T^{\text{GW}}_n}(x)$ the first ancestor of $x$ in $\T^{\text{GW}}_n$ which  has a loopless image (or the root if no such point exists). We can prove that the distance between $x$ and $\overline{\pi}_{\T^{\text{GW}}_n}(x)$ cannot be macroscopic, actually we can even obtain the following result
\begin{lemma}\label{close_cut_point}
Fix $d>14$. There exists $\epsilon' >0$, such that 
\[
\lim_{n\to \infty} {\bf P}_n\bigl[ \max_{x\prec U_1^n} (\abs{x}-\abs{\overline{\pi}_{\T^{\text{GW}}_n}(x)}) \geq n^{(1-\epsilon')/2}\bigr] =0,
\]
where $U_1^n$ is a uniformly chosen point on $\T^{\text{GW}}_n$.
\end{lemma}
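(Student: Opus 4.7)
The strategy is to apply the transfer result of Lemma~\ref{transfer_monotone} to reduce the statement on $\T^{\text{GW}}_n$ to an analogous one on the IICBRW $\T^{\text{GW}}_{\infty,+}$, then stochastically dominate by the IBICBRW in order to exploit its ergodic and intersection structure along the backbone. To this end I define, for each $n\in\N$,
\[
\mathfrak{A}_n(\overline T, k) := \Bigl\{\exists\, x \text{ on the backbone of } \overline T,\ |x|\leq k,\ |x| - |\overline{\pi}_{\overline T}(x)| \geq n^{(1-\epsilon')/2}\Bigr\}.
\]
Enlarging $\overline T$ can only destroy the loopless-image property at existing backbone vertices, since more vertices create more potential spatial collisions; hence the set of loopless ancestors of any backbone vertex shrinks, $|x| - |\overline{\pi}_{\overline T}(x)|$ can only grow, and $\mathfrak{A}_n$ is monotone in both the marked-tree component and in $k$, in the sense required by Lemma~\ref{transfer_monotone}.

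Granted this monotonicity it suffices to exhibit a sequence $f(n)\to\infty$ for which ${\bf P}_{\infty,+}[\mathfrak{A}_n(\T^{\text{GW}}_{\infty,+}, n^{1/2} f(n))] = o(n^{-1})$. Using the coupling~\eqref{stoch_dom} to dominate $\T^{\text{GW}}_{\infty,+}$ by $\T^{\text{GW}}_{-\infty,\infty}$ (loopless-in-a-larger-tree implies loopless-in-the-smaller), I would pass to the IBICBRW, where I declare an index $i$ to be \emph{loopless} when
\[
\{\phi(\alpha(j)) : j \geq i\} \cap \Bigl(\{\phi(\alpha(j)) : j < i\} \cup \bigcup_{j\in\Z} \phi(\mathcal{B}^{(j)}(\alpha(j))\setminus\{\alpha(j)\})\Bigr) = \emptyset.
\]
This notion is translation-invariant and hence, by Lemma~\ref{ergo_shift0}, ergodic under $\theta_1$. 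Adapting the proof of Lemma~\ref{cutp_2} by summing Lemma~\ref{cutp_1}-type intersection bounds over all side branches attached along the backbone, should yield ${\bf P}^{-\infty,\infty}[0\text{ is loopless}] > 0$ in the relevant range of dimensions.

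With positivity of the density in hand I would follow the chaining/bootstrap argument of Lemma~\ref{tail_gamma} essentially verbatim: letting $\zeta_1$ denote the first positive loopless index, partitioning $[0,m]$ into alternating independent blocks and using Lemma~\ref{cutp_1} for the block intersection probabilities yields
\[
{\bf P}^{-\infty,\infty}[\zeta_1 > m] \leq C (\log m)^{(d-6)/2} m^{-(d-8)/2}.
\]
A stationarity-plus-union-bound over the first $N = n^{1/2} f(n)$ backbone indices then gives
\[
{\bf P}^{-\infty,\infty}\bigl[\mathfrak{A}_n(\T^{\text{GW}}_{-\infty,\infty}, N)\bigr] \leq C N (\log n)^{(d-6)/2} n^{-(1-\epsilon')(d-8)/4},
\]
which is $o(n^{-1})$ precisely when $(1-\epsilon')(d-8)/4 > 3/2$; taking $\epsilon'>0$ small this is the hypothesis $d>14$, and $f(n)$ is chosen to grow slowly enough to absorb the logarithmic factor. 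Feeding the bound through Lemma~\ref{transfer_monotone} closes the proof.

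The main obstacle is the upgrade from the pivotal analysis of Section~\ref{sect_ergo} to the strictly stronger loopless condition: one must control intersections between the future half-backbone and the entirety of the past half together with the infinitely many independent side branches attached at every backbone vertex, rather than just the intersection of two disjoint branch unions. The threshold $d > 14$ is where this scheme becomes tight, as it is exactly what lets the tail exponent $(d-8)/2$ combined with a maximum over $n^{1/2+o(1)}$ indices fit within the $o(n^{-1})$ budget demanded by Lemma~\ref{transfer_monotone}.
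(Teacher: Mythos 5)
Your opening moves match the paper exactly: you set up the same event $\mathfrak{A}_n$ built from the statistic $\max_{j\leq k}\bigl(\abs{x_j}-\abs{\overline{\pi}(x_j)}\bigr)$, check the same two monotonicities required by Lemma~\ref{transfer_monotone} (the crucial one being that shrinking a tree can only create points with loopless image), and pass from $\T^{\text{GW}}_{\infty,+}$ to $\T^{\text{GW}}_{-\infty,\infty}$ via the coupling of \eqref{stoch_dom}. The divergence comes after. You propose to rebuild the entire density-plus-tail machinery of Section~\ref{sect_ergo} for a brand-new notion of ``loopless index'' on the IBICBRW, re-proving analogues of Lemmas~\ref{cutp_2} and~\ref{tail_gamma}. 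The paper avoids this altogether with a one-line observation: under the coupling $\T^{\text{GW}}_{\infty,+}\subset\T^{\text{GW}}_{-\infty,\infty}$, a pivotal point of the bi-infinite tree is already a point with loopless image for the smaller tree $\T^{\text{GW}}_{\infty,+}$, so the already-proven Lemma~\ref{neglect_bubble}, read off at scale $n^{1/2}$, directly yields the $o(n^{-1})$ bound needed by Lemma~\ref{transfer_monotone} once $d>14$. No new renewal estimate is developed.

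The part you honestly flag as ``the main obstacle'' is a genuine gap that your outline does not close, and it is exactly the work the paper's shortcut makes unnecessary. First, positivity of ${\bf P}^{-\infty,\infty}[0\text{ is loopless}]$ in your sense is not a verbatim adaptation of Lemma~\ref{cutp_2}: you require the half-backbone $\{\alpha(j):j\geq i\}$ to avoid \emph{every} side branch $\mathcal{B}^{(k)}(\alpha(k))$, including those rooted at indices on the same side of $i$, and the renewal identity $\sum_k s(k)\,r(n-k)=1-r(n)$ driving the Erd\H{o}s-type argument of Lemma~\ref{cutp_2} exploits a clean past-versus-future factorisation that this one-sided-vs-everything event does not enjoy. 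Second, the block-decoupling step of Lemma~\ref{tail_gamma} — reducing ``no pivotal point in $[1,k_{2J}]$'' to independent intersection events on disjoint odd-indexed blocks — does not carry over either, because failure of looplessness at $i$ involves the entire forward backbone hitting a side branch located at an arbitrary index, so the odd blocks are no longer independent. Third, your loopless-index definition is built from the forward backbone $\{\alpha(j):j\geq i\}$, whereas Definition~\ref{def_strongcut} concerns the \emph{ancestral} line $\{y:y\preceq x\}$; under the marked-tree indexing of Section~\ref{sect:monotone} the ancestors of a backbone vertex $x_i$ form the segment running back toward the root, so the orientation in your definition must be reconciled with the one used in $\overline{\pi}_{\T^{\text{GW}}_n}$ before the reduction is meaningful. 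Until those three items are supplied, the proof is a plausible program rather than a complete argument.
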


\begin{proof}
Let us consider a marked tree $(T_0,\ldots,T_k)$. We can introduce, for $i\leq k$, the quantity
\[
\psi( (T_0,\ldots,T_k), i)=\max_{j \leq i}\{\abs{x_j}-\abs{\overline{\pi}_{\T^{\text{GW}}_n}(x_j)}\},
\]
where $x_j$ denotes the root of $T_j$. This corresponds to the maximal distance to the first ancestor which is a point with loopless image on the $i$ first levels of the backbone in $(T_0,\ldots,T_k)$. We want to use Lemma~\ref{transfer_monotone} applied to the event $\mathfrak{A}_n(\overline{T},k)=\{\psi(\overline{T},k) \geq n^{(1-\epsilon')/2}\}$ for some $\epsilon'>0$, this would prove that
\[
\lim_{n\to \infty} {\bf P}_n[ \psi(BB(\T^{\text{GW}}_n,U_1^n),\abs{U_1^n}) \geq n^{(1-\epsilon')/2}] =0,
\]
which implies the result.

Let us notice that the event $\mathfrak{A}_n(\overline{T},k)$ is obviously increasing in $k$. Moreover, if $T\subset T'$ and $x\in T$ is a cut-point (resp.~point with loopless image) for $T'$, then it is also a cut-point (resp.~point with loopless image) for $T$. This implies that the event is also increasing in the marked tree component.

We can check the last condition of Lemma~\ref{transfer_monotone} by proving, for example, that for some $\epsilon'>0$
\[
\lim_{n\to \infty} {\bf P}^{-\infty,\infty}[ \psi(\T^{\text{GW}}_{\infty,+},n^{(1+\epsilon')/2}) \geq n^{(1-\epsilon')/2}] =0,
\]
this equation follows from the fact that
\begin{enumerate}
\item By stochastic domination we can create a coupling such that $\T^{\text{GW}}_{\infty,+} \subset \T^{\text{GW}}_{-\infty,\infty}$,
\item Since $\psi$ is increasing in the marked tree component, we have that $\psi(\T^{\text{GW}}_{-\infty,\infty},n^{(1+\epsilon')/2}) \geq \psi(\T^{\text{GW}}_{\infty,+},n^{(1+\epsilon')/2})$,
\item In the coupling, pivotal points for $\T^{\text{GW}}_{-\infty,\infty}$ are always points with loopless images for $\T^{\text{GW}}_{\infty,+} $ and that the estimate of Lemma~\ref{neglect_bubble} states that on the backbone of the IIC, pivotal points before level $n^{1/2}$ cannot be at a distance larger than $n^{(1-\epsilon')/2}$ except with probability $o(n^{-1})$ when $d>14$.
\end{enumerate}

This finishes the proof.
\end{proof}

Using Remark~\ref{mono2} we could also obtain 
\begin{lemma}\label{close_cut_point2}
For $d>10$, there exists $\epsilon' >0$, such that 
\[
\lim_{n\to \infty} {\bf Q}_n\bigl[ \max_{x\prec U_1^n} (\abs{x}-\abs{\overline{\pi}_{\T^{\text{GW}}_n}(x)}) \geq n^{(1-\epsilon')}\bigr] =0,
\]
where $U_1^n$ is a uniformly chosen point on $\T^{\text{GW}}_n$.
\end{lemma}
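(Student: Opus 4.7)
The plan is to mirror the proof of Lemma~\ref{close_cut_point}, substituting Lemma~\ref{mono2} (which transfers to ${\bf Q}_n$) for Lemma~\ref{transfer_monotone} (which transfers to ${\bf P}_n$). As before, for a marked tree $(T_0,\ldots,T_k)$ with $x_j$ the root of $T_j$, define
\[
\psi((T_0,\ldots,T_k),i) = \max_{j\le i}\{\abs{x_j}-\abs{\overline{\pi}_{T_j}(x_j)}\},
\]
and set $\mathfrak{A}_n(\overline{T},k)=\{\psi(\overline{T},k)\geq n^{1-\epsilon'}\}$ for an $\epsilon'>0$ to be chosen. Monotonicity of $\mathfrak{A}_n$ in both arguments is verified exactly as in the proof of Lemma~\ref{close_cut_point}: increasingness in $k$ is immediate, and increasingness in the marked tree component uses that if $T\subset T'$ then any $x\in T$ having a loopless image in $T'$ still has one in $T$, so that $\overline{\pi}_{T}(x)$ can only be a closer ancestor of $x$ than $\overline{\pi}_{T'}(x)$.

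By Lemma~\ref{mono2}, it now suffices to exhibit a function $f(n)\to\infty$ such that
\[
\lim_{n\to\infty} {\bf P}_{\infty,+}\bigl[\psi(\T^{\text{GW}}_{\infty,+},nf(n))\geq n^{1-\epsilon'}\bigr]=0,
\]
and I would take $f(n)=\log n$. Using the coupling $\T^{\text{GW}}_{\infty,+}\subset \T^{\text{GW}}_{-\infty,\infty}$ from \eqref{stoch_dom} along the common backbone and the monotonicity of $\psi$ in the tree component, one has $\psi(\T^{\text{GW}}_{\infty,+},nf(n)) \leq \psi(\T^{\text{GW}}_{-\infty,\infty},nf(n))$. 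Since any pivotal point of $\T^{\text{GW}}_{-\infty,\infty}$ on the forward half of the backbone is a point with loopless image of the coupled $\T^{\text{GW}}_{\infty,+}$, the latter is bounded above by the maximal spacing of pivotal points $\max\{\gamma_{i+1}-\gamma_i:\gamma_i\in[0,nf(n)]\}$.

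Applying Lemma~\ref{neglect_bubble} with $N=nf(n)=n\log n$ yields, for some $\epsilon>0$,
\[
{\bf P}^{-\infty,\infty}\Bigl[\max_{i\in[-N,N]}(\gamma_{i+1}-\gamma_i)\geq N^{1-\epsilon}\Bigr] = o(N^{-(d-10)/2-\epsilon}),
\]
which tends to $0$ as soon as $d>10$. Choosing any $\epsilon'<\epsilon$, one has $N^{1-\epsilon}=n^{1-\epsilon}(\log n)^{1-\epsilon}\le n^{1-\epsilon'}$ for $n$ large, and the event $\{\psi(\T^{\text{GW}}_{-\infty,\infty},nf(n))\ge n^{1-\epsilon'}\}$ is contained in the event above, verifying the hypothesis of Lemma~\ref{mono2}. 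There is no serious obstacle beyond careful bookkeeping; the only qualitative point is that because Lemma~\ref{mono2} forces us to control $\psi$ on a backbone of length $nf(n)$ rather than $n^{1/2}f(n)$ as in Lemma~\ref{transfer_monotone}, the final tolerance must be polynomially larger than $n$ (hence the exponent $1-\epsilon'$ instead of $(1-\epsilon')/2$), and this is exactly what makes the weaker moment estimate available from $d>10$ suffice.
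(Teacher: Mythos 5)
Your proof is correct and is exactly the argument the paper has in mind when it states that Lemma~\ref{close_cut_point2} follows by using Lemma~\ref{mono2} in place of Lemma~\ref{transfer_monotone}. You have correctly tracked the change of scale (backbone length $nf(n)$, tolerance $n^{1-\epsilon'}$ instead of $n^{(1-\epsilon')/2}$) and correctly identified that it is the weaker convergence requirement in Lemma~\ref{mono2} (mere convergence to $0$ rather than $o(n^{-1})$) that compensates for the longer backbone and lets $d>10$ suffice where Lemma~\ref{close_cut_point} needs $d>14$.
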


\section{Resistance estimate, verifying condition $(R)$}\label{sect_resistance}

The goal of this section is to show that the resistance distance  is asymptotically proportional to the graph distance. The proof goes as follows

\begin{enumerate}
\item We are going to apply an ergodic theorem on the IBICBRW with respect to the shift along pivotal points (using the tools of Section~\ref{sect_ergo}). This will allow us to obtain estimates on the resistance between the origin and distant pivotal points.
\item Since  two successive pivotal points are at microscopic distance, we know that understanding the resistance to pivotal points is enough to approximate well the resistance to any point.
\item The same estimate is true on the IICBRW because the discrepancy between the resistance in the IICBRW and in the IBICBRW is controlled by the resistance between the pivotal points surrounding the origin in the IBICBRW (which is small).
\item Resistance estimates on the IICBRW can be transferred to the LCBRW, using results of Section~\ref{sect_transfer}.
\end{enumerate}

The last step is the most complex part of the proof. 

It is intuitively natural to believe that, in the bulk (meaning for points of $\omega_n$ emanating from points in $\T^{\text{GW}}_n$ which are far from level $n$), the IICBRW and the LCBRW should look similar. This intuition is validated by Lemma~\ref{transfer_cvg}. However, one should not forget that the resistance could, in principle, be significantly influenced by points located outside the bulk.  In order to tackle this problem, one can see that it is sufficient to say that bubbles (parts of the graph lying between cut-points) are small and this was proved by transferring results from the IBICBRW to the LCBRW (see Lemma~\ref{close_cut_point}). It should be noted that our transfer of results to the LCBRW is done in several steps: first we transfer to trees conditioned on $\{H(\T)\geq n\}$, then on $\{\abs{\T}\geq n\}$ and finally on $\{\abs{\T}=n\}$.

\subsection{Resistance estimate on IBICBRW}\label{sect_res_est}






Recalling the definition of  $\alpha(n)$ in Section~\ref{sect_res_est_not}, we introduce
\begin{equation}\label{def_rho1}
\rho_1=\frac{{\bf E}^{-\infty,\infty}[R_{\text{eff}}^{-\infty,\infty}(0, \alpha(\gamma_1))\mid 0\text{ is a pivotal point}] }{{\bf E}^{-\infty,\infty}[\gamma_1\mid 0\text{ is a pivotal point}] },
\end{equation}
and
\begin{equation}\label{def_rho2}
\rho_2=\frac{{\bf E}^{-\infty,\infty}[d_{\omega_{{-\infty,\infty}}}(0, \alpha(\gamma_1))\mid 0\text{ is a pivotal point}] }{{\bf E}^{-\infty,\infty}[\gamma_1\mid 0\text{ is a pivotal point}] },
\end{equation}

Then we have
\begin{lemma}\label{simple_LLN1}
Take $d>10$. Then we have
\[
\lim_{n\to \infty}\frac{R_{\text{eff}}^{-\infty,\infty}(0, \alpha(n))}n= \rho_1 \qquad {\bf P}^{-\infty,\infty}\text{-a.s.}
\]
and
\[
\lim_{n\to \infty}\frac{d_{\omega_{-\infty,\infty}}(0, \alpha(n))}n= \rho_2 \qquad {\bf P}^{-\infty,\infty}\text{-a.s.}
\]
\end{lemma}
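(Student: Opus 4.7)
The plan is to apply Birkhoff's ergodic theorem to the pivotal shift $\Theta$ introduced in Lemma~\ref{ergo_shift}, exploiting the crucial observation that pivotal points are cut-points (Remark~\ref{pivotal_cut}), so both effective resistance and graph distance are \emph{additive} along the sequence $(\alpha(\gamma_i))_{i\in\Z}$. All the ingredients are set up in Section~\ref{sect_ergo}, so the argument is essentially a packaging exercise.

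First, I work under $\widehat{\bf P} := {\bf P}^{-\infty,\infty}[\,\cdot\mid 0\text{ is a pivotal point}]$, which is well-defined by Lemma~\ref{cutp_2}. Set
\[
X_i := R_{\text{eff}}^{-\infty,\infty}(\alpha(\gamma_i),\alpha(\gamma_{i+1})), \quad Y_i := d_{\omega_{-\infty,\infty}}(\alpha(\gamma_i),\alpha(\gamma_{i+1})), \quad Z_i := \gamma_{i+1}-\gamma_i.
\]
By Lemma~\ref{ergo_shift} these are stationary ergodic sequences under $\Theta$. For integrability: Rayleigh monotonicity gives $X_0\le Z_0$ and clearly $Y_0\le Z_0$, and by~\eqref{moment_gamma} we have $Z_0\in L^p(\widehat{\bf P})$ for some $p>1$ when $d>10$. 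Birkhoff's theorem then yields
\[
\frac 1N\sum_{i=0}^{N-1}X_i \to \widehat{\bf E}[X_0],\qquad \frac 1N\sum_{i=0}^{N-1}Y_i\to \widehat{\bf E}[Y_0],\qquad \frac 1N\sum_{i=0}^{N-1}Z_i\to \widehat{\bf E}[Z_0],
\]
$\widehat{\bf P}$-a.s. Because each $\alpha(\gamma_i)$ is a cut-point, resistance in series and additivity of graph distance along cut-points give $R_{\text{eff}}^{-\infty,\infty}(0,\alpha(\gamma_N))=\sum_{i=0}^{N-1}X_i$, $d_{\omega_{-\infty,\infty}}(0,\alpha(\gamma_N))=\sum_{i=0}^{N-1}Y_i$ and $\gamma_N=\sum_{i=0}^{N-1}Z_i$. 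Taking ratios produces
\[
\frac{R_{\text{eff}}^{-\infty,\infty}(0,\alpha(\gamma_N))}{\gamma_N}\to\rho_1, \qquad \frac{d_{\omega_{-\infty,\infty}}(0,\alpha(\gamma_N))}{\gamma_N}\to\rho_2, \qquad \widehat{\bf P}\text{-a.s.}
\]

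To interpolate between pivotal times, for each $n$ let $N(n)$ be the unique integer with $\gamma_{N(n)}\le n<\gamma_{N(n)+1}$. Additivity along the cut-point $\alpha(\gamma_{N(n)})$ and Rayleigh monotonicity give
\[
\bigl|R_{\text{eff}}^{-\infty,\infty}(0,\alpha(n))-R_{\text{eff}}^{-\infty,\infty}(0,\alpha(\gamma_{N(n)}))\bigr|\le Z_{N(n)},
\]
and similarly for $d_{\omega_{-\infty,\infty}}$. Lemma~\ref{neglect_bubble} ensures $\max_{|i|\le n} Z_i = o(n)$ a.s., so this error is negligible and $n=\gamma_{N(n)}(1+o(1))$. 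The conclusion $R_{\text{eff}}^{-\infty,\infty}(0,\alpha(n))/n\to\rho_1$ and $d_{\omega_{-\infty,\infty}}(0,\alpha(n))/n\to\rho_2$ then holds $\widehat{\bf P}$-a.s.

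Finally, to remove the conditioning on $\{0\text{ is pivotal}\}$: under ${\bf P}^{-\infty,\infty}$ itself the first positive pivotal time $\gamma_1$ is a.s.~finite (infinitely many pivotal points exist by Lemma~\ref{ergo_shift0} combined with Lemma~\ref{cutp_2}), and the time-reversed argument gives a symmetric finite last-negative pivotal point. Decomposing along the cut-point $\alpha(\gamma_1)$,
\[
R_{\text{eff}}^{-\infty,\infty}(0,\alpha(n))=R_{\text{eff}}^{-\infty,\infty}(0,\alpha(\gamma_1))+R_{\text{eff}}^{-\infty,\infty}(\alpha(\gamma_1),\alpha(n)),
\]
the first term is a.s.~finite, and the second term, after applying $\theta_{\gamma_1}$, reduces to the conditioned case treated above (applied to $n-\gamma_1\to\infty$). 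The same decomposition handles the graph distance, completing the proof. The only mildly delicate step is the negligibility of the final incomplete block $Z_{N(n)}$, which would fail without the tail bound of Lemma~\ref{neglect_bubble}; once that is in hand, the argument is straightforward ergodic theory.
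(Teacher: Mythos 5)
Your proof is correct and follows essentially the same strategy as the paper: Birkhoff's ergodic theorem for the pivotal shift $\Theta$, additivity of resistance and graph distance along the sequence of pivotal points, and interpolation to non-pivotal times. Two small notes. First, citing Lemma~\ref{neglect_bubble} for the a.s.\ bound $\max_{|i|\le n} Z_i = o(n)$ is imprecise: that lemma is a probability estimate whose exponent $(d-10)/2+\epsilon$ need not be summable for $d$ near $10$, so Borel--Cantelli is not immediate; the needed fact $Z_{N(n)}/n\to 0$ a.s.\ actually follows directly from the Birkhoff limit $\gamma_k/k\to\widehat{\bf E}[\gamma_1]$ (which gives $\gamma_{k+1}/\gamma_k\to 1$ a.s., hence $Z_{N(n)}/\gamma_{N(n)}\to 0$, and $\gamma_{N(n)}\le n$). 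Second, your explicit decomposition along $\alpha(\gamma_1)$ to remove the conditioning on $\{0\text{ is pivotal}\}$ is a genuine improvement in rigor over the paper, which states the Birkhoff conclusion as ${\bf P}^{-\infty,\infty}$-a.s.\ directly even though the ergodicity in Lemma~\ref{ergo_shift} is only with respect to the conditioned measure.
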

\begin{proof}
We will only prove the first point, the second one is similar but simpler. By the law of resistances in series, we have that
\[
 R_{\text{eff}}^{-\infty,\infty}(\alpha(\gamma_{i+2}),\alpha(\gamma_i))= R_{\text{eff}}^{-\infty,\infty}(\alpha(\gamma_{i+1}),\alpha(\gamma_i))+ R_{\text{eff}}^{-\infty,\infty}(\alpha(\gamma_{i+2}),\alpha(\gamma_{i+1})),
 \]
 for any $i\in \Z$ since, by the definition of pivotal points, any path going from $\alpha(\gamma_i)$ to $\alpha(\gamma_{i+2})$ has to go through $\alpha(\gamma_{i+1})$. Hence, by Lemma~\ref{ergo_shift} and Birkhoff's ergodic theorem (p.~340 of \cite{durrett2010probability}) we have that $ {\bf P}^{-\infty,\infty}$-$\text{a.s.}$
 \[
 \lim_{n\to \infty} \frac{ R_{\text{eff}}^{-\infty,\infty}(0, \alpha(\gamma_{n}))}n={\bf E}^{-\infty,\infty}[R_{\text{eff}}^{-\infty,\infty}(0, \alpha(\gamma_1))\mid 0\text{ is a pivotal point}].
 \]
 since the previous expectation is finite by~\eqref{moment_gamma} and Rayleigh's monotonicity principle.

Let us denote $\tilde{\gamma}_n$ the last pivotal point before level $n$ i.e.~$\tilde{\gamma}_n=\max \{\gamma_i,\ \gamma_i\leq n\}$. Obviously, by the definition of $\tilde{\gamma}_n$ we have
\[
\tilde{\gamma}_n\leq n < \tilde{\gamma}_{n+1},
\]
and now using Lemma~\ref{neglect_bubble}, we can see that for any $\epsilon>0$
\begin{equation}\label{beta_bound}
{\bf P}^{-\infty,\infty}\Bigl[\abs{\frac{\tilde{\gamma}_n}n-1}>\epsilon\Bigr]=o(1).
\end{equation}

Furthermore let us denote $i(n)$ the largest pivotal point index before $n$, i.e., such that $\gamma_{i(n)}=\tilde{\gamma}_n =\max\{\gamma_i,\ \gamma_i\leq n\}$. By Lemma~\ref{ergo_shift} and Birkhoff's ergodic theorem (p.~340 of \cite{durrett2010probability}) and a standard inversion argument (see for example the proof of Proposition 2.1.~in~\cite{SZ}) we see that
\[
\lim_{n\to \infty} \frac{i(n)}n =\frac 1{{\bf E}^{-\infty,\infty}[\gamma_1\mid 0\text{ is a pivotal point}]}  \qquad {\bf P}^{-\infty,\infty}-\text{a.s.}
\]

 Hence by the previous points, we have
\begin{align*}
\lim_{n\to \infty} \frac{ R_{\text{eff}}^{-\infty,\infty}(0, \alpha(\tilde{\gamma}_n))}n & = \lim_{n\to \infty} \frac{ R_{\text{eff}}^{-\infty,\infty}(0, \alpha(\gamma_{i(n)}))}{ i(n)}\frac{i(n)}n \\
 &=\rho  \qquad {\bf P}^{-\infty,\infty}-\text{a.s.}
\end{align*}

This proves the first point. Furthermore, we can see that
\[
R_{\text{eff}}^{-\infty,\infty}(0, \alpha(\tilde{\gamma}_n))\leq R_{\text{eff}}^{-\infty,\infty}(0, \alpha(n)) \leq R_{\text{eff}}^{-\infty,\infty}(0, \alpha(\tilde{\gamma}_{n+1}))  \qquad {\bf P}^{-\infty,\infty}-\text{a.s.}
\]
which implies the expected result by~\eqref{beta_bound}.
\end{proof}

Fix a spatial tree $(T,\phi_T)$ with $\phi_T(\text{root})=0$ and $k\leq H(T)$ an integer. For $x\in T$, denote $\overline{R}^{\phi_T}_{\text{eff}}(0 , \phi_T(x)\mid k)$ (resp.~$\overline{d}_{\phi_T}(0 , x \mid k)$) the resistance (resp.~graph distance) between $0$ and $\phi_T(x)$ in the graph given by the image through $\phi$ of the tree $T$ restricted to the levels between $0$ and $k$. In the case where $k=d_{T}(0,x)$ we will simply use $\overline{R}_{\text{eff}}^{\phi_T}(0 , \phi_T(x))$ (resp.~$\overline{d}_{\phi_T}(0 , \phi_T(x))$).  Those definitions will be useful for applying Lemma~\ref{transfer_cvg}. We have

\begin{lemma}\label{simple_LLN}
Take $d>10$. Then we have
\[
\lim_{n\to \infty}\frac{\overline{R}_{\text{eff}}^{-\infty,\infty}(0, \alpha(n))}n= \rho_1 \qquad {\bf P}^{-\infty,\infty}\text{-a.s.}
\]
and
\[
\lim_{n\to \infty}\frac{\overline{d}_{\omega_{-\infty,\infty}}(0, \alpha(n))}n= \rho_2 \qquad {\bf P}^{-\infty,\infty}\text{-a.s.}
\]
\end{lemma}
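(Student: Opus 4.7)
The plan is to sandwich the restricted resistance and distance between quantities that both converge to $\rho_1$ and $\rho_2$ respectively by Lemma~\ref{simple_LLN1}. The lower bound comes from Rayleigh's monotonicity: the tree-depth-$n$ restriction is a sub-graph of $\omega_{-\infty,\infty}$, so $\overline{R}_{\text{eff}}^{-\infty,\infty}(0, \alpha(n)) \geq R_{\text{eff}}^{-\infty,\infty}(0, \alpha(n))$ and (since removing vertices can only lengthen shortest paths) $\overline{d}_{\omega_{-\infty,\infty}}(0, \alpha(n)) \geq d_{\omega_{-\infty,\infty}}(0, \alpha(n))$, giving $\liminf \geq \rho_1$ and $\liminf \geq \rho_2$ almost surely.

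For the matching upper bounds I would split the graph using pivotal points. Let $\tilde{\gamma}_n := \max\{\gamma_i : \gamma_i \leq n\}$; by Lemma~\ref{neglect_bubble} the gap $n - \tilde{\gamma}_n$ is at most $n^{1-\epsilon}$ with probability tending to $1$. The pivotal-point property of $\alpha(\tilde{\gamma}_n)$ in the full graph is inherited by every sub-graph containing both $0$ and $\alpha(n)$, giving the additive decomposition
\[
\overline{R}_{\text{eff}}^{-\infty,\infty}(0, \alpha(n)) = \overline{R}_{\text{eff}}^{-\infty,\infty}(0, \alpha(\tilde{\gamma}_n)) + \overline{R}_{\text{eff}}^{-\infty,\infty}(\alpha(\tilde{\gamma}_n), \alpha(n)),
\]
and the analogue for $\overline{d}_{\omega_{-\infty,\infty}}$. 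The boundary term on the right is $o(n)$, since it is bounded by the backbone path of length $n - \tilde{\gamma}_n$. It thus suffices to show that the restricted and full resistances agree up to $o(n)$ at the pivotal point $\alpha(\tilde{\gamma}_n)$, after which Lemma~\ref{simple_LLN1} applied to the full resistance closes the argument.

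To compare $\overline{R}_{\text{eff}}^{-\infty,\infty}(0, \alpha(\tilde{\gamma}_n))$ with $R_{\text{eff}}^{-\infty,\infty}(0, \alpha(\tilde{\gamma}_n))$, I would insert a further pivotal point $\alpha(\tilde{\gamma}_m)$ at backbone index $m = n - n^{1-\epsilon'}$ with $\epsilon' < \epsilon$. The segment between $\tilde{\gamma}_m$ and $\tilde{\gamma}_n$ contributes at most $O(n^{1-\epsilon'}) = o(n)$ on both sides by the same backbone-path bound. On the segment $0 \to \alpha(\tilde{\gamma}_m)$, a bubble $\mathcal{B}(\alpha(j))$ with $j \leq \tilde{\gamma}_m$ is truncated only when its height exceeds $n - j \geq n^{1-\epsilon'}$. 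Using the polynomial tail ${\bf P}[H > h] \sim C/h$ for bubble heights together with the block-length moment estimate~\eqref{moment_gamma}, one argues that the total resistance perturbation produced by these truncations is $o(n)$ almost surely. The graph-distance version is strictly simpler: the explicit backbone path realises an upper bound of $n$ in the restricted graph, and the same pivotal decomposition reduces the problem to showing truncated bubbles perturb the block \emph{distance} negligibly.

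The principal obstacle is the last truncation estimate: a naive union bound on truncation \emph{events} is useless because the expected number of bubbles of height $\geq n^{1-\epsilon'}$ along $[0,n]$ is of order $n^{\epsilon'}$, which diverges. One must instead track the resistance \emph{contribution} of each truncated bubble and exploit that in high dimension ($d > 10$) a truncated bubble typically perturbs the block resistance by only a bounded amount. This ultimately relies on a refined version of the intersection estimate of Lemma~\ref{cutp_1} applied to the truncated tail of the bubble, ensuring that deep bubble vertices rarely create spatial coincidences that would contribute macroscopically to the effective resistance between the endpoints of the block.
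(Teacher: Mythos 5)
The structure of your argument (Rayleigh for the lower bound; decompose at a pivotal point near the top; absorb the boundary via the backbone path; compare restricted and full resistance on the lower block) follows the paper's broad outline, but you make a choice that opens a genuine gap. You place the pivotal cut at $m = n - n^{1-\epsilon'}$, so that on the block $[0, \tilde{\gamma}_m]$ a bubble at backbone position $j$ is truncated whenever its height exceeds $n - j \geq n^{1-\epsilon'}$. You then need to argue that the accumulated resistance perturbation from all such truncations is $o(n)$ almost surely. You correctly recognize that a union bound over truncation \emph{events} fails (although the relevant count of truncated bubbles is $\sum_{j\leq m} {\bf P}[H > n-j] \asymp \log n$, not $n^{\epsilon'}$), but your proposed fix --- ``tracking the resistance \emph{contribution} of each truncated bubble and exploiting that a truncated bubble typically perturbs the block resistance by only a bounded amount'' --- is not an argument, and it is not at all clear how to make it one: truncating a bubble can in principle sever a spatial coincidence and alter the effective resistance between the block endpoints non-locally.

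The paper avoids this entirely by putting the cut at a \emph{constant} fraction, $y_\delta \in [n - \delta n,\, n - \tfrac{\delta}{2} n]$, and showing that, with probability $\to 1$, no truncation occurs at all on the lower block. The reason the constant fraction is essential is Lemma~\ref{FUCKKKKKKKKKKKinfty}: the number $l(n,\delta)$ of tree vertices at level $n - \tfrac{\delta}{2} n$ with descendants at level $n$ is tight in $n$ (it converges to a finite number of up-crossings of a Brownian-type height process), whereas with your cut at level $n - n^{1-\epsilon'}$ the corresponding count diverges. With only finitely many ``escaping'' branches, the paper can (via the uniform-point device, Lemma~\ref{close_cut_point2}, and an independence-of-branches argument) show that each escaping branch has a cut-point ancestor close to the cut level which separates it from the bubbles below; hence $\mathcal{B}_{\text{ubble}}(\delta)$ contains no vertices at tree level $\geq n$, and the restricted and full resistances on the lower block \emph{coincide exactly}. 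This reduces the error to the two boundary terms of size at most $\delta n$. That argument simply does not run with your choice of cut, and you have no substitute for it; so the upper bound is not proved.
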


The proof is based on comparing $\overline{R}$ with $R$ and $\overline{d}$ with $d$. The main argument of the proof will have to be repeated three times with slight variations. This proof is the most  delicate one of the three.

We start by proving a technical result.  Fix $\delta>0$, write $y_0^{(n,\delta)},\ldots, y_{l(n,\delta)}^{(n,\delta)}$ for the vertices at generation $n-\frac{\delta}2 n$ that have descendants at generation $n$.  
\begin{enumerate} 
\item there is exactly one of those vertices (say $y_0^{(n,\delta)}$) that belongs to the backbone. 
\item the other vertices $y_i^{(n,\delta)}$, $1\leq i\leq l(n,\delta)$ have ancestors at level $n-\frac{3\delta}4 n$ that we call $z_i^{(n,\delta)}$. When $z_i^{(n,\delta)}$ is not on the backbone then  the tree formed by the descendants of $z_i^{(n,\delta)}$ is denoted by $\T^{i,\delta n}$ which is a critical Galton-Watson tree conditioned to have height at least $\frac{3\delta}4 n$. Up to reordering we can assume that the indices $i$ such that $z_i^{(n,\delta)}$ is not on the backbone are between $1$ and some number $l'(n,\delta)$.
\end{enumerate}

We will show the following
  \begin{lemma} \label{FUCKKKKKKKKKKKinfty}
Fix $\delta>0$,
\[
\lim_{R\to \infty} \limsup_{n\to \infty}{\bf P}^{-\infty,\infty}[ l(n,\delta) >R]=0,
\]
and 
\[
\limsup_{\epsilon' \to 0}\limsup_{n\to \infty} {\bf P}^{-\infty,\infty}\Bigl[ \min_{i=1,\ldots, l'(n,\delta)} \abs{\overrightarrow{\T^{i,\delta n}}_{y_i^{(n,\delta)}}}/\abs{\T^{i,\delta n}} <\epsilon' n\Bigr] =0,
\]
where 
\end{lemma}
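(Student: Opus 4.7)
The plan is to prove the first bound on $l(n,\delta)$ by a first-moment calculation using Kesten's spinal decomposition of $\T^{\text{GW}}_{\infty}$, and the second bound by combining a Yaglom-type scaling limit for critical Galton--Watson trees conditioned on large height with a union bound based on the first.

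For the first statement, by the stochastic domination~\eqref{stoch_dom} and the construction of $\T^{\text{GW}}_{-\infty,\infty}$ as two independent copies of $\T^{\text{GW}}_{\infty}$ glued at the root, up to a factor $2$ it suffices to control the analogous count in a single IICBRW. Kesten's construction attaches at each backbone vertex $\alpha(k)$ an i.i.d.\ family of $\tilde Z_k-1$ independent critical Galton--Watson subtrees; a non-backbone vertex at level $n-\delta n/2$ with a descendant at level $n$ lies in one such subtree at some $\alpha(k)$ with $k\le n-\delta n/2$, sits at relative depth $n-\delta n/2-k$ inside it, and its own subtree must survive $\delta n/2$ further levels. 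The many-to-one formula gives that the expected number of vertices at relative level $m$ of a critical Galton--Watson tree with at least one descendant $h$ levels further equals ${\bf E}[Z_m]\,\eta_h=\eta_h$, with $\eta_h\sim C/h$ by Kesten. Summing yields ${\bf E}^{-\infty,\infty}[l(n,\delta)]\le 1+2(n-\delta n/2)({\bf E}[\tilde Z]-1)\eta_{\delta n/2}\le C/\delta$ uniformly in $n$, and Markov's inequality gives the first claim.

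For the second statement (which I read as $|\overrightarrow{\T^{i,\delta n}}_{y_i^{(n,\delta)}}|/|\T^{i,\delta n}|<\epsilon'$, since both numerator and denominator are $\Theta(n^2)$ and the literal bound $\epsilon' n$ would be eventually vacuous), the key input is the scaling limit of critical Galton--Watson trees conditioned on height. By the branching property, conditionally on the sub-tree of $\T^{i,\delta n}$ down to and including $y_i^{(n,\delta)}$ and on $y_i^{(n,\delta)}$ having a descendant at relative level $3\delta n/4$, the subtree $\overrightarrow{\T^{i,\delta n}}_{y_i^{(n,\delta)}}$ is an independent critical Galton--Watson tree conditioned on $H\ge\delta n/2$, while $\T^{i,\delta n}$ itself is a critical Galton--Watson tree conditioned on $H\ge 3\delta n/4$. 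By Aldous's invariance principle, each of these trees, after spatial rescaling by the conditioning height and mass rescaling by the square of that height, converges to a nondegenerate real tree with strictly positive total mass; hence the pair $\bigl(|\overrightarrow{\T^{i,\delta n}}_{y_i^{(n,\delta)}}|,|\T^{i,\delta n}|\bigr)$, rescaled by $n^{-2}$, converges jointly to a pair of a.s.\ positive variables and the ratio to a strictly positive limit.

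Consequently there exists $\omega$ with $\omega(\epsilon')\downarrow 0$ as $\epsilon'\downarrow 0$ such that for every fixed $i$,
\[
{\bf P}^{-\infty,\infty}\bigl[|\overrightarrow{\T^{i,\delta n}}_{y_i^{(n,\delta)}}|/|\T^{i,\delta n}|<\epsilon'\bigr]\le \omega(\epsilon')+o(1).
\]
Conditioning on $\{l(n,\delta)\le R\}$ (whose complement has probability $\le C'/(\delta R)$ by the first statement) and applying a union bound over $i\le l'(n,\delta)\le l(n,\delta)\le R$ then gives
\[
{\bf P}^{-\infty,\infty}\Bigl[\min_{i\le l'(n,\delta)}|\overrightarrow{\T^{i,\delta n}}_{y_i^{(n,\delta)}}|/|\T^{i,\delta n}|<\epsilon'\Bigr]\le R\,\omega(\epsilon')+o(1)+C'/(\delta R),
\]
and letting $n\to\infty$, then $\epsilon'\to 0$, then $R\to\infty$ concludes. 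The principal technical obstacle is the joint scaling limit above; identifying the biased law of the pair once $y_i^{(n,\delta)}$'s survival is conditioned on requires care, but the branching property at $y_i^{(n,\delta)}$ makes the inner subtree an independent conditioned Galton--Watson tree, so the joint convergence reduces to two applications of Aldous's (equivalently, Duquesne--Le Gall's) invariance principle for conditioned Galton--Watson trees.
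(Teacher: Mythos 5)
Your proposal is correct in spirit, and it takes a genuinely different route for the first bound. The paper proves the tightness of $l(n,\delta)$ by invoking the scaling limit of the height process of the IIC tree from \cite{angel2013scaling}, counting up-crossings of $[n(1-\delta/2),n]$, and separately controlling (via the Luka\v{c}iewicz path) that no up-crossings occur beyond a compact time window $[0,Mn^2]$. You instead compute ${\bf E}^{-\infty,\infty}[l(n,\delta)]$ directly from Kesten's spinal decomposition and the elementary many-to-one identity ${\bf E}[Z_m]\,\eta_h=\eta_h$, getting the quantitative bound $C/\delta$ uniformly in $n$ and concluding by Markov. This is more elementary and gives an explicit rate; the paper's route is a soft argument but fits naturally with its treatment of the second bound, which is also done via the height process. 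For that second bound, the paper identifies $|\overrightarrow{\T^{i,\delta n}}_{y_i^{(n,\delta)}}|$ and $|\T^{i,\delta n}|$ as excursion durations of $h_{\T^{\text{GW}}_\infty}$ above the levels $n-\delta n/2$ and $n-3\delta n/4$, and uses continuity of the limit process to conclude the durations (hence their ratios) are simultaneously bounded away from zero; you instead apply the Aldous/Duquesne--Le Gall invariance principle tree-by-tree and union-bound over $i\le R$ using your first-moment estimate. Both are valid; the encoding-process approach handles the \emph{joint} positivity of all the excursion durations in one stroke, whereas your union bound requires careful bookkeeping of the conditioning on the realized $y_i$'s (as you acknowledge), since the random index $i\le l'(n,\delta)$ introduces a selection effect. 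You are also right to flag the $\epsilon' n$ in the statement as a misprint for $\epsilon'$: the ratio $|\overrightarrow{\T^{i,\delta n}}_{y_i^{(n,\delta)}}|/|\T^{i,\delta n}|$ is $\Theta(1)$, not $\Theta(n)$; the paper's own proof contains the same normalization slip (it writes $n^{-1}|\cdot|$ where $n^{-2}|\cdot|$ is what converges to an excursion duration, consistent with the typo $n^{1/2}t$ for $n^2t$ in the time-scaling of the height process), and the way the lemma is invoked in the proof of Lemma~\ref{simple_LLN} confirms the intended form (the event there should read $\geq\epsilon_1$, not $<\epsilon' n$).
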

\begin{proof}
For the sake of simplicity we will prove the Lemma for the infinite tree $\T^{\text{GW}}_\infty$ instead of the bi-infinite $\T^{\text{GW}}_{-\infty,\infty}$. The proofs can be easily adapted to $\T^{\text{GW}}_{-\infty,\infty}$. 

Let $v_0=\text{root}< v_1<\dots$ be the vertices of the infinite tree $\T^{\text{GW}}_{\infty}$ indexed in increasing lexicographical order. Let 
\[h_{\T^{\text{GW}}_{\infty}}(i)=d^{\T^{\text{GW}}_{\infty}}(\text{root},v_i)\]
be the \emph{height process} associated to $\T^{\text{GW}}_\infty$. We extend the domain of definition of $h_{\T^{\text{GW}}_{\infty}}$ to $[0,\infty)$ through linear interpolation.
By part (ii) of Theorem 1.6 in \cite{angel2013scaling} we get that
\begin{equation}\label{eq:goodman}
(n^{-1}h_{\T^{\text{GW}}_{\infty}}(n^{1/2}t))_{t\geq0}\stackrel{n\to\infty}{\to}(\gamma^{-1/2}(2B_t-3\underline{B}_t))_{t\geq0},
\end{equation}
in distribution in the topology of uniform convergence in compact sets, where $\gamma$ is a positive constant, $B$ is a standard Brownian motion and $\underline{B}_t=\min_{s\leq t}B_t$. 

On the other hand, it is clear that $l(n,\delta)$ equals the number of up-crossings of $[n-\frac{\delta}2 n,n]$ by $h_{\T_{\infty}^{\text{GW}}}$.
We will argue that $l(n,\epsilon)$ converges in distribution, as $n\to\infty$, to the number of up-crossings of $[1-\frac{\delta}{2},1]$ by $\gamma^{-1/2}(2B_t-3\underline{B})$.
 

For each $i\in\N$, let $\mathcal{B}_i$ be the branch emerging from the $i$-th vertex of the backbone of $\T^{\text{GW}}_{\infty}$. Let $U_i$ be the Lukaciewicz  path\footnote{ For the definition of Lukaciewicz path, we refer to Section 1.1 of \cite{le2005random}} of $\mathcal{B}_i$ and $U_{\T^{\text{GW}}_\infty}:\N\to\R$ be the Lukaciewicz  path corresponding to the sequence of trees $(\mathcal{B}_i)_{i\in\N}$, i.e., $U_{\T^{\text{GW}}_\infty}$ is obtained as the concatenation of the $(U_i)_{i\in\N}$. We extend the domain of definition of $U_{\T^{\text{GW}}_\infty}$ to $[0,\infty)$ by linear interpolation. It is clear that the index $j$ of the branch $\mathcal{B}_j$ containing $v_i$ (the $i$-th vertex of $\T^{\text{GW}}_\infty$) is $-\min_{j\leq i} U_{\T^{\text{GW}}_\infty}(j)$. As in the proof of Corollary 4.5 in \cite{angel2013scaling} we have that
\begin{equation}
(n^{-1}\min_{s\leq t}U_{_{\T^{\text{GW}}_\infty}}(n^{1/2}s))_{t\geq0}\stackrel{n\to\infty}{\to}(\frac{1}{m}\underline{B}_t)_{t\geq0},
\end{equation}
in distribution in the topology of uniform convergence over compacts, where $m$ is a positive constant. 
From the display above it can be deduced that
\begin{equation}
\lim_{M\to\infty}\limsup_{n\in\N}\bold{P}[-\min_{i\leq M  n^2}U_{\T^{\text{GW}}_\infty}(i) \leq n]=0.
\end{equation}

Therefore, for all $\eta>0$ there exists $M$ such that 
\[\limsup_{n\in\N}\bold{P}[-\min_{i\leq M  n^2}U_{\T^{\text{GW}}_\infty}(i) \leq n]\leq \eta .\]
It is clear that in the event $\{-\min_{i\leq M  n^2}U_{\T^{\text{GW}}_\infty}(i) > n\}$, all the vertices with indices greater than $Mn^{1/2}$ are on a tree whose index is greater than $n$. In particular, the distance to the root of the IIC of all those vertices is greater than $n$.
 Therefore, letting $l(n,\delta\vert \alpha)$ be the number of up-crossings of $[n-\frac{\delta}2 n,n]$ by $h_{\T_{\infty}^{\text{GW}}}$ in the interval $[\alpha,\infty]$, we get that
  \begin{equation}\label{eq:dasni}
\limsup_{n\to\infty}\bold{P}[l(n,\delta\vert Mn^2)\neq0 ]\leq \eta.
 \end{equation}
 
 On the other hand, it follows from \eqref{eq:goodman} that the number of up-crossings of $[n(1-\frac{\delta}{2}),n]$ by $h_{\T^{\text{GW}}_{\infty}}$ in $[0,Mn^2]$ converges to the number of up-crossings of $[1-\frac{\delta}{2},1]$ by $\gamma^{-1/2}(2B_t-3\underline{B}_t)$ in $[0,M]$. Since $\gamma^{-1/2}(2B_t-3\underline{B})$ is continuous, the latter quantity is finite. Therefore, it follows that
 \[
\lim_{R\to\infty}\limsup_{n\to\infty}\bold{P}[l(n,\delta)-l(n,\delta\vert Mn^2)\geq R]=0.
 \]
 
 This, together with \eqref{eq:dasni} implies that
\[
\lim_{R\to\infty}\limsup_{n\to\infty}\bold{P}[l(n,\delta)\geq R]=0.
 \]
 
That shows the first claim of the Lemma. 
 
For the second claim, we reason as follows:
Using, as above, that each $y_i^{(n,\epsilon)}$ is associated to an up-crossing of the interval $[n(1-\frac{\epsilon}{2}),n]$ by $h_{\T^{\text{GW}}_{\infty}}$, we see that $\abs{\overrightarrow{\T^{i,\delta n}}_{y_i^{(n,\delta)}}}$ equals the duration of the excursion above level $n$ corresponding to the said up-crossing.
Again, by the convergence of $h_{\T^{\text{GW}}_\infty}$ in display \eqref{eq:goodman}, we get that $n^{-1}\abs{\overrightarrow{\T^{i,\delta n}}_{y_i^{(n,\delta)}}}$ converges in distribution of the duration of the excursion of $\gamma^{-1/2}(2B-3\underline{B})$ associated to an up-crossing of $[1-\delta/2,1]$. Analogously $n^{-1}\abs{\T^{i,\delta n}}$ converges in distribution of the duration of the excursion of $\gamma^{-1/2}(2B-3\underline{B})$ associated to an up-crossing of $[1-3\delta/4,1]$. Since $\gamma^{-1/2}(2B-3\underline{B})$ is continuous, the minimum duration of those excursions is positive. The result follows. 
\end{proof}

Let us go back to the proof of Lemma~\ref{simple_LLN}
\begin{proof}
Using Lemma~\ref{simple_LLN1}, it will be sufficient to show that for any $\epsilon>0$
\begin{equation}\label{august1}
\limsup_{n\to \infty} {\bf P}^{-\infty,\infty}\Bigl[\frac{\abs{\overline{R}_{\text{eff}}^{-\infty,\infty}(0, \alpha(n))-R_{\text{eff}}^{-\infty,\infty}(0, \alpha(n))}}n>\epsilon\Bigr]=0.
\end{equation}

On the one hand, by Rayleigh's monotonicity principle, we can see that 
\[
R_{\text{eff}}^{-\infty,\infty}(0, \alpha(n))\leq \overline{R}_{\text{eff}}^{-\infty,\infty}(0, \alpha(n)).
\]

\vspace{0.5cm}

{\it Step 1: construction of a subgraph which is sufficient to evaluation the resistance }

\vspace{0.5cm}

For the other inequality we start by fixing $\delta\in(0,\frac 12)$. By Lemma~\ref{neglect_bubble}, we know that with probability going to $1$ we have that $\max_{i\in [-n,n]} (\gamma_{i+1}-\gamma_i) \leq n^{(1-\epsilon')}$ for $\epsilon'>0$, which implies that  there exists a pivotal point $\alpha(y_{\delta})$ with  $n-\delta n \leq y_{\delta} \leq n -\frac{\delta}2 n$. Obviously, by Rayleigh's monotonicity principle and the fact that $\alpha(y_{\delta})$ is a  pivotal point lying between $0$ and $\alpha(n)$, we have that
\begin{equation}\label{olamexicanoinfty}
R^{-\infty,\infty}(0, \alpha(y_{\delta}))\leq  R^{-\infty,\infty}(0,  \alpha(n)) \leq  R^{-\infty,\infty}(0, \alpha(y_{\delta})) +\delta n.
 \end{equation}

  We will now turn our gaze to $R^{-\infty,\infty}(0, \alpha(y_{\delta}))$. For this, we define  the graph 
 \begin{align*}
  \mathcal{B}_{\text{ubble}}(\delta)  :=  & \{z\in \omega_{-\infty,\infty},\ z\text{ is connected without using cut-points to } \alpha(k)  \\
& \text{ for some $0\leq k\leq n -\frac{\delta}2 n$}\},
 \end{align*}
 that is the graph formed by the bubbles containing the first $n -\frac{\delta}2 n$ point of the backbone starting from $0$.
 
 Take a cut-bond $e$ that is not in $ \mathcal{B}_{\text{ubble}}(\delta)$. It doesn't lie on any simple path from $0$ to $\alpha(y_{\delta})$, so when a current flows from $0$ to $y_{\delta}$  we know that the current travelling through $e$ is $0$ (this follows from Proposition 2.2.~in~\cite{lyons2005probability}). In particular this implies that 
\begin{equation}\label{chowchow}
\overline{R}^{-\infty,\infty}(0, \alpha(y_{\delta})\mid n)=
\overline{R}_{\mathcal{B}_{\text{ubble}}(\delta)}(0, \alpha(y_{\delta})\mid n),
\end{equation}
where we used a natural extension of the notation $\overline{R}^{-\infty,\infty}(\cdot,\cdot \mid n)$ to $\overline{R}_{\mathcal{B}_{\text{ubble}}(\delta)}(\cdot,\cdot \mid n)$ by looking at the resistance in the graph $\mathcal{B}_{\text{ubble}}$ intersected with the sub-graph of $\omega_{-\infty,\infty}$ generated by the images of points of $\T_{-\infty,\infty}^{\text{GW}}$ which are at a level lower than $n$. This also implies
\[
R^{-\infty,\infty}(0, \alpha(y_{\delta}))=
R_{\mathcal{B}_{\text{ubble}}(\delta)}(0, \alpha(y_{\delta})).
\]

The idea developed in this step is illustrated in Figure 7

  \begin{figure}
  \includegraphics[width=\linewidth]{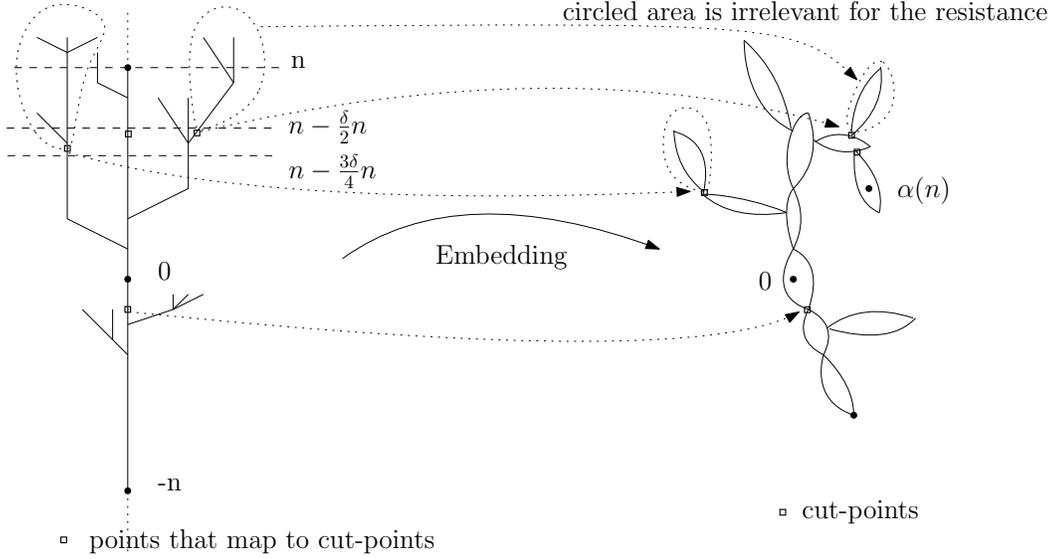}
  \caption{We do not need to know the whole tree in order to compute resistance between $0$ and $\alpha(n)$. If branches reaching level $n$ contain cut-points earlier on, we can estimate the resistance accurately without knowing what happens after level $n$.}
\end{figure}

\vspace{0.5cm}

{\it Step 2: Event on which we can apply the first computation from Step 1}

\vspace{0.5cm}

Introduce
\[
A_{\delta}(n):=\{\text{for any } z\in \T^{\text{GW}}_{-\infty,\infty} \text{ with } \abs{z}\geq n , \text{ we have } \phi_{\T^{\text{GW}}_{-\infty,\infty}}(z) \notin  \mathcal{B}_{\text{ubble}}(\delta)\},
\]
where $\phi_{\T^{\text{GW}}_{-\infty,\infty}}$ is the embedding of $ \T^{\text{GW}}_{-\infty,\infty} $ that yields $\omega_{-\infty,\infty}$. The previous equation means that $ \mathcal{B}_{\text{ubble}}(\delta)$  are only made of images of points in $\T^{\text{GW}}_{-\infty,\infty}$ whose generation is lower than $n$. The key observation is that on $A_{\delta}(n)$ we have
 \[
 R_{\mathcal{B}_{\text{ubble}}(y_{\delta})}(0, \alpha(y_{\delta}))=\overline{R}_{\mathcal{B}_{\text{ubble}}(y_{\delta})}(0, \alpha(y_{\delta}) \mid n),
 \]
 which implies by the two preceding equations that, on $A_{\delta}(n)$
 \begin{equation}\label{olamexicano2infty}
 \overline{R}^{-\infty,\infty}(0, \alpha(y_{\delta}) \mid n)=R^{-\infty,\infty}(0, \alpha(y_{\delta})).
 \end{equation}

Finally, we can notice that $y_{\delta}$ is a cut-point for the restricted subgraph generated by embedding the $n$ first levels of $\T^{\text{GW}}_{-\infty,\infty}$. This and Rayleigh's monotonicity principle imply that
\begin{equation}\label{olamexicano3infty}
 \overline{R}^{-\infty,\infty}(0, \alpha(y_{\delta})\mid n)\leq  \overline{R}^{-\infty,\infty}(0, \alpha(n))\leq  \overline{R}^{-\infty,\infty}(0, \alpha(y_{\delta})\mid n) +\delta n.
 \end{equation}

The combination of~\eqref{olamexicanoinfty}, \eqref{olamexicano2infty} and~\eqref{olamexicano3infty} imply that on $A_{\delta}(n)$, we see that
\[
\abs{\overline{R}^{-\infty,\infty}(0, \alpha(n))-R^{-\infty,\infty}(0, \alpha(n))} \leq 2\delta n.
\]


Hence, in order to prove the result, it will be enough to show that for any $\delta\in(0,\frac 12)$ we have that 
\[
 \limsup_{n\to\infty}{\bf P}^{-\infty,\infty}[A_{\delta}(n)]=0,
 \]
 i.e.~showing that the ancestors of $\alpha(n)$ which are at a generation lower than $n-\frac{\delta}2n$ are not in bubbles that contain images of points at a generation greater than $n$, except with vanishing probability.
 
Fix $\delta \in(0,\frac 12)$, write $y_0^{(n,\delta)},\dots, y_{l(n,\delta)}^{(n,\delta)}$ for the vertices at generation $n-\frac{\delta}2n$ that have descendants at generation $n$ such as introduced before Lemma~\ref{FUCKKKKKKKKKKKinfty}. For any $\epsilon>0$, we can choose, by Lemma~\ref{FUCKKKKKKKKKKKinfty}, $M<\infty$ and $\epsilon_1>0$ such that the event
\[
B(n)=\{l(n,\delta) \leq M\} \cap \Bigl\{ \min_{i=1,\ldots, l'(n,\delta)} \abs{\overrightarrow{\T^{i,\delta n}}_{y_i^{(n,\delta)}}}/\abs{\T^{i,\delta n}} <\epsilon' n \Bigr\},
\]
verifies
 \begin{equation}\label{ave2a}
\limsup_{n\to \infty} {\bf P}^{-\infty,\infty}[B(n)^c ] <\epsilon.
\end{equation}

On $B(n)$, by virtue of~Lemma~\ref{neglect_bubble}, we have  that with probability going to $1$ there is a loopless cut-point of $y_0^{(n,\delta)}$ at distance less than $n^{(1-\epsilon')}$. For the other (at most $M$) vertices $y_i^{(n,\delta)}$, there are two cases.
\begin{enumerate} 
\item If the corresponding point $z_i^{(n,\delta)}$ is on the backbone then by~Lemma~\ref{neglect_bubble} with probability going to $1$ there is a loopless cut-point of $y_i^{(n,\delta)}$ at distance less than $\frac{3\delta}4n+n^{(1-\epsilon')}$ from $y_i^{(n,\delta)}$.
\item  If the corresponding point $z_i^{(n,\delta)}$ is not on the backbone (i.e.~$i\leq l'(n,\delta)$), we can successively choose uniform points $(U_j^{\delta n,i})_{j\geq 0}$ in the corresponding tree $\T^i_{\delta n}$ (which is a Galton-Watson tree conditioned to have height larger than $\frac{3\delta}4 n$). On $B(n)$, we know that at each step we have probability at least $\epsilon_1$ to pick a uniform point which has $y_i^{(n,\delta)}$ as an ancestor. Since there are at most $M$ such $y_i^{(n,\delta)}$, we know there exists $J<\infty$ such that
\begin{equation}\label{ave2b}
\limsup_{n\to \infty} {\bf P}^{-\infty,\infty}[B(n), \text{for any $i \leq l'(n,\delta)$, } y_i^{(n,\delta)}\prec U_j^{\delta n,i} \text{ for some $j\leq J$}]>1-\epsilon,
\end{equation}
and denote $C(n)^c$ the event $\{\text{for any $i \leq l(n,\delta)$, } y_i^{(n,\delta)}\prec U_j^{\delta n , i} \text{ for some $j\leq J$}\}$.
\end{enumerate}

Now, by applying Lemma~\ref{close_cut_point2} to the points $U_j^{\delta n,i }$ for $j\leq J$, we know that for some $\epsilon'>0$ we have
\begin{equation}\label{ave2c}
\lim_{n\to \infty} {\bf P}^{-\infty,\infty}\Bigl[ \max_{x\prec U_j^{\delta n,i}} (\abs{x}-\abs{\overline{\pi}_{\T^{i,\delta n}}(x)}) \geq \bigl(\frac 34 \delta n\bigr)^{(1-\epsilon')} \text{ for some $j\leq J$}\Bigr] =0,
\end{equation}
and call $D(n)^c$ the event in the previous line.

We can notice that on $B(n)\cap C(n) \cap D(n)$,
\begin{itemize}
\item the only paths leading to generation $n$ go through the points $ y_i^{(n,\delta)}$ for $ i \leq l(n,\delta)$,
\item $y_0^{(n,\delta)}$ and all $y_i^{(n,\delta)}$ for $i>l'(n,\delta)$ have an ancestor $\chi_i$ which is a loopless point at a distance which is less than $\frac{3\delta}4n+n^{(1-\epsilon')}$.
\item any $y_i^{(n,\delta)}$ is an ancestor of some $U_j^{\delta n,i}$ for $j\leq J$ and as such $y_i^{(n,\delta)}$  has an ancestor $\chi_i$ which is a cut-point (in the image of the tree $\T^i_{\delta n}$) at distance less than $n^{(1-\epsilon')}$. Hence this $\T^i_{\delta n}$-cut-point is at a generation at least $n-\frac{3\delta}4n -(\frac 34 \delta n)^{(1-\epsilon')}$.
\end{itemize}

Let us now argue that, on $B(n)\cap C(n) \cap D(n)$, all the $\chi_i$ are, with high probability, cut-points in $\omega_{-\infty,\infty}$. This statement is obvious for $\chi_0$ and $\chi_i$ for $i>l'(n\delta)$ since all  points with loopless image are cut-points. It remains to be proved that with high probability all $\chi_i$ for $1\leq i \leq l'(n,\delta)$ are cut-points. We know that $\chi_i$ is a $\T^i_{\delta n}$-cut-point, hence we only have to see how parts of $\omega_{-\infty,\infty}$ other than $\T^i_{\delta n}$ can prevent $\chi_i$ from being a cut-point.
\begin{itemize}
\item The part of the graph that are descendants of $\alpha(n)$ cannot prevent any $\chi_i$ from being a cut-point in $\omega_{-\infty,\infty}$ since there exists a loopless point separating $\chi_i$ and any descendant of $\alpha(n)$.
\item Similarly, if $\gamma_{-1} \geq -n$, then any points that are not descendants of $\alpha(-n)$ cannot prevent $\chi_i$ from being cut-points.
\item Recalling the notations at the beginning of Section~\ref{sect_res_est_not}, we denote $\mathcal{B}^{\geq (1-\frac{3\delta}2) n}(x)$ the image of the points at height larger that $(1-\frac{3\delta}2n)$ of a modified Galton-Watson tree $\mathcal{T}_*^{\text{GW}}(x)$ rooted at $x$. Then, if for all $-n\leq i,j \leq n$ we have  that $\mathcal{B}^{\geq (1-\frac{3\delta}2) n}(\alpha(i))\cap \mathcal{B} (\alpha(j)) =\emptyset$, then we know that no descendants of $\alpha(j)$ with $-n\leq j \leq n$ can prevent $\chi_i$ from being a cut-point (since it is at generation at least $n-\frac {3\delta}4 n -(\frac 34 \delta n)^{1-\epsilon'}$.
\end{itemize}

This means that 
\begin{align*}
& \limsup_{n\to\infty}{\bf P}^{-\infty,\infty}[B(n)\cap C(n) \cap D(n),\ \text{one of the $\chi_i$ is not a cut-point}] \\
\leq & \limsup_{n\to\infty}{\bf P}^{-\infty,\infty}[\gamma_{-1} \geq -n] \\
& \qquad \qquad + \limsup_{n\to\infty}{\bf P}^{-\infty,\infty}[\mathcal{B}^{\geq (1-\frac{3\delta}2) n}(\alpha(i))\cap \mathcal{B}(\alpha(j)) \neq \emptyset, \text{ for some $-n\leq i,j\leq n$}] \\
\leq & Cn^2 \max_{n\leq i,j\leq n} \limsup_{n\to\infty}{\bf P}^{-\infty,\infty}[\mathcal{B}^{\geq (1-\frac{3\delta}2) n}(\alpha(i))\cap \mathcal{B}(\alpha(j)) \neq \emptyset],
\end{align*}
where we used Lemma~\ref{tail_gamma} to say that $\limsup_{n\to\infty}{\bf P}^{-\infty,\infty}[\gamma_{-1} \geq -n]=0$.

Notice that $\mathcal{B}^{\geq (1-\frac{3\delta}2)n}(x)\subset \mathcal{B}(x)$, which means by Corollary~\ref{bound_q} for any $x,y\in \Z^d$
\begin{align*}
{\bf P}^{-\infty,\infty}[\mathcal{B}^{\geq (1-\frac{3\delta}2)n}(x)\cap \mathcal{B} (y) \neq \emptyset] & \leq  {\bf P}^{-\infty,\infty}[\mathcal{B}(x)\cap \mathcal{B} (y) \neq \emptyset]\\
& \leq C n^{-\frac{d-4}2},
\end{align*}
where $p_k(\cdot,\cdot)$ is the heat kernel of the simple random walk that is controlled by~\eqref{gauss_bound}. Hence, for $d>8$ we have that
\[
 \limsup_{n\to\infty}{\bf P}^{-\infty,\infty}[B(n)\cap C(n) \cap D(n),\ \text{one of the $\chi_i$ is not a cut-point}]=0.
 \]

On the former event we know that any path from level $n-\delta n$ that reaches level $n$ has to cross one of the cut-points $\chi_i$ (on the path to some $y_i^{(n,\delta)}$) with $ i \leq l(n,\delta)$. This is not compatible with $A_{\delta}(n)$. This means, by the previous equation, \eqref{ave2a}, \eqref{ave2b} and~\eqref{ave2c}, that
  \begin{align*}
  &  \limsup_{n\to\infty}{\bf P}^{-\infty,\infty}[A_{\delta}(n)] \\
  \leq & \limsup_{n\to \infty} {\bf P}^{-\infty,\infty}[B(n)^c ]+ \limsup_{n\to \infty} {\bf P}^{-\infty,\infty}[B(n),C(n)^c] + \limsup_{n\to \infty} {\bf P}^{-\infty,\infty}[B(n),C(n),D(n)^c]  \\ 
  & \qquad + \limsup_{n\to\infty}{\bf P}^{-\infty,\infty}[B(n)\cap C(n) \cap D(n),\ \text{all $\chi_i$ are not cut-points}] \\
  \leq & 2\epsilon
  \end{align*}
  for any $\epsilon>0$. Since $\epsilon$ is arbitrary this means that
  \[
  \limsup_{n\to\infty}{\bf P}^{-\infty,\infty}[A_{\delta}(n)]=0,
  \]
  which is what we needed to complete the proof.
\end{proof}

\subsection{Resistance estimate on the IICBRW}

We are now transferring our results to the IICBRW.
\begin{lemma}
 \label{res_LLN}
Take $d>10$. Then we have
\[
\lim_{n\to \infty}\frac{\overline{R}^{\infty}_{\text{eff}}(0, \alpha(n))}n= \rho_1 \qquad {\bf P}_{\infty}-\text{a.s.}
\]
and
\[
\lim_{n\to \infty}\frac{\overline{d}_{\omega_{\infty}}(0, \alpha(n))}n= \rho_2 \qquad {\bf P}_{\infty}-\text{a.s.}
\]
\end{lemma}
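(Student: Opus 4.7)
The plan is to transfer the resistance estimate for the IBICBRW (Lemma~\ref{simple_LLN}) to the IICBRW via the stochastic-domination coupling of~\eqref{stoch_dom}. On a common probability space I realize both $\omega_\infty$ and $\omega_{-\infty,\infty}$ so that $\omega_\infty$ is exactly the positive-half subgraph of $\omega_{-\infty,\infty}$: the positive backbone $(\alpha(j))_{j\geq 0}$ and its attached bubbles $\mathcal{B}^{(j)}(\alpha(j))$, $j\geq 0$, are shared. The goal is then to show, under this coupling,
\[
\bigl|\overline{R}_{\text{eff}}^{\infty}(0,\alpha(n))-\overline{R}_{\text{eff}}^{-\infty,\infty}(0,\alpha(n))\bigr|=o(n)\qquad {\bf P}^{-\infty,\infty}\text{-a.s.},
\]
after which Lemma~\ref{simple_LLN} delivers the limit $\rho_1$. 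The same strategy will handle $\overline{d}_{\omega_\infty}$.

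The lower bound $\overline{R}_{\text{eff}}^\infty(0,\alpha(n))\geq\overline{R}_{\text{eff}}^{-\infty,\infty}(0,\alpha(n))$ is immediate from Rayleigh's monotonicity, since the truncation of $\omega_\infty$ to the first $n$ levels is a subgraph of that of $\omega_{-\infty,\infty}$. For the matching upper bound I would pick a positive pivotal point $\alpha(\gamma_1)$ of $\omega_{-\infty,\infty}$ close to the origin: by Lemma~\ref{neglect_bubble} one can take $\gamma_1\leq n^{1-\epsilon}$ eventually almost surely. Because $\omega_\infty$ is a subgraph of $\omega_{-\infty,\infty}$ and truncation only removes vertices and edges, the pivotal property of $\alpha(\gamma_1)$ persists: it remains a cut-point separating $0$ from $\alpha(n)$ in both truncated graphs whenever $\gamma_1<n$. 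The series law then yields, for each ambient graph $G\in\{\omega_\infty,\omega_{-\infty,\infty}\}$,
\[
\overline{R}_{\text{eff}}^{G}(0,\alpha(n))=\overline{R}_{\text{eff}}^{G}(0,\alpha(\gamma_1)\mid n)+\overline{R}_{\text{eff}}^{G}(\alpha(\gamma_1),\alpha(n)\mid n),
\]
with the first summand bounded by the graph distance $\gamma_1\leq n^{1-\epsilon}=o(n)$.

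The heart of the argument is the identity
\[
\overline{R}_{\text{eff}}^{-\infty,\infty}(\alpha(\gamma_1),\alpha(n)\mid n)=\overline{R}_{\text{eff}}^{\infty}(\alpha(\gamma_1),\alpha(n)\mid n).
\]
Since $\alpha(\gamma_1)$ is pivotal in $\omega_{-\infty,\infty}$, removing it disconnects every negative-side vertex from $\alpha(\gamma_1+1)$; the connected component of $\alpha(\gamma_1+1)$ in $\omega_{-\infty,\infty}\setminus\{\alpha(\gamma_1)\}$ therefore lies entirely in $\omega_\infty$, and in fact coincides with the connected component of $\alpha(\gamma_1+1)$ in $\omega_\infty\setminus\{\alpha(\gamma_1)\}$. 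The complementary "before" pieces of both $\omega_{-\infty,\infty}$ and $\omega_\infty$ are pure dead-ends for a current injected at $\alpha(\gamma_1)$ and extracted at $\alpha(n)$, and therefore do not contribute to the effective resistance. Combining the two series decompositions, this identity, and the $o(n)$ bound, and then invoking Lemma~\ref{simple_LLN}, yields the almost-sure convergence to $\rho_1$. The argument for $\overline{d}_{\omega_\infty}$ is structurally identical (and simpler, as graph distances split exactly at cut-points and shortest paths never enter a dead-end).

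The main delicate point is the "after-component coincidence" above: one must handle carefully the status of the bubble $\mathcal{B}^{(\gamma_1)}(\alpha(\gamma_1))$, which may lie on different sides of the cut in the two graphs depending on whether its non-root part intersects the negative side, and one must check that the truncation to the first $n$ levels neither destroys the pivotal property of $\alpha(\gamma_1)$ nor the dead-end analysis. Both issues are handled by the observation that any subgraph attached only to $\alpha(\gamma_1)$ carries no current and hence is irrelevant for the resistance between $\alpha(\gamma_1)$ and $\alpha(n)$.
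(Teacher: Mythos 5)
Your proposal is correct and follows essentially the same route as the paper: couple $\omega_\infty\subset\omega_{-\infty,\infty}$, get one inequality from Rayleigh's monotonicity, and for the other decompose at the first positive pivotal point $\alpha(\gamma_1)$, bounding the initial piece by $\gamma_1=o(n)$ and identifying the resistances beyond $\alpha(\gamma_1)$ in the two graphs because the portion before the pivotal point is a dead end carrying no current (the paper phrases this via acyclicity of the unit current and Thomson's principle). Your extra care about the truncation preserving the cut-point property and about the bubble at $\alpha(\gamma_1)$ is a welcome but minor refinement of the same argument.
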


\begin{proof}
Once again, we will only prove the first part which is the more complicated one.

Let us use the natural coupling between $\omega_{-\infty,\infty}$ (the IBICBRW)  and $\omega_{\infty}$ (the IICBRW) such that
$\omega_{\infty} \subset \omega_{-\infty,\infty}$ (see~\ref{stoch_dom}). We see that by Rayleigh's monotonicity principle that
\[
\overline{R}^{-\infty,\infty}_{\text{eff}}(0, \alpha(n))\leq \overline{R}^{\infty}_{\text{eff}}(0, \alpha(n)).
\]

We shall now look for a similar bound in the other direction. Recall that $\gamma_1$ denotes the first positive pivotal point in $\omega_{-\infty,\infty}$. This point is also a pivotal point for $\omega_{\infty}$ (although not necessarily the first one). Since the resistance is a metric on finite networks (see~\cite{lyons2005probability} exercise 2.68), we have
\begin{align*}
\overline{R}^{\infty}_{\text{eff}}(0, \alpha(n)\mid n) & \leq \overline{R}^{\infty}_{\text{eff}}(0, \alpha(\gamma_{1})\mid n) + \overline{R}^{\infty}_{\text{eff}}( \alpha(\gamma_{1}), \alpha(n)\mid n)  \\
& \leq \gamma_0 +\overline{R}^{\infty}_{\text{eff}}( \alpha(\gamma_{1}), \alpha(n)\mid n),
\end{align*}
where we used Rayleigh's monotonicity principle in the last line (indeed $0$ and $\alpha(\gamma_1)$ are connected by a path of length at most $\gamma_1$).

Furthermore, we let $i(\cdot)$ be the unit current (see~\cite{lyons2005probability} for an introduction to electrical networks) flowing from $\alpha(\gamma_1)$ to $\alpha(n)$ on $\omega_{-\infty,\infty}$. Since any current is acyclic (see Proposition 3.2.~of~\cite{lyons2005probability}), we know that $i(\cdot)$ does not flow through any edge between $0$ and $\alpha(\gamma_1)$. This implies, by Thompson's principle (see~\cite{lyons2005probability}), that
\[
\overline{R}^{\infty}_{\text{eff}}( \alpha(\gamma_{1}), \alpha(n))=\overline{R}^{-\infty,\infty}_{\text{eff}}( \alpha(\gamma_{1}), \alpha(n)).
\]

The last three equations imply that
\[
\abs{\overline{R}^{-\infty,\infty}_{\text{eff}}(0, \alpha(n))- \overline{R}^{\infty}_{\text{eff}}(0, \alpha(n))}\leq \gamma_1.
\]

We can now use Lemma~\ref{neglect_bubble} and Lemma~\ref{simple_LLN} to obtain the result.
\end{proof}

\subsection{Resistance on LCBRW conditioned on height} \label{proof_res0}

We are now going to transfer the results obtained for the IICBRW to the case of LCBRW conditioned on height. This part of the proof is slightly involved and is done in several steps.


\subsubsection{Transferring results from the IICBRW}

Let us start by a technical version of the result we are trying to prove. For $x\in \T^{\text{GW}}$ such that $d(o,x)\geq k$, we denote for $k'\leq k$ the unique ancestor of $x$ at level $k'$ by $\overleftarrow{x}^{k'}$, in particular $\overleftarrow{x}^0$ is the root.
\begin{lemma}\label{res_tech1}
For $\epsilon>0$, $\delta\in (0,1)$ and $\eta\in (0,1)$, we have for any $i\in \N$ such that $\eta \leq \eta+i\delta \leq 1-\eta$,
\[
\lim_{n\to \infty} {\bf Q}_n\Bigl[\abs{\frac{\overline{R}^{\omega_n}(0, \phi_{\T^{\text{GW}}_n}(\overleftarrow{U_1^n}^{(\eta+i\delta)n}))}{ (\eta+i\delta)n} -\rho_1} >\epsilon, \frac{d^{\omega_n}(0,U_1^n)}n\in [\eta +i\delta,\eta+(i+1)\delta)\Bigr]=0,
\]
and
\[
\lim_{n\to \infty} {\bf Q}_n\Bigl[\abs{\frac{\overline{d}_{\omega_n}(0, \phi_{\T^{\text{GW}}_n}(\overleftarrow{U_1^n}^{(\eta+i\delta)n}))}{ (\eta+i\delta)n} -\rho_2} >\epsilon, \frac{d^{\omega_n}(0,U_1^n)}n\in [\eta +i\delta,\eta+(i+1)\delta)\Bigr]=0,
\]
where $U_1^n$ is uniformly chosen among the vertices of $\T^{\text{GW}}_n$.
\end{lemma}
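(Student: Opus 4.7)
The strategy is to transfer the resistance estimate on the IICBRW from Lemma~\ref{res_LLN} to the ${\bf Q}_n$-setting using Lemma~\ref{transfer_cvg}. I will focus on the resistance statement; the graph-distance version is identical in structure, using the second assertion of Lemma~\ref{res_LLN}.

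First I reduce the event to something measurable with respect to the first $(1-\epsilon')n$ levels of $\T^{\text{GW}}_n$. Since $(\eta+i\delta)n\leq(1-\eta)n$, by subdividing $\delta$ if necessary I may assume $\delta<\eta/2$, so the whole interval $[(\eta+i\delta)n,(\eta+(i+1)\delta)n)$ lies in $[0,(1-\eta/2)n]$. The truncated resistance $\overline{R}^{\omega_n}(0,\phi_{\T^{\text{GW}}_n}(\overleftarrow{U_1^n}^{(\eta+i\delta)n}))$ and the ancestor $\overleftarrow{U_1^n}^{(\eta+i\delta)n}$ both depend only on the tree and its embedding truncated at level $(1-\eta/2)n$, provided one knows which vertex at that truncated level is the ancestor of $U_1^n$. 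The only remaining non-local dependence comes through the uniformly chosen $U_1^n$, which in principle depends on the full tree.

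To remove this non-locality I replace the ancestor of $U_1^n$ by a canonical "backbone-like" ancestor. Under ${\bf Q}_n$ one may select, e.g., the leftmost vertex $v_n$ at level $n$ of $\T^{\text{GW}}_n$ and denote by $v_n^{(k)}$ its ancestor at level $k$. On the event $\{|U_1^n|\geq(\eta+i\delta)n\}$ the two candidates coincide, $\overleftarrow{U_1^n}^{(\eta+i\delta)n}=v_n^{(\eta+i\delta)n}$, with probability tending to $1$; this is because, conditionally on the first $(1-\eta/2)n$ levels, a uniformly chosen vertex at high generation is, via Kesten's size-biased description of ${\bf Q}_n$, concentrated along the most populous infinite line of descent in the same way as the leftmost deep vertex. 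Equivalently, one may argue this using Lemma~\ref{transfer_cvg} applied to the event that two long branches emanate from a common ancestor above level $(\eta+i\delta)n$, whose ${\bf P}_\infty$-probability is controlled by standard branching estimates of Section~\ref{sect_be_brw}.

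Once the ancestor of $U_1^n$ is identified (up to $o(1)$ error) with the canonical $v_n^{(\eta+i\delta)n}$, the event in question becomes a function of the tree truncated at level $(1-\eta/2)n$, to which Lemma~\ref{transfer_cvg} applies directly. Under ${\bf P}_\infty$, the canonical backbone-ancestor $v_n^{(\eta+i\delta)n}$ corresponds to the IIC backbone vertex $\alpha((\eta+i\delta)n)$, so the transferred event reads
\[
\left|\frac{\overline{R}^{\infty}(0,\alpha((\eta+i\delta)n))}{(\eta+i\delta)n}-\rho_1\right|>\epsilon,
\]
and its probability vanishes by Lemma~\ref{res_LLN}. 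The principal difficulty is the middle step: arguing that the ancestral line of a uniform vertex in a conditioned critical Galton-Watson tree coincides with a distinguished deep-vertex backbone with high probability. This requires combining a spine-type decomposition of ${\bf Q}_n$ with the branch-size tail estimates already used to prove Lemma~\ref{close_cut_point}, and is the only nontrivial input beyond the ergodic machinery of Section~\ref{sect_ergo}.
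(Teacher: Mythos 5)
Your proof has a genuine gap at its central step. You claim that $\overleftarrow{U_1^n}^{(\eta+i\delta)n}=v_n^{(\eta+i\delta)n}$ with probability tending to $1$, where $v_n$ is the leftmost level-$n$ vertex. This is false: under ${\bf Q}_n$ the number of vertices at level $(\eta+i\delta)n$ tends to infinity, and conditionally on the tree truncated at that level the ancestor of a uniform vertex with $|U_1^n|$ in the prescribed range is uniform over all level-$(\eta+i\delta)n$ vertices; hence the probability it equals any one distinguished vertex such as $v_n^{(\eta+i\delta)n}$ tends to $0$, not $1$. The appeal to Kesten's size-biased description actually points the other way: size-biasing makes the backbone vertex of $\T^{\text{GW}}_\infty$ \emph{uniform} on each generation conditionally on the truncated tree; it does not say that any uniform vertex is concentrated on the backbone or on a "most populous" line (no such line exists in a critical tree). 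Your fallback argument via "two long branches from a common ancestor" is likewise irrelevant, since the ancestor of $U_1^n$ need not lie on any branch that reaches level $n$.

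The correct mechanism, used in the paper, is distributional rather than pathwise. One observes that conditionally on the first $(\eta+i\delta)n$ generations and on $d^{\omega_n}(0,U_1^n)/n\in[\eta+i\delta,\eta+(i+1)\delta)$, the vertex $\overleftarrow{U_1^n}^{(\eta+i\delta)n}$ is uniform on the level-$(\eta+i\delta)n$ vertices (by exchangeability of the descendant subtrees). This lets one bound the probability from above by the probability of the same resistance event with a \emph{fresh} uniform level-$(\eta+i\delta)n$ vertex $U^{(\eta+i\delta)n}$, independent of everything else. Under ${\bf P}_\infty$ the same conditional uniformity holds for the backbone vertex $\alpha((\eta+i\delta)n)$, so the ${\bf P}_\infty$-probability of the analogous event equals the probability involving $\alpha((\eta+i\delta)n)$, which vanishes by Lemma~\ref{res_LLN}. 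Since the event is measurable with respect to the first $(\eta+i\delta)n\leq(1-\eta)n$ levels, Lemma~\ref{transfer_cvg} transfers this to ${\bf Q}_n$. No identification of $\overleftarrow{U_1^n}^{(\eta+i\delta)n}$ with a specific "backbone" vertex is ever needed, and indeed none is possible. Your proposal would have to be restructured around this distributional matching rather than an almost-sure coincidence.
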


\begin{proof}
We will only prove the first equation, since both are proved in the same manner.

We start by noticing that conditionally on the first $(\eta+i\delta)n$ levels of $\T^{\text{GW}}_n$, the position of $\overleftarrow{U_1^n}^{(\eta+i\delta)n}$ (when $d^{\omega}(0,U_1^n)/n\in [\eta +i\delta,\eta+(i+1)\delta)$) is uniformly distributed on the vertices of $\T^{\text{GW}}_n$ at level $(\eta+i\delta)n$. 

We also recall that, by definition,  the  random variable $\overline{R}^{\omega_n}(0, \phi_{\T^{\text{GW}}_n}(\overleftarrow{U_1^n}^{(\eta+i\delta)n}))$ is measurable with respect to the first $(\eta+i\delta)n$ levels of $\T^{\text{GW}}_n$. This means that
\begin{align} \label{res_tech0}
& \limsup_{n\to \infty} {\bf Q}_n\Bigl[\abs{\frac{\overline{R}^{\omega_n}(0, \phi_{\T^{\text{GW}}_n}(\overleftarrow{U_1^n}^{(\eta+i\delta)n}))}{ (\eta+i\delta)n} -\rho_1} >\epsilon, \frac{d^{\omega_n}(0,U_1^n)}n\in [\eta +i\delta,\eta+(i+1)\delta)\Bigr] \\ \nonumber
\leq & \limsup_{n\to \infty} {\bf Q}_n\Bigl[\abs{\frac{\overline{R}^{\omega_n}(0, \phi_{\T^{\text{GW}}_n}(\overleftarrow{U_1^n}^{(\eta+i\delta)n}))}{ (\eta+i\delta)n} -\rho_1} >\epsilon\Bigr],
\end{align}
where $U^{(\eta+i\delta)n}$ is a uniform random variables on the vertices of $\T^{\text{GW}}_n$ at level $(\eta+i\delta)n$ that is independent of the structure of $\T^{\text{GW}}_n$ at other levels.

On the other hand, we know that for the IICBRW, conditionally on the first $(\eta+i\delta)n$ levels of $\T^{\text{GW}}_{\infty}$, the vertex $\alpha((\eta+i\delta)n)$ (the unique vertex of the backbone at level $(\eta+i\delta)n$) is uniform among the vertices of $\T^{\text{GW}}_{\infty}$ at level $(\eta+i\delta)n$. Hence
\[
{\bf P}_{\infty}\Bigl[\abs{\frac{\overline{R}^{\omega_{\infty}}(0, \phi_{\T^{\text{GW}}_{\infty}}(\overleftarrow{U_1^n}^{(\eta+i\delta)n}))}{ (\eta+i\delta)n} -\rho_1} >\epsilon\Bigr]={\bf P}_{\infty}\Bigl[\abs{\frac{\overline{R}^{\omega_{\infty}}(0, \alpha((\eta+i\delta)n))}{ (\eta+i\delta)n} -\rho_1} >\epsilon\Bigr],
\]
where $\overleftarrow{U_1^n}^{(\eta+i\delta)n}$ is a uniform random variables on the vertices of $\T^{\text{GW}}_{\infty}$ at level $(\eta+i\delta)n$ and independent of the rest of the tree. By Lemma~\ref{res_LLN}, the right hand side of the previous equation goes to $0$ as $n$ goes to infinity, which implies that
\[
\lim_{n\to \infty}{\bf P}_{\infty}\Bigl[\abs{\frac{\overline{R}^{\omega_{\infty}}(0, \phi_{\T^{\text{GW}}_{\infty}}(\overleftarrow{U_1^n}^{(\eta+i\delta)n}))}{ (\eta+i\delta)n} -\rho_1} >\epsilon\Bigr]=0.
\]

Now, we want to use Lemma~\ref{transfer_cvg} to transfer this result for the measure ${\bf Q}_n$. In order to use this result, we simply notice that, by the definition of $\overline{R}^{\omega_n}(0 , \cdot)$, the event $\Bigl\{\abs{\frac{\overline{R}^{\omega_n}(0, \phi_{\T^{\text{GW}}}(\overleftarrow{U_1^n}^{(\eta+i\delta)n}))}{ (\eta+i\delta)n} -\rho} >\epsilon\Bigr\}$ is measurable with respect to the first $(\eta+i\delta)n\leq(1-\eta)n$ levels of the tree. Thus we can combine the previous equation with Lemma~\ref{transfer_cvg} to see that 
\[
\lim_{n\to \infty} {\bf Q}_n\Bigl[\abs{\frac{\overline{R}^{\omega_n}(0, \phi_{\T^{\text{GW}}_n}(\overleftarrow{U_1^n}^{(\eta+i\delta)n}))}{ (\eta+i\delta)n} -\rho} >\epsilon\Bigr]=0,
\]
as well. Recalling~(\ref{res_tech0}), we obtain the result.
\end{proof}

\subsubsection{Relating $\overline{R}^{\omega_n}$ and $R^{\omega_n}$}

Let us start by proving a technical result. Fix $\delta>0$, write $y_1^{(n,\delta)},\ldots, y_{l(n,\delta)}^{(n,\delta)}$ for the vertices at generation $(\eta +(i-1/2) \delta)n $ that have descendants at generation $(\eta+i\delta)n$. If $\abs{U_1^n}<\eta$ we set $l(n,\epsilon)=0$.

  \begin{lemma} \label{FUCKKKKKKKKKKK1}
Fix $\delta>0$, we have for any $i\in \N$ such that $\eta \leq \eta+i\delta \leq 1-\eta$,
\begin{equation}\label{eq:barriewhit}
\limsup_{K\to \infty} \limsup_{n\to \infty}{\bf Q}_n[ l(n,\delta) >K]=0,
\end{equation}
and 
\begin{equation}\label{eq:lordqua}
\limsup_{\epsilon' \to 0}\limsup_{n\to \infty} {\bf Q}_n\Bigl[ \min_{i=1,\ldots, l(n,\delta)} \abs{\overrightarrow{\T^{\text{GW}}_n}_{y_i^{(n,\delta)}}} <\epsilon' n\Bigr] =0,
\end{equation}
where $\overrightarrow{\T^{\text{GW}}_n}_{y_i^{(n,\delta)}}$ is the tree composed of the descendants of $y_i^{(n,\delta)}$ in $\T^{\text{GW}}_n$.  
\end{lemma}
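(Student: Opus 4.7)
The strategy is to transfer both estimates to the IICBRW via Lemma~\ref{transfer_cvg}, then to deduce the IICBRW statements from the height-process excursion argument already carried out in the proof of Lemma~\ref{FUCKKKKKKKKKKKinfty}. The underlying picture is that $y_i^{(n,\delta)}$ indexes up-crossings of the interval $[(\eta+(i-1/2)\delta), (\eta+i\delta)]$ by the rescaled height process, whose limit is the continuous process $\gamma^{-1/2}(2B - 3\underline{B})$.

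First I would reduce both events to ones measurable with respect to the first $(1-\eta)n$ levels of $\T^{\text{GW}}_n$. For~\eqref{eq:barriewhit}, the quantity $l(n,\delta)$ only uses the tree up to generation $(\eta+i\delta)n \leq (1-\eta)n$, so $\{l(n,\delta) > K\}$ is already measurable. For~\eqref{eq:lordqua}, the subtree $\overrightarrow{\T^{\text{GW}}_n}_{y_i^{(n,\delta)}}$ may extend beyond generation $(1-\eta)n$, so I replace it by its truncation
\[
S_i^n := \bigl|\overrightarrow{\T^{\text{GW}}_n}_{y_i^{(n,\delta)}} \cap \{v \in \T^{\text{GW}}_n : |v|\leq (1-\eta)n\}\bigr|,
\]
which is measurable with respect to the first $(1-\eta)n$ levels and satisfies $S_i^n \leq |\overrightarrow{\T^{\text{GW}}_n}_{y_i^{(n,\delta)}}|$; hence a lower bound on $\min_i S_i^n$ is sufficient.

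Second, I would apply the quantitative form of Lemma~\ref{transfer_cvg} (namely ${\bf Q}_n[A] \leq (2/\eta)\, {\bf P}_\infty[A]$, which is immediate from the proof of that lemma with $\epsilon = \eta$) to reduce the two claims to the corresponding statements under ${\bf P}_\infty$:
\[
\limsup_{K\to\infty}\limsup_{n\to\infty} {\bf P}_\infty\bigl[l(n,\delta) > K\bigr] = 0,
\]
\[
\limsup_{\epsilon'\to 0}\limsup_{n\to\infty} {\bf P}_\infty\Bigl[\min_{1\leq i\leq l(n,\delta)} S_i^n < \epsilon' n\Bigr] = 0.
\]

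Third, both ${\bf P}_\infty$-estimates follow from the scaling limit~\eqref{eq:goodman} of the height process of $\T^{\text{GW}}_\infty$, by an argument identical to the one in the proof of Lemma~\ref{FUCKKKKKKKKKKKinfty}, with the target interval shifted from $[1-\delta/2,1]$ to $[\eta+(i-1/2)\delta,\ \eta+i\delta]$. The Lukasiewicz-path argument (as in~\eqref{eq:dasni}) shows that, up to a small probability, all vertices of $\T^{\text{GW}}_\infty$ at a generation below $(1-\eta)n$ have index at most $M n^2$ for some $M$ depending on $\eta$. On this event, $l(n,\delta)$ equals the number of up-crossings of $[\eta+(i-1/2)\delta,\ \eta+i\delta]$ by the rescaled height process restricted to $[0,M]$, which converges in distribution to the corresponding up-crossing count of the continuous limit $\gamma^{-1/2}(2B - 3\underline{B})$ on $[0,M]$; this count is a.s.\ finite by continuity, yielding the first bound. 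For the second bound, each $y_i^{(n,\delta)}$ corresponds to an excursion of the height process above $(\eta+(i-1/2)\delta)n$ that reaches level $(\eta+i\delta)n$, and the truncated subtree size $S_i^n$ equals the number of indices in this excursion while it remains below $(1-\eta)n$; by the same scaling, the associated rescaled duration converges to the Lebesgue measure of the corresponding excursion interval of the continuous limit, which is a.s.\ strictly positive. As there are a.s.\ only finitely many such excursions, their minimum is a.s.\ strictly positive, which is much stronger than the bound $S_i^n \geq \epsilon' n$ we need.

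\medskip

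The only genuinely new ingredient compared with Lemma~\ref{FUCKKKKKKKKKKKinfty} is the truncation step that makes $S_i^n$ measurable with respect to the first $(1-\eta)n$ levels so that Lemma~\ref{transfer_cvg} applies; this will be the main obstacle, but it is benign because $S_i^n$ is a lower bound on the true subtree size and the corresponding truncated excursion of the limit process still has positive duration almost surely.
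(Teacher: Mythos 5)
Your proof is correct, but it takes a genuinely different route from the paper. The paper works directly under $\mathbf{Q}_n$ by invoking the scaling limit of the height process of $\T^{\text{GW}}_n$ conditioned on $H(\T^{\text{GW}})\geq n$, namely $(n^{-1}h_{\T^{\text{GW}}_n}(\lfloor tn^2\rfloor))_{t\geq 0}\to(\tfrac{2}{\sigma_Z}\hat{\bold{e}}_t)_{t\geq0}$ from Corollary~1.13 of~\cite{le2005random}, where $\hat{\bold{e}}$ is a Brownian excursion conditioned on height exceeding $\sigma_Z/2$. Since $\hat{\bold{e}}$ is continuous with compact support, the up-crossing count of the fixed strip $[\eta+(i-1/2)\delta,\eta+i\delta]$ and the associated excursion durations are immediately controlled with no reduction to compact time intervals. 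You instead truncate the subtree sizes to make them measurable with respect to the first $(1-\eta)n$ levels, transfer to $\mathbf{P}_\infty$ via the quantitative form of Lemma~\ref{transfer_cvg}, and rerun the $\T^{\text{GW}}_\infty$ height-process argument from Lemma~\ref{FUCKKKKKKKKKKKinfty} — which additionally requires the Lukasiewicz-path localisation step (as in~\eqref{eq:dasni}) because the IIC height process limit $\gamma^{-1/2}(2B_t-3\underline{B}_t)$ is not compactly supported. Your argument reuses machinery already built for the bi-infinite case and needs no additional input from~\cite{le2005random}; the paper's is shorter because the conditioned-excursion limit makes both the truncation and the Lukasiewicz reduction unnecessary. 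Both are valid.
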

\begin{proof}
\emph{Proof of display \eqref{eq:barriewhit}}
Let $h_{\T^\text{GW}_n}$ be the height function associated to $\T^{\text{GW}}_n$ as in the proof of Lemma \ref{FUCKKKKKKKKKKKinfty} and extend its domain of definition to $[0,\infty)$ by setting $h_{\T^{\text{GW}}_n}(t)=0$ for $t\geq \abs{\T^{\text{GW}}_n}$.
It is not hard to notice that $l(n,\delta)$ equals the number of up-crossings of the interval $[(\eta +(i-1/2) \delta) n, (\eta +i \delta)n]$ by $h_{\T^{\text{GW}}_n}$. On the other hand, from Corollary 1.13 of \cite{le2005random} we have that
\begin{equation}\label{eq:olodu} (n^{-1}h_{\T^\text{GW}_n}(\lfloor tn^2\rfloor))_{t\geq 0} \stackrel{n\to\infty}{\to} (\frac{2}{\sigma_Z} \hat{\bold{e}}_t)_{t\geq0} 
\end{equation} in distribution, where $\hat{\bold{e}}$ is a Brownian excursion conditioned on having height larger than $\frac{\sigma_Z}{2}$ (and we recall that $\sigma_Z^2$ is the variance of the offspring distribution of $\T^{\text{GW}}$). Hence, $l(n,\delta)$ converges in distribution, as $n\to\infty$, to the number of up-crossings of $[\eta +(i-1/2) \delta, \eta +i \delta]$ by $\frac{2}{\sigma_Z}\hat{\bold{e}}$. But, since $\hat{\bold{e}}$ is continuous, the said number of up-crossings is almost surely finite. This finishes the proof.
 
\emph{Proof of display \eqref{eq:lordqua}} 
Using, as above, that each $y_j^{(n,\delta)}$ is associated to an up-crossing of the interval $[(\eta +(i-1/2) \delta) n, (\eta +i \delta)n]$ by $h_{\T^{\text{GW}}_n}$, we see that $\abs{\overrightarrow{\T^{\text{GW}}_n}_{y_j^{(n,\delta)}}}$ equals the duration of the excursion above level $(\eta +(i-1/2) \delta) n$ corresponding to the said up-crossing.
Again, by the convergence of $h_{\T^{\text{GW}}_n}$ in display \eqref{eq:olodu}, we get that $n^{-1}\abs{\overrightarrow{\T^{\text{GW}}_n}_{y_j^{(n,\delta)}}}$ converges in distribution of the duration of the excursion of $\frac{2}{\sigma_Z}\hat{\bold{e}}$ associated to an up-crossing of $[\eta +(i-1/2) \delta, \eta +i \delta]$. Since $\hat{\bold{e}}$ is uniformly continuous, the minimum duration of those excursions is positive. The result follows. 
\end{proof}

We have
  \begin{lemma} \label{ASH}
Fix $\delta>0$, we have for any $i\in \N$ such that $\eta \leq \eta+i\delta \leq 1-\eta$,
\begin{align*}
 & \limsup_{n\to\infty}{\bf Q}_n\Bigl[\abs{\overline{R}^{\omega_n}(0, \phi_{\T^{\text{GW}}_n}(\overleftarrow{U_1^n}^{(\eta+i\delta)n}))-R^{\omega_n}(0, \phi_{\T^{\text{GW}}_n}(\overleftarrow{U_1^n}^{(\eta+i\delta)n}))} \geq 2\delta n  \\
& \qquad \qquad \qquad \qquad\qquad \qquad \text{ and }\frac{d^{\omega_n}(0,U_1^n)}n\in [\eta +i\delta,\eta+(i+1)\delta)\Bigr]=0
\end{align*}
and
\begin{align*}
 & \limsup_{n\to\infty}{\bf Q}_n\Bigl[\abs{\overline{d}^{\omega_n}(0, \phi_{\T^{\text{GW}}_n}(\overleftarrow{U_1^n}^{(\eta+i\delta)n}))-d^{\omega_n}(0, \phi_{\T^{\text{GW}}_n}(\overleftarrow{U_1^n}^{(\eta+i\delta)n}))} \geq 2\delta n  \\
& \qquad \qquad \qquad \qquad\qquad \qquad \text{ and }\frac{d^{\omega_n}(0,U_1^n)}n\in [\eta +i\delta,\eta+(i+1)\delta)\Bigr]=0
\end{align*}
\end{lemma}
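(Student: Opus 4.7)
The plan is to adapt the strategy of Lemma~\ref{simple_LLN} to the finite conditioned setting. Write $L:=(\eta+i\delta)n$ and $z:=\phi_{\T^{\text{GW}}_n}(\overleftarrow{U_1^n}^{L})$. Rayleigh's monotonicity principle gives $R^{\omega_n}(0,z)\leq\overline{R}^{\omega_n}(0,z)$ (fewer edges means larger resistance), and the corresponding statement for graph distance is immediate, so I only need to prove the matching upper bounds up to additive error $2\delta n$. The distance statement is a simpler variant of the resistance one, so I describe the latter.

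Following the IBICBRW argument, I introduce the bubble set
\begin{align*}
\mathcal{B}_{\text{ubble}}(\delta):=\{&w\in\omega_n:\ w\text{ is connected to the image of some ancestor}\\
&\text{of }U_1^n\text{ at level }\leq L-\delta n/2\text{ without using cut-bonds}\},
\end{align*}
and the event $A_\delta(n)$ that no vertex of $\T^{\text{GW}}_n$ at level $\geq L$ is mapped into $\mathcal{B}_{\text{ubble}}(\delta)$. Since cut-bonds outside $\mathcal{B}_{\text{ubble}}(\delta)$ carry no current between $0$ and $z$, and since Lemma~\ref{close_cut_point2} produces, with ${\bf Q}_n$-probability tending to $1$, a loopless-image cut ancestor of $\overleftarrow{U_1^n}^L$ lying at distance $o(\delta n)$, Rayleigh monotonicity reduces both $R^{\omega_n}(0,z)$ and $\overline{R}^{\omega_n}(0,z)$, up to additive error $\delta n$, to the resistance inside $\mathcal{B}_{\text{ubble}}(\delta)$ computed, respectively, in the whole graph and in the image of the first $L$ levels. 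On $A_\delta(n)$ these two quantities coincide, giving the desired bound. The lemma therefore reduces to showing ${\bf Q}_n[A_\delta(n)^c]=o(1)$.

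For this I reproduce the two-layer scheme of Lemma~\ref{simple_LLN}. Let $y_1^{(n,\delta)},\ldots,y_{l(n,\delta)}^{(n,\delta)}$ be the level-$(L-\delta n/2)$ vertices with descendants at level $L$. By Lemma~\ref{FUCKKKKKKKKKKK1} I can fix $K<\infty$ and $\epsilon_1>0$ such that, with ${\bf Q}_n$-probability at least $1-\epsilon$, $l(n,\delta)\leq K$ and $\abs{\overrightarrow{\T^{\text{GW}}_n}_{y_j^{(n,\delta)}}}\geq\epsilon_1 n$ for every $j$. Sampling a bounded number $J$ of uniform vertices inside each such subtree and applying Lemma~\ref{close_cut_point2} to each one yields, for every $j$, a loopless-image ancestor $\chi_j$ of $y_j^{(n,\delta)}$ at distance at most $(3\delta n/4)^{1-\epsilon'}$ from $y_j^{(n,\delta)}$; this $\chi_j$ is a cut-point for the subtree rooted at the ancestor of $y_j^{(n,\delta)}$ at level $L-3\delta n/4$. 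The main obstacle is upgrading each $\chi_j$ to a cut-point of the whole $\omega_n$: a distant branch could in principle spoil the cut property. I handle this exactly as in Lemma~\ref{simple_LLN}, using the stochastic domination $\omega_n\prec\omega_{-\infty,\infty}$ of~\eqref{stoch_dom} to bound the bad event by ${\bf P}^{-\infty,\infty}[\mathcal{B}^{\geq L-3\delta n/4}(\alpha(i))\cap\mathcal{B}(\alpha(j))\neq\emptyset]$ summed over the $O(n^2)$ relevant pairs; Corollary~\ref{bound_q} together with the heat-kernel estimate~\eqref{gauss_bound} gives $Cn^{-(d-4)/2}$ per pair, which is summable under our hypothesis $d>14$. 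Once every $\chi_j$ is genuinely a cut-point of $\omega_n$, every path from level $\leq L-\delta n/2$ to level $\geq L$ must cross some $\chi_j$, which rules out the existence of a level-$\geq L$ vertex mapped into $\mathcal{B}_{\text{ubble}}(\delta)$ and thus establishes $A_\delta(n)$.
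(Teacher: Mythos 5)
Your argument diverges from the paper's in a consequential way. You transplant the sub-tree sampling and the subsequent ``upgrade $\chi_j$ to a genuine cut-point of $\omega_n$'' step from Lemma~\ref{simple_LLN}, but that extra step was needed there only because the uniform points $U_j^{\delta n,i}$ were sampled \emph{inside} a sub-Galton-Watson tree $\T^{i,\delta n}$, so that $\overline{\pi}_{\T^{i,\delta n}}$ produced a point whose image is loopless merely with respect to that sub-tree; one then had to check that the rest of $\omega_{-\infty,\infty}$ did not close a loop around it. In the ${\bf Q}_n$ setting the paper avoids the issue entirely: it samples the auxiliary points $U_j^n$ uniformly in the \emph{whole} tree $\T^{\text{GW}}_n$, and since $B(n)$ guarantees each $\overrightarrow{\T^{\text{GW}}_n}_{y_i^{(n,\delta)}}$ has a macroscopic fraction $\epsilon_1$ of the vertices, a bounded number $J$ of such samples hits every sub-tree with high probability. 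Lemma~\ref{close_cut_point2} then applies directly to the $U_j^n$ and returns a bound on $\max_{x\prec U_j^n}(\abs{x}-\abs{\overline{\pi}_{\T^{\text{GW}}_n}(x)})$, where $\overline{\pi}_{\T^{\text{GW}}_n}$ is the loopless-ancestor map for the \emph{entire} tree. By Remark~\ref{rem_strongcut} such points are automatically cut-points of $\omega_n$, so no verification via stochastic domination is needed.

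Your proposed verification step is moreover not clearly available in the ${\bf Q}_n$ setting. The coupling furnished by \eqref{stoch_dom} aligns the backbone of $\omega_{-\infty,\infty}$ with the (longest) spine of $\T^{\text{GW}}_n$, not with the root-to-$U_1^n$ path that is relevant here; the $\alpha(i)$, $\alpha(j)$ intersection events you want to union-bound are indexed by the IBICBRW backbone and do not directly dominate the bad events for your off-spine $\chi_j$. Even ignoring that mismatch, the step is unnecessary and would needlessly inflate the dimensional requirement (you invoke $d>14$; the paper's argument for this lemma only uses Lemma~\ref{close_cut_point2}, which requires $d>10$). The fix is simple: sample the auxiliary uniform points from all of $\T^{\text{GW}}_n$, apply Lemma~\ref{close_cut_point2} to them directly, and invoke Remark~\ref{rem_strongcut} to conclude each $\chi_j$ is a cut-point of $\omega_n$; the rest of your argument (bubble set, $A_\delta(n)$, Rayleigh sandwich, Lemma~\ref{FUCKKKKKKKKKKK1}) then matches the paper's proof.
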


This argument will be a version of the argument given to prove~\eqref{august1}.
\begin{proof}
As in the previous proofs, we will only prove the first statement which is the harder one. We assume that $d^{\omega}(0,U_1^n)/n\in [\eta +i\delta,\eta+(i+1)\delta)$.

By Lemma~\ref{close_cut_point2}, we know that with probability going to $1$ we have that $\max_{x\prec U_1^n} (\abs{x}-\abs{\overline{\pi}_{\T^{\text{GW}}_n}(x)}) \leq n^{(1-\epsilon')}$ for $\epsilon'>0$, which implies that  there exists a point with loopless image $y_{\delta} \prec \overleftarrow{U_1^n}^{(\eta+i\delta)n}$ such that $(\eta+(i-1)\delta)n \leq \abs{y_{\delta}} \leq (\eta+(i-1/2)\delta)n  $. Obviously, by Rayleigh's monotonicity principle and the fact that $\phi_{\T^{\text{GW}}_n}(y_{\delta})$ is (the image of a loopless point and hence) a cut-point lying between $0$ and $U_1^n$, we have that
\begin{equation}\label{olamexicano1}
R^{\omega_n}(0, \phi_{\T^{\text{GW}}_n}(y_{\delta}))\leq  R^{\omega_n}(0,  \phi_{\T^{\text{GW}}_n}(\overleftarrow{U_1^n}^{(\eta+i\delta)n})) \leq  R^{\omega_n}(0, \phi_{\T^{\text{GW}}_n}(y_{\delta})) +2\delta n.
 \end{equation}

  We are now going to evaluate $R^{\omega_n}(0, \phi_{\T^{\text{GW}}_n}(y_{\delta}))$. For this, we introduce  the graph 
 \begin{align*}
  \mathcal{B}_{\text{ubble}}(\delta)  :=  & \{z\in \omega,\ z\text{ is connected without using cut-points to } \phi_{\T^{\text{GW}}_n}(z')  \\
& \text{ for some $z'\prec \overleftarrow{U_1^n}^{(\eta+i\delta)n}$ with $\abs{z'} \leq (\eta+(i-1/2)\delta)n$}\},
 \end{align*}
 that is the graph formed by the bubbles containing the first $(\eta+(i-1/2)\delta)n$ ancestors of $U_1^n$ starting from $0$. 
 
 Take a cut-bond $e$ that is not in $ \mathcal{B}_{\text{ubble}}(\delta)$. It doesn't lie on any simple path from $0$ to $\phi_{\T^{\text{GW}}_n}(y_{\delta})$, so when a current flows from $0$ to $y_{\delta}$  we know that the current travelling through $e$ is $0$ (Proposition 2.2.~in~\cite{lyons2005probability}). In particular this implies that
\[
\overline{R}^{\omega_n}(0, \phi_{\T^{\text{GW}}_n}(y_{\delta})\mid (\eta+i\delta)n)=
\overline{R}_{\mathcal{B}_{\text{ubble}}(\delta)}(0, \phi_{\T^{\text{GW}}_n}(y_{\delta})\mid (\eta+i\delta)n),
\]
where we used a natural extension of the notation $\overline{R}^{\omega_n}$ (as in~\eqref{chowchow}), and also
\[
R^{\omega_n}(0, \phi_{\T^{\text{GW}}_n}(y_{\delta}))=
R_{\mathcal{B}_{\text{ubble}}(\delta)}(0, \phi_{\T^{\text{GW}}_n}(y_{\delta})).
\]

Introduce 
\[
A_{\delta}(n):=\{\text{for any } z\in \T^{\text{GW}}_n \text{ with } \abs{z}\geq (\eta+i\delta)n , \text{ we have } \phi_{\T^{\text{GW}}_n}(z) \notin  \mathcal{B}_{\text{ubble}}(\delta)\},
\]
 which means that $ \mathcal{B}_{\text{ubble}}(\delta)$  are only made of images of points in $\T^{\text{GW}}_n$ whose generation is lower than $(\eta+i\delta)n$. The key observation is that on $A_{\delta}(n)$ we have
 \[
 R_{\mathcal{B}_{\text{ubble}}(y_{\delta})}(0, \phi_{\T^{\text{GW}}_n}(y_{\delta}))=\overline{R}_{\mathcal{B}_{\text{ubble}}(y_{\delta})}(0, \phi_{\T^{\text{GW}}_n}(y_{\delta})\mid (\eta+i\delta)n),
 \]
 which implies by the two previous equations that, on $A_{\delta}(n)$
 \begin{equation}\label{olamexicano12}
 \overline{R}^{\omega_n}(0, \phi_{\T^{\text{GW}}_n}(y_{\delta})\mid (\eta+i\delta)n)=R^{\omega_n}(0, \phi_{\T^{\text{GW}}_n}(y_{\delta})).
 \end{equation}

Finally, we can notice that $y_{\delta}$ is a cut-point for the restricted subgraph generated by embedding the $(\eta+i\delta)n$ first levels of $\T^{\text{GW}}_n$. This and Rayleigh's monotonicity principle imply that
\begin{equation}\label{olamexicano13}
 \overline{R}^{\omega_n}(0, \phi_{\T^{\text{GW}}_n}(y_{\delta})\mid (\eta+i\delta)n)\leq  \overline{R}^{\omega_n}(0, \phi_{\T^{\text{GW}}_n}(\overleftarrow{U_1^n}^{(\eta+i\delta)n}))\leq  \overline{R}^{\omega_n}(0, \phi_{\T^{\text{GW}}_n}(y_{\delta})\mid (\eta+i\delta)n) +\delta n.
 \end{equation}

The combination of~\eqref{olamexicano1}, \eqref{olamexicano12} and~\eqref{olamexicano13} imply that on $A_{\delta}(n)$, we see that
\[
\abs{\overline{R}^{\omega_n}(0, \phi_{\T^{\text{GW}}_n}(\overleftarrow{U_1^n}^{(\eta+i\delta)n}))-R^{\omega_n}(0, \phi_{\T^{\text{GW}}_n}(\overleftarrow{U_1^n}^{(\eta+i\delta)n}))} \leq 2\delta n.
\]


Hence, in order to prove the result, it will be enough to show that for any $\delta>0$ we have that 
\[
 \limsup_{n\to\infty}{\bf Q}_n[A_{\delta}(n)]=0,
 \]
 i.e.~showing that the ancestors of $U_1^n$ which are at a generation lower than $(\eta+(i-1/2)\delta)n$ are not in bubbles that contain images of points at a generation greater than $(\eta+i\delta)n$, except with vanishing probability.
 
Fix $\delta>0$, write $y_1^{(n,\delta)},\dots, y_{l(n,\delta)}^{(n,\delta)}$ for the vertices at generation $(\eta+(i-1/2)\delta)n$ that have descendants at generation $(\eta+i\delta)n$. For any $\epsilon>0$, we can choose, by Lemma~\ref{FUCKKKKKKKKKKK1}, $M<\infty$ and $\epsilon_1>0$ such that the event
\[
B(n)=\{l(n,\delta) \leq M\} \cap \Bigl\{\min_{i=1,\ldots, l(n,\delta)} \abs{\overrightarrow{\T^{\text{GW}}_n}_{y_i^{(n,\delta)}}} \geq \epsilon_1 n\Bigr\},
\]
verifies
 \[
\limsup_{n\to \infty} {\bf Q}_n[B(n)^c ] <\epsilon.
\]

On $B(n)$, by successively choosing uniform points $(U_j^n)_{j\geq 0}$ in the tree, we know that at each step we have probability at least $\epsilon_1$ to pick a uniform point which has $y_i^{(n,\delta)}$ as an ancestor. Since there are at most $M$ such $y_i^{(n,\delta)}$, we know there exists $J<\infty$ such that
\[
\limsup_{n\to \infty} {\bf Q}_n[B(n), \text{for any $i \leq l(n,\delta)$, } y_i^{(n,\delta)}\prec U_j^n \text{ for some $j\leq J$}]>1-\epsilon,
\]
and denote $C(n)^c$ the event $\{\text{for any $i \leq l(n,\delta)$, } y_i^{(n,\delta)}\prec U_j^n \text{ for some $j\leq J$}\}$.

Now, by applying Lemma~\ref{close_cut_point2} to the points $U_j^n$ for $j\leq J$, we know that for some $\epsilon'>0$ we have
\[
\lim_{n\to \infty} {\bf Q}_n\Bigl[ \max_{x\prec U_j^n} (\abs{x}-\abs{\overline{\pi}_{\T^{\text{GW}}_n}(x)}) \geq n^{(1-\epsilon')} \text{ for some $j\leq J$}\Bigr] =0.
\]

But we can notice that if $B(n)$ and $C(n)$ occur along with $\max_{x\prec U_j^n} (\abs{x}-\abs{\overline{\pi}_{\T^{\text{GW}}_n}(x)}) \leq n^{(1-\epsilon')} \text{ for all $j\leq J$}$, then we can see that 
\begin{itemize}
\item the only paths leading to generation $U_1^n$ go through the points $ y_i^{(n,\delta)}$ for $ i \leq l(n,\delta)$,
\item any $y_i^{(n,\delta)}$ is an ancestor of some $U_j^n$ for $j\leq J$ and as such $y_i^{(n,\delta)}$  has an ancestor which is a cut-point at distance less than $n^{(1-\epsilon')}$ (and hence this cut-point is at a generation at least $(\eta+i\delta)n -n^{(1-\epsilon')}$)
\end{itemize}

  This means that any path from level $(\eta+(i-1/2)\delta)n$ that reaches level $(\eta+i\delta)n$ has to cross one of the cut-points on the path to some $y_i^{(n,\delta)}$ with $ i \leq l(n,\delta)$. This is not compatible with $A_{\delta}(n)$. This means that
  \begin{align*}
  &  \limsup_{n\to\infty}{\bf Q}_n[A_{\delta}(n)] \\
  \leq & \limsup_{n\to \infty} {\bf Q}_n[B(n)^c ]+ \limsup_{n\to \infty} {\bf Q}_n[B(n),C(n)^c] \\ 
  & \qquad + \lim_{n\to \infty} {\bf Q}_n\Bigl[ \max_{x\prec U_j^n} (\abs{x}-\abs{\overline{\pi}_{\T^{\text{GW}}_n}(x)}) \geq n^{(1-\epsilon')} \text{ for some $j\leq J$}\Bigr] \\
  \leq & 2\epsilon
  \end{align*}
  for any $\epsilon>0$. Since $\epsilon$ is arbitrary this means that
  \[
  \limsup_{n\to\infty}{\bf Q}_n[A_{\delta}(n)]=0,
  \]
  which is what we needed to complete the proof.
\end{proof}

\subsubsection{A first result on the proportionality of the resistance and the tree distance}

We can deduce from the previous lemma
\begin{lemma}\label{res_tech}
For $\epsilon>0$ and $\eta\in (0,1)$, we have 
\[
\lim_{n\to 0} {\bf Q}_n\Bigl[\abs{\frac{R^{\omega_n}(0, \phi_{\T^{\text{GW}}_n}(U_1^n))}{ d^{\T^{\text{GW}}_n}(0,U_1^n) } -\rho_1} >\epsilon, \frac{d^{\T^{\text{GW}}_n}(0,U_1^n)}n\in [\eta,1-\eta]\Bigr]=0,
\]
and
\[
\lim_{n\to \infty} {\bf Q}_n\Bigl[\abs{\frac{d_{\omega_n}(0, \phi_{\T^{\text{GW}}_n}(U_1^n))}{ d^{\T^{\text{GW}}_n}(0,U_1^n)} -\rho_2} >\epsilon, d^{\T^{\text{GW}}_n}(0,U_1^n)\in [\eta ,1-\eta]\Bigr]=0,
\]
where $U_1^n$ is uniformly chosen among the vertices of $\T^{\text{GW}}_n$.
\end{lemma}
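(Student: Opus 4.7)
The plan is to prove both statements by the same chain of approximations; I describe the argument for the resistance claim, since the distance claim follows by the identical argument with $(R^{\omega_n}, \overline{R}^{\omega_n}, \rho_1)$ replaced by $(d_{\omega_n}, \overline{d}_{\omega_n}, \rho_2)$.

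First I will fix $\epsilon, \eta > 0$ and introduce a small parameter $\delta > 0$ whose value will be chosen at the end. Discretize the range $[\eta, 1-\eta]$ into sub-intervals $I_i := [\eta + i\delta, \eta + (i+1)\delta)$ for $0 \le i \le \lfloor(1-2\eta)/\delta\rfloor$. Since there are only $O(1/\delta)$ such intervals, a union bound reduces the claim to showing, for each fixed $i$, that
\[
{\bf Q}_n\Bigl[\abs{\tfrac{R^{\omega_n}(0, \phi_{\T^{\text{GW}}_n}(U_1^n))}{d^{\T^{\text{GW}}_n}(0,U_1^n)} - \rho_1} > \epsilon,\ \tfrac{d^{\T^{\text{GW}}_n}(0,U_1^n)}{n} \in I_i\Bigr] \to 0
\]
as $n\to\infty$. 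Write $v_i := \overleftarrow{U_1^n}^{\lfloor (\eta+i\delta)n \rfloor}$ for the ancestor of $U_1^n$ at tree level $\lfloor(\eta+i\delta)n\rfloor$.

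The plan is then to approximate $R^{\omega_n}(0, \phi_{\T^{\text{GW}}_n}(U_1^n))$ by $\rho_1 (\eta + i\delta) n$ through a three-step chain. First I will use that the effective resistance is a metric (Exercise 2.68 of~\cite{lyons2005probability}) together with Rayleigh's monotonicity principle applied to the single tree path joining $v_i$ and $U_1^n$, to get
\[
\abs{R^{\omega_n}(0, \phi_{\T^{\text{GW}}_n}(U_1^n)) - R^{\omega_n}(0, \phi_{\T^{\text{GW}}_n}(v_i))} \leq R^{\omega_n}(\phi_{\T^{\text{GW}}_n}(v_i), \phi_{\T^{\text{GW}}_n}(U_1^n)) \leq d^{\T^{\text{GW}}_n}(v_i, U_1^n) \leq \delta n
\]
deterministically on the event of interest. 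Second, Lemma~\ref{ASH} yields, with ${\bf Q}_n$-probability $1-o(1)$, the bound $\abs{R^{\omega_n}(0, \phi_{\T^{\text{GW}}_n}(v_i)) - \overline{R}^{\omega_n}(0, \phi_{\T^{\text{GW}}_n}(v_i))} \leq 2\delta n$, which passes from the full resistance to the resistance computed in the first $\lfloor(\eta+i\delta)n\rfloor$ levels only. Third, Lemma~\ref{res_tech1} ensures that, for any pre-fixed $\epsilon' > 0$, one has $\abs{\overline{R}^{\omega_n}(0, \phi_{\T^{\text{GW}}_n}(v_i)) - \rho_1 (\eta + i\delta) n} \leq \epsilon' (\eta + i\delta) n$ with probability $1 - o(1)$, which is the law-of-large-numbers content inherited from the IICBRW.

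Combining the three bounds gives, with probability $1 - o(1)$, that $\abs{R^{\omega_n}(0, \phi_{\T^{\text{GW}}_n}(U_1^n)) - \rho_1 (\eta+i\delta) n} \leq (3\delta + \epsilon') n$. Since on the event in question $d^{\T^{\text{GW}}_n}(0, U_1^n) \geq \eta n$ and $\abs{d^{\T^{\text{GW}}_n}(0, U_1^n) - (\eta + i\delta) n} \leq \delta n$, dividing yields
\[
\Bigl| \tfrac{R^{\omega_n}(0, \phi_{\T^{\text{GW}}_n}(U_1^n))}{d^{\T^{\text{GW}}_n}(0, U_1^n)} - \rho_1 \Bigr| \leq \tfrac{3\delta + \epsilon' + \rho_1 \delta}{\eta},
\]
and choosing $\delta$ and $\epsilon'$ small enough that the right-hand side is strictly less than $\epsilon$ completes the argument. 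The analytical substance has already been distilled into Lemmas~\ref{res_tech1} and~\ref{ASH}, so the main obstacle here is the careful discretization and the (mild) step of controlling $R^{\omega_n}(\phi_{\T^{\text{GW}}_n}(v_i), \phi_{\T^{\text{GW}}_n}(U_1^n))$ by the tree distance via Rayleigh's monotonicity.
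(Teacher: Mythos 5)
Your proposal follows the same approximation chain as the paper's proof: discretize the level of $U_1^n$ into intervals of width $\delta$, replace $U_1^n$ by its ancestor $v_i$ at the left endpoint (control the error by the triangle inequality for effective resistance plus Rayleigh's principle applied to the tree path), then pass from $R^{\omega_n}$ to $\overline{R}^{\omega_n}$ via Lemma~\ref{ASH}, and finally invoke Lemma~\ref{res_tech1} for the law-of-large-numbers step. The only cosmetic difference is that the paper anchors the discretization at $\eta/2$ and bounds the ratio directly (then demands the bound be $\leq \epsilon/3$), whereas you accumulate additive errors and divide at the end; the substance is identical.
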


\begin{proof}
We will only prove the first point, the second one being similar.

Let us now fix $ i\in \N $ with $\frac{\eta}2 \leq \frac{\eta}2+i\delta \leq 1-\frac{\eta}2$.  Since the embedding diminishes the distances, we know that $\phi(\overleftarrow{U_1^n}^{(\eta+i\delta)n})$ is at a distance of at most $\delta n$ of $\phi(U_1^n)$, when $d^{\T^{\text{GW}}_n}(0,U_1^n)/n\in [\frac{\eta}2 +i\delta,\frac{\eta}2+(i+1)\delta)$ (by the definition of $\overleftarrow{U_1^n}^{(\frac{\eta}2+i\delta)n}$). Hence, since the resistance is a metric we can see that, when  $d^{\T^{\text{GW}}_n}(0,U_1^n)/n\in [\frac{\eta}2 +i\delta,\frac{\eta}2+(i+1)\delta)$, we have
\[
\abs{R^{\omega_n}(0, \phi_{\T^{\text{GW}}_n}(U_1^n))- R^{\omega_n}(0, \phi_{\T^{\text{GW}}_n}(\overleftarrow{U_1^n}^{(\frac{\eta}2+i\delta)n}))}\leq\delta n,
\]
this implies, using $d^{\T^{\text{GW}}_n}(0,U_1^n)/n\in [\frac{\eta}2 +i\delta,\frac{\eta}2+(i+1)\delta)$, that
\begin{align*}
\frac{R^{\omega_n}(0, \phi_{\T^{\text{GW}}_n}(\overleftarrow{U_1^n}^{(\frac{\eta}2+i\delta)n}))-\delta n}{(\frac{\eta}2 +(i+1)\delta)n}
 & \leq \frac{R^{\omega_n}(0, \phi_{\T^{\text{GW}}_n}(U_1^n))}{d^{\T^{\text{GW}}_n}(0,U_1^n)} \\
  & \leq \frac{R^{\omega_n}(0, \phi_{\T^{\text{GW}}_n}(\overleftarrow{U_1^n}^{(\eta+i\delta)n}))+\delta n}{(\frac{\eta}2 +i\delta)n}.
\end{align*}

Hence, on $d^{\T^{\text{GW}}_n}(0,U_1^n)/n\in [\frac{\eta}2 +i\delta,\frac{\eta}2+(i+1)\delta)$, we have
\begin{align*}
& \abs{\frac{R^{\omega_n}(0, \phi_{\T^{\text{GW}}_n}(U_1^n))}{d^{\T^{\text{GW}}_n}(0,U_1^n)}-\frac{R^{\omega_n}(0, \phi_{\T^{\text{GW}}_n}(\overleftarrow{U_1^n}^{(\frac{\eta}2+i\delta)n}))}{(\frac{\eta}2 +i\delta)n}}\\
\leq & \frac{\delta}{(\eta/2)} +\frac{\delta}{(\frac{\eta}2+(i+1)\delta)(\frac{\eta}2+i\delta)n} R^{\omega_n}(0, \phi_{\T^{\text{GW}}_n}(\overleftarrow{U_1^n}^{(\frac{\eta}2+i\delta)n})) \\
\leq & \frac{\delta}{(\eta/2)} +\frac{\delta}{(\frac{\eta}2+(i+1)\delta)(\frac{\eta}2+i\delta)},
\end{align*}
where we used that $R^{\omega_n}(0, \phi_{\T^{\text{GW}}_n}(\overleftarrow{U_1^n}^{(\frac{\eta}2+i\delta)n})) \leq n$ by Rayleigh's monotonicity principle since $\phi_{\T^{\text{GW}}_n}$ diminishes distances and $\frac{\eta}2+i\delta\leq 1$.

Now, fix $\epsilon>0$ and choose $\delta$ small enough (depending on $\eta$ and $\epsilon$) such that $\frac{\delta}{(\eta/2)} +\frac{\delta}{((\eta/2)+\delta)((\eta/2)+\delta)}\leq \frac{\epsilon}3$, which implies that 
\[
\abs{\frac{R^{\omega_n}(0, \phi_{\T^{\text{GW}}_n}(U_1^n))}{d^{\T^{\text{GW}}_n}(0,U_1^n)}-\frac{R^{\omega_n}(0, \phi_{\T^{\text{GW}}_n}(\overleftarrow{U_1^n}^{(\frac{\eta}2+i\delta)n}))}{(\frac{\eta}2 +i\delta)n}}\leq \frac{\epsilon}3.
\]

We can then use Lemma~\ref{res_tech1} (applied with $\epsilon/3$ instead of $\epsilon$) to see that
\[
\lim_{n\to \infty} {\bf Q}_n\Bigl[\abs{\frac{\overline{R}^{\omega_n}(0, \phi_{\T^{\text{GW}}_n}(\overleftarrow{U_1^n}^{(\frac{\eta}2+i\delta)n}))}{ (\frac{\eta}2+i\delta)n} -\rho_1} >\frac{\epsilon}3, \frac{d^{\omega_n}(0,U_1^n)}n\in [\frac{\eta}2 +i\delta,\frac{\eta}2+(i+1)\delta)\Bigr]=0,
\]
 and Lemma~\ref{ASH} ( provided $2\delta \leq \frac{2\epsilon}{3\eta}$) to see that 
 \begin{align*}
 & \limsup_{n\to\infty}{\bf Q}_n\Bigl[\frac{\abs{\overline{R}^{\omega_n}(0, \phi_{\T^{\text{GW}}_n}(\overleftarrow{U_1^n}^{(\frac{\eta}2+i\delta)n}))-R^{\omega_n}(0, \phi_{\T^{\text{GW}}_n}(\overleftarrow{U_1^n}^{(\frac{\eta}2+i\delta)n}))}}{(\frac{\eta}2 +i \delta)n} \geq \frac{\epsilon} 3   \\
& \qquad \qquad \qquad \qquad\qquad \qquad \text{ and }\frac{d^{\omega_n}(0,U_1^n)}n\in [\frac{\eta}2 +i\delta,\frac{\eta}2+(i+1)\delta)\Bigr]=0
\end{align*}
 
 The last three equations imply that for any $i\in \N$ such that for $\delta$ small enough and $\frac{\eta}2 \leq \frac{\eta}2+i\delta \leq 1-\frac{\eta}2$ we have
\[
\lim_{n\to \infty} {\bf Q}_n\Bigl[\abs{\frac{R^{\omega_n}(0, \phi_{\T^{\text{GW}}_n}(U_1^n))}{d^{\T^{\text{GW}}_n}(0,U_1^n)} -\rho_1} >\epsilon, d^{\omega}(0,U_1^n)\in [\frac{\eta}2 +i\delta,\frac{\eta}2+(i+1)\delta)\Bigr]=0,
\]
which implies the lemma by summing over $i$ (with $\frac{\eta}2 \leq \frac{\eta}2+i\delta \leq 1-\frac{\eta}2$) if we impose further that $\delta<\frac{\eta}2$.
\end{proof}

\subsubsection{Proving the proportionality of the distance and the resistance distance for LCBRW conditioned on height}

Write 
\begin{equation}\label{de_longA}
A(\eta):=\Bigl\{\abs{\frac{R^{\omega_n}(0, \phi_{\T^{\text{GW}}_n}(U_1^n))}{ d^{\T^{\text{GW}}_n}(0,U_1^n)} -\rho_1} >\epsilon, \frac{d^{\T^{\text{GW}}_n}(0,U_1^n)}{H(\T^{\text{GW}}_n)}\in [\eta ,1-\eta ]\Bigr\},
\end{equation}
and
\begin{equation}\label{de_longB}
B(\eta):=\Bigl\{\abs{\frac{d_{\omega_n}(0, \phi_{\T^{\text{GW}}_n}(U_1^n))}{ d^{\T^{\text{GW}}_n}(0,U_1^n)} -\rho_2} >\epsilon, \frac{d^{\T^{\text{GW}}_n}(0,U_1^n)}{H(\T^{\text{GW}}_n)}\in [\eta ,1-\eta ]\Bigr\},
\end{equation}
where $U_1^n$ is uniformly chosen among the vertices of $\T^{\text{GW}}_n$. Let us emphasize the fact that the events $A(\eta)$ and $B(\eta)$ are not defined in terms of $n$.

We can generalize the previous result to obtain 
\begin{lemma}\label{res_techh}
For $\epsilon>0$ and $\eta\in (0,1)$, we have 
\[
\lim_{n\to \infty} {\bf Q}_n[A(\eta)]=0,
\]
and
\[
\lim_{n\to \infty} {\bf Q}_n[B(\eta)]=0.
\]
\end{lemma}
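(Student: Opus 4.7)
The strategy is to reduce Lemma~\ref{res_techh} to Lemma~\ref{res_tech} by partitioning on the value of $H(\T^{\text{GW}}_n)/n$ under ${\bf Q}_n$. The only structural input beyond Lemma~\ref{res_tech} is the tightness of this ratio.

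First I would establish the tightness. Using Kesten's asymptotics ${\bf P}[H(\T^{\text{GW}})\geq n]\sim 2/(\sigma_Z^2 n)$, cited just after Lemma~\ref{transfer_cvg}, one gets
\[
{\bf Q}_n[H(\T^{\text{GW}}_n)\geq Mn]=\frac{{\bf P}[H\geq Mn]}{{\bf P}[H\geq n]}\;\longrightarrow\;\frac{1}{M}\quad\text{as }n\to\infty,
\]
so that for any $\epsilon'>0$ one can pick $M<\infty$ with $\limsup_{n\to\infty}{\bf Q}_n[H(\T^{\text{GW}}_n)>Mn]<\epsilon'$.

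Next, fix $\delta\in(0,\eta/(2(1-\eta)))$, so that $(1-\eta)(1+\delta)\leq 1-\eta/2$, set $n_j:=\lceil(1+\delta)^j n\rceil$ and $J:=\lceil\log_{1+\delta}M\rceil$, and decompose
\[
A(\eta)\subset \{H(\T^{\text{GW}}_n)>n_J\}\cup \bigcup_{j=0}^{J-1}\bigl(A(\eta)\cap\{H(\T^{\text{GW}}_n)\in[n_j,n_{j+1}]\}\bigr).
\]
The central observation is the identity ${\bf Q}_n[\cdot\mid H\geq h]={\bf P}[\cdot\mid H\geq h]={\bf Q}_h[\cdot]$, valid for every $h\geq n$. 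It yields
\[
{\bf Q}_n\bigl[A(\eta),\,H\in[n_j,n_{j+1}]\bigr]\leq {\bf Q}_{n_j}\bigl[A(\eta)\cap\{H\leq n_{j+1}\}\bigr].
\]
On $\{H\in[n_j,n_{j+1}]\}\cap\{d^{\T^{\text{GW}}_{n_j}}(0,U_1^{n_j})/H\in[\eta,1-\eta]\}$, the ratio $d^{\T^{\text{GW}}_{n_j}}(0,U_1^{n_j})/n_j$ belongs to $[\eta,(1-\eta)(1+\delta)]\subset[\eta/2,1-\eta/2]$, so the right-hand side above is bounded by
\[
{\bf Q}_{n_j}\Bigl[\Bigl|\frac{R^{\omega_{n_j}}(0,\phi_{\T^{\text{GW}}_{n_j}}(U_1^{n_j}))}{d^{\T^{\text{GW}}_{n_j}}(0,U_1^{n_j})}-\rho_1\Bigr|>\epsilon,\;\frac{d^{\T^{\text{GW}}_{n_j}}(0,U_1^{n_j})}{n_j}\in\Bigl[\frac{\eta}{2},1-\frac{\eta}{2}\Bigr]\Bigr],
\]
which tends to $0$ as $n\to\infty$ (so $n_j\to\infty$) by Lemma~\ref{res_tech}.

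Summing the at most $J$ pieces (finite and independent of $n$) and adding the tail contribution yields $\limsup_n{\bf Q}_n[A(\eta)]\leq \epsilon'$; sending $\epsilon'\downarrow 0$ proves the first statement. The argument for $B(\eta)$ is verbatim with $R^{\omega_n}$ and $\rho_1$ replaced by $d_{\omega_n}$ and $\rho_2$. The only mild delicacy lies in choosing $\delta$ small enough that $(1-\eta)(1+\delta)\leq 1-\eta/2$ while keeping $J$ finite; no genuine obstacle arises since this is a soft reduction to the already established Lemma~\ref{res_tech}.
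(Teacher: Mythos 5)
Your proposal follows the same overall strategy as the paper — partition on the height $H$ into blocks of geometrically growing length, control the far tail via Kesten's asymptotic ${\bf P}[H\geq m]\sim C/m$, and apply Lemma~\ref{res_tech} on each block — but with two genuinely helpful modifications that the paper does not make.

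First, the paper uses a \emph{dyadic} decomposition into blocks $[2^k n, 2^{k+1}n)$ and claims that conditionally on $H\in[n,2n)$ one has $\{d/H\in[\eta,1-\eta]\}\subset\{d/n\in[\eta/2,1-\eta/2]\}$. But on $H\in[n,2n)$ the upper bound $d\leq(1-\eta)H$ only gives $d/n<2(1-\eta)$, which is $\leq 1-\eta/2$ only when $\eta\geq 2/3$; for small $\eta$ the inclusion fails, so the paper's step as written is incorrect. You avoid this by using a $(1+\delta)$-geometric decomposition with $\delta<\eta/(2(1-\eta))$, so that $(1-\eta)(1+\delta)\leq 1-\eta/2$ and the containment $\{d/H\in[\eta,1-\eta]\}\cap\{H\in[n_j,n_{j+1}]\}\subset\{d/n_j\in[\eta/2,1-\eta/2]\}$ holds for every $\eta\in(0,1)$. (In effect your finer partition is the needed repair of the paper's argument.) Second, the paper keeps a growing number $N=N(n)$ of blocks and optimizes $N$ to infinity slowly, using $\frac{{\bf P}[H\geq 2^k n]}{{\bf P}[H\geq n]}\leq C2^{-k}$; you instead fix a finite $J$ (independent of $n$) determined by the tightness ${\bf Q}_n[H\geq Mn]\to 1/M$, absorb the tail $H>n_J$ into a prescribed $\epsilon'$, and then let each of the $J$ pieces vanish individually via Lemma~\ref{res_tech}. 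Both devices are legitimate; your version is cleaner because it dispenses with the auxiliary $N(n)\to\infty$ optimization. Also note that your crucial inequality ${\bf Q}_n[A(\eta),\,H\in[n_j,n_{j+1}]]\leq{\bf Q}_{n_j}[A(\eta),\,H\leq n_{j+1}]$ (using ${\bf P}[H\geq n_j]\leq{\bf P}[H\geq n]$) is a compact substitute for the paper's ratio $\frac{{\bf P}[H\geq n]}{{\bf P}[H\in[n,2n)]}\leq C$ manipulation. Overall the proposal is correct, and indeed somewhat more careful than the paper's own proof.
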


\begin{proof}
For notational convenience, we will use $H$ instead of $H(\T^{\text{GW}}_n)$ during this proof.

 Write ${\bf Q}_{n,2}:={\bf P}[\cdot \mid H \in [n,2n)]$. We have 
\begin{align*}
&{\bf Q}_{n,2}\Bigl[\abs{\frac{R^{\omega_n}(0, \phi_{\T^{\text{GW}}_n}(U_1^n))}{ d^{\T^{\text{GW}}_n}(0,U_1^n) } -\rho_1} >\epsilon, \frac{d^{\T^{\text{GW}}_n}(0,U_1^n)}n\in \Bigl[\frac{\eta}2,1-\frac{\eta}2\Bigr]\Bigr] \\
= & \frac 1{{\bf P}[ H \in [n,2n)]} \\
& \times {\bf P}\Bigl[\abs{\frac{R^{\omega_n}(0, \phi_{\T^{\text{GW}}_n}(U_1^n))}{ d^{\T^{\text{GW}}_n}(0,U_1^n) } -\rho_1} >\epsilon, \frac{d^{\T^{\text{GW}}_n}(0,U_1^n)}n\in \Bigl[\frac{\eta}2,1-\frac{\eta}2\Bigr], H \in [n,2n)\Bigr] \\
\leq &\frac 1{{\bf P}[ H \in [n,2n)]} \\
&\qquad \times {\bf P}\Bigl[\abs{\frac{R^{\omega_n}(0, \phi_{\T^{\text{GW}}_n}(U_1^n))}{ d^{\T^{\text{GW}}_n}(0,U_1^n) } -\rho_1} >\epsilon, \frac{d^{\T^{\text{GW}}_n}(0,U_1^n)}n\in \Bigl[\frac{\eta}2,1-\frac{\eta}2\Bigr], H \geq n\Bigr] \\
\leq &\frac {{\bf P}[H\geq n]}{{\bf P}[ H \in [n,2n)]} {\bf Q}_n\Bigl[\abs{\frac{R^{\omega_n}(0, \phi_{\T^{\text{GW}}_n}(U_1^n))}{ d^{\T^{\text{GW}}_n}(0,U_1^n) } -\rho_1} >\epsilon, \frac{d^{\T^{\text{GW}}_n}(0,U_1^n)}n\in \Bigl[\frac{\eta}2,1-\frac{\eta}2\Bigr]\Bigr],
\end{align*}
but since ${\bf P}[H\geq n] \sim C n^{-1}$, by~\cite{KNS}, we know that $ \frac{{\bf P}[H\geq n]}{{\bf P}[n\leq H < 2n]} \leq C$, so we can use Lemma~\ref{res_tech} to see that
\[
\lim_{n\to \infty} {\bf Q}_{n,2}\Bigl[\abs{\frac{R^{\omega_n}(0, \phi_{\T^{\text{GW}}_n}(U_1^n))}{ d^{\T^{\text{GW}}_n}(0,U_1^n) } -\rho_1} >\epsilon, \frac{d^{\T^{\text{GW}}_n}(0,U_1^n)}n\in \Bigl[\frac{\eta}2,1-\frac{\eta}2\Bigr]\Bigr] =0.
\]

Now, notice that conditionally $H \in [n,2n)$, we have 
\[
\Bigl\{\frac{d^{\T^{\text{GW}}_n}(0,U_1^n)}H\in [\eta,1-\eta]\Bigr\}\subset \Bigl\{\frac{d^{\T^{\text{GW}}_n}(0,U_1^n)}n \in [\frac{\eta}2,1-\frac{\eta}2]\Bigr\},
\]
 we can use this and the previous equation to see that 
\begin{equation}\label{this_article_is_long}
\lim_{n\to \infty} {\bf Q}_{n,2}[A(\eta)] =0.
\end{equation}

Fix $N\in \N$. The quantity we want to study is 
\begin{align*}
& {\bf Q}_n[A(\eta)]  \\
= & \frac 1{{\bf P}[H\geq n]} {\bf P}[A(\eta), H \geq n] \\
\leq &\frac 1{{\bf P}[H\geq n]}  \Bigl( \sum_{k=0}^{N-1} {\bf P}[A(\eta), 2^k n \leq H < 2^{k+1}n] +{\bf P}[H\geq 2^{N}n]\Bigr) \\
\leq & \sum_{k=0}^{N-1} \frac{{\bf P}[H \geq  2^{k}n]}{{\bf P}[H\geq n]}  {\bf Q}_{2^kn,2}[A(\eta)] + \frac{{\bf P}[H \geq  2^{N}n]}{{\bf P}[H\geq n]},
\end{align*}
noticing that $\frac{{\bf P}[H \geq  2^{k}n]}{{\bf P}[H\geq n]} \leq C 2^{-k}$, we can get the upper bound
\begin{equation}\label{no_more_labels}
{\bf Q}_{n}[A(\eta)] \leq N \sup_{k\geq 0} {\bf Q}_{2^kn,2}[A(\eta)] +  C 2^{-N}.
 \end{equation}

 By~\eqref{this_article_is_long}, we know that
 \[
 \lim_{n\to \infty} \sup_{k\geq 0} {\bf Q}_{2^kn,2}[A(\eta)]  =0,
 \]
 so by choosing $N:=(\sup_{k\geq 0} {\bf Q}_{2^kn,2}[A(\eta)])^{-1/2}$, we have $N\to_{n\to \infty} \infty$ and~\eqref{no_more_labels} implies 
\[
\lim_{n\to \infty}  {\bf Q}_n[A(\eta)]=0,
\]
which is the result we were trying to prove.
\end{proof}

We have
\begin{lemma} \label{uniform_points_bulk}
For any $\epsilon>0,$ there exists $\eta\in(0,1)$, we have
\[
\limsup_{n\to \infty} {\bf Q}_n\Bigl[\frac{d^{\T^{\text{GW}}_n}(0,U_1^n)}{H(\T^{\text{GW}}_n)} \notin [\eta,1-\eta]\Bigr]<\epsilon,
\]
where $U_1^n$ is chosen uniformly on $\T^{\text{GW}}_n$.
\end{lemma}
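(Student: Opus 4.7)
The plan is to deduce this from the scaling limit of the rescaled height function of $\T^{\text{GW}}_n$ under ${\bf Q}_n$ that was already invoked in the proof of Lemma~\ref{FUCKKKKKKKKKKK1}. To this end, enlarge the probability space to carry an auxiliary variable $U\sim\mathrm{Unif}[0,1]$ independent of $\T^{\text{GW}}_n$, enumerate the vertices of $\T^{\text{GW}}_n$ in lexicographical order as $v_0,\ldots,v_{\abs{\T^{\text{GW}}_n}-1}$, and set $h_n(i):=d^{\T^{\text{GW}}_n}(0,v_i)$ and $I_n:=\lfloor U\abs{\T^{\text{GW}}_n}\rfloor$. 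Then $v_{I_n}$ has the same law as $U_1^n$, so the ratio to be controlled equals in distribution $h_n(I_n)/\max_i h_n(i)$.

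Next I would apply Corollary~1.13 of \cite{le2005random}, recalled in the proof of Lemma~\ref{FUCKKKKKKKKKKK1}, which gives that under ${\bf Q}_n$,
\[
\bigl(n^{-1}h_n(\lfloor tn^2\rfloor)\bigr)_{t\geq 0}\;\xrightarrow[n\to\infty]{d}\;\Bigl(\tfrac{2}{\sigma_Z}\hat{\mathbf{e}}_t\Bigr)_{t\geq 0}
\]
in the topology of uniform convergence on compact sets, where $\hat{\mathbf{e}}$ is a Brownian excursion of random duration $L$ conditioned to have height at least $\sigma_Z/2$; in particular $n^{-2}\abs{\T^{\text{GW}}_n}\to L$ in distribution. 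Since $U$ is independent of $\T^{\text{GW}}_n$, a continuous mapping argument using the a.s.\ continuity of $\hat{\mathbf{e}}$ yields the joint convergence
\[
\frac{d^{\T^{\text{GW}}_n}(0,U_1^n)}{H(\T^{\text{GW}}_n)}\;\xrightarrow[n\to\infty]{d}\;R:=\frac{\hat{\mathbf{e}}(UL)}{\max_{0\leq s\leq L}\hat{\mathbf{e}}_s},
\]
with $U$ still uniform on $[0,1]$ and independent of $\hat{\mathbf{e}}$.

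The last step is to show that the law of $R$ is supported in $(0,1)$ and atomless. It lies in $(0,1)$ because $U\in(0,1)$ a.s., $\hat{\mathbf{e}}$ is a.s.\ strictly positive on $(0,L)$, and it attains its maximum at a unique point (standard facts about Brownian excursion). It is atomless because, conditionally on $\hat{\mathbf{e}}$, its law is the pushforward of Lebesgue measure on $[0,1]$ by the continuous map $u\mapsto\hat{\mathbf{e}}(uL)/\max\hat{\mathbf{e}}$, and Brownian excursion a.s.\ has level sets of zero Lebesgue measure. Consequently $\PR[R\notin[\eta,1-\eta]]\to 0$ as $\eta\downarrow 0$, so picking $\eta\in(0,1/2)$ such that $\eta$ and $1-\eta$ are continuity points of the distribution function of $R$ and $\PR[R\notin[\eta,1-\eta]]<\epsilon$, the Portmanteau theorem delivers the claim. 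The only mildly delicate step is the continuous mapping producing the joint limit $\hat{\mathbf{e}}(UL)$; this is routine given uniform convergence of $h_n$ on compacts and the everywhere continuity of $\hat{\mathbf{e}}$, and can be made rigorous via Skorokhod representation.
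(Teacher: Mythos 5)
Your proposal takes essentially the same route as the paper: both rest on the scaling limit of the rescaled height function under ${\bf Q}_n$ from Corollary 1.13 of \cite{le2005random}, together with convergence of the rescaled tree size, and reduce the claim to a property of a Brownian excursion (conditioned on height $\geq \sigma_Z/2$). The paper writes the limiting probability as $E\bigl[\tfrac{1}{R}\mathrm{Leb}\{t\in[0,R]:\hat{\mathbf e}_t/\max\hat{\mathbf e}\in[\eta,1-\eta]\}\bigr]$ and then lets $\eta\to0$ by dominated convergence, whereas you identify the distributional limit of the ratio as $R=\hat{\mathbf e}(UL)/\max\hat{\mathbf e}$ and argue via Skorokhod representation, atomlessness and Portmanteau — these are equivalent in substance, and your version is, if anything, slightly more explicit about the role of the auxiliary uniform variable.
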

\begin{proof}
Let $h_{\T^{\text{GW}}_n}$ be the height function associated to $\T^{\text{GW}}_n$ and $\hat{\bold{e}}$ be a Brownian excursion conditioned on having height larger than $\frac{\sigma_Z}{2}$.
 Set $R:=\sup\{t\geq0: \hat{\bold{e}}_t>0\}$. It follows from the proof of Corollary 1.13 in \cite{le2005random} that
 \begin{equation}
 ((n^{-1}h_{\T^{\text{GW}}_n}(n^2t))_{t\geq0},n^{-1}\abs{\T^{\text{GW}}_n}) \stackrel{n\to\infty}{\to} ((\hat{\bold{e}}_t)_{t\geq0}, R),
 \end{equation}  
 in distribution under $\bold{Q}_n$, where the convergence of the first coordinate is in the topology of uniform convergence.  It is not hard to see from the display above that
\begin{equation}\label{eq:conejitosQ}
 {\bf Q}_n\Bigl[\frac{d^{\T^{\text{GW}}_n}(0,U_1^n)}{H(\T^{\text{GW}}_n)} \in [\eta,1-\eta]\Bigr] \stackrel{n\to\infty}{\to} E\left[\frac{1}{R}\text{Leb}(\{t\in[0,R]: \frac{\abs{\hat{\bold{e}}_t}}{M}\in [\eta, 1-\eta ]\})\right], 
 \end{equation}
From standard facts about Brownian motion, (e.g., existence of local times) and the dominated convergence Theorem we have that
\begin{equation}\label{eq:conejitoseta}
\lim_{\eta\to0}E\left[\frac{1}{R}\text{Leb}(\{t\in[0,R]: \frac{\abs{\hat{\bold{e}}_t}}{M}\in [\eta, 1-\eta ]\})\right]=1.
\end{equation}
The result follows from \eqref{eq:conejitosQ} and \eqref{eq:conejitoseta} 
\end{proof}

Finally we obtain the main result of this section

\begin{lemma}\label{res_techhh}
For $\epsilon>0$, we have 
\[
\lim_{n\to \infty} {\bf Q}_n\Bigl[\abs{\frac{R^{\omega_n}(0, \phi_{\T^{\text{GW}}_n}(U_1^n))}{ d^{\T^{\text{GW}}_n}(0,U_1^n)} -\rho_1} >\epsilon\Bigr]=0,
\]
and
\[
\lim_{n\to \infty} {\bf Q}_n\Bigl[\abs{\frac{d^{\omega_n}(0, \phi_{\T^{\text{GW}}_n}(U_1^n))}{ d^{\T^{\text{GW}}_n}(0,U_1^n)} -\rho_2} >\epsilon\Bigr]=0.
\]
\end{lemma}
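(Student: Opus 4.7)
The plan is to combine the two immediately preceding lemmas. Lemma~\ref{res_techh} gives the desired proportionality on the event that the tree-distance $d^{\T^{\text{GW}}_n}(0,U_1^n)$ is a macroscopic fraction of the total height $H(\T^{\text{GW}}_n)$; Lemma~\ref{uniform_points_bulk} guarantees that, under ${\bf Q}_n$, a uniform vertex $U_1^n$ lies in such a macroscopic interval with probability arbitrarily close to $1$ provided $\eta$ is chosen small enough.

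More precisely, fix $\epsilon>0$ and recall from~\eqref{de_longA}--\eqref{de_longB} the events $A(\eta)$ and $B(\eta)$. I would start from the trivial inclusion
\[
\Bigl\{\abs{\tfrac{R^{\omega_n}(0, \phi_{\T^{\text{GW}}_n}(U_1^n))}{ d^{\T^{\text{GW}}_n}(0,U_1^n)} -\rho_1} >\epsilon\Bigr\}\subseteq A(\eta)\cup \Bigl\{\tfrac{d^{\T^{\text{GW}}_n}(0,U_1^n)}{H(\T^{\text{GW}}_n)} \notin [\eta,1-\eta]\Bigr\},
\]
and the analogous inclusion for the graph-distance ratio with $B(\eta)$ in place of $A(\eta)$. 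Given $\delta>0$, I would then invoke Lemma~\ref{uniform_points_bulk} to choose $\eta=\eta(\delta)\in(0,1)$ such that
\[
\limsup_{n\to\infty}{\bf Q}_n\Bigl[\tfrac{d^{\T^{\text{GW}}_n}(0,U_1^n)}{H(\T^{\text{GW}}_n)} \notin [\eta,1-\eta]\Bigr]<\delta.
\]

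With $\eta$ now fixed, Lemma~\ref{res_techh} applied to this $\eta$ (and to the same $\epsilon$) yields $\lim_{n\to\infty}{\bf Q}_n[A(\eta)]=0$ and $\lim_{n\to\infty}{\bf Q}_n[B(\eta)]=0$. Putting these two facts into the union bound above gives
\[
\limsup_{n\to\infty}{\bf Q}_n\Bigl[\abs{\tfrac{R^{\omega_n}(0, \phi_{\T^{\text{GW}}_n}(U_1^n))}{ d^{\T^{\text{GW}}_n}(0,U_1^n)} -\rho_1} >\epsilon\Bigr]\leq \delta,
\]
and the same for the graph distance with $\rho_2$ in place of $\rho_1$. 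Since $\delta>0$ was arbitrary, both limsups must be $0$, which is the statement of the lemma.

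There is essentially no obstacle beyond correctly matching the conditioning: both Lemma~\ref{res_techh} and Lemma~\ref{uniform_points_bulk} are stated under the same measure ${\bf Q}_n$ used in the target statement, so no transfer argument is needed here. The only mild point is that one must be careful to pick $\eta$ depending only on $\delta$ (and not on $n$) before sending $n\to\infty$, which is exactly the order in which the two lemmas are designed to be applied.
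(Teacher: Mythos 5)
Your proposal is correct and follows essentially the same argument as the paper: a union bound over the bad event $A(\eta)$ (resp.\ $B(\eta)$) and the event that $U_1^n$ falls outside the bulk, controlling the first by Lemma~\ref{res_techh} and the second by Lemma~\ref{uniform_points_bulk}, then letting the free parameter (your $\delta$, the paper's $\epsilon'$) tend to $0$. The only cosmetic difference is notation; the structure, ordering of quantifiers, and choice of lemmas are identical to the paper's proof.
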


\begin{proof}
We prove only the first result. Fix $\epsilon'>0$ and chose $\eta>0$ given by Lemma~\ref{uniform_points_bulk} associated to that $\epsilon'$. We know that
\[
{\bf Q}_n\Bigl[\abs{\frac{R^{\omega_n}(0, \phi_{\T^{\text{GW}}_n}(U_1^n))}{ d^{\T^{\text{GW}}_n}(0,U_1^n)} -\rho_1} >\epsilon\Bigr] \leq {\bf Q}_n[A(\eta)] +{\bf Q}_n\Bigl[\frac{d^{\T^{\text{GW}}_n}(0,U_1^n)}{H(\T^{\text{GW}}_n)} \notin [\eta,1-\eta]\Bigr],
\]
and which implies by Lemma~\ref{uniform_points_bulk} and Lemma~\ref{res_techh}
\[
\limsup_{n\to \infty} {\bf Q}_n\Bigl[\abs{\frac{R^{\omega_n}(0, \phi_{\T}(U_1^n))}{ d^{\T^{\text{GW}}_n}(0,U_1^n)} -\rho_1} >\epsilon\Bigr] \leq  \epsilon',
\]
which is valid for all $\epsilon'>0$. This proves the lemma.
\end{proof}

\subsection{Resistance on LCBRW conditioned on size} \label{proof_res}

Our proof will proceed in two steps. We first condition on the size being larger than $n$ and then we will transfer this to the size being exactly $n$.

\subsubsection{Conditioning on the size being larger than $n$}

We will start by proving a result conditioned on $\abs{\T}\geq n$. First we need a technical Lemma
\begin{lemma}\label{arggg} There exists $C>0$ such that
for all $\epsilon'>0$, we have
\[
\limsup_{n\to \infty} {\bf P}[H(\T)\leq (\epsilon'n)^{1/2} \mid \abs{\T}\geq n]\leq C \epsilon',
\]
\end{lemma}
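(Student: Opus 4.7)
The plan is to bound the numerator ${\bf P}[H(\T) \leq h, \abs{\T} \geq n]$ with $h := \lfloor (\epsilon' n)^{1/2} \rfloor$ by a second moment Markov inequality, and then divide by ${\bf P}[\abs{\T} \geq n]$, whose asymptotics are classical.

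First, if $H(\T) \leq h$ then every vertex of $\T$ belongs to some generation $k \in \{0,\dots,h\}$, so $\abs{\T} \leq \sum_{k=0}^h Z_k$. Consequently
\[
{\bf P}[H(\T)\leq h,\ \abs{\T}\geq n] \;\leq\; {\bf P}\Bigl[\sum_{k=0}^h Z_k \geq n\Bigr] \;\leq\; \frac{1}{n^2}\,{\bf E}\Bigl[\Bigl(\sum_{k=0}^h Z_k\Bigr)^2\Bigr].
\]
To evaluate the second moment I will use the martingale property of the critical process, namely that ${\bf E}[Z_k \mid Z_j] = Z_j$ for $j\leq k$, combined with the standard formula $\text{Var}(Z_k) = k\sigma_Z^2$. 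This yields ${\bf E}[Z_i Z_j] = 1 + (i\wedge j)\sigma_Z^2$, and summing over $0\leq i,j\leq h$ produces a bound of order $h^3$.

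Inserting $h^2 \leq \epsilon' n$ then gives ${\bf P}[H(\T)\leq (\epsilon' n)^{1/2},\ \abs{\T}\geq n] \leq C(\epsilon')^{3/2} n^{-1/2}$. Since Kolchin's asymptotics ${\bf P}[\abs{\T}=k]\sim c k^{-3/2}$, already quoted in the paper, imply ${\bf P}[\abs{\T}\geq n] \sim c' n^{-1/2}$, dividing yields a conditional probability of order $(\epsilon')^{3/2}$, which is bounded by $C\epsilon'$ for $\epsilon' \leq 1$; the regime $\epsilon' \geq 1$ is trivial since any probability is bounded by $1$.

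There is no real obstacle. The only point worth flagging is that the first moment bound ${\bf E}[\sum_{k=0}^h Z_k] = h+1$ would deliver only $(\epsilon')^{1/2}$ after dividing by $n^{-1/2}$, which is not sufficient; hence the quadratic moment is genuinely needed.
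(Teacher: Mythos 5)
Your argument is correct, and it takes a genuinely different route from the paper's.

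The paper proves the lemma by passing to the scaling limit: the rescaled height function of $\T$ conditioned on $|\T|\geq n$ converges to a Brownian excursion conditioned on duration at least $1$, so ${\bf P}[H(\T)\leq(\epsilon'n)^{1/2}\mid |\T|\geq n]$ converges to the probability that the maximum of such an excursion is small. To bound the latter, the paper invokes Vervaat's theorem to relate the normalized excursion to a Brownian bridge, bounds the probability that the bridge at time $1/2$ is small by a Gaussian density estimate, and then transfers back to the conditioned excursion via the scaling identity for $\tau_0^{-1/2}\mathbf{e}^1_{\tau_0 t}$. In contrast, you work entirely at the discrete level: the inclusion $\{H(\T)\leq h,\,|\T|\geq n\}\subset\{\sum_{k=0}^h Z_k\geq n\}$, the second-moment Markov bound, the covariance identity ${\bf E}[Z_iZ_j]=1+(i\wedge j)\sigma_Z^2$ (valid by the martingale property and $\mathrm{Var}(Z_k)=k\sigma_Z^2$), the resulting $O(h^3)$ estimate, and finally division by ${\bf P}[|\T|\geq n]\sim c'n^{-1/2}$ from Kolchin. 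The calculation is tight: $h^3/n^2=(\epsilon')^{3/2}n^{-1/2}$ produces a bound of order $(\epsilon')^{3/2}\leq\epsilon'$ for $\epsilon'\leq1$, and the regime $\epsilon'\geq 1$ is trivial.

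What each approach buys: your moment computation is self-contained and elementary, requiring none of the machinery of conditioned excursion measures, Vervaat's theorem, or the scaling relation for $\mathbf{e}^1$, and it in fact delivers the stronger exponent $(\epsilon')^{3/2}$. The paper's continuous approach, while heavier, fits naturally with the surrounding results in the section, which all go through the scaling limit of the height process, so the authors can reuse the same convergence statement rather than switch techniques. You also correctly flag that the first moment alone yields only $(\epsilon')^{1/2}$, which is insufficient, so the second moment is genuinely needed.
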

\begin{proof}
The proof is based on the convergence of the height function of $\T$ to a Brownian excursion. Let $h_{\T}$ denote the height function of $\T$ as in the proof of Lemma \ref{uniform_points_bulk}. 
Using the notation of the proof of Lemma \ref{uniform_points_bulk}, let
\[L':=\inf \{t>0: \beta_t=0 \text{ and }\beta_{t+s}>0 \text{ for all }s< 1 \}\]
and
\[R':=\inf\{t>L':\beta_t=0\}\]
 Let $(\bold{e}^1_t)_{t\geq0}$ be a Brownian excursion conditioned on having duration larger than $1$.  i.e. $(\bold{e}_t^1)_{t\geq0}$ is distributed as $\left(\frac{2}{\sigma} \beta_{L'+t} 1_{\{t\leq R'-L'\}}\right)_{t\geq0}$. The same arguments leading to \eqref{eq:jtcvgc} can be used to prove that  $h_\T$ conditioned on $\abs{\T}\geq n$ converge to a $(\bold{e}^1_t)_{t\geq0}$. Therefore
\begin{equation}\label{eq:traile}
 \lim_{n\to\infty}{\bf P}[H(\T)\leq (\epsilon'n)^{1/2} \mid \abs{\T}\geq n]=P\left[\max_{t\geq 0}\bold{e}^1_t \leq \frac{\sigma}{2}\epsilon'\right].
 \end{equation}
As the displays above suggest, we will need to control the maximum of a Brownian excursion. Theorem 1 of \cite{vervaat1979relation} establishes a relation between the Brownian bridge and the normalized Brownian excursion $(\bold{e}_t)_{t\in[0,1]}$. More precisely, let $(W^b(t))_{t\in[0,1]}$ be a Brownian bridge and $\tau\in[0,1]$ the point where the Brownian bridge attains its minimum. Then $(W^b(\tau+t\text{ mod }1)-W^b(\tau))_{t\in[0,1]}$ has the law of a normalized Brownian excursion. Therefore
\begin{equation}\label{eq:conejitosderaza}
P\left[\max_{t\in[0,1]} W^b(t)-\min_{t\in[0,1]} W^b(t)< \epsilon\right]= P\left[\max_{t\in[0,1]} \bold{e}_t < \epsilon\right].
\end{equation} 
 Let $W$ be a standard, one-dimensional Brownian motion. A simple analysis yields that if $W(1)\notin [2W(1/2)-2\epsilon,2W(1/2)+2\epsilon]$, then the Brownian bridge $W_t-tW_1$ evaluated at time $1/2$ is outside the ball of radius $\epsilon$ centered at the origin. That is 
\begin{equation}\label{eq:Brownianbridge}
\abs{W(1/2)-1/2W(1)}>\epsilon.
\end{equation}
 Therefore 
 \[
 \begin{aligned}
 P\left[ W^b(1/2)<\epsilon\right] \leq& P\left[W(1)\in [2W(1/2)-2\epsilon,2W(1/2)+2\epsilon]\right]\\
 \leq& 4\epsilon \times \sup_{x\in\R} p_{\frac{1}{2}}(x),
 \end{aligned}
 \]
where $p_t(x)$ is the Gaussian density of $W$ at time $t$. Since $\sup_{x\in\R}p_{\frac{1}{2}}(x)< \infty$, the display above yields that there exists $C>0$ such that 
\[P[W^{b}(1/2)<\epsilon ]\leq C\epsilon.\]
That, together with \eqref{eq:conejitosderaza} yields that 
\begin{equation}\label{eq:acho}
P\left[\max_{t\in[0,1]} \bold{e}_t < \epsilon\right]\leq C \epsilon.
\end{equation}
 We need to replace the normalized Brownian excursion $\bold{e}$ in the display above by the Brownian excursion conditioned on having duration larger than $1$, $\bold{e}^1$. But that step can be easily justified using the known fact that $(\tau_0^{-1/2} \bold{e}^1_{\tau_0t})_{t\in[0,1]}$ is distributed as $(\bold{e}_t)_{t\in[0,1]}$, where $\tau_0:=\inf\{t>0: \bold{e}^1_t=0\}$. Notice that, since $\bold{e}^1$ is conditioned to have duration larger than $1$, we have that $\tau_0>1$ almost surely and therefore
 \[
 P\left[\max_{t\geq 0} \bold{e}^{1}_t < \epsilon\right]
  =
 P\left[\max_{t\geq 0} \tau_0^{1/2} \bold{e}_t < \epsilon\right]\\
  \leq P\left[\max_{t\geq 0} \bold{e}_t < \epsilon\right].
 \]
 Hence, the lemma follows from \eqref{eq:acho} and \eqref{eq:traile}.
\end{proof}

This allows us to prove
\begin{lemma}\label{prop_res0000}
For $\epsilon>0$, we have 
\[
\lim_{n\to \infty} {\bf P}\Bigl[\abs{\frac{R^{\omega}(0, \phi_{\T}(U_1^n))}{ d^{\T}(0,U_1^n)} -\rho_1} >\epsilon\mid \abs{\T}\geq n\Bigr]=0,
\]
and
\[
\lim_{n\to \infty} {\bf P}\Bigl[\abs{\frac{d_{\omega}(0, \phi_{\T}(U_1^n))}{ d^{\T}(0,U_1^n)} -\rho_2} >\epsilon\mid \abs{\T}\geq n\Bigr]=0.
\]
\end{lemma}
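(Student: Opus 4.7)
The plan is to transfer the estimate already established under $\mathbf{Q}_n=\mathbf{P}[\cdot \mid H(\T^{\text{GW}})\geq n]$ (Lemma~\ref{res_techhh}) to the measure $\mathbf{P}[\cdot \mid \abs{\T^{\text{GW}}}\geq n]$. The crucial observation is that the event
\[
A_\epsilon:=\Bigl\{\abs{\frac{R^{\omega}(0,\phi_{\T}(U_1^n))}{d^{\T}(0,U_1^n)}-\rho_1}>\epsilon\Bigr\}
\]
is simply a statement about a ratio of the resistance and the tree-distance at a uniformly chosen marked point, and is therefore measurable with respect to $(\T^{\text{GW}},\phi_{\T^{\text{GW}}},U_1^n)$ without any intrinsic dependence on $n$; only the law of the marked spatial tree varies with the chosen conditioning.

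Given $\epsilon'>0$, I would set $h(n):=\lfloor (\epsilon' n)^{1/2}\rfloor$ and split
\[
\mathbf{P}[A_\epsilon \mid \abs{\T^{\text{GW}}}\geq n]\leq \mathbf{P}[H(\T^{\text{GW}})\leq h(n)\mid \abs{\T^{\text{GW}}}\geq n] + \mathbf{P}[A_\epsilon, H(\T^{\text{GW}})> h(n) \mid \abs{\T^{\text{GW}}}\geq n].
\]
Lemma~\ref{arggg} directly bounds the first term by $C\epsilon'$ in the limit. For the second, the key inequality is
\[
\mathbf{P}[A_\epsilon, H(\T^{\text{GW}})> h(n)\mid \abs{\T^{\text{GW}}}\geq n]\leq \frac{\mathbf{P}[H(\T^{\text{GW}})>h(n)]}{\mathbf{P}[\abs{\T^{\text{GW}}}\geq n]}\cdot \mathbf{Q}_{h(n)}[A_\epsilon],
\]
obtained by dropping the constraint $\{\abs{\T^{\text{GW}}}\geq n\}$ in the numerator and then factoring the probability. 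Using the classical tail asymptotics $\mathbf{P}[H(\T^{\text{GW}})\geq h]\sim c_1 h^{-1}$ from~\cite{KNS} together with $\mathbf{P}[\abs{\T^{\text{GW}}}\geq n]\sim c_2 n^{-1/2}$ (obtained by summing the local estimate of Lemma~2.1.4 in~\cite{Kolchin}), the prefactor $\mathbf{P}[H(\T^{\text{GW}})>h(n)]/\mathbf{P}[\abs{\T^{\text{GW}}}\geq n]$ is bounded by a finite constant depending only on $\epsilon'$.

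Since $h(n)\to\infty$, Lemma~\ref{res_techhh} applied along the sequence $h(n)$ gives $\mathbf{Q}_{h(n)}[A_\epsilon]\to 0$, and taking limits therefore yields $\limsup_n \mathbf{P}[A_\epsilon\mid \abs{\T^{\text{GW}}}\geq n]\leq C\epsilon'$. Sending $\epsilon'\to 0$ concludes the first statement; the second follows by the identical argument with $\rho_2$ replacing $\rho_1$ and $d_{\omega}$ replacing $R^{\omega}$, using the parallel conclusion already built into Lemma~\ref{res_techhh}. I do not expect a genuine obstacle at this stage: the delicate analysis was carried out in establishing Lemma~\ref{res_techhh} and in the tail comparison of Lemma~\ref{arggg}, and the transfer here is essentially a bookkeeping argument based on comparing the polynomial decay rates of the two natural conditionings on $\T^{\text{GW}}$.
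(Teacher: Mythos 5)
Your argument matches the paper's own proof essentially step for step: you split on whether $H(\T^{\text{GW}})$ exceeds $(\epsilon' n)^{1/2}$, invoke Lemma~\ref{arggg} for the small-height piece, drop the conditioning $\{\abs{\T^{\text{GW}}}\geq n\}$ and reweight by $\mathbf{P}[H\geq (\epsilon' n)^{1/2}]/\mathbf{P}[\abs{\T^{\text{GW}}}\geq n]$ for the remaining piece, then use the tail asymptotics from~\cite{KNS} and~\cite{Kolchin} together with Lemma~\ref{res_techhh} along the sequence $(\epsilon' n)^{1/2}$, exactly as in the paper. The observation that $A_\epsilon$ is an event about the marked spatial tree (with no intrinsic $n$-dependence beyond the conditioning) is the correct justification, also implicit in the paper, for comparing across the two conditionings.
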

\begin{proof}
Fix $\epsilon'>0$. We have 
\begin{align*}
& {\bf P}\Bigl[\abs{\frac{R^{\omega}(0, \phi_{\T}(U_1^n))}{ d^{\T}(0,U_1^n)} -\rho_1} >\epsilon\mid \abs{\T}\geq n\Bigr]\\
\leq & {\bf P}\Bigl[\abs{\frac{R^{\omega}(0, \phi_{\T}(U_1^n))}{ d^{\T}(0,U_1^n)} -\rho_1} >\epsilon, H(\T)\geq (\epsilon'n)^{1/2} \mid \abs{\T}\geq n\Bigr] \\
& \qquad \qquad  \qquad \qquad\qquad \qquad +{\bf P}[H(\T)\leq (\epsilon'n)^{1/2} \mid \abs{\T}\geq n] \\
\leq & \frac 1{{\bf P}[\abs{\T}\geq n]}{\bf P}\Bigl[\abs{\frac{R^{\omega}(0, \phi_{\T}(U_1^n))}{ d^{\T}(0,U_1^n)} -\rho_1} >\epsilon,  H(\T)\geq (\epsilon'n)^{1/2} , \abs{\T}\geq n\Bigr] \\ & \qquad\qquad   \qquad \qquad\qquad \qquad+{\bf P}[H(\T)\leq (\epsilon'n)^{1/2} \mid \abs{\T}\geq n] \\
\leq & \frac {{\bf P}[H(\T)\geq (\epsilon'n)^{1/2}]}{{\bf P}[\abs{\T}\geq n]} {\bf P}\Bigl[\abs{\frac{R^{\omega}(0, \phi_{\T}(U_1^n))}{ d^{\T}(0,U_1^n)} -\rho_1} >\epsilon\mid H(\T)\geq (\epsilon'n)^{1/2}\Bigr] \\ & \qquad \qquad  \qquad \qquad \qquad \qquad+{\bf P}[H(\T)\leq (\epsilon'n)^{1/2} \mid \abs{\T}\geq n].
\end{align*}

Using ${\bf P}[H(\T^{\text{GW}})\geq n]\sim C n^{-1}$ (see~\cite{KNS}), ${\bf P}[\abs{\T^{\text{GW}}}\geq n] \sim C n^{-1/2}$ (Lemma 2.1.4 in \cite{Kolchin}) we see that $\frac {{\bf P}[H(\T)\geq (\epsilon'n)^{1/2}]}{{\bf P}[\abs{T}\geq n]} \leq C(\epsilon')^{-1/2}$. Using this, the previous computation, Lemma~\ref{arggg} and Lemma~\ref{res_techhh} we can see that 
\[
\limsup_{n\to \infty} {\bf P}\Bigl[\abs{\frac{R^{\omega}(0, \phi_{\T}(U_1^n))}{ d^{\T}(0,U_1^n)} -\rho_1} >\epsilon\mid \abs{\T}\geq n\Bigr] \leq C\epsilon',
\]
and since this is true for all $\epsilon'>0$ the lemma is proved.
\end{proof}

\subsubsection{Conditioning on the size being $n$}\label{subsubsection_searchdepth}

 Let us define the breadth first search of a Galton-Watson tree. We consider the vertices level-by-level, and from left to right within each level. This induces an ordering on the tree which we denote $(v^{\T}_i)_{i\leq \abs{\T}}$. The breadth first search keeps a queue of vertices $Q_i$ with $Q_0=1$ and the recursion $Q_i=Q_{i-1}-1+Z_i$, with $Z_i$ which are i.i.d.~random variables with the same law as $Z$. The breadth first search stops when the tree is completely explored when $Q_j=0$. 
 
 This process is a way of encoding a tree with a random walk. We can see that a tree has size at least $n$ if, and only if, $Q_j>0$ for all $j<n$, and it has size exactly $n$ if, and only if, we also have $Q_n=0$.

We will also need another way of encoding the tree known as the depth first search process around $\T^{\text{GW}}_n$, which corresponding to exploring the tree by going around it. This process $(w_n(\frac{i}{2(n-1)}))_{i\leq 2 (n-1)}$ takes values in $V(\T^{\text{GW}}_n)$ and is defined by starting at the root then defining recursively $w_n(\frac{i+1}{2(n-1)})$ in the following manner:
\begin{itemize}
\item if $w_n(\frac{i}{2(n-1)})$ has offsprings which has not yet been visited by the process, then $w_n(\frac{i+1}{2(n-1)})$ will be the first of those offsprings according to the lexicographical order,
\item otherwise $w_n(\frac{i+1}{2(n-1)})$ is the ancestor of $w_n(\frac{i}{2(n-1)})$.
\end{itemize}
Using this we can define the search depth process by setting $\hat{w}_n(i)=d^{\T^{\text{GW}}_n}(\text{root},w_n(i))$. We extend the domain of definition of $\hat{w}_n$ to $[0,1]$ by linear interpolation.

Before moving on to our main result we need three more technical estimates. For any $\eta>0$, we write
\[
C(\eta)=\{\text{Card}(\{i\leq \abs{\T},\ d_{\T}(0,v^{\T}_i)\leq d(0,U_1^n)\}) \leq (1-\eta)n\}
\]
where $U_1^n$ is uniformly chosen over $\T$.

\begin{lemma}\label{stupid_estimate_n_plus_one}
For any $\epsilon'>0$ there exists $\eta>0$, we have that
\[
\limsup_{n\to \infty} {\bf P}_n[C(\eta)^c]\leq \epsilon'.
\]
\end{lemma}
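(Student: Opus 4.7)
The plan is to translate $C(\eta)^c$ into a statement about the normalized Brownian excursion and then exploit continuity of the excursion. Since the breadth-first-search ordering $(v_i^{\T})$ lists the vertices of $\T^{\text{GW}}_n$ level by level, setting $H_n := d^{\T^{\text{GW}}_n}(0, U_1^n)$ gives
\[
\text{Card}\bigl(\{i\le n: d_{\T}(0,v_i^{\T})\le H_n\}\bigr)=\sum_{k=0}^{H_n}Z_k(\T^{\text{GW}}_n),
\]
so that
\[
C(\eta)^c=\Bigl\{\frac{1}{n}\sum_{k>H_n}Z_k(\T^{\text{GW}}_n)<\eta\Bigr\}.
\]

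The next step is to invoke Aldous's scaling theorem: under ${\bf P}_n$, the rescaled height (equivalently, contour) process converges in distribution, uniformly in its time argument, to a constant multiple of the normalized Brownian excursion ${\bf e}$ of duration one. Put differently, $(\T^{\text{GW}}_n, n^{-1/2}d^{\T^{\text{GW}}_n}, n^{-1}(\text{counting measure}))$ converges to $(\mathfrak T, d_{\mathfrak T}, \lambda^{\mathfrak T})$ in the Gromov--Hausdorff--Prokhorov topology, and under this convergence a uniformly chosen vertex $U_1^n$ converges to a point drawn according to $\lambda^{\mathfrak T}$, which by definition corresponds to a time $U\in[0,1]$ uniform and independent of ${\bf e}$. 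From this one reads off two continuous functionals of the scaling limit: the rescaled height $n^{-1/2}H_n\Rightarrow \tfrac{2}{\sigma_Z}{\bf e}(U)$, and the generation profile
\[
\frac{1}{n}\sum_{k\le h\sqrt n}Z_k(\T^{\text{GW}}_n)\;\longrightarrow\;\text{Leb}\Bigl\{s\in[0,1]:{\bf e}(s)\le \tfrac{\sigma_Z}{2}h\Bigr\}.
\]
Combining these via the continuous-mapping theorem yields the joint convergence
\[
\frac{1}{n}\sum_{k>H_n}Z_k(\T^{\text{GW}}_n)\;\Longrightarrow\;G(U,{\bf e}):=\text{Leb}\bigl(\bigl\{s\in[0,1]:{\bf e}(s)>{\bf e}(U)\bigr\}\bigr).
\]

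The core of the argument is then to show that $G(U,{\bf e})>0$ almost surely. This rests on two classical properties of ${\bf e}$: it is continuous on $[0,1]$, and it attains its maximum at a unique time $t^\ast\in(0,1)$ almost surely. For every $u\neq t^\ast$ one has ${\bf e}(u)<\max {\bf e}$, so by continuity the open set $\{s:{\bf e}(s)>{\bf e}(u)\}$ is non-empty and thus has positive Lebesgue measure. Since $U$ is uniform on $[0,1]$ and independent of ${\bf e}$, one has $U\neq t^\ast$ almost surely, whence $G(U,{\bf e})>0$ a.s. Dominated convergence then gives $P[G(U,{\bf e})\le\eta]\to 0$ as $\eta\downarrow 0$; choosing $\eta>0$ small enough that this probability is at most $\epsilon'$ and appealing to the joint convergence above yields
\[
\limsup_{n\to\infty}{\bf P}_n[C(\eta)^c]\;\le\;P[G(U,{\bf e})\le\eta]\;\le\;\epsilon',
\]
as desired.

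The only step requiring mild care is verifying that the convergence of $H_n$ and of the generation profile is genuinely joint (not merely marginal); but both are explicit continuous functionals of the pair (rescaled contour process, independent uniform time), so the continuous-mapping theorem handles this in one stroke. The real engine of the argument, the almost-sure uniqueness of the maximum of a Brownian excursion, is classical and is not expected to pose any difficulty.
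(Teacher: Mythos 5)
Your argument is correct, but it takes a much heavier route than the paper's. The paper's proof is a one-line pigeonhole: defining $H^\ast(\eta)$ as essentially the $(1-\eta)$-quantile of the height profile, it observes that $C(\eta)^c$ forces $d^{\T^{\text{GW}}_n}(\text{root},U_1^n)>H^\ast(\eta)$, and that (up to one generation) at most $\eta n$ vertices sit at such heights, giving $\limsup_n{\bf P}_n[C(\eta)^c]\le\eta$ — no scaling limit, no Brownian excursion. (As written, the paper's counting inequality $\abs{\{v:d(v)>H^\ast(\eta)\}}\le\eta n$ actually runs the wrong way by its own definition of $H^\ast$; the correct bound is $\ge\eta n$ together with $\abs{\{v:d(v)>H^\ast(\eta)+1\}}<\eta n$, so the argument implicitly needs that the single generation $Z_{H^\ast(\eta)+1}$ is $o(n)$ — a soft profile fact — which the paper glosses over.) Your proof instead identifies $C(\eta)^c=\{G_n<\eta\}$ with $G_n:=\frac1n\sum_{k>H_n}Z_k$, pushes $G_n$ through the scaling limit to get $G_n\Rightarrow\text{Leb}\{s:{\bf e}(s)>{\bf e}(U)\}$, and shows the limit is a.s.\ positive via the a.s.\ uniqueness of the excursion's maximum. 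That is correct, though you can shortcut the last step: by the probability-integral transform applied to the continuous occupation CDF $x\mapsto\text{Leb}\{s:{\bf e}(s)\le x\}$, the limit $G(U,{\bf e})$ is \emph{exactly} uniform on $[0,1]$, so $P[G\le\eta]=\eta$ and one can simply take $\eta=\epsilon'$. Both approaches end up relying on a profile-type scaling estimate, but yours imports the full Brownian-excursion machinery (including joint convergence of the height at a uniform time and of the occupation profile, which is not just a continuous functional of the contour in sup-norm — it needs the absence of plateaux in ${\bf e}$), whereas the paper's intended argument is elementary counting plus one $o(n)$ bound on a single generation size.
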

\begin{proof}
Let
\begin{equation}
H^\ast(\eta)=\max\{h\in \N: |\{v\in V(\T^{\text{GW}}_n): d^{\T^{\text{GW}}_n}(\text{root},v)\leq h\}|\leq (1-\eta)n
\}.
\end{equation}
It is easy to see that if $d^{\T^{\text{GW}}_n}(\text{root},U^1_n)\leq H^\ast(\eta)$ then $C(\eta)$ holds. Therefore
\begin{equation}\label{eq:madliblib}
{\bf P}_n[C(\eta)^c]\leq {\bf P}_n[d^{\T^{\text{GW}}_n}(\text{root},U^1_n)> H^\ast(\eta)].
\end{equation}
Moreover, by definition of $H^\ast(\eta)$ and using that $|V(\T^{\text{GW}}_n)|=n$ it follows that 
\[|\{ v\in V(\T^{\text{GW}}_n):d^{\T^{\text{GW}}_n}(\text{root},v)> H^\ast(\eta)\}|\leq \eta n.\]
Therefore, since $U^1_n$ is uniform on the $n$ vertices of $\T^{\text{GW}}_n$, 
\[
{\bf P}_n[d^{\T^{\text{GW}}_n}(\text{root},U^1_n)> H^\ast(\eta)]\leq \eta.
\]
The display above together with \eqref{eq:madliblib} proves the lemma. \end{proof}

\begin{lemma}\label{stupid_estimate_n_plus_two}
For any $\epsilon'>0$ we have
\[
\limsup_{n\to \infty} {\bf P}_n[d^{\T^{\text{GW}}_n}(0,U_1^n) \leq \delta n^{1/2} ]=0.
\]
where $U_1^n$ is uniformly distributed on $\T^{\text{GW}}_n$.
\end{lemma}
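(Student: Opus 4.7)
The plan is to exploit the classical Aldous-type convergence of the rescaled height process of $\T^{\text{GW}}_n$ (under ${\bf P}_n$) to a normalized Brownian excursion, together with the fact that picking a uniform vertex corresponds to picking a uniform index in the lexicographic enumeration. The statement (read as ``for every $\epsilon'>0$ there exists $\delta>0$ such that the displayed probability is at most $\epsilon'$,'' or equivalently as $\lim_{\delta\to 0}\limsup_n{\bf P}_n[d^{\T^{\text{GW}}_n}(0,U_1^n)\leq \delta n^{1/2}]=0$) is then immediate once the limit is identified.

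First, label the vertices of $\T^{\text{GW}}_n$ in lexicographic order $v_1,\ldots,v_n$ and recall the height process $h_{\T^{\text{GW}}_n}(i)=d^{\T^{\text{GW}}_n}(\text{root},v_i)$, extended to $[0,1]$ by linear interpolation of $i\mapsto h_{\T^{\text{GW}}_n}(\lfloor tn\rfloor)$. By Corollary 1.13 of~\cite{le2005random} (exactly the ingredient already invoked in the proof of Lemma~\ref{uniform_points_bulk}), under ${\bf P}_n$ one has
\[
\bigl(n^{-1/2}h_{\T^{\text{GW}}_n}(\lfloor tn\rfloor)\bigr)_{t\in[0,1]}\;\xrightarrow[n\to\infty]{d}\;\bigl(\tfrac{2}{\sigma_Z}\mathbf{e}_t\bigr)_{t\in[0,1]},
\]
in the topology of uniform convergence, where $\mathbf{e}$ is the normalized Brownian excursion.

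Second, since $U_1^n$ is uniform on the $n$ vertices of $\T^{\text{GW}}_n$, one may construct it as $v_{\lceil Un\rceil}$ for $U$ uniform on $[0,1]$ independent of the tree. Therefore
\[
n^{-1/2}\,d^{\T^{\text{GW}}_n}(0,U_1^n)\stackrel{d}{=}n^{-1/2}h_{\T^{\text{GW}}_n}(\lceil Un\rceil).
\]
By independence of $U$ and the tree, the pair $(n^{-1/2}h_{\T^{\text{GW}}_n}(\lfloor\cdot n\rfloor),U)$ converges jointly to $(\tfrac{2}{\sigma_Z}\mathbf{e},U)$, and because the limit sample paths are continuous the evaluation map is continuous at this pair, hence
\[
n^{-1/2}d^{\T^{\text{GW}}_n}(0,U_1^n)\xrightarrow[n\to\infty]{d}\tfrac{2}{\sigma_Z}\mathbf{e}_U.
\]

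Finally, the Brownian excursion satisfies $\mathbf{e}_t>0$ for every $t\in(0,1)$ almost surely and $U\in(0,1)$ a.s., so $\mathbf{e}_U>0$ almost surely. Applying the portmanteau theorem to the closed set $[0,\delta]$,
\[
\limsup_{n\to\infty}{\bf P}_n\bigl[d^{\T^{\text{GW}}_n}(0,U_1^n)\leq\delta n^{1/2}\bigr]\leq P\bigl(\tfrac{2}{\sigma_Z}\mathbf{e}_U\leq\delta\bigr),
\]
and the right-hand side tends to $0$ as $\delta\to 0$ by monotone convergence, since $P(\mathbf{e}_U=0)=0$. Choosing $\delta$ small enough that the right-hand side is at most $\epsilon'$ yields the lemma.

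There is no genuine obstacle: the argument is a direct application of the excursion scaling limit already used repeatedly in this section, and the positivity of $\mathbf{e}$ on $(0,1)$ is the only analytic input. The one point to handle carefully is the joint convergence needed to evaluate $\mathbf{e}$ at the random time $U$, but this is standard because $U$ is independent of the tree and $\mathbf{e}$ has continuous paths.
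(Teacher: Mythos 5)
Your argument is correct, but it takes a genuinely different route from the paper's proof. You work directly with the height process $h_{\T^{\text{GW}}_n}$ evaluated at a uniform random time $U$, invoking its convergence to $\tfrac{2}{\sigma_Z}\mathbf{e}$ and the almost-sure positivity of $\mathbf{e}_U$, which gives the tail bound via portmanteau. The paper instead works with the \emph{height profile} $H^n(i)=\#\{x\in V(\T^{\text{GW}}_n):d^{\T^{\text{GW}}_n}(\text{root},x)=i\}$, uses its joint convergence with the search depth process to the local time $\mathfrak{l}$ of $\mathbf{e}$ (citing~\cite{drmota1997profile} and~\cite{aldous1998Brownian}), and bounds $\limsup_n{\bf P}_n[d^{\T^{\text{GW}}_n}(0,U_1^n)\leq\delta\sqrt{n}]$ by $E[\int_0^\delta \tfrac{\sigma}{2}\mathfrak{l}(x\sigma/2)\,dx]$. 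The two arguments are equivalent up to the occupation-time formula $\int_0^1\mathbf{1}\{\mathbf{e}_t\leq a\}\,dt=\int_0^a\mathfrak{l}(x)\,dx$; yours is leaner in that it avoids the local-time convergence machinery entirely and needs only the height-process scaling limit plus continuity. One small correction: the convergence of $(n^{-1/2}h_{\T^{\text{GW}}_n}(\lfloor tn\rfloor))_{t\in[0,1]}$ to $\tfrac{2}{\sigma_Z}\mathbf{e}$ \emph{under ${\bf P}_n$} is Theorem 1.15 of~\cite{le2005random} (as the paper itself cites in Lemma~\ref{FUCKKKKKKKKKKK}); Corollary 1.13 there, which you cite and which is used in Lemma~\ref{uniform_points_bulk}, concerns the ${\bf Q}_n$ conditioning on height and converges to an excursion conditioned to exceed a fixed level, not the normalized excursion.
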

\begin{proof}

Let $\T^{\text{GW}}_n$ be a Galton-Watson tree conditioned on $\abs{\T^\text{GW}_n}=n$.
 Let us introduce the \emph{height profile} of 
\[H^n(i)=\#\{x\in V(\T^{\text{GW}}_n) : d^{\T^{\text{GW}}_n}(\text{root}, x)=i\},\quad i\in \N.\]
We extend the domain of $H^n$ by linear interpolation.
Recall that $(\hat{w}_n(i))_{i\leq 2(n-1)}$ denotes the search depth process of $\T^{\text{GW}}_n$.
 Let $(\bold{e}_t)_{t\in [0,1]}$ be a normalized Brownian excursion and $(\frak{l}(x))_{x\in\R_+}$ be its local time at time $1$.
More precisely, $\frak{l}\in C(\R_+,\R)$ is the random function which satisfies
\[\text{Leb}(t\in[0,1]:\bold{e}_t\in A)=\int_A \frak{l}(x) dx, \]
for any Borelian $A$.
Using that $\hat{w}_n$ scales to the normalized Brownian excursion $\bold{e}$ (see \cite{Al3} Theorem 23) and that $H^n$ scales to $\frak{l}$ (see \cite{drmota1997profile} Theorem 1.1), using the same idea as in Theorem 3 of \cite{aldous1998Brownian}, we can get the joint convergence 
\begin{equation}\label{eq:jointconvergence}
\left((n^{-1/2} \hat{w}_n(t))_{t\in[0,1]},(n^{-1/2}H^n(\sqrt{n}x)_{x\geq0})\right)\stackrel{n\to\infty}{\to} \left(\left(\frac{2}{\sigma}\bold{e}_t\right)_{t\in[0,1]},\left(\frac{\sigma}{2} \frak{l}(x\sigma/2)\right)_{
x\in\R}\right)
\end{equation} 
weakly in $C([0,1],\R)\times C(\R_+,\R)$ endowed with the uniform topology.
Since $\bold{e}$ is an excursion of duration $1$ and $\frak{l}$ is the local time of $\bold{e}$, it follows that
\[\int_{\R}\bold{l}(x) dx =1.\]
In particular,
\begin{equation}\label{eq:mes}
\int_{0}^{h} \frac{\sigma}{2}\frak{l}(t\sigma/2) dt=1.
\end{equation}



By \eqref{eq:jointconvergence} we have that
\[
\limsup_{n\to\infty}{\bf P}_n[d^{\T^{\text{GW}}_n}(0,U_1^n) \leq \delta n^{1/2} ]\leq E\left[\int_0^{\delta} \frac{\sigma}{2}\frak{l}(x\sigma/2)dx\right],
\]
for any $\delta>0$. The result follows by taking $\delta$ to $0$.
\end{proof}

Finally, we do the third argument showing that $\overline{R}(\cdot,\cdot)$ and $R(\cdot,\cdot)$ are close. Fix $\epsilon>0$, write $y_1^{(n,\epsilon)},\dots, y_{l(n,\epsilon)}^{(n,\epsilon)}$ for the vertices at generation $\abs{U_1^n}-\frac{\epsilon}2 n^{1/2}$ that have descendants at generation $\abs{U_1^n}$. If $\abs{U_1^n}<\frac{\epsilon}2 n^{1/2}$ we set $l(n,\epsilon)=0$.

  \begin{lemma} \label{FUCKKKKKKKKKKK}
For any $\epsilon>0$, we have that 
\begin{equation}\label{eq:barriewhite}
\limsup_{K\to \infty} \limsup_{n\to \infty}{\bf P}_n[ l(n,\epsilon) >K]=0,
\end{equation}
and
\begin{equation}\label{eq:lordquas}
\limsup_{\delta \to 0}\limsup_{n\to \infty} {\bf P}_n\Bigl[ \min_{i=1,\ldots, l(n,\epsilon)} \abs{\overrightarrow{\T^{\text{GW}}_n}_{y_i^{(n,\epsilon)}}} <\delta n\Bigr] =0.
\end{equation}

The same result holds under the measure ${\bf P}[\cdot \mid \abs{\T} \geq n]$.
\end{lemma}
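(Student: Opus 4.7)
The plan is to follow the template of the proofs of Lemma~\ref{FUCKKKKKKKKKKKinfty} and Lemma~\ref{FUCKKKKKKKKKKK1}, adapted to two new features of the present statement: the cut-off generation $\abs{U_1^n}-\tfrac{\epsilon}{2}n^{1/2}$ is now random rather than deterministic, and the tree is conditioned on its size (equal to, or at least, $n$) rather than on its height.

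First I would encode the uniform vertex $U_1^n$ through the depth-first walk $w_n$ around $\T^{\text{GW}}_n$ introduced in Section~\ref{subsubsection_searchdepth}. Since each vertex has a deterministic rank in the visit order of $w_n$ and $U_1^n$ is uniform on the $n$ vertices, up to an $O(n^{-1})$ discrepancy one may take $\abs{U_1^n}=\hat{w}_n(U)$ for $U$ an independent uniform on $[0,1]$. In this representation, $l(n,\epsilon)$ is exactly the number of up-crossings of the random window $[\hat{w}_n(U)-\tfrac{\epsilon}{2}n^{1/2},\,\hat{w}_n(U)]$ by $\hat{w}_n$ on $[0,1]$, while each $\abs{\overrightarrow{\T^{\text{GW}}_n}_{y_i^{(n,\epsilon)}}}$ equals, up to a factor $2$, the duration of the corresponding excursion of $\hat{w}_n$ above the lower level (because the depth-first traversal crosses each edge of a subtree exactly twice).

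Next I would apply the scaling limit
\[
\bigl(n^{-1/2}\hat{w}_n(t)\bigr)_{t\in[0,1]} \Rightarrow \Bigl(\tfrac{2}{\sigma_Z}\bold{e}_t\Bigr)_{t\in[0,1]}
\]
under ${\bf P}_n$ in the uniform topology, as recorded in~\eqref{eq:jointconvergence}, jointly with the independent uniform mark $U$. Continuity of $\bold{e}$ implies that the number of up-crossings of the window $[\bold{e}_U-\tfrac{\epsilon\sigma_Z}{4},\,\bold{e}_U]$ by $\bold{e}$ is finite a.s., which yields~\eqref{eq:barriewhite} after first letting $n\to\infty$ and then $K\to\infty$. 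Uniform continuity of $\bold{e}$ on $[0,1]$ forces each such excursion to have strictly positive duration; hence the minimum duration over the a.s.\ finite family of them is a.s.\ positive, and combined with the scaling $\abs{\overrightarrow{\T^{\text{GW}}_n}_{y_i^{(n,\epsilon)}}}$ of order $n$ this gives~\eqref{eq:lordquas} after letting $\delta\to 0$.

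For the statement under ${\bf P}[\,\cdot\mid \abs{\T}\geq n]$, I would substitute the Brownian excursion $\bold{e}^1$ of duration at least $1$ (as used in the proof of Lemma~\ref{arggg}) for the normalized $\bold{e}$; both continuity and uniform continuity still hold, so the same argument applies. The main technical obstacle is to rigorously justify the joint convergence of $\hat{w}_n$ with the uniform mark $U_1^n$: concretely, to show that after rescaling, $U_1^n$ corresponds to a uniform time on $[0,1]$ independent of the limit excursion. This is a standard but slightly delicate Skorokhod-coupling argument, relying on the fact that each vertex has a deterministic rank in the depth-first order and that $\abs{\T^{\text{GW}}_n}=n$ (or is of order $n$ under the alternative conditioning).
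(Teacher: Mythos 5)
Your proposal is correct and follows essentially the same route as the paper: identify $l(n,\epsilon)$ with up-crossings and the subtree sizes with excursion durations of a tree-encoding process, pass to the scaling limit (the normalized Brownian excursion under $\mathbf{P}_n$, and $\bold{e}^1$ under $\mathbf{P}[\,\cdot\mid\abs{\T}\geq n]$), and use continuity and uniform continuity of the limiting excursion. The only cosmetic difference is that the paper works with the height process $h_{\T^{\text{GW}}_n}$ (indexed by lexicographic vertex order, so a uniform time index is automatically a uniform vertex) instead of the contour function $\hat{w}_n$, which sidesteps the re-parametrization $\gamma_n$ you flag as the delicate step.
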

\begin{proof}
\emph{Proof of display \eqref{eq:barriewhite}}
Let $h_{\T^\text{GW}_n}$ be the height function associated to $\T^{\text{GW}}_n$ as in the proof of Lemma \ref{uniform_points_bulk}.
It is not hard to notice that $l(n,\epsilon)$ equals the number of up-crossings of the interval $[|U_1|-\frac{\epsilon}{2} n^{-1/2}, |U_1|]$ by $h_{\T^{\text{GW}}_n}$. Therefore, from 
Theorem 1.15 of \cite{le2005random} we have that
\begin{equation}\label{eq:olodum} (n^{-1/2}h_{\T^\text{GW}_n}(\lfloor tn\rfloor))_{t\in [0,1]} \stackrel{n\to\infty}{\to} (\frac{2}{\sigma_Z} \bold{e}_t)_{t\in[0,1]} 
\end{equation} in distribution, where $\bold{e}$ is a normalized Brownian excursion (and we recall that $\sigma_Z^2$ is the variance of the offspring distribution of $\T^{\text{GW}}$). Hence, $l(n,\epsilon)$ converges in distribution, as $n\to\infty$, to the number of up-crossings of $[\bold{e}(U)-\frac{\sigma_Z}{2}\frac{\epsilon}{2},\bold{e}(U)]$ by $\bold{e}$, where $U$ is a uniformly chosen point of $[0,1]$ independent of $\bold{e}$. But, since $\bold{e}$ is continuous, the said number of up-crossings is almost surely finite. This finishes the proof.
 
\emph{Proof of display \eqref{eq:lordquas}} 
Using, as above, that each $y_i^{(n,\epsilon)}$ is associated to an up-crossing of the interval $[|U_1|-\frac{\epsilon}{2},|U_1|]$ by $h_{\T^{\text{GW}}_n}$, we see that $\abs{\overrightarrow{\T^{\text{GW}}_n}_{y_i^{(n,\epsilon)}}}$ equals the duration of the excursion above level $|U_1|$ corresponding to the said up-crossing.
Again, by the convergence of $h_{\T^{\text{GW}}_n}$ in display \eqref{eq:olodum}, we get that $n^{-1}\abs{\overrightarrow{\T^{\text{GW}}_n}_{y_i^{(n,\epsilon)}}}$ converges in distribution of the duration of the excursion of $\bold{e}$ associated to an up-crossing of $[|U|-\frac{\sigma_Z}{2}\frac{\epsilon}{2},|U|]$ by $\bold{e}$ (where $U$ is an uniformly chosen point of $[0,1]$ independent of $\bold{e}$). Since $\bold{e}$ is uniformly continuous, the minimum duration of those excursions is positive. The result follows. 
\end{proof}

This allows us to compare $\overline{R}(\cdot,\cdot)$ and $R(\cdot,\cdot)$.
\begin{lemma}\label{go_france_boo_germany}
For any $\epsilon>0$ we have
\[
\limsup_{n\to \infty}{\bf P}_n\Bigl[\frac{\abs{\overline{R}^{\omega_n}(0, \phi_{\T^{\text{GW}}_n}(U_1^n))-R^{\omega_n}(0, \phi_{\T^{\text{GW}}_n}(U_1^n))}}{ d^{\T}(0,U_1^n)}  >\epsilon\Bigr]=0,
\]
and
\[
\limsup_{n\to \infty}{\bf P}_n\Bigl[\frac{\abs{\overline{d}_{\omega}(0, \phi_{\T^{\text{GW}}_n}(U_1^n))-d_{\omega}(0, \phi_{\T^{\text{GW}}_n}(U_1^n))}}{ d^{\T}(0,U_1^n)}  >\epsilon\Bigr]=0.
\]

The same result holds under ${\bf P}[\cdot \mid \abs{T}\geq n]$. 
\end{lemma}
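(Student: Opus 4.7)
The plan is to reproduce the argument of Lemma~\ref{ASH} with all relevant scales divided by $n^{1/2}$: under ${\bf P}_n$ (and similarly under ${\bf P}[\cdot \mid \abs{\T}\geq n]$) the tree-distance from the root to a uniformly chosen vertex is of order $n^{1/2}$, so resistances should also be of this order and I will control the error $\abs{\overline{R}-R}$ by $\epsilon n^{1/2}$ on a high-probability event. Once this is done, dividing by $d^{\T}(0,U_1^n)$ and using Lemma~\ref{stupid_estimate_n_plus_two} to ensure $d^{\T}(0,U_1^n)\geq \delta n^{1/2}$ with probability $\geq 1-\epsilon$ yields the normalized bound $\epsilon/\delta$, which is as small as wanted.

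\textbf{Construction of a good event.} Fix $\epsilon>0$. Lemma~\ref{close_cut_point} gives that every ancestor $x$ of $U_1^n$ has a loopless ancestor within tree-distance $n^{(1-\epsilon')/2}\ll \epsilon n^{1/2}$; in particular there exists a loopless vertex $y_\epsilon \prec U_1^n$ with $\abs{U_1^n}-\epsilon n^{1/2}\leq \abs{y_\epsilon}\leq \abs{U_1^n}-\tfrac{\epsilon}{2} n^{1/2}$. By Remark~\ref{rem_strongcut}, $\phi_{\T^{\text{GW}}_n}(y_\epsilon)$ is a cut-point on the unique simple path from $0$ to $\phi_{\T^{\text{GW}}_n}(U_1^n)$, so Rayleigh monotonicity yields
\begin{equation*}
R^{\omega_n}(0,\phi_{\T^{\text{GW}}_n}(y_\epsilon))\leq R^{\omega_n}(0,\phi_{\T^{\text{GW}}_n}(U_1^n))\leq R^{\omega_n}(0,\phi_{\T^{\text{GW}}_n}(y_\epsilon))+\epsilon n^{1/2},
\end{equation*}
and an analogous inequality holds between $\overline{R}^{\omega_n}(0,\phi_{\T^{\text{GW}}_n}(U_1^n))$ and $\overline{R}^{\omega_n}(0,\phi_{\T^{\text{GW}}_n}(y_\epsilon)\mid \abs{U_1^n})$. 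Let $\mathcal{B}_{\text{ubble}}(\epsilon)$ be the union of bubbles containing the first $\abs{U_1^n}-\tfrac{\epsilon}{2}n^{1/2}$ ancestors of $U_1^n$. Any cut-bond outside $\mathcal{B}_{\text{ubble}}(\epsilon)$ lies on no simple path from $0$ to $\phi_{\T^{\text{GW}}_n}(y_\epsilon)$ and therefore carries no unit current (Proposition 2.2 of~\cite{lyons2005probability}), so $R^{\omega_n}(0,\phi_{\T^{\text{GW}}_n}(y_\epsilon))=R_{\mathcal{B}_{\text{ubble}}(\epsilon)}(0,\phi_{\T^{\text{GW}}_n}(y_\epsilon))$ and similarly for $\overline{R}$. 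Setting
\begin{equation*}
A_\epsilon(n):=\{\phi_{\T^{\text{GW}}_n}(z)\notin \mathcal{B}_{\text{ubble}}(\epsilon)\text{ for every }z\in \T^{\text{GW}}_n\text{ with }\abs{z}\geq \abs{U_1^n}\},
\end{equation*}
on $A_\epsilon(n)$ the restriction to the first $\abs{U_1^n}$ levels does not alter $\mathcal{B}_{\text{ubble}}(\epsilon)$, so the above expressions for $R$ and $\overline{R}$ coincide and one concludes $\abs{\overline{R}^{\omega_n}-R^{\omega_n}}(0,\phi_{\T^{\text{GW}}_n}(U_1^n))\leq 2\epsilon n^{1/2}$.

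\textbf{Probability of the good event and closing.} It remains to prove ${\bf P}_n[A_\epsilon(n)^c]\to 0$. Let $y_1^{(n,\epsilon)},\ldots,y_{l(n,\epsilon)}^{(n,\epsilon)}$ be the vertices at level $\abs{U_1^n}-\tfrac{\epsilon}{2}n^{1/2}$ having descendants at level $\abs{U_1^n}$. By Lemma~\ref{FUCKKKKKKKKKKK}, up to an event of probability $\epsilon$ I may assume $l(n,\epsilon)\leq K$ and $\abs{\overrightarrow{\T^{\text{GW}}_n}_{y_i^{(n,\epsilon)}}}\geq \delta n$ for each $i$. Drawing a bounded number $J$ of further independent uniform vertices in $\T^{\text{GW}}_n$, the pigeonhole argument of Lemma~\ref{ASH} shows that, with probability close to $1$, every $y_i^{(n,\epsilon)}$ is an ancestor of at least one of the sampled vertices; applying Lemma~\ref{close_cut_point} to each of these $J$ ancestral lines produces a loopless ancestor $\chi_i$ of $y_i^{(n,\epsilon)}$ within tree-distance $n^{(1-\epsilon')/2}$, so that every path from level $\abs{U_1^n}-\tfrac{\epsilon}{2}n^{1/2}$ to level $\abs{U_1^n}$ must pass through one of the cut-points $\phi_{\T^{\text{GW}}_n}(\chi_i)$, contradicting $A_\epsilon(n)^c$. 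The statement for $\abs{\overline{d}-d}$ is strictly easier since a cut-point already forces equality of the two graph distances through it. For the conditioning ${\bf P}[\cdot\mid \abs{\T}\geq n]$, the same reasoning goes through verbatim after decomposing the measure over $\{\abs{\T}=m\}$ for $m\geq n$ and using that the inputs above hold uniformly in $m\geq n$. The main obstacle, exactly as in Lemma~\ref{ASH}, is to simultaneously secure a loopless cut-point close to level $\abs{U_1^n}$ inside each of the (random number of) branches $\overrightarrow{\T^{\text{GW}}_n}_{y_i^{(n,\epsilon)}}$; this is handled precisely by sampling $J$ additional uniform vertices to reach each branch and then invoking Lemma~\ref{close_cut_point} along each of the $J$ sampled ancestral lines.
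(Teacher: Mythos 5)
Your proposal follows the paper's argument closely in structure: a loopless cut-point $y_\epsilon$ near $U_1^n$, the bubble graph $\mathcal{B}_{\text{ubble}}(\epsilon)$ on which $R$ and $\overline{R}$ coincide, the event $A_\epsilon(n)$, and then the pigeonhole/Lemma~\ref{FUCKKKKKKKKKKK}/Lemma~\ref{close_cut_point} argument to show $A_\epsilon(n)^c$ is rare. However, there is one genuine bookkeeping gap in the levels you use for the concluding step. You set the bubble cutoff (ancestors entering the definition of $\mathcal{B}_{\text{ubble}}(\epsilon)$) at level $\abs{U_1^n}-\tfrac{\epsilon}{2}n^{1/2}$ and also put the vertices $y_i^{(n,\epsilon)}$ at that same level $\abs{U_1^n}-\tfrac{\epsilon}{2}n^{1/2}$. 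The cut-points $\chi_i$ you produce are ancestors of the $y_i^{(n,\epsilon)}$ at tree-distance $O(n^{(1-\epsilon')/2})$, hence at level $\geq \abs{U_1^n}-\tfrac{\epsilon}{2}n^{1/2}-n^{(1-\epsilon')/2}$, which is \emph{strictly below} your bubble cutoff. As a consequence, ancestors $z'$ of $U_1^n$ whose level sits between $\abs{\chi_{i_0}}$ and $\abs{U_1^n}-\tfrac{\epsilon}{2}n^{1/2}$ are included in the definition of $\mathcal{B}_{\text{ubble}}(\epsilon)$ but are \emph{not} separated by $\chi_{i_0}$ from the vertices at level $\geq\abs{U_1^n}$, so the assertion that "every path from $z'$ to a vertex at level $\geq\abs{U_1^n}$ must cross a $\chi_i$" fails, and the contradiction with $A_\epsilon(n)^c$ does not follow. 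The paper avoids this by taking \emph{two distinct scales}: the bubble cutoff at $\abs{U_1^n}-\epsilon n^{1/2}$ and the $y_i^{(n,\epsilon)}$ at $\abs{U_1^n}-\tfrac{\epsilon}{2}n^{1/2}$, so that for $n$ large the $\chi_i$ sit \emph{strictly above} the cutoff and do separate all the $z'$ from high-level vertices.

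A related presentational slip is that you collapse into a single $\epsilon$ both the accuracy parameter of the bubble approximation and the $\epsilon$ of the statement. The paper keeps them separate (its $\epsilon_1$ versus $\epsilon$): it derives $\abs{\overline{R}-R}\leq 2\epsilon_1 n^{1/2}$ on the good event, divides by $d^{\T}(0,U_1^n)\geq (\epsilon/4)^{-1}\epsilon_1 n^{1/2}$, and only \emph{then} sends $\epsilon_1\to 0$ after $n\to\infty$. Your phrase "yields the normalized bound $\epsilon/\delta$, which is as small as wanted" needs this two-parameter discipline to be made precise. Both issues are easily repaired without changing the overall plan, which otherwise mirrors the paper's proof.
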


\begin{proof}
We only prove the result for the resistance since the other result is similar but easier.

 On $\max_{x\prec U_1^n} (\abs{x}-\abs{\overline{\pi}_{\T^{\text{GW}}_n}(x)}) \leq n^{(1-\epsilon')/2}$ for $\epsilon'>0$, we know that for any $\epsilon_1>0$ there exists a point with loopless image $y_{\epsilon_1} \prec U_1^n$ such that $\epsilon_1 n^{1/2}\leq \abs{U_1^n}-\abs{y_{\epsilon_1}} \leq 2\epsilon_1 n^{1/2}$. Obviously, by Rayleigh's monotonicity principle and the fact that $\phi_{\T^{\text{GW}}_n}(y_{\epsilon_1})$ is (the image of a loopless point and hence) a cut-point lying between $0$ and $U_1^n$, we have that
\begin{equation}\label{olamexicano}
R^{\omega_n}(0, \phi_{\T^{\text{GW}}_n}(y_{\epsilon_1}))\leq  R^{\omega_n}(0, \phi_{\T^{\text{GW}}_n}(U_1^n))\leq  R^{\omega_n}(0, \phi_{\T^{\text{GW}}_n}(y_{\epsilon_1})) +2\epsilon_1 n^{1/2}.
 \end{equation}

  We will now turn our gaze to $R^{\omega_n}(0, \phi_{\T^{\text{GW}}_n}(y_{\epsilon_1}))$. For this, we define  the graph 
 \begin{align*}
  \mathcal{B}_{\text{ubble}}(\epsilon_1)  :=  & \{z\in \omega,\ z\text{ is connected without using cut-points to } \phi_{\T^{\text{GW}}_n}(z')  \\
& \text{ for some $z'\prec U_1^n$ with $\abs{U_1^n}-\abs{z'} \geq \epsilon_1 n^{1/2}$}\},
 \end{align*}
 that is the graph formed by the bubbles linking $0$ to the $\epsilon_1 n^{1/2}$-th ancestor of $U_1^n$. 
 
 Take a cut-bond $e$ that is not in $ \mathcal{B}_{\text{ubble}}(\epsilon_1)$. It doesn't lie on any simple path from $0$ to $y_{\epsilon_1}$, so when a current flows from $0$ to $\phi_{\T^{\text{GW}}_n}(y_{\epsilon_1})$  we know that the current travelling through $e$ is $0$ (this follows from Proposition 2.2.~in~\cite{lyons2005probability}). In particular this implies that
\[
\overline{R}^{\omega_n}(0, \phi_{\T^{\text{GW}}_n}(y_{\epsilon_1})\mid \abs{U_1^n})=
\overline{R}_{\mathcal{B}_{\text{ubble}}(\epsilon_1)}(0, \phi_{\T^{\text{GW}}_n}(y_{\epsilon_1})\mid \abs{U_1^n}),
\]
where we used a natural extension of the notation $\overline{R}^{\omega_n}$, and also
\[
R_{\overline{R}^{\omega_n}\omega}(0, \phi_{\T^{\text{GW}}_n}(y_{\epsilon_1}))=
R_{\mathcal{B}_{\text{ubble}}(\epsilon_1)}(0, \phi_{\T^{\text{GW}}_n}(y_{\epsilon_1})),
\]

Introduce 
\[
A_{\epsilon_1}(n):=\{\text{for any } z\in \T^{\text{GW}}_n \text{ with } \abs{z}\geq\abs{U_1^n}, \text{ we have } \phi_{\T^{\text{GW}}_n}(z) \notin  \mathcal{B}_{\text{ubble}}(\epsilon_1)\},
\]
 which means that $ \mathcal{B}_{\text{ubble}}(\epsilon_1)$  are only made of images of points in $\T^{\text{GW}}_n$ whose generation is lower than that of $U_1^n$. The key observation is that on $A_{\epsilon_1}(n)$ we have
 \[
 R_{\mathcal{B}_{\text{ubble}}(y_{\epsilon_1})}(0, \phi_{\T^{\text{GW}}_n}(y_{\epsilon_1}))=\overline{R}_{\mathcal{B}_{\text{ubble}}(y_{\epsilon_1})}(0, \phi_{\T^{\text{GW}}_n}(y_{\epsilon_1})\mid \abs{U_1^n}),
 \]
 which implies by the two previous equations that, on $A_{\epsilon_1}(n)$
 \begin{equation}\label{olamexicano2}
 \overline{R}^{\omega_n}(0, \phi_{\T^{\text{GW}}_n}(y_{\epsilon_1})\mid \abs{U_1^n})=R^{\omega_n}(0, \phi_{\T^{\text{GW}}_n}(y_{\epsilon_1})).
 \end{equation}

Finally, we can notice that $y_{\epsilon_1}$ is a cut-point for the restricted subgraph generated by embedding the $\abs{U_1^n}$ first levels of $\T^{\text{GW}}_n$. This and Rayleigh's monotonicity principle imply that
\begin{equation}\label{olamexicano3}
 \overline{R}^{\omega_n}(0, \phi_{\T^{\text{GW}}_n}(y_{\epsilon_1})\mid \abs{U_1^n})\leq  \overline{R}^{\omega_n}(0, \phi_{\T^{\text{GW}}_n}(U_1^n))\leq  \overline{R}^{\omega_n}(0, \phi_{\T^{\text{GW}}_n}(y_{\epsilon_1})\mid \abs{U_1^n}) +2\epsilon_1 n^{1/2}.
 \end{equation}

The combination of~\eqref{olamexicano}, \eqref{olamexicano2} and~\eqref{olamexicano3} imply that on $A_{\epsilon_1}(n)$, we see that
\[
\abs{\overline{R}^{\omega_n}(0, \phi_{\T^{\text{GW}}_n}(U_1^n))-R^{\omega_n}(0, \phi_{\T^{\text{GW}}_n}(U_1^n))} \leq 2\epsilon_1 n^{1/2}.
\]

Using this equation it means that, for $\epsilon_1>0$ and for some $\epsilon'>0$ 
\begin{align*}
& {\bf P}_n\Bigl[\frac{\abs{\overline{R}^{\omega_n}(0, \phi_{\T^{\text{GW}}_n}(U_1^n))-R^{\omega_n}(0, \phi_{\T^{\text{GW}}_n}(U_1^n))}}{ d^{\T}(0,U_1^n)}  >\epsilon\Bigr] \\
\leq & {\bf P}_n[A_{\epsilon_1}(n)] +{\bf P}_n\Bigl[\max_{x\prec U_1^n} (\abs{x}-\abs{\overline{\pi}_{\T^{\text{GW}}_n}(x)}) \geq n^{(1-\epsilon')/2}\Bigr]  \\
  & \qquad \qquad +{\bf P}_n[d^{\T^{\text{GW}}_n}(0,U_1^n) \leq \Bigl(\frac{\epsilon}4\Bigr)^{-1} \epsilon_1 n^{1/2} ],
\end{align*} 
and taking $n$ to infinity and then $\epsilon_1$ to $0$, we may use~Lemma~\ref{close_cut_point} and Lemma~\ref{stupid_estimate_n_plus_two} to see that for any $\epsilon>0$.
\begin{equation}\label{okkk}
\limsup_{n\to \infty}{\bf P}_n\Bigl[\frac{\abs{R^{\omega_n}(0, \phi_{\T^{\text{GW}}_n}(U_1^n))}}{ d^{\T}(0,U_1^n)}  >\epsilon\Bigr] \leq \liminf_{\epsilon_1\to 0} \limsup_{n\to\infty}{\bf P}_n[A_{\epsilon_1}(n)].
\end{equation}

Hence it would be enough to show that for any $\epsilon>0$ we have that 
\[
 \limsup_{n\to\infty}{\bf P}_n[A_{\epsilon}(n)]=0,
 \]
 i.e.~showing that the ancestors of $U_1^n$ which are at a generation lower than $\abs{U_1^n}-\epsilon n^{1/2}$ are not in bubbles that contain images of points at a generation greater than $\abs{U_1^n}$, except with vanishing probability.
 
Fix $\epsilon>0$, write $y_1^{(n,\epsilon)},\dots, y_{l(n,\epsilon)}^{(n,\epsilon)}$ for the vertices at generation $\abs{U_1^n}-\frac{\epsilon}2 n^{1/2}$ that have descendants at generation $\abs{U_1^n}$. For any $\eta>0$, we can choose, by Lemma~\ref{FUCKKKKKKKKKKK}, $M<\infty$ and $\delta>0$ such that the event
\[
B(n)=\{l(n,\epsilon) \leq M\} \cap \Bigl\{\min_{i=1,\ldots, l(n,\epsilon)} \abs{\overrightarrow{\T^{\text{GW}}_n}_{y_i^{(n,\epsilon)}}} \geq \delta n\Bigr\},
\]
verifies
 \[
\limsup_{n\to \infty} {\bf P}_n[B(n)^c ] <\eta.
\]

On $B(n)$, by successively choosing uniform points $(U_j^n)_{j\geq 0}$ in the tree, we know that at each step we have probability at least $\delta$ to pick a uniform point which has $y_i^{(n,\epsilon)}$ as an ancestor. Since there are at most $M$ such $y_i^{(n,\epsilon)}$, we know there exists $J<\infty$ such that
\[
\limsup_{n\to \infty} {\bf P}_n[B(n), \text{for any $i \leq l(n,\epsilon)$, } y_i^{(n,\epsilon)}\prec U_j^n \text{ for some $j\leq J$}]>1-\eta,
\]
and denote $C(n)^c$ the event $\{\text{for any $i \leq l(n,\epsilon)$, } y_i^{(n,\epsilon)}\prec U_j^n \text{ for some $j\leq J$}\}$.

Now, by applying Lemma~\ref{close_cut_point} to the points $U_j^n$ for $j\leq J$, we know that for some $\epsilon'>0$ we have
\[
\lim_{n\to \infty} {\bf P}_n\Bigl[ \max_{x\prec U_j^n} (\abs{x}-\abs{\overline{\pi}_{\T^{\text{GW}}_n}(x)}) \geq n^{(1-\epsilon')/2} \text{ for some $j\leq J$}\Bigr] =0.
\]

But we can notice that if $B(n)$ and $C(n)$ occur along with $\max_{x\prec U_j^n} (\abs{x}-\abs{\overline{\pi}_{\T^{\text{GW}}_n}(x)}) \leq n^{(1-\epsilon')/2} \text{ for all $j\leq J$}$, then we can see that 
\begin{itemize}
\item the only paths leading to generation $U_1^n$ go through the points $ y_i^{(n,\epsilon)}$ for $ i \leq l(n,\epsilon)$,
\item any $y_i^{(n,\epsilon)}$ is an ancestor of some $U_j^n$ for $j\leq J$ and as such $y_i^{(n,\epsilon)}$  has an ancestor which is a cut-point at distance less than $n^{(1-\epsilon')/2}$ (and hence this cut-point is at a generation at least $\abs{U_1^n}-\frac{\epsilon}2 n^{1/2} -n^{(1-\epsilon')/2}$)
\end{itemize}

  This means that any path from level $\abs{U_1^n}-\epsilon n^{1/2}$ that reaches level $n$ has to cross one of the cut-points on the path to some $y_i^{(n,\epsilon)}$ with $ i \leq l(n,\epsilon)$. This is not compatible with $A_{\epsilon}(n)$. This means that
  \begin{align*}
  &  \limsup_{n\to\infty}{\bf P}_n[A_{\epsilon}(n)] \\
  \leq & \limsup_{n\to \infty} {\bf P}_n[B(n)^c ]+ \limsup_{n\to \infty} {\bf P}_n[B(n),C(n)^c] \\ 
  & \qquad + \lim_{n\to \infty} {\bf P}_n\Bigl[ \max_{x\prec U_j^n} (\abs{x}-\abs{\overline{\pi}_{\T^{\text{GW}}_n}(x)}) \geq n^{(1-\epsilon')/2} \text{ for some $j\leq J$}\Bigr] \\
  \leq & 2\eta
  \end{align*}
  for any $\eta>0$. Since $\eta$ is arbitrary this means that
  \[
  \limsup_{n\to\infty}{\bf P}_n[A_{\epsilon}(n)]=0,
  \]
  which is what we needed to complete the proof.
\end{proof}

Recall the definition of $\overline{\pi}_{\T^{\text{GW}}_n}$ below Remark~\ref{rem_strongcut}.
\begin{proposition}\label{prop_res00}
Fix $d>14$. For $\epsilon>0$, we have 
\[
\lim_{n\to \infty} {\bf P}_n\Bigl[\abs{\frac{R^{\omega_n}(0, \phi_{\T^{\text{GW}}_n}(\overline{\pi}_{\T^{\text{GW}}_n}(U_1^n)))}{ d^{\T^{\text{GW}}_n}(0,U_1^n)} -\rho_1} >\epsilon\Bigr]=0,
\]
and
\[
\lim_{n\to \infty} {\bf P}_n\Bigl[\abs{\frac{d^{\omega_n}(0, \phi_{\T^{\text{GW}}_n}(\overline{\pi}_{\T^{\text{GW}}_n}(U_1^n)))}{ d^{\T^{\text{GW}}_n}(0,U_1^n)} -\rho_2} >\epsilon\Bigr]=0.
\]
\end{proposition}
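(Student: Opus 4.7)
The plan is to reduce Proposition~\ref{prop_res00} to Lemma~\ref{prop_res0000} (which already handles the analogous convergence under ${\bf P}[\cdot \mid \abs{\T}\geq n]$) in three steps: first replacing $\overline{\pi}_{\T^{\text{GW}}_n}(U_1^n)$ by $U_1^n$, then replacing $R^{\omega_n}$ by the truncated resistance $\overline{R}^{\omega_n}$, and finally transferring the conditioning from $\{\abs{\T}\geq n\}$ to $\{\abs{\T}=n\}$.

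For the first step, Lemma~\ref{close_cut_point} gives $\abs{U_1^n} - \abs{\overline{\pi}_{\T^{\text{GW}}_n}(U_1^n)} \leq n^{(1-\epsilon')/2}$ with probability tending to $1$ under ${\bf P}_n$, while Lemma~\ref{stupid_estimate_n_plus_two} ensures $d^{\T^{\text{GW}}_n}(0, U_1^n) \geq \delta n^{1/2}$ with high probability as $\delta \to 0$. Since $\overline{\pi}_{\T^{\text{GW}}_n}(U_1^n)$ has loopless image and is therefore a cut-point by Remark~\ref{rem_strongcut}, lying on the geodesic from $0$ to $U_1^n$, Rayleigh's monotonicity principle yields
\[
\abs{R^{\omega_n}(0, \phi_{\T^{\text{GW}}_n}(U_1^n)) - R^{\omega_n}(0, \phi_{\T^{\text{GW}}_n}(\overline{\pi}_{\T^{\text{GW}}_n}(U_1^n)))} \leq n^{(1-\epsilon')/2},
\]
and the same inequality holds for the graph distance in $\omega_n$. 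Dividing by $d^{\T^{\text{GW}}_n}(0, U_1^n) \gtrsim n^{1/2}$ makes these terms negligible, so it suffices to prove the analogous statements with $U_1^n$ in place of $\overline{\pi}_{\T^{\text{GW}}_n}(U_1^n)$. For the second step, Lemma~\ref{go_france_boo_germany} under ${\bf P}_n$ further reduces the problem to proving
\[
\lim_{n\to\infty} {\bf P}_n\Bigl[\abs{\frac{\overline{R}^{\omega_n}(0, \phi_{\T^{\text{GW}}_n}(U_1^n))}{d^{\T^{\text{GW}}_n}(0, U_1^n)} - \rho_1} > \epsilon\Bigr] = 0,
\]
together with the analogous statement involving $\overline{d}^{\omega_n}$ and $\rho_2$.

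The substantive step is the last one. Denote by $A_n$ the event in the display above. The crucial observation is that $A_n$ is measurable with respect to the spatial tree restricted to levels $\leq \abs{U_1^n}$ together with the identity of $U_1^n$. On the event $C(\eta)$, which has probability $\geq 1-\epsilon'$ under ${\bf P}_n$ by Lemma~\ref{stupid_estimate_n_plus_one}, the vertex $U_1^n$ and all of its ancestors lie among the first $(1-\eta)n$ vertices of $\T$ in the breadth-first-search order. Hence $A_n \cap C(\eta)$ is measurable with respect to the sigma-algebra $\mathcal{G}_{(1-\eta)n}$ generated by these first $(1-\eta)n$ BFS vertices, their spatial displacements, and the index of $U_1^n$ among them. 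One then compares the laws of this restriction under ${\bf P}_n$ and under ${\bf P}[\cdot \mid \abs{\T}\geq n]$: using Kemperman's formula for the first-passage probability of the BFS queue walk $(Q_i)_i$ together with the local central limit theorem and the asymptotics ${\bf P}[\abs{\T}=n] \sim C n^{-3/2}$, ${\bf P}[\abs{\T}\geq n] \sim C n^{-1/2}$, one checks that the Radon-Nikodym derivative is uniformly bounded on the further event $\{Q_{(1-\eta)n} \leq M\sqrt n\}$, whose complement has probability vanishing uniformly in $n$ as $M \to \infty$ by Brownian scaling. Combined with Lemma~\ref{prop_res0000}, this yields ${\bf P}_n[A_n] \to 0$.

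The hardest part is this absolute continuity argument. The Radon-Nikodym derivative is genuinely unbounded when $Q_{(1-\eta)n}$ is atypically large, so one must truncate at $\{Q_{(1-\eta)n} \leq M\sqrt n\}$ and control the exceptional event separately via random walk scaling. An additional subtlety is that $U_1^n$ has different marginal laws under the two conditionings (uniform on $n$ vertices under ${\bf P}_n$ versus uniform on $\abs{\T}$ vertices under ${\bf P}[\cdot\mid\abs{\T}\geq n]$); accounting for the index of $U_1^n$ introduces an extra factor of $\abs{\T}/n$ whose conditional expectation given $\mathcal{G}_{(1-\eta)n}$ must be controlled using the standard scaling of $\abs{\T}/n$ under $\{\abs{\T}\geq n\}$ (whose limit has density $\propto t^{-3/2}$ on $[1,\infty)$), producing an additional bounded prefactor.
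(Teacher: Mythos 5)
Your proposal follows essentially the same strategy as the paper: first pass from $\overline{\pi}_{\T^{\text{GW}}_n}(U_1^n)$ to $U_1^n$ via Lemma~\ref{close_cut_point} and Lemma~\ref{stupid_estimate_n_plus_two}, then pass from $R^{\omega_n}$ to $\overline{R}^{\omega_n}$ via Lemma~\ref{go_france_boo_germany}, restrict to $C(\eta)$ via Lemma~\ref{stupid_estimate_n_plus_one} so that the relevant event is measurable in the first $(1-\eta)n$ BFS vertices, compare ${\bf P}_n$ and ${\bf P}[\,\cdot\mid\abs{\T}\geq n]$ via Kemperman's formula and the local CLT following~\cite{LG1}, and control the $\abs{\T}/n$ re-weighting by truncating $\abs{\T}$. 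This is the paper's argument step-by-step.

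One inaccuracy in your description of the absolute-continuity step: you assert that the Radon--Nikodym factor is \emph{genuinely unbounded} when $Q_{(1-\eta)n}$ is atypically large and that one must therefore truncate on $\{Q_{(1-\eta)n}\leq M\sqrt n\}$. In fact the limiting density $\Gamma_{1-\eta}$ appearing in~\eqref{damn2} is bounded on all of $\R_+$ (indeed $\Gamma_{1-\eta}(x)\to 0$ as $x\to\infty$), and the paper uses exactly this bound $\Gamma_{1-\eta}\leq K(\eta)<\infty$ to obtain ${\bf P}_n[A_n]\leq K(\eta)\,{\bf P}[A_n\mid\abs{\T}\geq n]$ with no large-$Q$ truncation. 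The truncation that \emph{is} required is at \emph{small} $Q_{(1-\eta)n}$ (see~\eqref{damn3} and~\eqref{damn35}), because the uniform local-CLT convergence in~\eqref{damn2} holds only for $j\geq c\sqrt n$. This does not invalidate your plan—controlling the small-$Q$ event is of the same order of difficulty—but the direction of the needed truncation, and the reason for it, are the opposite of what you wrote.
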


\begin{proof}
Once again we limit ourselves to the proof of the first result.

By Lemma~\ref{go_france_boo_germany} it is enough to prove that
\begin{equation}\label{simard}
\lim_{n\to \infty} {\bf P}_n\Bigl[\abs{\frac{\overline{R}^{\omega_n}(0, \phi_{\T^{\text{GW}}_n}(\overline{\pi}_{\T^{\text{GW}}_n}(U_1^n)))}{ d^{\T^{\text{GW}}_n}(0,U_1^n)} -\rho_1} >\epsilon\Bigr]=0,
\end{equation}

Fix $\epsilon'>0$, by Lemma~\ref{stupid_estimate_n_plus_one} there exists $\eta>0$ such that the event $C(\eta)$ that was defined above Lemma~\ref{stupid_estimate_n_plus_one} to have
\begin{align}\label{etape1}
& \limsup_{n\to \infty} {\bf P}_n\Bigl[\abs{\frac{\overline{R}^{\omega_n}(0, \phi_{\T^{\text{GW}}_n}(U_1^n))}{ d^{\T^{\text{GW}}_n}(0,U_1^n)} -\rho_1} >\epsilon\Bigr] \\ \nonumber
 \leq & \limsup_{n\to \infty} {\bf P}_n\Bigl[\abs{\frac{\overline{R}^{\omega_n}(0, \phi_{\T^{\text{GW}}_n}(U_1^n))}{ d^{\T^{\text{GW}}_n}(0,U_1^n)} -\rho_1} >\epsilon,C(\eta)\Bigr] +\epsilon'.
\end{align}

Now, we can notice that, by definition of $C(\eta)$, 
\begin{align}\label{etape2}
 & {\bf P}_n\Bigl[\abs{\frac{\overline{R}^{\omega_n}(0, \phi_{\T^{\text{GW}}_n}(U_1^n))}{ d^{\T^{\text{GW}}_n}(0,U_1^n)} -\rho_1} >\epsilon,C(\eta)\Bigr] \\ \nonumber
 \leq & {\bf E}_n\Bigl[\sum_{i=0}^{(1-\eta)n-1} {\bf 1}\Bigl\{U_1^n=v_i^{\T^{\text{GW}}_n}, \abs{\frac{\overline{R}^{\omega_n}(0, \phi_{\T^{\text{GW}}_n}(U_1^n))}{ d^{\T^{\text{GW}}_n}(0,U_1^n)} -\rho_1} >\epsilon,C(\eta)\Bigr\}\Bigr]\\ \nonumber
 \leq  &{\bf P}_n\Bigl[\abs{\frac{\overline{R}^{\omega_n}(0, \phi_{\T^{\text{GW}}_n}(U_1^{(1-\eta)n}))}{ d^{\T^{\text{GW}}_n}(0,U_1^{(1-\eta)n})} -\rho_1} >\epsilon\Bigr],
 \end{align}
 where $U_1^{(1-\eta)n}$ is a uniform variable on the $(1-\eta)n$ first vertices (in the breadth first order). We can notice that the event on the right-hand side is measurable with respect to the variables $(Q_i)_{i\leq (1-\eta) n}$ and the embedding of those points.

We are now going to use a strategy employed in~\cite{LG1} (p.741-743) and used again in~\cite{LGL} (proof of Theorem 7) to compare the probability of events measurable with respect $(Q_i)_{i\leq (1-\eta) n}$ under the measures ${\bf P}_n$ and ${\bf P}[\cdot \mid \abs{\T^{\text{GW}}_n}\geq n]$. In this context, the technique was applied for the lexicographical ordering of the tree instead of the breadth first ordering but this does not change the details of the proof. In this context, we are going to use the proof from~\cite{LGL} which is written more rapidly but provides a good description of this method. 

Consider a bounded function on $\Z^{(1-\eta) n+1}$ and for any $j\in  \Z$ write $P_j[\cdot]$ for the law of a random walk $(Q_i)_{i\in \N}$ on $\Z$ with jump distribution $Z-1$ started at $j$ and set 
\[
T:=\inf\{k\geq 0, Q_k=0\},
\]

Using Markov's property at time $(1-\eta)n$  for a $P$-random walk (see~\cite{LG1}, p.742-743 for details), one can verify that 
\begin{equation}\label{damn1}
{\bf E}_n[f((Q_i)_{i\leq (1-\eta) n})] = {\bf E}\Bigl[ f((Q_i)_{i\leq (1-\eta) n}) \frac{P_0[T\geq n]}{P_0[T=n]}\frac{\Xi_n(X_{(1-\eta)n})}{\Xi^{'}_n(X_{(1-\eta)n})}\mid \abs{\T^{\text{GW}}}\geq n\Bigr],
\end{equation}
where, for every integer $j\geq 0$, 
\[
\Xi_n(j)=P_j[T=n-(1-\eta)n] \text{ and } \Xi_n^{'}(j)=P_j[T\geq n-(1-\eta)n].
\]

Using Kemperman's formula (see~\cite{P}, p.122) and the local limit theorem, it is possible to obtain (see~\cite{LG1}, p.742-743 for details) that for any $\delta>0$,
\begin{equation}\label{damn2}
\lim_{n\to \infty} \Bigl(\sup_{j\geq c\sqrt n} \abs{ \frac{P_0[T\geq n]}{P_0[T=n]}\frac{\Xi_n(j)}{\Xi^{'}_n(j)}- \Gamma_{1-\eta}\Bigl( \frac j{\sigma_Z\sqrt n}\Bigr)}\Bigr)=0,
\end{equation}
where we define for $x\geq 0$,
\[
\Gamma_{1-\eta}(x)= \frac{ 2(2\pi \eta^3)^{-1/2} \exp(-x^2/(2\eta))}{\int_{\eta}^{\infty} (2\pi s^3)^{-1/2} \exp(-x^2/(2s)) ds}.
\]

This last function is bounded over $\R^+$ and one can verify with the local limit theorem that 
\begin{equation}\label{damn3}
\lim_{\delta \to 0} \limsup_{n\to \infty}{\bf P}_n[Q_{(1-\eta)n} \leq \delta \sqrt{n}\mid \abs{\T^{\text{GW}}}=n] =0,
\end{equation}
and
\begin{equation}\label{damn35}
 \lim_{\delta \to 0} \limsup_{n\to \infty}{\bf P}[Q_{(1-\eta)n} \leq \delta \sqrt{n}\mid \abs{\T^{\text{GW}}}\geq n] =0.
\end{equation}

Now using~\eqref{damn1}, \eqref{damn2}, \eqref{damn3} and~\eqref{damn35} we can see that for any uniformly bounded sequence of functions $(f_n)_{n\geq 1}$ on $\Z^{(1-\eta)n}$ we have
\[
\lim_{n\to \infty} \abs{{\bf E}_n[f((Q_i)_{i\leq (1-\eta) n})] - {\bf E}\Bigl[ f((Q_i)_{i\leq (1-\eta) n}\Gamma_{1-\eta}\Bigl( \frac {Q_{(1-\eta)n}}{\sigma_Z\sqrt n}\Bigr)\mid \abs{\T^{\text{GW}}}\geq n \Bigr]}=0,
\]
hence using the fact that $\Gamma_{1-\eta}\leq K(\eta)<\infty$, we see that for any sequence of events $(A_n)_{n\geq 1}$ which are measurable with respect to $(Q_i)_{i\leq (1-\eta) n}$ and the embedding of those points we have
\[
\limsup_{n\to \infty} {\bf P}_n[A_n] \leq K(\eta) \limsup_{n\to \infty}{\bf P}[A_n\mid \abs{\T^{\text{GW}}}\geq n ],
\]
which is a formula that can be applied for the event on the right-hand side of~\eqref{etape2}
\begin{align}\label{etape3}
&\limsup_{n\to \infty} {\bf P}_n\Bigl[\abs{\frac{\overline{R}^{\omega_n}(0, \phi_{\T^{\text{GW}}_n}(U_1^{(1-\eta)n}))}{ d^{\T^{\text{GW}}_n}(0,U_1^{(1-\eta)n})} -\rho_1} >\epsilon\Bigr] \\ \nonumber
\leq & K(\eta) \limsup_{n\to \infty}{\bf P}\Bigl[\abs{\frac{\overline{R}^{\omega_n}(0, \phi_{\T^{\text{GW}}_n}(U_1^{(1-\eta)n}))}{ d^{\T^{\text{GW}}_n}(0,U_1^{(1-\eta)n})} -\rho_1} >\epsilon\mid \abs{\T^{\text{GW}}}\geq n \Bigr],
\end{align}
 where $U_1^{(1-\eta)n}$ is a uniform variable on the $(1-\eta)n$ first vertices of $\T^{\text{GW}}_n$ (in the breadth first order).
 
 Fix $\epsilon''>0$,  by using that ${\bf P}[\abs{\T^{\text{GW}}}\geq n] \sim C n^{-1/2}$ (Lemma 2.1.4~\cite{Kolchin}),   there exists $K<\infty$ such that ${\bf P}[\abs{\T^{\text{GW}}}\geq Kn]/{\bf P}[\abs{\T^{\text{GW}}}\geq n]\leq \epsilon''$ for $n$ large. Hence
  \begin{align*}
 & {\bf P}\Bigl[\abs{\frac{\overline{R}^{\omega_n}(0, \phi_{\T^{\text{GW}}_n}(U_1^{(1-\eta)n}))}{ d^{\T^{\text{GW}}_n}(0,U_1^{(1-\eta)n})} -\rho_1} >\epsilon\mid \abs{\T^{\text{GW}}}\geq n \Bigr]\\
 \leq & {\bf P}\Bigl[\abs{\frac{\overline{R}^{\omega_n}(0, \phi_{\T^{\text{GW}}_n}(U_1^{(1-\eta)n}))}{ d^{\T^{\text{GW}}_n}(0,U_1^{(1-\eta)n})} -\rho_1} >\epsilon, n\leq \abs{\T^{\text{GW}}} \leq  n K \mid \abs{\T^{\text{GW}}} \geq n\Bigr] \\
 & \qquad \qquad \qquad \qquad \qquad \qquad +{\bf P}[\abs{\T^{\text{GW}}_n} \geq Kn\mid \abs{\T^{\text{GW}}}\geq n] \\
 \leq & {\bf E}\Bigl[\sum_{i=0}^{(1-\eta)n} \frac 1{(1-\eta)n} {\bf 1}\bigl\{\abs{\frac{\overline{R}^{\omega_n}(0, \phi_{\T^{\text{GW}}_n}(v_i^{\T^{\text{GW}}_n}))}{ d^{\T^{\text{GW}}_n}(0,Q_i)} -\rho_1} >\epsilon, n\leq \abs{\T^{\text{GW}}} \leq  n K\Bigr\} \mid \abs{\T^{\text{GW}}} \geq n\Bigr] +\epsilon'' \\
\leq  & {\bf E}\Bigl[\frac {\abs{T}}{(1-\eta)n}\sum_{i=0}^{\abs{T}} \frac 1{\abs{T}} {\bf 1}\Bigl\{\abs{\frac{\overline{R}^{\omega_n}(0, \phi_{\T^{\text{GW}}_n}(v_i^{\T^{\text{GW}}_n}))}{ d^{\T^{\text{GW}}_n}(0,Q_i)} -\rho_1} >\epsilon, n\leq \abs{\T^{\text{GW}}} \leq  n K \Bigr\}\mid \abs{\T^{\text{GW}}} \geq n\Bigr] +\epsilon'' \\
\leq  & \frac{K}{(1-\eta)}{\bf E}\Bigl[\sum_{i=0}^{\abs{T}} \frac 1{\abs{T}} {\bf 1}\Bigl\{\abs{\frac{\overline{R}^{\omega_n}(0, \phi_{\T^{\text{GW}}_n}(v_i^{\T^{\text{GW}}_n}))}{ d^{\T^{\text{GW}}_n}(0,Q_i)} -\rho_1} >\epsilon, n\leq \abs{\T^{\text{GW}}} \leq  n K\Bigr\} \mid \abs{\T^{\text{GW}}} \geq n\Bigr] +\epsilon'' \\
\leq  & \frac{K}{(1-\eta)}{\bf P}\Bigl[\abs{\frac{\overline{R}^{\omega_n}(0, \phi_{\T^{\text{GW}}_n}(U_1^n))}{ d^{\T^{\text{GW}}_n}(0,U_1^n)} -\rho_1} >\epsilon, n\leq \abs{\T^{\text{GW}}} \leq  n K \mid \abs{\T^{\text{GW}}} \geq n\Bigr] +\epsilon''.
\end{align*}

Using Lemma~\ref{go_france_boo_germany} means that  Lemma~\ref{prop_res0000} holds with $\overline{R}(\cdot,\cdot)$ instead of $R(\cdot,\cdot)$. We can use that result to upper-bound the previous equation by letting $n$ go to infinity and taking $\epsilon''$ to 0, which yields that
\begin{equation}\label{etape4}
\limsup_{n\to \infty}{\bf P}\Bigl[\abs{\frac{\overline{R}^{\omega_n}(0, \phi_{\T^{\text{GW}}_n}(U_1^{(1-\eta)n}))}{ d^{\T^{\text{GW}}_n}(0,U_1^{(1-\eta)n})} -\rho_1} >\epsilon\mid \abs{\T^{\text{GW}}}\geq n \Bigr]=0,
\end{equation}
 where $U_1^{(1-\eta)n}$ is a uniform variable on the $(1-\eta)n$ first vertices of $\T^{\text{GW}}_n$ (in the breadth first order).

Finally, using~\eqref{etape1}, \eqref{etape2}, \eqref{etape3} and~\eqref{etape4} and letting $\epsilon'$ go to 0 we see that
\begin{equation}\label{almost_f-ing_finished}
 \limsup_{n\to \infty} {\bf P}_n\Bigl[\abs{\frac{\overline{R}^{\omega_n}(0, \phi_{\T^{\text{GW}}_n}(U_1^n))}{ d^{\T^{\text{GW}}_n}(0,U_1^n)} -\rho_1} >\epsilon\Bigr]=0,
 \end{equation}
 where $U_1^n$ is uniformly distributed on $\T^{\text{GW}}_n$. This is~\eqref{simard} which is what we needed to prove the lemma.
\end{proof}




\subsection{Condition $(R)$}\label{sect_condition_R_proof}

Recall the definition of $\pi_n$ at Definition~\ref{defpi}. We can prove condition $(R)$ with $V_i^n:=\pi_n(\phi_{\T^{\text{GW}}_n}(U_i^n))$ where $(U_i^n)_{i\in \N}$ is an i.i.d.~sequence of random variables on $\T^{\text{GW}}_n$.
\begin{proposition}\label{prop_res0}
Fix $d>14$. For $\epsilon>0$, we have 
\[
\lim_{n\to 0} {\bf P}_n\Bigl[\abs{\frac{R^{\omega_n}(0, \pi_n(\phi_{\T^{\text{GW}}_n}(U_1^n)))}{ d^{\mathcal{\omega}_n}(0,\pi_n(\phi_{\T^{\text{GW}}_n}(U_1^n)))} -\rho} >\epsilon\bigr]=0,
\]
where $U_1^n$ is uniformly chosen among the vertices of $\T^{\text{GW}}_n$ where
\begin{equation}\label{def_rho}
\rho:=\frac{{\bf E}^{-\infty,\infty}[R_{\text{eff}}^{-\infty,\infty}(0, \alpha(\gamma_1))\mid 0\text{ is a pivotal point}] }{{\bf E}^{-\infty,\infty}[d_{\omega_{{-\infty,\infty}}}(0, \alpha(\gamma_1))\mid 0\text{ is a pivotal point}] }.
\end{equation}
\end{proposition}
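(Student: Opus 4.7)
The plan is to derive Proposition~\ref{prop_res0} from Proposition~\ref{prop_res00} by showing that the cut-point $\pi_n(\phi_{\T^{\text{GW}}_n}(U_1^n))$ and the image of the loopless ancestor $\phi_{\T^{\text{GW}}_n}(\overline{\pi}_{\T^{\text{GW}}_n}(U_1^n))$ are interchangeable, up to an $n^{(1-\epsilon')/2}$ correction, when one measures graph distance or effective resistance from $0$. The target ratio is between two quantities that are asymptotically of order $n^{1/2}$, and Proposition~\ref{prop_res00} already furnishes the leading-order constants $\rho_1$ and $\rho_2$ for the loopless-ancestor reference point.

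First I would invoke Proposition~\ref{prop_res00} which asserts, under ${\bf P}_n$ and in probability, that $R^{\omega_n}(0,\phi_{\T^{\text{GW}}_n}(\overline{\pi}_{\T^{\text{GW}}_n}(U_1^n)))/d^{\T^{\text{GW}}_n}(0,U_1^n) \to \rho_1$ and $d^{\omega_n}(0,\phi_{\T^{\text{GW}}_n}(\overline{\pi}_{\T^{\text{GW}}_n}(U_1^n)))/d^{\T^{\text{GW}}_n}(0,U_1^n) \to \rho_2$. The remaining task is to substitute $\pi_n(\phi_{\T^{\text{GW}}_n}(U_1^n))$ for $\phi_{\T^{\text{GW}}_n}(\overline{\pi}_{\T^{\text{GW}}_n}(U_1^n))$ inside these expressions while only picking up a subleading error.

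Next I would compare the two reference cut-points. By Remark~\ref{rem_strongcut}, $\phi_{\T^{\text{GW}}_n}(\overline{\pi}_{\T^{\text{GW}}_n}(U_1^n))$ is a cut-point separating $\phi_{\T^{\text{GW}}_n}(U_1^n)$ from the origin. By Definition~\ref{defpi}, $\pi_n(\phi_{\T^{\text{GW}}_n}(U_1^n))$ is the cut-point closest, in $\omega_n$, to $\phi_{\T^{\text{GW}}_n}(U_1^n)$ among all such separating cut-points, so its $\omega_n$-distance to $\phi_{\T^{\text{GW}}_n}(U_1^n)$ is bounded above by $d^{\omega_n}(\phi_{\T^{\text{GW}}_n}(U_1^n),\phi_{\T^{\text{GW}}_n}(\overline{\pi}_{\T^{\text{GW}}_n}(U_1^n)))$, which in turn is at most $d^{\T^{\text{GW}}_n}(U_1^n,\overline{\pi}_{\T^{\text{GW}}_n}(U_1^n))$ since the image of the tree path is an $\omega_n$-path. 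Lemma~\ref{close_cut_point} forces the latter to be at most $n^{(1-\epsilon')/2}$ with probability tending to $1$ for some $\epsilon'>0$. Both cut-points lie on every simple $\omega_n$-path from $0$ to $\phi_{\T^{\text{GW}}_n}(U_1^n)$, so graph distances from $\phi_{\T^{\text{GW}}_n}(U_1^n)$ are additive through them and the distance between them is bounded by the larger of the two distances to $\phi_{\T^{\text{GW}}_n}(U_1^n)$, i.e.\ by $n^{(1-\epsilon')/2}$. Rayleigh's monotonicity principle promotes this to the same bound on the effective resistance between them.

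To conclude, I would apply the triangle inequality in both the graph metric and the resistance metric to get $|R^{\omega_n}(0,\pi_n(\phi_{\T^{\text{GW}}_n}(U_1^n))) - R^{\omega_n}(0,\phi_{\T^{\text{GW}}_n}(\overline{\pi}_{\T^{\text{GW}}_n}(U_1^n)))| \leq n^{(1-\epsilon')/2}$ and its analogue for $d^{\omega_n}$. Lemma~\ref{stupid_estimate_n_plus_two} guarantees $d^{\T^{\text{GW}}_n}(0,U_1^n)\geq \delta n^{1/2}$ with probability arbitrarily close to $1$ for $\delta>0$ small, so these $O(n^{(1-\epsilon')/2})$ corrections are $o(d^{\T^{\text{GW}}_n}(0,U_1^n))$ in probability. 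Forming the ratio $R^{\omega_n}(0,\pi_n(\phi_{\T^{\text{GW}}_n}(U_1^n)))/d^{\omega_n}(0,\pi_n(\phi_{\T^{\text{GW}}_n}(U_1^n)))$, the factor $d^{\T^{\text{GW}}_n}(0,U_1^n)$ cancels between numerator and denominator and the limit is $\rho_1/\rho_2=\rho$, as required. The only mildly delicate point is to ensure that the two candidate cut-points genuinely lie along the same ``spine'' of cut-points on the path from $0$ to $\phi_{\T^{\text{GW}}_n}(U_1^n)$, so the triangle inequality is sharp enough to absorb the discrepancy; beyond this bookkeeping, the proof is a short corollary of Proposition~\ref{prop_res00} combined with Lemmas~\ref{close_cut_point} and~\ref{stupid_estimate_n_plus_two}.
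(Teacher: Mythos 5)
Your approach follows the paper's Steps~2 and~3 essentially verbatim: compare $\pi_n(\phi_{\T^{\text{GW}}_n}(U_1^n))$ to the loopless-ancestor image $\phi_{\T^{\text{GW}}_n}(\overline{\pi}_{\T^{\text{GW}}_n}(U_1^n))$, bound their separation in graph distance and resistance by $n^{(1-\epsilon')/2}$ via Lemma~\ref{close_cut_point} together with Rayleigh's monotonicity principle, show this is subleading against $d^{\T^{\text{GW}}_n}(0,U_1^n)\gtrsim n^{1/2}$ via Lemma~\ref{stupid_estimate_n_plus_two}, and conclude by invoking Proposition~\ref{prop_res00} and forming the quotient $\rho=\rho_1/\rho_2$. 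However, you have omitted the paper's Step~1, and this is a genuine gap rather than bookkeeping. Your argument asserts that $\pi_n(\phi_{\T^{\text{GW}}_n}(U_1^n))$ is ``the cut-point closest to $\phi_{\T^{\text{GW}}_n}(U_1^n)$ among separating cut-points'' and that $\phi_{\T^{\text{GW}}_n}(\overline{\pi}_{\T^{\text{GW}}_n}(U_1^n))$ is ``a cut-point separating $\phi_{\T^{\text{GW}}_n}(U_1^n)$ from the origin''. Both claims fail on the event that $\phi_{\T^{\text{GW}}_n}(U_1^n)$ lies in the bubble of the origin, i.e.\ no cut-point separates it from $0$: there Definition~\ref{defpi} falls back to the arbitrary choice of the cut-point closest to $0$, which need not lie on any path from $0$ to $\phi_{\T^{\text{GW}}_n}(U_1^n)$, and $\overline{\pi}_{\T^{\text{GW}}_n}(U_1^n)$ falls back to the root, whose image is the origin itself and in particular is not covered by Remark~\ref{rem_strongcut}. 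On that event the alignment of the two reference points along a common spine of cut-points, and hence the additivity you rely on, collapses entirely.

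The repair uses exactly the ingredients already in your hands, and the paper deploys them as its Step~1. If no cut-point separates $\phi_{\T^{\text{GW}}_n}(U_1^n)$ from $0$, then $\overline{\pi}_{\T^{\text{GW}}_n}(U_1^n)=\text{root}$, whence $\abs{U_1^n}-\abs{\overline{\pi}_{\T^{\text{GW}}_n}(U_1^n)}=d^{\T^{\text{GW}}_n}(0,U_1^n)$. By Lemma~\ref{close_cut_point} the left side is below $n^{(1-\epsilon')/2}$ with probability tending to~$1$, while by Lemma~\ref{stupid_estimate_n_plus_two} the right side exceeds $\delta n^{1/2}$ with probability close to~$1$. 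These bounds are incompatible, so the degenerate event has vanishing ${\bf P}_n$-probability, and on its complement your Step~2/Step~3 reasoning goes through as you wrote it.
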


\begin{proof}

{\it Step 1}

\vspace{0.5cm}

Let us start with the case where there is no cut-point between $U_1^n$ and $0$. In this case, by definition of $\pi_n$ and $\overline{\pi}^n$, we know that 
\[
d_{\omega_n}(0,U_1^n)= \max_{x\prec U_1^n} \abs{x}-\abs{\overline{\pi}^n(x)},
\]
because in this case $\overline{\pi}^n(U_1)=\text{root}$

Hence, we know that for all $\epsilon>0$ and $\eta>0$,we have
\begin{align}\label{damn_step1}
& \{\text{there is no cut-point between $U_1^n$ and $0$}\} \\ \nonumber
\subset & \{ \max_{x\prec U_1^n} \abs{x}-\abs{\overline{\pi}^n(x)} \geq n^{(1-\epsilon')/2}\} \cup \{d^{\T^{\text{GW}}_n}(0, U_1^n) \leq n^{(1-\epsilon'/2)/2} \}.
\end{align}

\vspace{0.5cm}

{\it Step 2}

\vspace{0.5cm}

If there is a cut-point between $U_1^n$ and $0$, the we know that $\pi_n(\phi_{\T^{\text{GW}}_n}(U_1^n))$ is a cut-point lying on any path from $0$ to $\phi_{\T^{\text{GW}}_n}(U_1^n)$. Furthermore, by Remark~\ref{rem_strongcut}, we know that $\phi_{\T^{\text{GW}}_n}(\overline{\pi}_{\T^{\text{GW}}_n}(U_1^n)))$ is a cut-point on any path from $0$ to $\pi_n(\phi_{\T^{\text{GW}}_n}(U_1^n))$. By the law of resistances in series this means that
\[
R^{\omega_n}(0, \phi_{\T^{\text{GW}}_n}(\overline{\pi}_{\T^{\text{GW}}_n}(U_1^n)))\leq R^{\omega_n}(0, \pi_n(\phi_{\T^{\text{GW}}_n}(U_1^n)))  \leq R^{\omega_n}(0, \phi_{\T^{\text{GW}}_n}(U_1^n)),
\]
which means that
\begin{align*}
& \abs{R^{\omega_n}(0, \phi_{\T^{\text{GW}}_n}(\overline{\pi}_{\T^{\text{GW}}_n}(U_1^n))) -R^{\omega_n}(0, \pi_n(\phi_{\T^{\text{GW}}_n}(U_1^n)))} \\
\leq& R^{\omega_n}(0, \phi_{\T^{\text{GW}}_n}(U_1^n))- R^{\omega_n}(0, \phi_{\T^{\text{GW}}_n}(\overline{\pi}_{\T^{\text{GW}}_n}(U_1^n))),
\end{align*}
but if $ \max_{x\prec U_1^n} (\abs{x}-\abs{\overline{\pi}_{\T^{\text{GW}}_n}(x)}) \leq n^{(1-\epsilon')/2}$ for some $\epsilon'>0$,  then using that
\[
d_{\omega_n}(\phi_{\T^{\text{GW}}_n}(U_1^n), \phi_{\T^{\text{GW}}_n}(\overline{\pi}_{\T^{\text{GW}}_n}(U_1^n))) \leq d^{\T^{\text{GW}}_n}(U_1^n,\overline{\pi}_{\T^{\text{GW}}_n}(U_1^n)),
\]
and using Rayleigh's monotonicity principle, we see
\begin{equation}\label{grrr}
\abs{R^{\omega_n}(0, \phi_{\T^{\text{GW}}_n}(\overline{\pi}_{\T^{\text{GW}}_n}(U_1^n))) -R^{\omega_n}(0, \pi_n(\phi_{\T^{\text{GW}}_n}(U_1^n)))} \leq n^{(1-\epsilon')/2},
\end{equation}
furthermore, considering the event $\{d^{\T^{\text{GW}}_n}(0,U_1^n) \leq n^{(1-\epsilon'/2)/2} \}$, we can see that
\begin{align}\label{damn_step2}
& \Bigl \{\abs{\frac{R^{\omega_n}(0, \pi_n(\phi_{\T^{\text{GW}}_n}(U_1^n)))}{ d^{\mathcal{\omega}_n}(0,\pi_n(\phi_{\T^{\text{GW}}_n}(U_1^n)))} -\frac{R^{\omega_n}(0, \phi_{\T^{\text{GW}}_n}(\overline{\pi}_{\T^{\text{GW}}_n}(U_1^n)))}{ d^{\mathcal{\omega}_n}(0,\phi_{\T^{\text{GW}}_n}(\overline{\pi}_{\T^{\text{GW}}_n}(U_1^n))))} } >n^{-\epsilon'/4} \Bigr\} \\ \nonumber
& \subset \Bigl\{  \max_{x\prec U_1^n} (\abs{x}-\abs{\overline{\pi}_{\T^{\text{GW}}_n}(x)}) \leq n^{(1-\epsilon')/2}\Bigr\} \cup \Bigl\{d^{\T^{\text{GW}}_n}(0,U_1^n) \leq n^{(1-\epsilon'/2)/2} \Bigr\}.
\end{align}

\vspace{0.5cm}

{\it Step 3}

\vspace{0.5cm}

Recalling~\eqref{def_rho1} and~\eqref{def_rho2}, Proposition~\ref{prop_res00} implies
\begin{equation}\label{nminus1}
\lim_{n\to 0} {\bf P}_n\Bigl[\abs{\frac{R^{\omega_n}(0, \phi_{\T^{\text{GW}}_n}(\overline{\pi}_{\T^{\text{GW}}_n}(U_1^n)))}{ d^{\mathcal{\omega}_n}(0,\phi_{\T^{\text{GW}}_n}(\overline{\pi}_{\T^{\text{GW}}_n}(U_1^n))))} -\rho} >\epsilon\bigr]=0.
\end{equation}

Recalling~\eqref{damn_step1}, \eqref{damn_step2} ant the first step we see that
\begin{align*}
& {\bf P}_n\Bigl[\abs{\frac{R^{\omega_n}(0, \pi_n(\phi_{\T^{\text{GW}}_n}(U_1^n)))}{ d^{\mathcal{\omega}_n}(0,\pi_n(\phi_{\T^{\text{GW}}_n}(U_1^n)))} -\rho} >\epsilon\bigr] \\
&\leq  {\bf P}_n\Bigl[\abs{\frac{R^{\omega_n}(0, \phi_{\T^{\text{GW}}_n}(\overline{\pi}_{\T^{\text{GW}}_n}(U_1^n)))}{ d^{\mathcal{\omega}_n}(0,\phi_{\T^{\text{GW}}_n}(\overline{\pi}_{\T^{\text{GW}}_n}(U_1^n))))} -\rho} >\epsilon-n^{-\epsilon'/4}\Bigr]  \\
& +{\bf P}_n\Bigl[\max_{x\prec U_1^n} (\abs{x}-\abs{\overline{\pi}_{\T^{\text{GW}}_n}(x)}) \geq n^{(1-\epsilon')/2}\Bigr] +{\bf P}_n[d^{\T^{\text{GW}}_n}(0,U_1^n) \leq n^{(1-\epsilon'/2)/2} ]
\end{align*}
 the result follows from Lemma~\ref{stupid_estimate_n_plus_two}, Lemma~\ref{close_cut_point} and~\eqref{nminus1}.
\end{proof}

We will need the following lemma which can be proved in the exact same manner as above
\begin{lemma}\label{prop_dist0}
Fix $d>14$. For $\epsilon>0$, we have 
\[
\lim_{n\to 0} {\bf P}_n\Bigl[\abs{\frac{d^{\omega_n}(\text{root}, \pi_n(\phi_{\T^{\text{GW}}_n}(U_1^n)))}{ d^{\T^{\text{GW}}_n}(0,U_1^n)} -\sigma_G} >\epsilon\bigr]=0,
\]
where $U_1^n$ is uniformly chosen among the vertices of $\T^{\text{GW}}_n$ where
\begin{equation}\label{def_sigmaG}
\sigma_G:=\frac{{\bf E}^{-\infty,\infty}[d_{\omega_{-\infty,\infty}}(0, \alpha(\gamma_1))\mid 0\text{ is a pivotal point}] }{{\bf E}^{-\infty,\infty}[\gamma_1\mid 0\text{ is a pivotal point}] }.
\end{equation}
\end{lemma}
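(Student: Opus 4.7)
The plan is to mimic the proof of Proposition~\ref{prop_res0} essentially verbatim, using the distance (rather than resistance) half of Proposition~\ref{prop_res00} as the input. Note that the constant $\sigma_G$ in~\eqref{def_sigmaG} is literally equal to $\rho_2$ from~\eqref{def_rho2}, so Proposition~\ref{prop_res00} already gives
\[
\lim_{n\to\infty}{\bf P}_n\Bigl[\abs{\frac{d^{\omega_n}(0,\phi_{\T^{\text{GW}}_n}(\overline{\pi}_{\T^{\text{GW}}_n}(U_1^n)))}{d^{\T^{\text{GW}}_n}(0,U_1^n)}-\sigma_G}>\epsilon\Bigr]=0.
\]
The whole task therefore reduces to showing that, once normalized by $d^{\T^{\text{GW}}_n}(0,U_1^n)$, the quantity $d^{\omega_n}(0,\pi_n(\phi_{\T^{\text{GW}}_n}(U_1^n)))$ differs from $d^{\omega_n}(0,\phi_{\T^{\text{GW}}_n}(\overline{\pi}_{\T^{\text{GW}}_n}(U_1^n)))$ by $o(1)$ in probability.

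First I would split into the two cases used in the proof of Proposition~\ref{prop_res0}. When there exists a cut-point on any path from $0$ to $\phi_{\T^{\text{GW}}_n}(U_1^n)$, both $\pi_n(\phi_{\T^{\text{GW}}_n}(U_1^n))$ and $\phi_{\T^{\text{GW}}_n}(\overline{\pi}_{\T^{\text{GW}}_n}(U_1^n))$ are cut-points lying on every such path (the former by definition, the latter by Remark~\ref{rem_strongcut}), and $\phi_{\T^{\text{GW}}_n}(\overline{\pi}_{\T^{\text{GW}}_n}(U_1^n))$ is separated from $U_1^n$ by $\pi_n(\phi_{\T^{\text{GW}}_n}(U_1^n))$ along any such path. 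Consequently
\[
d^{\omega_n}(0,\phi_{\T^{\text{GW}}_n}(\overline{\pi}_{\T^{\text{GW}}_n}(U_1^n)))\leq d^{\omega_n}(0,\pi_n(\phi_{\T^{\text{GW}}_n}(U_1^n)))\leq d^{\omega_n}(0,\phi_{\T^{\text{GW}}_n}(U_1^n)),
\]
so the absolute difference between the first and the middle quantity is bounded by
\[
d^{\omega_n}(\phi_{\T^{\text{GW}}_n}(U_1^n),\phi_{\T^{\text{GW}}_n}(\overline{\pi}_{\T^{\text{GW}}_n}(U_1^n)))\leq d^{\T^{\text{GW}}_n}(U_1^n,\overline{\pi}_{\T^{\text{GW}}_n}(U_1^n))\leq \max_{x\prec U_1^n}(\abs{x}-\abs{\overline{\pi}_{\T^{\text{GW}}_n}(x)}).
\]
By Lemma~\ref{close_cut_point} the right-hand side is at most $n^{(1-\epsilon')/2}$ with probability tending to $1$ for some $\epsilon'>0$.

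Next, the remaining case where no cut-point separates $0$ from $\phi_{\T^{\text{GW}}_n}(U_1^n)$ forces $\overline{\pi}_{\T^{\text{GW}}_n}(U_1^n)$ to coincide with the root, hence $\max_{x\prec U_1^n}(\abs{x}-\abs{\overline{\pi}_{\T^{\text{GW}}_n}(x)})\geq d^{\T^{\text{GW}}_n}(0,U_1^n)$. Combining this with Lemma~\ref{close_cut_point} and Lemma~\ref{stupid_estimate_n_plus_two} (which tells us $d^{\T^{\text{GW}}_n}(0,U_1^n)\geq n^{(1-\epsilon'/2)/2}$ with probability tending to $1$), this case has vanishing ${\bf P}_n$-probability. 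Putting the two cases together, on an event of probability tending to $1$ we have
\[
\abs{d^{\omega_n}(0,\pi_n(\phi_{\T^{\text{GW}}_n}(U_1^n)))-d^{\omega_n}(0,\phi_{\T^{\text{GW}}_n}(\overline{\pi}_{\T^{\text{GW}}_n}(U_1^n)))}\leq n^{(1-\epsilon')/2},
\]
and simultaneously $d^{\T^{\text{GW}}_n}(0,U_1^n)\geq n^{(1-\epsilon'/2)/2}$, so the contribution of the discrepancy to the ratio is $o(1)$. Combined with the distance statement of Proposition~\ref{prop_res00}, this yields the claimed limit.

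Since every ingredient (Proposition~\ref{prop_res00}, Lemma~\ref{close_cut_point}, Lemma~\ref{stupid_estimate_n_plus_two}, Remark~\ref{rem_strongcut}) is already in hand, no step is expected to pose real difficulty; the only subtle point, fully anticipated by the structure of the proof of Proposition~\ref{prop_res0}, is the careful treatment of the event that no cut-point separates $0$ from the uniform point, which is why the bookkeeping is split according to the existence of such a cut-point.
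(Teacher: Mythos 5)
Your proposal is correct and takes the same route as the paper, which itself offers no written proof for this lemma beyond remarking that it ``can be proved in the exact same manner'' as Proposition~\ref{prop_res0}. You have spelled out precisely that adaptation: the observation that $\sigma_G=\rho_2$ so that the distance half of Proposition~\ref{prop_res00} is the correct input, the sandwich $d^{\omega_n}(0,\phi_{\T^{\text{GW}}_n}(\overline{\pi}_{\T^{\text{GW}}_n}(U_1^n)))\leq d^{\omega_n}(0,\pi_n(\phi_{\T^{\text{GW}}_n}(U_1^n)))\leq d^{\omega_n}(0,\phi_{\T^{\text{GW}}_n}(U_1^n))$ via the separating cut-point (replacing the law of resistances in series), the bound on the gap by the tree-distance to $\overline{\pi}_{\T^{\text{GW}}_n}(U_1^n)$ controlled by Lemma~\ref{close_cut_point}, the handling of the ``no cut-point'' event via Lemma~\ref{stupid_estimate_n_plus_two}, and finally the division by $d^{\T^{\text{GW}}_n}(0,U_1^n)$. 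This is exactly the Step-1/Step-2/Step-3 structure of the proof of Proposition~\ref{prop_res0} with $R^{\omega_n}$ systematically replaced by $d^{\omega_n}$ and $\rho_1$ by $\rho_2=\sigma_G$.
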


\section{Geometric estimate and Asymptotic thinness}\label{sect:condG}
Recall that $\omega_n$ is the random graph whose set of vertices $V(\omega_n)$ is $\phi_{n}(V(\T^{\text{GW}}_n))$ and its set of edges $E(\omega_n)$ is $\{(x_1,x_2)\in E(\Z^d): \exists  e=(v_1,v_2)\in E(\T^{\text{GW}}_n) : \phi_{\T^{\text{GW}}_n}(v_1)=x_1\text{ and }\phi_{\T^{\text{GW}}_n}(x_2)=x_2\}$. 
Recall from Section \ref{sect_abstract_thm} that $V^n_i:=\pi_n(\phi_{\T^{\text{GW}}_n}(U^n_i))$, where, for each $n\in\N$, $(U^n_i)_{i\in\N}$ is a sequence uniform, i.i.d.~vertices of $\T^{\text{GW}}_n$.        
 In this section we will prove condition $(G)$ and asymptotic thinness for the sequence of random augmented graphs $(\omega_n,(V_i^n)_{i\in\N})_{n\in\N}$.
 
Recall that $\sigma=2/\sigma_Z$, where $\sigma_Z$ is the variance of the offspring distribution of the Galton-Watson tree and that $\sigma_G$ was defined at~\eqref{def_sigmaG}.
The main result of this section is
 \begin{lemma}\label{lem:condG}
 The sequence of random augmented graphs $(\omega_n,(V_i^n)_{i\in\N})_{n\in\N}$ satisfies condition $(G)_{\sigma_d,\sigma_\phi}$ with $\sigma_d=\sigma\sigma_G$ and $\sigma_{\phi}=\sigma_G^{-1/2}$.
 \end{lemma}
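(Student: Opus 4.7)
The plan is to derive Lemma~\ref{lem:condG} from the scaling limit for the reduced spatial Galton--Watson tree established in~\cite{JM}, and then apply two successive corrections: (i) replacing the embedded uniform points $\phi_{\T^{\text{GW}}_n}(U_i^n)$ by their cut-point projections $V_i^n=\pi_n(\phi_{\T^{\text{GW}}_n}(U_i^n))$, and (ii) replacing the intrinsic tree distance $d_{\T^{\text{GW}}_n}$ (which appears in the \cite{JM}-limit) by the graph distance $d_{\omega_n}$ that is used on the edges of the skeleton $\mathfrak{T}^{(n,K)}$. The verification of condition $(G)$ reduces, via the definition of the metric $D$, to the joint convergence of finitely many edge lengths and vertex positions, which is exactly what the two steps below establish.

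First, \cite{JM} yields that under ${\bf P}_n$, the reduced spatial subtree of $(\T^{\text{GW}}_n,\phi_{\T^{\text{GW}}_n})$ spanned by $\{\text{root},U_1^n,\ldots,U_K^n\}$, with intrinsic distances rescaled by $n^{-1/2}$ and positions by $n^{-1/4}$, converges weakly in the topology induced by $D$ to the $K$-ISE $(\mathfrak{T}^{(K)},\sigma\, d_{\mathfrak{T}^{(K)}},\sqrt{\sigma}\,\phi_{\mathfrak{T}^{(K)}})$. For the first correction, Lemma~\ref{close_cut_point} ensures that with probability tending to $1$ every $U_i^n$ admits a loopless-image ancestor $\overline{\pi}_{\T^{\text{GW}}_n}(U_i^n)$ at tree-distance at most $n^{(1-\epsilon')/2}$; by Remark~\ref{rem_strongcut} its image is a cut-point, so $V_i^n$ lies on the path from $\phi_{\T^{\text{GW}}_n}(U_i^n)$ to the origin at graph distance $O(n^{(1-\epsilon')/2})$. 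A standard Gaussian tail bound applied to the random walk embedding along a path of this length gives spatial displacement $O(n^{(1-\epsilon')/4}\log n)=o(n^{1/4})$. Applying these bounds at each of the $K$ marked vertices, and arguing similarly for the branch points, shows that swapping $\phi_{\T^{\text{GW}}_n}(U_i^n)$ for $V_i^n$ leaves the limiting spatial positions unchanged and contributes an error of $o(1)$ to every rescaled intrinsic edge length.

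For the second correction, Proposition~\ref{prop_dist0} gives $d_{\omega_n}(0,V_1^n)/d_{\T^{\text{GW}}_n}(0,U_1^n)\to\sigma_G$ in probability. Extending this to every pairwise intrinsic distance inside the skeleton (between any two of the $K+1$ marked vertices, and between each marked vertex and its branch points) by conditioning on the position of the branch point and iterating the argument of Section~\ref{proof_res}, one obtains that the rescaled intrinsic metric $n^{-1/2}d_{\T^{(n,K)}}$ converges to $\sigma_G$ times the limit from the first step, namely $\sigma_G\cdot\sigma\,d_{\mathfrak{T}^{(K)}}=\sigma_d\,d_{\mathfrak{T}^{(K)}}$. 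Since the spatial positions of the vertices of $\mathfrak{T}^{(n,K)}$ are unchanged by this rescaling, the rescaled embedding still converges to $\sqrt{\sigma}\,\phi_{\mathfrak{T}^{(K)}}$, which one rewrites as $\sigma_G^{-1/2}\sqrt{\sigma_d}\,\phi_{\mathfrak{T}^{(K)}}$, identifying $\sigma_\phi=\sigma_G^{-1/2}$.

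The main obstacle is the extension of Proposition~\ref{prop_dist0} from a single uniform point to every pair of skeleton vertices and to the short pieces surrounding the branch points. The delicate case is that of two branches emerging from a common branch point: one needs the ratio $d_{\omega_n}/d_{\T^{\text{GW}}_n}\to\sigma_G$ to hold uniformly, regardless of the (random) location of the branch point. This should be addressed by applying the Markov property of the critical Galton--Watson tree at the branch point, decomposing each such intrinsic distance as a sum of two conditionally independent copies of the single-point quantity of Proposition~\ref{prop_dist0}, and using Lemma~\ref{close_cut_point} to rule out pathological short connections between the branches.
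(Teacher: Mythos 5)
Your overall outline is on the right track: start from the Janson--Marckert scaling limit for the reduced spatial tree, then correct for (i) replacing $\phi_{\T^{\text{GW}}_n}(U_i^n)$ by the cut-point projections $V_i^n$ and (ii) replacing the intrinsic tree metric by the embedded graph metric, with the two $\sigma$ constants arising exactly as you describe. You also correctly flag the delicate case: extending $d_{\omega_n}/d_{\T^{\text{GW}}_n}\to\sigma_G$ from a single uniform point to the branch points of the skeleton. However, your proposed fix for that step --- decompose at the branch point via the Markov property and use conditional independence of the two branches to get two copies of the single-point estimate --- has a genuine gap.

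The problem is twofold. First, the branch point $b^{\T^{\text{GW}}_n}(U_i^n,U_j^n)$ is a point of the tree whose image is \emph{not} in general a cut-point of $\omega_n$, so the graph distance $d_{\omega_n}(0,V_i^n)$ does not decompose additively at that point: parts of the graph far from the tree-branch in question can create shortcuts across the branch point, and "conditional independence of the branches" does not hold at the level of graph distances in $\omega_n$. You would first need to pass to a nearby loopless-image ancestor of the branch point; Lemma~\ref{close_cut_point} gives such an ancestor at tree distance $o(n^{1/2})$, which is the needed ingredient, but this must be done explicitly before any decomposition is attempted. Second, even granting a cut-point at the branch, the subtree hanging below it is not a $\mathbf{P}_n$-distributed tree --- it is a random subtree conditioned on containing $U_i^n$ --- so Proposition~\ref{prop_dist0} does not apply to the re-rooted branch without a new transfer argument; this would essentially force you to redo the machinery of Section~\ref{sect_resistance} for the re-rooted structure.

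The paper avoids both issues by going a different route. It does not condition at branch points at all. Instead it first proves (Lemma~\ref{lem:analogoustores_2a}) that the convergence of $d^{(n,K)}(\text{root}^\ast,\phi_{\T^{\text{GW}}_n}(\bar{\pi}_{\T^{\text{GW}}_n}(v)))/(\sigma_G n^{-1/2}d^{\T^{\text{GW}}_n}(\text{root},v))\to1$ holds \emph{uniformly over all} $v\in V(\T^{\text{GW}}_n(K))$, by a density argument (Lemma~\ref{compacity_tnk}): one takes $K'$ large so that $U_1^n,\ldots,U_{K'}^n$ are $\epsilon$-dense in the reduced tree, applies Proposition~\ref{prop_dist0} at the nearby $U_{i(v)}^n$, and transfers to $v$ using $1$-Lipschitzness of $d^{(n,K)}$ relative to $n^{-1/2}d^{\T^{\text{GW}}_n}$. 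With this uniform statement in hand, the branch points are handled by sandwiching them between nearby loopless-image ancestors $\bar{\pi}_{\T^{\text{GW}}_n}(v^{\pm})$ of auxiliary vertices $v^{\pm}$ (Lemma~\ref{lem:vhart}), and the resistance/graph-distance at those sandwich points is controlled by the uniform statement. This never requires a re-rooted version of Proposition~\ref{prop_dist0} or any branch-point Markov decomposition. You should replace your Markov-property argument with a density/uniformity extension of Proposition~\ref{prop_dist0} in this style, followed by the $v^{\pm}$ sandwich at the branch points.
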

 
  In \cite{janson2005convergence} Janson and Marckert proved that  $(\T^{\text{GW}}_n, d^{\T^{\text{GW}}_n},\phi_{\T^{\text{GW}}_n})$ scales to the ISE when regarded as a spatial tree. Roughly speaking, the main task of this section is to transfer that result from $(\T^{\text{GW}}_n,d^{\T^{\text{GW}}_n})$ to $(\frak {T}^{(n,K)},d^{(n,K)})$.
  The first and more relevant issue to address is to transfer the convergence from the underlying tree distance $d^{\T^{\text{GW}}_n}$ to the embedded graph distance $d^{(n,K)}$. A secondary issue is the fact that the vertices and edges of $\T^{\text{GW}}_n$ are not the same as those of $\T^{(n,K)}$.

\subsection{A useful coupling}\label{s:usefcoupl}

 Since the proof of the lemma above will use some previous results about branching random walks, we will introduce the notation for the statement of those results. 
 Recall that $w_n$ is the depth first search around $\T^{\text{GW}}_n$ and $\hat{w}_n$ is the search depth process of $\T^{\text{GW}}_n$, which were given at the beginning of Section~\ref{subsubsection_searchdepth}.
 
 Let us define the discrete tour process $r_n:[0,1]\to\R^d$ by setting
 \[r_n\left(\frac{i}{2(n-1)}\right)=\phi_{\T^{\text{GW}}_n}\left(w_n\left(\frac{i}{2(n-1)}\right)\right) \qquad {i=0,\dots,2(n-1)}\]
 and interpolating linearly between those values.
 Similarly, let us define the Brownian tour $\bold{r}:[0,1]\to\R^d$ as
 \[\bold{r}(x)=\phi_{\frak{T}}([x]).\]
 Recall that $(\bold{e}_t)_{t\in[0,1]}$ is a normalized Brownian excursion.
 \begin{proposition}[Theorem 2 of \cite{janson2005convergence}]\label{prop:tourconvergence}
 \[(n^{-1/2}\hat{w}_n,n^{-1/4}r_n)\stackrel{n\to\infty}{\to}\left(\sigma\bold{e},\sqrt{\sigma}\bold{r}\right)
\]
weakly in the space $C([0,1],\R_+)\times C([0,1],\R^d)$ endowed with the topology of uniform convergence.
 \end{proposition}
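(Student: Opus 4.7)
The plan is to follow the strategy of Janson--Marckert, decoupling the convergence into a ``tree piece'' (the height process) and a ``spatial piece'' (the embedding conditioned on the tree). First I would recall that the height process satisfies $n^{-1/2}\hat{w}_n \Rightarrow \sigma \mathbf{e}$ in $C([0,1],\R_+)$: this is the classical Aldous--Duquesne--Le Gall scaling limit for the search-depth function of a critical Galton-Watson tree with finite-variance offspring (recall $\sigma = 2/\sigma_Z$). So the first coordinate is handled and it only remains to couple this with the convergence of the spatial tour.

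The key observation is that, conditionally on $\T^{\text{GW}}_n$, the embedding $\phi_{\T^{\text{GW}}_n}$ is nothing but a sum of i.i.d.\ centered jumps $X_e$ with covariance $\mathrm{Id}$ along the edges of the tree, and for any $s,t \in \{0,\tfrac{1}{2(n-1)},\ldots,1\}$ the spatial increment $r_n(s)-r_n(t)$ is a sum of independent jumps along the tree-geodesic between $w_n(s)$ and $w_n(t)$, the length of which is exactly
\[
d_{\T^{\text{GW}}_n}(w_n(s),w_n(t)) \;=\; \hat{w}_n(s)+\hat{w}_n(t)-2\min_{u\in[s,t]} \hat{w}_n(u) \;=\; d_{\hat{w}_n}(s,t),
\]
using the notation of~\eqref{eq:distancefromexcursion}. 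Consequently, conditionally on $\T^{\text{GW}}_n$, $r_n(s)-r_n(t)$ is a random vector with covariance $d_{\hat{w}_n}(s,t)\,\mathrm{Id}$. After rescaling by $n^{-1/4}$ and using that $n^{-1/2}d_{\hat{w}_n}(s,t) \to \sigma\,d_{\mathbf{e}}(s,t)$ along any subsequence where $n^{-1/2}\hat{w}_n \to \sigma \mathbf{e}$, the conditional multidimensional central limit theorem identifies the only possible limit of $n^{-1/4}r_n$ given the excursion as a centered Gaussian process with covariance $\sigma\,d_{\mathbf{e}}(s,t)\,\mathrm{Id}$. A short computation using that $d_{\mathbf{e}}(s,t)=d_{\mathfrak{T}}([s],[t])$ together with the definition of $\phi_{\mathfrak{T}}$ in Section~\ref{sect_spatial_tree} shows that this Gaussian process has exactly the law of $\sqrt{\sigma}\,\mathbf{r}$, which identifies the candidate limit.

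To upgrade finite-dimensional convergence to the functional level I would establish tightness of $(n^{-1/4}r_n)$ in $C([0,1],\R^d)$ via a Kolmogorov-type criterion. Conditioning on the tree, the jumps are i.i.d.\ with exponential moments (inherited from the simple random walk), so for every even $p\geq 2$ one has
\[
\mathbf{E}\bigl[\,|r_n(s)-r_n(t)|^{p}\,\big|\,\T^{\text{GW}}_n\bigr] \;\leq\; C_p \, d_{\hat{w}_n}(s,t)^{p/2},
\]
and the tightness of $(n^{-1/2}\hat{w}_n)$ combined with standard modulus-of-continuity estimates for the Brownian excursion translates this into a uniform modulus bound for $n^{-1/4}r_n$. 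This gives tightness of the pair $(n^{-1/2}\hat{w}_n,n^{-1/4}r_n)$, and any subsequential limit has the right marginals.

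Finally, to close the argument I need joint convergence rather than mere joint tightness plus marginal identification. The cleanest route is a Skorokhod-type coupling: on an enlarged probability space arrange $n^{-1/2}\hat{w}_n \to \sigma\mathbf{e}$ almost surely; then, given this almost sure realization, build the jumps $X_e$ freshly on a fixed probability space and verify that $n^{-1/4}r_n$ converges in probability (for the uniform topology) to a Gaussian process with covariance $\sigma d_{\mathbf{e}}(\cdot,\cdot)\mathrm{Id}$, which is a version of $\sqrt{\sigma}\,\mathbf{r}$. The main technical obstacle here is controlling the conditional CLT uniformly in $s,t$: the number of summands $d_{\hat{w}_n}(s,t)$ may be small for nearby $s,t$, and one has to combine a Lindeberg-type argument with the already-established tightness to conclude uniform convergence; this is precisely the step that absorbs most of the work in~\cite{janson2005convergence}.
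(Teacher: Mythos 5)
This proposition is not proved in the paper at all: it is imported verbatim as Theorem~2 of~\cite{janson2005convergence}, so there is no in-paper argument to compare yours against. Your sketch is a faithful outline of the strategy actually used there (and in the closely related ``discrete snake'' literature): conditional Gaussian structure of the embedding given the tree, a conditional CLT to identify the limit, tightness via moment bounds, and a Skorokhod coupling of the height process to get joint convergence. Two places where your outline is looser than what a complete proof requires. First, knowing that each increment $r_n(s)-r_n(t)$ has conditional covariance $d_{\hat w_n}(s,t)\,\mathrm{Id}$ does not by itself identify the joint law of a subsequential limit; you must establish that the finite-dimensional limits are \emph{jointly} Gaussian, which is done by decomposing $(\phi(w_n(s_1)),\dots,\phi(w_n(s_k)))$ into independent sums of jumps over the branches of the reduced subtree spanned by these points and applying the CLT branch by branch (after which the increment variances do determine the covariance function, since the process starts at $0$). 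Second, for the Kolmogorov criterion you need quantitative bounds of the form ${\bf E}\bigl[(n^{-1/2}d_{\hat w_n}(s,t))^{p/2}\bigr]\leq C|s-t|^{p/4}$ for the \emph{conditioned} tree; tightness of $n^{-1/2}\hat w_n$ and continuity of the limiting excursion are qualitative statements and do not yield this. These moment estimates are where the exponential-moment hypothesis on $Z$ enters, and they constitute a genuine part of the work in~\cite{janson2005convergence}.
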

 
During this section we will work under a specific coupling that we describe next:

By Proposition \ref{prop:tourconvergence} and the Skorohod representation Theorem we can assume that
\begin{equation}\label{eq:papapa}
(n^{-1/2}\hat{w}_n,n^{-1/4}r_n)\stackrel{n\to\infty}{\to}\left(\sigma\bold{e},\sqrt{\sigma}\bold{r}\right)
, \quad {\bf P}\text{-a.s.}
\end{equation}
Let $\gamma_n:[0,1]\to[0,1]$ be defined by
\begin{equation}\label{eq:defofgamma}
\gamma_n(x)=
\begin{cases}
\frac{\lfloor 2(n-1)x \rfloor}{2(n-1)} \quad &\text{ if }\hat{w}_n(\frac{\lfloor 2(n-1)x \rfloor}{2(n-1)})\geq \hat{w}_n(\frac{\lceil 2(n-1)x \rceil}{2(n-1)}),\\
\frac{\lceil 2(n-1)x \rceil}{2(n-1)} \quad &\text{ otherwise.}
\end{cases}
\end{equation}
The function $\gamma_n$ is constructed so that $w_n\circ\gamma_n\circ \text{Leb}$ is the uniform measure in $V(\T^{\text{GW}}_n)\setminus \{\text{root}\}$.
Therefore, recalling that $(U_i)_{i\in\N}$ is the i.i.d.~sequence of random variables uniformly distributed over $[0,1]$ which is used to construct $\frak{T}^{(K)}$, we have that $(w_n(\gamma_n(U_i)))_{i\in\N}$ is an independent sequence of uniform (non-root) vertices of $\T^{\text{GW}}_n$. 

Hence, we can (and will) assume that
\begin{equation}\label{eq:secondcoupling}
(U^i_n)_{i\in\N}=(w_n(\gamma_n(U_i)))_{i\in\N}.
\end{equation}
There is a small issue since $(w_n(\gamma_n(U_i)))_{i\in\N}$ is an i.i.d.~sequence of uniform \emph{non-root} vertices, while the original $(U^i_n)_{i\in\N}$ were uniform on the whole set of vertices of $\T^{\text{GW}}_n$. But, since the probability that one of the original $U^i_n,i=1,\dots,K$ is the root goes to $0$ with $n$, we can dismiss this issue.

\subsection{An equivalent formulation of tour convergence}
We will need to use an equivalent form for the convergence at display \eqref{eq:papapa}. 
Recall from Section \ref{sect_graph_tree} the notation for reduced sub-trees. 
Let $K\in\N$ and define
\begin{equation}\label{eq:defoftgwk}
\T^{\text{GW}}_n(K):=\T^{\text{GW}}_n(U^n_i,i=1,\dots,K),
\end{equation}
that is, $\T^{\text{GW}}_n(K)$ is the minimal subtree of $\T^{\text{GW}}_n$ containing the root and $U_i^n,i=1,\dots,K$. Let $\pi_{\T^{\text{GW}}_n(K)}:V(\T^{\text{GW}}_n)\to V(\T^{\text{GW}}_n(K))$ be the projection of $\T^{\text{GW}}_n$ onto $\T^{\text{GW}}_n(K)$. More precisely, for all $v\in V(\T^{\text{GW}}_n)$, $\pi_{\T^{\text{GW}}_n(K)}(v)$ is the vertex of $\T^{\text{GW}}_n(K)$ which is the closest to $v$ under $d^{\T^{\text{GW}}_n}$.
Let $\frak{T}^{\text{GW}}_{n}(K)$ be constructed from $\T^{\text{GW}}_n(K)$ in the same way that $\frak{T}^{(n,K)}$ was constructed from $\T^{(n,K)}$ in Section \ref{sect:skeleton} and regard $(\frak{T}_n^{\text{GW}}(K), d^{\T^{\text{GW}}_n}, n^{-1/4}\phi_{\T^{\text{GW}}_n})$ as a graph spatial tree. 
 \begin{proposition}[Proposition 8.2 of \cite{Croydon_arc}] \label{prop:croydonequivalence}
For $\bold{P}$-a.e.~realization of $((\bold{e}_t)_{t\in[0,1]},(U_i)_{i\in\N})$, the convergence at Proposition \ref{prop:tourconvergence} is equivalent to the conjunction of the two following conditions 
 \begin{enumerate}
 \item For all $K\in\N$
 \begin{equation}\label{eq:equation1}
\lim_{K\to\infty}\limsup_{n\to\infty}n^{-1/2}\max_{v\in V(\T^{\text{GW}}_n)} d^{\T^{\text{GW}}_n}(v,\pi_{\T^{\text{GW}}_n(K)}(v))=0
\end{equation}
and
\begin{equation}\label{eq:equation2}
\lim_{\delta\to0}\limsup_{n\to\infty}n^{-1/4} \max_{d^{\T^{\text{GW}}_n}(x,y)\leq \delta n^{-1/2}} d_{\Z^d}(\phi_{\T^{\text{GW}}_n}(x),\phi_{\T^{\text{GW}}_n}(y))=0.
\end{equation}

 \item For all $K\in\N$
 \begin{equation}\label{eq:underlyinggeometricestimate}
(\frak{T}^{\text{GW}}_n(K), n^{-1/2}d^{\T^{\text{GW}}_n}, n^{-1/4}\phi_{\T^{\text{GW}}_n})\stackrel{n\to\infty}{\to}\left(\frak{T}^{(K)},\sigma d_{\frak{T}},\sqrt{\sigma}\phi_{\frak{T}}\right)
\end{equation}
under the topology given by the metric in graph spatial trees $D$.
 
 \end{enumerate}
 \end{proposition}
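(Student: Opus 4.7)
The plan is to prove this equivalence by the classical strategy of separating the tour convergence into a finite-dimensional piece (convergence of $K$-reduced spatial trees for each $K$) and a tightness/equicontinuity piece (the two estimates of condition (1)). The forward direction $(\Rightarrow)$ extracts these two ingredients from the uniform tour convergence; the backward direction $(\Leftarrow)$ reconstructs the full uniform convergence by interpolating between the finitely many marked points, using condition (1) to control the interpolation error. This is Proposition 8.2 of~\cite{Croydon_arc}; below I outline the argument.

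For the forward direction, assume $(n^{-1/2}\hat w_n, n^{-1/4} r_n)\to(\sigma\bold{e},\sqrt{\sigma}\bold{r})$ uniformly, $\bold{P}$-a.s. Estimate~\eqref{eq:equation2} is immediate from an $\epsilon/3$ argument: the limit $\sqrt{\sigma}\bold{r}$ is uniformly continuous on $[0,1]$, and uniform closeness of $n^{-1/4}r_n$ to $\sqrt{\sigma}\bold{r}$ together with uniform closeness of $n^{-1/2}\hat w_n$ to $\sigma\bold{e}$ transfer this modulus of continuity to the prelimit (the $d^{\T^{\text{GW}}_n}$-distance between $x$ and $y$ is encoded through the height function values $\hat w_n(s_x),\hat w_n(s_y)$ at tour times $s_x,s_y$ and the minimum of $\hat w_n$ between them, so a small $d^{\T^{\text{GW}}_n}$-distance forces $s_x,s_y$ to lie in a narrow excursion interval whose spatial oscillation is controlled by the uniform modulus of $r_n$). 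Estimate~\eqref{eq:equation1} follows from the fact that, in the CRT, the $K$ uniformly chosen points become dense as $K\to\infty$ (the sub-tree $\frak{T}^{(K)}$ converges to $\frak{T}$ in Gromov–Hausdorff sense), so the maximum distance to the reduced sub-tree vanishes in the limit; this transfers to $n^{-1/2}\T^{\text{GW}}_n$ via the uniform convergence of the height function. Finally, for~\eqref{eq:underlyinggeometricestimate}, the shape of $\T^{\text{GW}}_n(K)$ is determined by the relative order of the $U_i^n$ and by the locations of the pairwise infima of $\hat w_n$ between them, while its edge lengths are differences of these extrema and its spatial positions are the tour values at marked times; since $\bold{e}$ is $\bold{P}$-a.s.\ strictly positive on $(0,1)$ and, jointly with the independent $U_i$'s, the pairwise infima are attained at a.s.\ unique interior times, the discrete shape stabilizes to the limiting one and both edge lengths and marked positions converge, giving convergence in the metric $D$.

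For the backward direction, assume both (1) and (2). Fix $\epsilon>0$ and, using \eqref{eq:equation1} together with the uniform continuity of $\sigma\bold{e}$, choose $K$ so large that every $v\in V(\T^{\text{GW}}_n)$ satisfies $n^{-1/2}d^{\T^{\text{GW}}_n}(v,\pi_{\T^{\text{GW}}_n(K)}(v))<\epsilon$ for all $n$ large. For any $t\in[0,1]$ write $w_n(\gamma_n(t))=v$ and let $\tilde v=\pi_{\T^{\text{GW}}_n(K)}(v)$; pick a tour time $\tilde t$ visiting $\tilde v$. Then
\begin{align*}
|n^{-1/2}\hat w_n(t)-\sigma \bold{e}(t)| &\leq n^{-1/2}|\hat w_n(t)-\hat w_n(\tilde t)| + |n^{-1/2}\hat w_n(\tilde t)-\sigma\bold{e}(\tilde t)| + |\sigma\bold{e}(\tilde t)-\sigma\bold{e}(t)|,
\end{align*}
where the first term is bounded by $n^{-1/2}d^{\T^{\text{GW}}_n}(v,\tilde v)<\epsilon$, the second vanishes by~\eqref{eq:underlyinggeometricestimate} applied at the finite set of marked times, and the third is controlled by the uniform continuity of $\bold{e}$ (using that $\tilde t$ and $t$ are close in $[0,1]$, which follows again from (1) on a deterministic subsequence of tour times). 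The same decomposition, substituting $r_n$ for $\hat w_n$ and using \eqref{eq:equation2} in place of the height estimate, yields uniform convergence of the spatial tour.

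The main obstacle is the forward half of the finite-dimensional convergence~\eqref{eq:underlyinggeometricestimate}, specifically the stabilization of the combinatorial shape $(\T^{\text{GW}}_n(K))^\ast$ to $(\frak{T}^{(K)})^\ast$: this needs to exclude, $\bold{P}$-a.s., that two of the relevant pairwise infima of $\hat w_n$ are attained simultaneously or cross as $n\to\infty$, since $D$ is infinite when the shapes disagree. This is where one must use, in an essential way, that the $U_i$'s are independent of $\bold{e}$ and that the excursion $\bold{e}$ attains each of its pairwise infima over the induced partition of $[0,1]$ at a.s.\ unique interior times, so the discrete shape induced by $\hat w_n$ coincides with $(\frak{T}^{(K)})^\ast$ for all $n$ past a (random) threshold.
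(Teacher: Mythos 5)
The paper offers no proof of this statement---it is imported verbatim as Proposition 8.2 of \cite{Croydon_arc}---so there is nothing internal to compare against and I assess your argument on its own terms. Your architecture is the right one and matches Croydon's: split the uniform tour convergence into finite-dimensional convergence of the $K$-reduced spatial trees in $D$ plus the two tightness estimates, extract these in the forward direction, and interpolate between the marked points in the backward direction. Your identification of the a.s.\ uniqueness of the pairwise infima (hence stabilization of the combinatorial shape, without which $D$ is infinite) as the delicate point of \eqref{eq:underlyinggeometricestimate} is also correct.

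There is, however, a genuine flaw in your backward direction. You control the third term $\sigma\abs{\bold{e}(\tilde t)-\bold{e}(t)}$ by uniform continuity of $\bold{e}$, ``using that $\tilde t$ and $t$ are close in $[0,1]$, which follows again from (1).'' It does not: \eqref{eq:equation1} makes $v$ and $\tilde v=\pi_{\T^{\text{GW}}_n(K)}(v)$ close in the \emph{tree} metric, but two vertices at bounded tree distance can be visited at tour times that are order one apart in $[0,1]$ (two children of the root, explored at the start and at the end of the tour). For the same reason your second term is not covered by \eqref{eq:underlyinggeometricestimate} ``at the finite set of marked times'': $\tilde t$ is a tour time of a point of the reduced subtree, not of one of the $K$ marked vertices. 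The repair is a slot-by-slot oscillation bound. For $t$ in a slot $[U_{(i)},U_{(i+1)}]$ between consecutive order statistics of the marked times, the projection $\pi_{\T^{\text{GW}}_n(K)}(v)$ lies on the reduced path joining $U^n_{(i)}$ and $U^n_{(i+1)}$, so \eqref{eq:equation1} gives
\[
m_{\hat{w}_n}\bigl(\gamma_n(U_{(i)}),\gamma_n(U_{(i+1)})\bigr)\;\leq\;\hat{w}_n(\gamma_n(t))\;\leq\;\max\bigl(\hat{w}_n(\gamma_n(U_{(i)})),\hat{w}_n(\gamma_n(U_{(i+1)}))\bigr)+\epsilon n^{1/2}.
\]
The three bracketing quantities are reduced-tree quantities, hence converge after rescaling to their continuum analogues by the $d_1$-part of \eqref{eq:underlyinggeometricestimate}; and the continuum gap $\max\bigl(\bold{e}(U_{(i)}),\bold{e}(U_{(i+1)})\bigr)-m_{\bold{e}}(U_{(i)},U_{(i+1)})$ vanishes as $K\to\infty$ by density of the $(U_i)$ and uniform continuity of $\bold{e}$. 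Thus on each slot both $n^{-1/2}\hat{w}_n$ and $\sigma\bold{e}$ are confined to a common interval of length $\epsilon$ plus a quantity vanishing with $K$, which yields uniform convergence of the height functions without ever matching tour times in $[0,1]$. The spatial tour is handled analogously, with \eqref{eq:equation2} converting the tree-metric bound from \eqref{eq:equation1} into a spatial oscillation bound on each slot and the $d_2$-part of the $D$-convergence supplying the comparison along the reduced edges. (The same imprecision---``narrow excursion interval''---appears in your forward proof of \eqref{eq:equation2}, but there it is harmless, since the mechanism you actually invoke, the modulus of continuity of $\bold{r}$ with respect to the pseudo-metric $d_{\bold{e}}$, is the correct one.)
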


\subsection{Proof of Condition (G)} 
Before presenting the proof of Lemma \ref{lem:condG} we will need four auxiliary results. 
\begin{lemma}\label{compacity_tnk}
For each $\epsilon,\eta>0$, there exists $K'$ such that
\[\limsup_{n\to\infty}{\bf P}\left[ \max_{x\in V(\T^{\text{GW}}_n)}\min_{i=1,\dots,K'}n^{-1/2}d^{\T^{\text{GW}}_n}(U^n_i,x)\geq \epsilon \right]\leq \eta.\]
\end{lemma}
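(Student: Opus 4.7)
The plan is to transfer the statement to the CRT via the a.s.~coupling of Section~\ref{s:usefcoupl}, under which $n^{-1/2}\hat{w}_n \to \sigma \bold{e}$ uniformly a.s.~and $U_i^n = w_n(\gamma_n(U_i))$ with $(U_i)_{i\in\N}$ i.i.d.~uniform on $[0,1]$. First, I would write every $v\in V(\T^{\text{GW}}_n)$ as $v=w_n(s)$ for some $s\in I_n:=\{k/(2(n-1)) : 0\le k\le 2(n-1)\}$, so that
\[
n^{-1/2}\, d^{\T^{\text{GW}}_n}(v, U_i^n) = f_n(s,\gamma_n(U_i)),
\qquad f_n(s,t):= n^{-1/2}\bigl(\hat{w}_n(s)+\hat{w}_n(t)-2\min_{u\in[s\wedge t,\,s\vee t]}\hat{w}_n(u)\bigr).
\]
Using the uniform convergence $n^{-1/2}\hat{w}_n\to\sigma\bold{e}$, the continuity of $\bold{e}$, and $|\gamma_n(U_i)-U_i|\le 1/(2(n-1))$, one obtains $f_n\to\sigma f$ uniformly on $[0,1]^2$, where $f(s,t)=d_{\bold{e}}(s,t)=d_{\mathfrak{T}}([s],[t])$. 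Bounding the maximum over $I_n$ by the maximum over $[0,1]$ and passing to the limit then yields
\[
\limsup_{n\to\infty} n^{-1/2}\max_{v\in V(\T^{\text{GW}}_n)}\min_{i\le K'} d^{\T^{\text{GW}}_n}(v, U_i^n)\;\le\;\sigma\, d^H_{\mathfrak{T}}\bigl(\mathfrak{T},\,\{[U_1],\ldots,[U_{K'}]\}\bigr)\quad {\bf P}\text{-a.s.},
\]
where $d^H_{\mathfrak{T}}$ denotes Hausdorff distance in $(\mathfrak{T},d_{\mathfrak{T}})$.

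Next, I would reduce the lemma to proving that, for any $\epsilon',\eta>0$, one can choose $K'$ so that $d^H_{\mathfrak{T}}(\mathfrak{T},\{[U_1],\ldots,[U_{K'}]\})\le\epsilon'$ with probability at least $1-\eta$. This is an essentially deterministic fact about the CRT: almost surely $\mathfrak{T}$ is compact, and $\lambda^{\mathfrak{T}}$ is by construction the push-forward of Lebesgue measure on $[0,1]$ by the continuous surjection $t\mapsto[t]$, so every non-empty open subset of $\mathfrak{T}$ has positive $\lambda^{\mathfrak{T}}$-measure; in particular the topological support of $\lambda^{\mathfrak{T}}$ is all of $\mathfrak{T}$. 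For each $m\in\N$ one may then cover $\mathfrak{T}$ by finitely many open balls of radius $1/m$, each of positive $\lambda^{\mathfrak{T}}$-measure; the second Borel--Cantelli lemma applied (conditionally on $\mathfrak{T}$) to the i.i.d.~$U_i$'s ensures that each of these balls eventually contains some $[U_i]$, so $d^H_{\mathfrak{T}}(\mathfrak{T},\{[U_1],\ldots,[U_K]\})\to 0$ a.s.~as $K\to\infty$, and hence in probability.

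Taking $\epsilon'=\epsilon/\sigma$ and combining the two steps above concludes the argument. The main technical point will be the uniform-in-$s$ passage from the discrete distance $d^{\T^{\text{GW}}_n}(w_n(s),w_n(t))$ to its continuum analogue $d_{\bold{e}}(s,t)$; this is nonetheless straightforward because both distances share the same excursion-min representation in terms of the encoding processes $\hat{w}_n$ and $\bold{e}$, whose uniform convergence is furnished by Proposition~\ref{prop:tourconvergence}. The only additional care needed is that the number of balls covering $\mathfrak{T}$ and their minimum measure are $\omega$-dependent, but this is harmless because the argument is carried out realization-by-realization and only convergence in probability of the Hausdorff distance is required in the end.
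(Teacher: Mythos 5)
Your proposal follows the same strategy as the paper's proof: transfer to the CRT through the a.s.~coupling $n^{-1/2}\hat{w}_n\to\sigma\bold{e}$ of \eqref{eq:papapa}, use the excursion-minimum representation of the tree distance, and then let $K'\to\infty$ via density of the sampled points $\{[U_1],\dots,[U_{K'}]\}$. The only (cosmetic) difference is the density step: the paper orders the $U_i$ and uses $\eta$-density in $[0,1]$ together with uniform continuity of $\bold{e}$, whereas you argue directly in $(\mathfrak{T},d_{\mathfrak{T}})$ via full support of $\lambda^{\mathfrak{T}}$ and a covering-plus-Borel--Cantelli argument; the two are interchangeable and both give the required control.
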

 
 \begin{lemma}\label{lem:analogoustores_2a}
Let $K\in\N$ and $\epsilon>0$, then
\begin{equation} 
\lim_{n\to\infty} \bold{P}\left[\max_{v\in V(\T^{\text{GW}}_n(K)) }\abs{d^{(n,K)}(\text{root}^\ast,\phi_{\T^{\text{GW}}_n}(\bar{\pi}_{\T^{\text{GW}}_n}(v)))-\sigma_Gn^{-1/2}d^{\T^{\text{GW}}_n}(\text{root},v)} \geq \epsilon \right]=0.
\end{equation}
 \end{lemma}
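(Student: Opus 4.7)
The plan is to exploit the fact that $V(\T^{\text{GW}}_n(K))$ has cardinality at most $2K+1$, so that a union bound reduces the claim to proving convergence in probability for each fixed $v$ separately. These vertices are of two types: (i)~the root and the uniform samples $U_1^n,\ldots,U_K^n$, and (ii)~the branching points (at most $K$) produced when building the reduced subtree.

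For type~(i) we apply Proposition~\ref{prop_dist0}, which yields
\[
\frac{d^{\omega_n}(\text{root},\pi_n(\phi_{\T^{\text{GW}}_n}(U_i^n)))}{d^{\T^{\text{GW}}_n}(\text{root},U_i^n)} \longrightarrow \sigma_G
\]
in ${\bf P}_n$-probability for each fixed $i$. Three corrections, each of size $o(n^{1/2})$, convert this into the required quantity: (a)~by Lemma~\ref{close_cut_point}, with high probability $|U_i^n|-|\bar{\pi}_{\T^{\text{GW}}_n}(U_i^n)| \le n^{(1-\epsilon')/2}$, so the $\omega_n$-graph distance between $\pi_n(\phi_{\T^{\text{GW}}_n}(U_i^n))$ and $\phi_{\T^{\text{GW}}_n}(\bar{\pi}_{\T^{\text{GW}}_n}(U_i^n))$ is $o(n^{1/2})$; (b)~asymptotic thinness (Lemma~\ref{lem:asymthin}) yields $d^{\omega_n}(\text{root},\text{root}^*)=o(n^{1/2})$; (c)~by the construction of Section~\ref{sect:skeleton}, $d_{\T^{(n,K)}}(\text{root}^*,z) = d^{\omega_n}(\text{root}^*,z)$ for any cut-point $z$ on the path from $\text{root}^*$ to some $V_j^n$. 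A union bound over $i=1,\ldots,K$ disposes of all uniform points.

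For type~(ii), let $b$ be a branching point, viewed as the common ancestor in $\T^{\text{GW}}_n$ of a pair $U_i^n,U_j^n$. The tree four-point identity reads
\[
d^{\T^{\text{GW}}_n}(\text{root},b) = \frac{1}{2}\bigl(d^{\T^{\text{GW}}_n}(\text{root},U_i^n) + d^{\T^{\text{GW}}_n}(\text{root},U_j^n) - d^{\T^{\text{GW}}_n}(U_i^n,U_j^n)\bigr).
\]
Applying Lemma~\ref{close_cut_point} along the branch to $U_i^n$, the vertex $\bar{\pi}_{\T^{\text{GW}}_n}(b)$ sits within $n^{(1-\epsilon')/2}$ of $b$ in tree distance and has loopless image, so $\phi_{\T^{\text{GW}}_n}(\bar{\pi}_{\T^{\text{GW}}_n}(b))$ is a cut-point lying on the unique simple path in $\T^{(n,K)}$ from $\text{root}^*$ to each of $V_i^n$ and $V_j^n$. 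Up to an $o(1)$ mismatch (at most one additional bubble or triangle), it therefore plays the role of the branching vertex of $(V_i^n,V_j^n,\text{root}^*)$ in the skeleton, giving
\[
d^{(n,K)}(\text{root}^*,\phi_{\T^{\text{GW}}_n}(\bar{\pi}_{\T^{\text{GW}}_n}(b))) = \frac{1}{2}\bigl(d^{(n,K)}(\text{root}^*,V_i^n) + d^{(n,K)}(\text{root}^*,V_j^n) - d^{(n,K)}(V_i^n,V_j^n)\bigr) + o(1).
\]
What remains is the pairwise estimate $d^{(n,K)}(V_i^n,V_j^n) - \sigma_G n^{-1/2} d^{\T^{\text{GW}}_n}(U_i^n,U_j^n) \to 0$ in probability. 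Since $\phi_{\T^{\text{GW}}_n}(\bar{\pi}_{\T^{\text{GW}}_n}(b))$ is a cut-point on the unique path of $\T^{(n,K)}$ between $V_i^n$ and $V_j^n$, the distance splits additively as $d^{(n,K)}(V_i^n,\phi_{\T^{\text{GW}}_n}(\bar{\pi}_{\T^{\text{GW}}_n}(b))) + d^{(n,K)}(\phi_{\T^{\text{GW}}_n}(\bar{\pi}_{\T^{\text{GW}}_n}(b)),V_j^n)$, and each summand is controlled by the same argument driving Proposition~\ref{prop_dist0} applied to the two half-trees emanating from $b$. The identical constant $\sigma_G$ appears because the IBICBRW is shift-invariant along its backbone (Lemma~\ref{ergo_shift}). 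This pairwise step is the main obstacle, but it amounts to a straightforward relabelling of Sections~\ref{sect_ergo}--\ref{proof_res} after rerooting at $b$.
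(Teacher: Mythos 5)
Your split into uniform samples and branching points is a natural starting point, and the handling of type~(i) is essentially sound. The gap is in type~(ii). Via the four-point identity you reduce the branching-point estimate to the pairwise estimate $d^{(n,K)}(V_i^n,V_j^n)-\sigma_G n^{-1/2}d^{\T^{\text{GW}}_n}(U_i^n,U_j^n)\to 0$, and then assert this is ``a straightforward relabelling of Sections~\ref{sect_ergo}--\ref{proof_res} after rerooting at $b$.'' It is not. The entire transfer apparatus (Lemmas~\ref{transfer_cvg}, \ref{transfer_monotone} and the Aldous-fringe identity~\eqref{eq_aldous}) is built for conditioned Galton--Watson trees rooted at a fixed distinguished vertex; after re-rooting at the random branching point $b$ the tree is no longer a conditioned GW tree from $b$'s viewpoint, and a rigorous re-rooting invariance argument for conditioned GW trees, together with a check that the constant $\sigma_G$ is preserved under re-rooting, would be a substantial addition, not a relabelling. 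Moreover, given the type~(i) bounds the four-point identity makes the pairwise estimate and the branching-point estimate logically equivalent: writing $a,c,\beta,p$ for the $d^{\T^{\text{GW}}_n}$-distances from the root to $U_i^n$, to $U_j^n$, to $b$, and between $U_i^n$ and $U_j^n$, and $A,C,B,P$ for the corresponding skeleton quantities, one has
\[
P-\sigma_G n^{-1/2}p = (A-\sigma_G n^{-1/2}a)+(C-\sigma_G n^{-1/2}c)-2(B-\sigma_G n^{-1/2}\beta),
\]
so the reduction does not simplify the problem, it just reformulates it.

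The paper proves the lemma with a different decomposition that avoids re-rooting altogether. Lemma~\ref{compacity_tnk} shows that for any $\epsilon,\eta>0$ there exists $K'$ such that, with probability at least $1-\eta$ for large $n$, every vertex of $\T^{\text{GW}}_n$ (so in particular every branching point of $\T^{\text{GW}}_n(K)$) lies within $\epsilon n^{1/2}$ of some $U^{i(v)}_n$ with $i(v)\le K'$. One then applies Lemma~\ref{prop_dist0} to the nearby $U^{i(v)}_n$, turned from a ratio into a difference statement via tightness of $n^{-1/2}d^{\T^{\text{GW}}_n}(\text{root},U_1^n)$, and controls the error terms in the triangle inequality using the fact that the embedding never increases distances together with Lemma~\ref{close_cut_point}. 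Since all distances in that argument are measured from $\text{root}^*$, the branching points require no separate treatment and the re-rooting obstacle never arises.
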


 \begin{lemma}\label{lem:vhart}
For $i,j\leq K$, we have that 
 \begin{equation}\label{eq:mosa}
d^{(n,K)}(\text{root}^*,b^{\T^{(n,K)}}(V^i_n, V^j_n))\to \sigma_dd_{\frak{T}}(\text{root},b^{\frak{T}}([U_i],[U_j])).
 \end{equation}

\end{lemma}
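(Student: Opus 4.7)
The plan is to reduce the statement to an assertion about the underlying Galton--Watson tree and then invoke Lemma~\ref{lem:analogoustores_2a} together with the $D$-convergence of reduced subtrees from Proposition~\ref{prop:croydonequivalence}. Set $b_n := b^{\T^{\text{GW}}_n}(U_i^n, U_j^n)$ and $v_n := \overline{\pi}_{\T^{\text{GW}}_n}(b_n)$, i.e., the first loopless-image ancestor of $b_n$ in the underlying tree. By Remark~\ref{rem_strongcut}, the point $\phi_{\T^{\text{GW}}_n}(v_n)$ is a cut-point that separates the origin from both $\phi_{\T^{\text{GW}}_n}(U_i^n)$ and $\phi_{\T^{\text{GW}}_n}(U_j^n)$, hence from $V_n^i$ and $V_n^j$; in particular it belongs to $V^*(\T^{(n,K)})$ and is a common ancestor of $V_n^i$ and $V_n^j$ in the skeleton tree $\T^{(n,K)}$.

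First I would argue that $b^{\T^{(n,K)}}(V_n^i,V_n^j)$ and $\phi_{\T^{\text{GW}}_n}(v_n)$ are asymptotically indistinguishable in $d^{(n,K)}$. The skeleton-branching point is, by the previous paragraph, a descendant of $\phi_{\T^{\text{GW}}_n}(v_n)$ in $\T^{(n,K)}$, and it sits inside the bubble based at $\phi_{\T^{\text{GW}}_n}(v_n)$ where the descending branches leading to $V_n^i$ and $V_n^j$ separate. Any cut-point strictly below $\phi_{\T^{\text{GW}}_n}(v_n)$ that still lies on the paths to both $V_n^i$ and $V_n^j$ would have to correspond to a loopless-image vertex of $\T^{\text{GW}}_n$ sitting strictly between $v_n$ and $b_n$ on the common ancestral line of $U_i^n$ and $U_j^n$, and Lemma~\ref{close_cut_point} rules out any such vertex at tree-distance more than $n^{(1-\epsilon')/2}$ with probability tending to $1$. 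Consequently
\[
\abs{d^{(n,K)}(\text{root}^*, b^{\T^{(n,K)}}(V_n^i, V_n^j)) - d^{(n,K)}(\text{root}^*, \phi_{\T^{\text{GW}}_n}(v_n))} \to 0
\]
in ${\bf P}_n$-probability, using that $d^{(n,K)}$ is the rescaled $n^{-1/2}$ graph metric.

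Next, since $b_n \in V(\T^{\text{GW}}_n(K))$ whenever $i,j \leq K$, Lemma~\ref{lem:analogoustores_2a} applied to $v = b_n$ gives $\abs{d^{(n,K)}(\text{root}^*, \phi_{\T^{\text{GW}}_n}(v_n)) - \sigma_G\, n^{-1/2} d^{\T^{\text{GW}}_n}(\text{root}, b_n)} \to 0$ in probability. Under the coupling~\eqref{eq:secondcoupling}, the $D$-convergence~\eqref{eq:underlyinggeometricestimate} of the reduced subtrees $\T^{\text{GW}}_n(K)$, which controls all internal edge lengths and hence all distances from the root to branching points, yields $n^{-1/2} d^{\T^{\text{GW}}_n}(\text{root}, b_n) \to \sigma\, d_{\mathfrak{T}}(\text{root}, b^{\mathfrak{T}}([U_i],[U_j]))$ in probability, because $b_n$ is the discrete counterpart of $b^{\mathfrak{T}}([U_i],[U_j])$ inside $\T^{\text{GW}}_n(K)$. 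Chaining the three displays and recalling $\sigma_d = \sigma \sigma_G$ yields~\eqref{eq:mosa}. The main obstacle is the first step: one must pin down the branching point of the skeleton inside a single bubble attached to $\phi_{\T^{\text{GW}}_n}(v_n)$ and then use Lemma~\ref{close_cut_point} to absorb the bubble's contribution into an $o(1)$ error at the $n^{-1/2}$ scale; the remaining steps are essentially direct applications of results already established in the paper.
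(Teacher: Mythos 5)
Your plan is the same as the paper's: reduce $d^{(n,K)}(\text{root}^*, b^{\T^{(n,K)}}(V^i_n, V^j_n))$ to a distance from $\text{root}^*$ to the image of a loopless-image ancestor of $b_n := b^{\T^{\text{GW}}_n}(U^i_n,U^j_n)$, then chain Lemma~\ref{lem:analogoustores_2a} with the convergence of distances in the underlying tree (the paper establishes the latter directly as~\eqref{eq:premosa} via $m_{\hat{w}_n}\to\sigma m_{\mathbf{e}}$; your invocation of~\eqref{eq:underlyinggeometricestimate} is equivalent). Steps two and three of your chain are correct.

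The gap is in the first step. You assert that any cut-point of $\omega_n$ that is a common ancestor of $V^i_n$ and $V^j_n$ and lies strictly below $\phi_{\T^{\text{GW}}_n}(v_n)$ "would have to correspond to a loopless-image vertex sitting strictly between $v_n$ and $b_n$ on the common ancestral line." This fails on two counts. First, cut-points of $\omega_n$ need not be images of loopless-image vertices — Remark~\ref{rem_strongcut} is only a one-way implication. Second and more seriously, such a common cut-point $z$ need not have any pre-image on $[\text{root},b_n]$ at all: since $z$ must lie on both $\phi_{\T^{\text{GW}}_n}([\text{root},U^i_n])$ and $\phi_{\T^{\text{GW}}_n}([\text{root},U^j_n])$, it may have two \emph{distinct} pre-images, one on each of the forked branches $[b_n,U^i_n]$ and $[b_n,U^j_n]$, that happen to collide under $\phi_{\T^{\text{GW}}_n}$; in that case the cut-point paths in $\T^{(n,K)}$ share a prefix extending past the tree-level fork, and your one-sided localization of the branching point in "the bubble based at $\phi(v_n)$" is unjustified. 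The paper closes exactly this hole by \emph{bracketing}: besides a $v^-$ with $\bar{\pi}_{\T^{\text{GW}}_n}(v^-)\preceq b_n$ close to $b_n$ (essentially your $v_n$), it uses Lemma~\ref{close_cut_point} to produce a $v^+$ with $\bar{\pi}_{\T^{\text{GW}}_n}(v^+)\succ b_n$ on one of the two branches and within $\epsilon n^{1/2}$ of $b_n$. Because $\bar{\pi}_{\T^{\text{GW}}_n}(v^+)$ has a loopless image, $\phi_{\T^{\text{GW}}_n}(\bar{\pi}_{\T^{\text{GW}}_n}(v^+))$ is a cut-point separating $0$ from exactly one of $V^i_n,V^j_n$, so the skeleton-branching point is trapped between $\phi(\bar{\pi}(v^-))$ and $\phi(\bar{\pi}(v^+))$; applying Lemma~\ref{lem:analogoustores_2a} to both $v^-$ and $v^+$ then pins it down. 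You would need to add this upper bracket (or an equivalent two-sided argument controlling pre-images past the fork) to make the first step go through.
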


\begin{lemma}\label{lem:eleele}
Let $K\in\N$ and $i,j,k\leq K$.
\begin{enumerate}
\item If 
\[d_{\frak{T}}(\text{root},b^{\T^{\text{GW}}_n}([U_i], [U_j]))>d_{\frak{T}}(\text{root},b^{\frak{T}}([U_i], [U_k]))\] then \[d^{(n,K)}(\text{root}^*,b^{\T^{(n,K)}}(V^i_n, V^j_n))>d^{(n,K)}(\text{root}^*,b^{\T^{(n,K)}}(V^i_n, V^k_n))\] for $n$ large enough. 
\item
If \[d_{\frak{T}}(\text{root},b^{\frak T}([U_i], [U_j]))=d_{\frak{T}}(\text{root},b^{\frak{T}}([U_i], [U_k]))\]
 then \[d^{(n,K)}(\text{root}^\ast,b^{\T^{(n,K)}}(V^i_n, V^j_n))=d^{(n,K)}(\text{root}^*,b^{\T^{(n,K)}}(V^i_n, V^k_n))\] for $n$ large enough.  
\end{enumerate}
\end{lemma}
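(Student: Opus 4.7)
The plan is to combine the quantitative convergence already supplied by Lemma~\ref{lem:vhart} with an elementary combinatorial property of rooted trees. For any three vertices $\sigma_1,\sigma_2,\sigma_3$ of any rooted tree with root $r$, the three root-to-MRCA distances $d(r,\sigma_p\wedge \sigma_q)$ for $\{p,q\}\subset\{1,2,3\}$ satisfy the relation that (at least) two of them coincide and those common values do not exceed the third. Indeed, both $\sigma_1\wedge \sigma_2$ and $\sigma_1\wedge \sigma_3$ lie on the simple path from $r$ to $\sigma_1$, so one is an ancestor of the other, and the shallower one automatically equals $\sigma_2\wedge \sigma_3$. I will invoke this property both inside the rooted tree $\T^{(n,K)}$ (applied to $V^n_i,V^n_j,V^n_k$ with root $\mathrm{root}^*$, which is a valid rooted tree by the star--triangle construction of Section~\ref{sect:skeleton}) and inside the reduced CRT $\frak{T}^{(K)}$ (applied to $[U_i],[U_j],[U_k]$ with its root).

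Case (1) is then immediate: the hypothesis provides a strict inequality between the two real limits appearing on the right-hand side of Lemma~\ref{lem:vhart}, and preservation of strict inequality along convergent sequences yields the corresponding inequality inside $\T^{(n,K)}$ for all $n$ large enough.

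For case (2), I first rule out trifurcations in the CRT. Since $\Xi$-a.s.~every point of $\frak{T}$ has degree at most three (Theorem~4.6 of~\cite{DuLG}), $M\otimes(\lambda^{\frak{T}})^{\otimes\N}$-a.s.~the three points $[U_i],[U_j],[U_k]$ are in a binary configuration with respect to the root. Combined with the hypothesis $d_{\frak{T}}(\mathrm{root},b^{\frak{T}}([U_i],[U_j]))=d_{\frak{T}}(\mathrm{root},b^{\frak{T}}([U_i],[U_k]))$ and the combinatorial property above, this forces these two equal values to be the common shallowest distance and $d_{\frak{T}}(\mathrm{root},b^{\frak{T}}([U_j],[U_k]))$ to be strictly larger. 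Applying Lemma~\ref{lem:vhart} to each of the three pairs, I obtain that for $n$ large enough $d^{(n,K)}(\mathrm{root}^*,b^{\T^{(n,K)}}(V^n_j,V^n_k))$ strictly exceeds both $d^{(n,K)}(\mathrm{root}^*,b^{\T^{(n,K)}}(V^n_i,V^n_j))$ and $d^{(n,K)}(\mathrm{root}^*,b^{\T^{(n,K)}}(V^n_i,V^n_k))$. Since two of the three pairwise root-to-MRCA distances inside $\T^{(n,K)}$ must be equal by the combinatorial property, and since the $(j,k)$-distance is the strict maximum, the remaining two must coincide, which is exactly the claim.

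The only subtlety (rather than a real obstacle) is verifying that the combinatorial tree property can be used inside $\T^{(n,K)}$, which is ensured by construction, and that the trifurcation exclusion in the CRT is valid; the rest is a purely quantitative consequence of Lemma~\ref{lem:vhart} combined with the preservation of strict inequalities under convergence.
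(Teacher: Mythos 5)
Your proof is correct and follows essentially the same route as the paper: both arguments rest on the absence of trifurcations in the CRT (Theorem~4.6 of~\cite{DuLG}), the convergence of root-to-branching-point distances supplied by Lemma~\ref{lem:vhart}, and the rooted-tree property of pairwise most-recent-common-ancestor depths. Your packaging of the tree property into a single reusable statement (``two of the three root-to-MRCA depths coincide and do not exceed the third''), invoked once in $\frak{T}^{(K)}$ and once in $\T^{(n,K)}$, is a tidier rendering of the step-by-step ancestral-ordering case analysis the paper carries out, but the underlying argument is the same.
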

 
 Assuming those four lemmas we prove Lemma \ref{lem:condG}.
 \begin{proof}[Proof of Lemma \ref{lem:condG}]
We have to show that
\begin{equation}\label{eq:...}
(\frak{T}^{(n,K)}, d^{(n,K)}, n^{-1/4}\phi_{\T^{\text{GW}}_n})\stackrel{n\to\infty}{\to} \left(\frak{T}^{(K)},\sigma\sigma_Gd_{\frak{T}},\sqrt{\sigma}\phi_{\frak{T}}\right)
\end{equation}
in distribution in the $D$ distance.

\vspace{0.5cm}

{\it Step 1: Showing that the trees are asymptotically homeomorphic}

\vspace{0.5cm}

We will start by showing that, for $n$ large enough, the tree $\frak{T}^{(n,K)}$ is homeomorphic to $\frak{T}^{(K)}$ and there exists a genealogical order preserving homeomorphism.
The construction depends on Lemma \ref{lem:eleele}

We can construct the homeomorphism $\Upsilon_{n,K}:\frak{T}^{(n,K)}\to \frak{T}^{(K)}$ as follows. Let $i\leq K$ be fixed. We will first define $\Upsilon_{n,K}$ on the interval $[\text{root}^\ast,V^i_n]$. The interval $[\text{root}^\ast,V^i_n]$ splits on a set of edges $e_1^n(i),e_2^n(i),\dots, e^n_{l(n,i)}(i)$  $\in E(\frak{T}^{(n,K)})$, each one having as endpoints on the set $\{\text{root}^\ast, V^i_n, b^{\T^{(n,K)}}(V^i_n,V^j_n), j\leq K\}$.
Also, $[\text{root}, [U_i]]$ splits on a set of edges $e_1(i),e_2(i),\dots, e_{l(i)}(i)$ of $\frak{T}^{(K)}$. Moreover, by Lemma \ref{lem:eleele} it is not hard to see that, for $n$ large enough, the number $l(n,i)$ of edges in which $[\text{root}^\ast, V^i_n]$ decomposes is the same as the number $l(i)$ in which $[\text{root}, [U_i]]$ does. 
Let us assume that $e_1^n(i),e_2^n(i),\dots, e_{l(n,i)}^n(i)$ and $e_1(i),e_2(i),\dots, e_{l(i)}(i)$ are indexed according to increasing genealogical order. So, for $i\leq K$ and $j\leq l(n,i)$ we set 
$\Upsilon_{n,K}$ as mapping $e_j^n(i)$ onto $e_j(i)$ in such a way that the point at distance $\alpha |e^n_j(i)|_{d^{(n,K)}}$ along the edge $e^n_j(i)$ is mapped to the point at distance $\alpha |e_j(i)|_{d_{\frak{T}}}$ along the edge $e_j(i)$, for all $\alpha \in [0,1]$.
By Lemma \ref{lem:eleele} we have that this construction is consistent for different indices $i$, for $n$ large enough, thus providing a homeomorphism $\Upsilon_{n,K}:\frak{T}^{(n,K)}\to\frak{T}^{(K)}$.

\vspace{0.5cm}

{\it Step 2: Showing the convergence of the metric}

\vspace{0.5cm}

To show that $(\T^{(n,K)}_n,d^{(n,K)})$ converges to $(\frak{T}^{(K)},\sigma_dd_{\frak{T}})$ under the $d_1$ distance (where $d_1$ is as in \eqref{eq:defofd1} 
) it only remains to show that 
\begin{equation}
\abs{e_j^n(i)}_{d^{(n,K)}} \to \sigma_d\abs{e_j(i)}_{d_{\frak{T}}}
\end{equation}
for all $i\leq K$ and $j\leq l(i)$, but this can be easily deduced from
\eqref{eq:mosa}.
That is, we have shown that $(\frak{T}^{(n,K)},d^{(n,K)})$ converges under $d_1$ to $(\frak{T}^{(K)},\sigma_d d_{\frak{T}})$.

\vspace{0.5cm}

{\it Step 3: Showing that the embeddings are asymptotically close}

\vspace{0.5cm}

The last remaining step to conclude the proof is to show that 
  \begin{equation}\label{eq:Gfordres}
  \lim_{n\to\infty }\sup_{x\in\frak{T}^{(n,K)}}d_{\R^d}(n^{-1/4}\phi_{\T^{\text{GW}}_n}(x),\sqrt{\sigma}\phi_{\frak{T}}(\Upsilon_{n,K}(x)))=0,
    \end{equation}
    almost surely. 
 Let  $\hat{\Upsilon}_{n,K}$ be the lexicographical-order preserving homeomorphism between $(\frak{T}^{\text{GW}}_n,n^{-1/2}d^{\T^{\text{GW}}_n})$ and $(\frak{T}^{(K)},\sigma d_{\frak{T}})$ which is linear along the edges (and which exists by \eqref{eq:underlyinggeometricestimate}). By  \eqref{eq:underlyinggeometricestimate} we have
  \begin{equation}\label{eq:Gford}
  \lim_{n\to\infty }\sup_{x\in\frak{T}^{(n,K)}}d_{\R^d}(n^{-1/4}\phi_{\T^{\text{GW}}_n}(x),\sqrt{\sigma}\phi_{\frak{T}}(\hat{\Upsilon}_{n,K}(x)))=0,
    \end{equation} 
    almost surely.
  It can be deduced from Lemma \ref{lem:analogoustores_2a} that 
    \begin{equation}
  \lim_{n\to\infty} \sup_{x\in\frak{T}^{(n,K)}} d_{\frak{T}^{(K)}}(\Upsilon_{n,K}(x),\hat{\Upsilon}_{n,K}(x))=0.
  \end{equation}
  That, together with \eqref{eq:Gford} and the continuity of $\phi_{\frak{T}}$ yields \eqref{eq:Gfordres}.

\end{proof}

\subsection{Proof of the preliminary lemmas for Condition (G)}
\begin{proof}[Proof of Lemma \ref{compacity_tnk}]
For any $w:[0,1]\to \R$, let 
\begin{equation}\label{eq:defofmin}
m_{w}(x,y):=\min\{w(t):t\in [x,y]\}.
\end{equation}

For any $x\in[0,1]$, let
\begin{equation}\label{eq:defof[]}
[x]_n:=w_n(\gamma_n(x)),
\end{equation}
where $w_n$ is the search depth process associated to $\T^{\text{GW}}_n$. First, note that for any $x,y\in[0,1]$
\[d^{\T^{\text{GW}}_n}([x]_n,[y]_n)=\hat{w}_n(\gamma_n(x))+\hat{w}_n(\gamma_n(y))-2m_{\hat{w}_n}(\gamma_n(x),\gamma_n(y)).\]

Let $0\leq U_{(1)}\leq U_{(2)}\leq \dots \leq U_{(K')}\leq 1$ be the points $U_i,i=1,\dots,K'$ ordered in increasing order.
Therefore
\[ 
\begin{aligned}
&\max_{x\in V(\T^{\text{GW}}_n)}\min_{i=1,\dots,K'}d^{\T^{\text{GW}}_n}(U^n_i,x)=\\ 
\leq & \max_{i=1\dots K'-1}\left[2\max_{s\in [\gamma_n(U_{(i)}),\gamma_n(U_{(i+1)})]} \hat{w}_n(s)-2m_{\hat{w}_n}(\gamma_n(U_{(i)}),\gamma_n(U_{(i+1)}))\right].
\end{aligned}
\]
On the other hand, from \eqref{eq:lastlines} and \eqref{eq:papapa} we get that 
\[
\begin{aligned}
&n^{-1/2}\max_{i=1\dots K'-1}\left[2\max_{s\in [\gamma_n(U_{(i)}),\gamma_n(U_{(i+1)})]} \hat{w}_n(s)-2m_{\hat{w}_n}(\gamma_n(U_{(i)}),\gamma_n(U_{(i+1)}))\right]\\
\stackrel{n\to\infty}{\to}& \frac{2}{\sigma_Z} \max_{i=1,\dots K'-1}\left[2\max_{s\in[U_{(i)},U_{(i+1)}]}{\bold e}_{s}-2m_{\bold{e}}(U_{(i)},U_{(i+1)})\right]
\end{aligned}
\]
But, since for any $\eta$, there exists $K'$ large enough such that $U_i,i=1,\dots K'$ is $\eta$-dense\footnote{i.e., for all $x\in[0,1]$ there exists an $i\in\{1,\dots,K'\}$ such that the distance from $x$ to $U_i$ is smaller than $\eta$.} in $[0,1]$ and the fact that $\bold{e}$ is uniformly continuous, we get that 
\[
\lim_{K'\to\infty} {\bf P} \left[ \max_{i=1,\dots K'-1}\left[2\max_{s\in[U_{(i)},U_{(i+1)}]}{\bold e}_{s}-2m_{\bold{e}}(U_{(i)},U_{(i+1)})\right]\geq \epsilon\right]=0
\]
for all $\epsilon>0$. This finishes the proof.
\end{proof}

 \begin{proof}[Proof of Lemma \ref{lem:analogoustores_2a}]
 By Lemma \ref{prop_dist0} we get that
\begin{equation}\label{eq:embeddedoverunderlying-1}
\lim_{n\to\infty}\bold{P}_n\left[\abs{\frac{d^{(n,K)}(\text{root}^{\ast},V^i_n)}{n^{-1/2}d^{\T^{\text{GW}}_n}(\text{root},U^i_n)}-\sigma_{G}}\geq\epsilon\right]=0,
\end{equation}
for all $\epsilon>0$, where $V^i_n$ is as in the beginning of Section \ref{sect:condG}.

We will start deducing that
\begin{equation}\label{eq:embeddedoverunderlying}
\lim_{n\to\infty}\bold{P}_n\left[\abs{d^{(n,K)}(\text{root}^{\ast},V^i_n)-\sigma_{G}n^{-1/2}d^{\T^{\text{GW}}_n}(\text{root},U^i_n)}\geq\epsilon\right]=0,
\end{equation}
for all $\epsilon>0$.

 Since $d^{\T^{\text{GW}}_n}(\text{root},V^i_n)\leq \sup_{x\in[0,1]}\hat{w}_n(x)$, by virtue of Proposition \ref{prop:tourconvergence} we can say that, for each $\eta>0$, there exists $C$ such that
\begin{equation}\label{eq:jejenolemma}
\limsup_{n\to\infty}{\bf P}_n\left[ n^{-1/2} d^{\T^{\text{GW}}_n}(\text{root}, U^i_n) \geq C\right]\leq \eta.
\end{equation}
 Once we know this term cannot be larger than $C$, we can apply~\eqref{eq:embeddedoverunderlying-1} with $\epsilon/C$ instead of $\epsilon$ to obtain~\eqref{eq:embeddedoverunderlying}. 

 By Lemma ~\ref{compacity_tnk} we have that for any $v\in V(\T^{\text{GW}}_n)$ there exists $i=i(v)\leq K'$ such that $d^{\T^{\text{GW}}_n}(U_n^i,v)$ is small.
 More precisely, for any $\epsilon,\eta>0$ there exists $K'$ such that
 \begin{equation}\label{eq:eleven}
 \bold{P}_n\left[\max_{v\in V(\T^{\text{GW}}_n(K'))}n^{-1/2} d^{\T^{\text{GW}}_n}(v, U^{i(v)}_n)\geq \epsilon\right]\leq \eta.
 \end{equation} 
Therefore, by the display above and the fact that \begin{equation}
 d^{(n,K)}(\phi_{\T^{\text{GW}}_n}(\cdot),\phi_{\T^{\text{GW}}_n}(\cdot))\leq n^{-1/2}d^{\T^{\text{GW}}_n}(\cdot,\cdot)
 \end{equation}
  we get that 
 \begin{equation}\label{eq:mgg}
 \bold{P}_n\left[\max_{v\in V(\T^{\text{GW}}_n(K'))}d^{(n,K)}(\phi_{\T^{\text{GW}}_n}(\bar{\pi}_{\T^{\text{GW}}_n}(U^{i(v)}_n)),\phi_{\T^{\text{GW}}_n}(\bar{\pi}_{\T^{\text{GW}}_n}(v)))\geq \epsilon \right]\leq \eta.
 \end{equation}
 On the other hand 
 \begin{equation}\label{eq:piojore}
\begin{aligned}
 &\abs{\sigma_G d^{\T^{\text{GW}}_n}(\text{root},v)- d^{(n,K')}(\text{root}^\ast,\phi_{\T^{\text{GW}}_n}(\bar{\pi}_{\T^{\text{GW}}_n}(v)))}\\
\leq &\abs{\sigma_G d^{\T^{\text{GW}}_n}(\text{root},v)-\sigma_G d^{\T^{\text{GW}}_n}(\text{root},U^{i(v)}_n)}\\
& \qquad + \abs{\sigma_G d^{\T^{\text{GW}}_n}(\text{root},U^{i(v)}_n)-d^{(n,K')}(\text{root}^\ast ,\phi_{\T^{\text{GW}}_n}(\bar{\pi}_{\T^{\text{GW}}_n}(U^{i(v)}_n)))}\\
&\qquad +\abs{d^{(n,K')}(\text{root}^\ast ,\phi_{\T^{\text{GW}}_n}(\bar{\pi}_{\T^{\text{GW}}_n}(U^{i(v)}_n)))- d^{(n,K')}(\text{root}^\ast,\phi_{\T^{\text{GW}}_n}(\bar{\pi}_{\T^{\text{GW}}_n}(v)))} \\
\leq & \sigma_G d^{\T^{\text{GW}}_n}(U^{i(v)}_n,v)+\abs{\sigma_G d^{\T^{\text{GW}}_n}(\text{root},U^{i(v)}_n)-d^{(n,K')}(\text{root}^\ast ,V_{i(v)}^n)}\\
& \qquad+d^{(n,K')}(\phi_{\T^{\text{GW}}_n}(\bar{\pi}_{\T^{\text{GW}}_n}(v)),\phi_{\T^{\text{GW}}_n}(\bar{\pi}_{\T^{\text{GW}}_n}(U^{i(v)}_n))).
\end{aligned}
\end{equation}
The first and third summands of the right hand side in the display above can be controlled by displays \eqref{eq:eleven} and \eqref{eq:mgg} respectively, while the second summand can be controlled by \eqref{eq:embeddedoverunderlying}. This proves the lemma.
 \end{proof}

\begin{proof}[Proof of Lemma \ref{lem:vhart}]
Since we are under the assumption that $(U^i_n)_{i\in\N}$ (the uniform vertices in $\T^{\text {GW}}_n$) are chosen as $w_n(\gamma_n(U_i))$ (where we recall that $w_n$ is the depth-first search around $\T^{\text{GW}}_n$ and $\gamma_n$ is as in \eqref{eq:defofgamma}) we have that
 \begin{equation}\label{eq:holahola}
d^{\T^{\text{GW}}_n}(\text{root},U^i_n)=\hat{w}_n(\gamma_n(U_i)).
\end{equation}
On the other hand
\begin{equation}\label{eq:holachao}
d_{\frak{T}}(\text{root}, [U_i])= \bold{e}_{U_i}.
\end{equation}
Furthermore, it follows from the definition of $\gamma_n$ that 
\begin{equation}\label{eq:lastlines}
\sup_{x\in[0,1]} \abs{ \gamma_n(x)-x }\to0, \quad {\bf P}\text{-a.s.}
\end{equation} as $n\to\infty$. 
Hence, we get that 
\begin{equation}\label{eq:gammantoid}
 \gamma_n(U_i)\to U_i  \quad \text{ for all } i\in\N.
\end{equation}

Therefore it follows from \eqref{eq:papapa}, \eqref{eq:holahola}, \eqref{eq:holachao} and \eqref{eq:gammantoid} that
\begin{equation}
n^{-1/2} d^{\T^{\text{GW}}_n}(\text{root},U^i_n)\to \sigma d_{\frak{T}}(\text{root}, [U_i])
 \end{equation}
We would also like to have,  for all $i,j\leq K$, that 
\begin{equation}\label{eq:premosa}
n^{-1/2}d^{\T^{\text{GW}}_n}(\text{root},b^{\T^{\text{GW}}_n}(U_i^n, U_j^n))\to \sigma d_{\frak{T}}(\text{root},b^{\frak{T}}([U_i],[U_j])).
\end{equation} 

Since 
\[
d^{\T^{\text{GW}}_n}(\text{root},b^{\T^{\text{GW}}_n}(U_i^n, U_j^n))= m_{\hat{w}_n}(\gamma_n(U_i),\gamma_n(U_j))\]
and
\[ d_{\frak{T}}(\text{root},b^{\frak{T}}([U_i],[U_j]))=m_{\bold{e}}(U_i,U_j),
\]
display \eqref{eq:premosa} can be deduced from 
  \begin{equation}\label{eq:nopuedodarotrabatalla}
\lim_{n\to\infty}\sup_{x,y\in[0,1]} \abs{n^{-1/2}m_{\hat{w}_n}(\gamma_n(x),\gamma_n(y))- \sigma m_{\bold{e}}(x,y)} = 0 \quad {\bf P}\text{-a.s. }
\end{equation}

\vspace{0.5cm}

{\it Step 1: Proof of \eqref{eq:nopuedodarotrabatalla}.}

\vspace{0.5cm}

For $x,y\in[0,1]$, $x<y$, let $z_{\text{min}}\in[x,y]$ be the point where $m_{\bold{e}}(x,y)$ is attained. We have
\begin{equation}
n^{-1/2}\hat{w}_n(z_{\text{min}})\leq \sigma \bold{e}_{z_{\text{min}}}+\abs{n^{-1/2}\hat{w}_n(z_{\text{min}})-\sigma\bold{e}_{z_{\text{min}}}}
\end{equation}
Therefore
\begin{equation}\label{eq:lasfloresbarce}
n^{-1/2}m_{\hat{w}_n}(x,y)\leq \sigma m_{\bold{e}}(x,y)+\abs{n^{-1/2}\hat{w}_n(z_{\text{min}})-\sigma\bold{e}_{z_{\text{min}}}}
\end{equation}
Similarly, letting $z_{\text{min}}^n$ be the point where $m_{\hat{w}_n}(x,y)$ is attained. 
We have 
\begin{equation}
\sigma \bold{e}_{z_{\text{min}}^n}\leq n^{-1/2}\hat{w}_n(z^n_{\text{min}})+\abs{n^{-1/2}\hat{w}_n(z^n_{\text{min}})-\sigma\bold{e}_{z^n_{\text{min}}}}
\end{equation}
and
\begin{equation}\label{eq:lasfloresbarce2}
\sigma m_{\bold{e}}(x,y)\leq n^{-1/2}m_{\hat{w}_n}(x,y)+\abs{n^{-1/2}\hat{w}_n(z_{\text{min}}^n)-\sigma\bold{e}_{z^n_{\text{min}}}}
\end{equation}
From \eqref{eq:lasfloresbarce} and \eqref{eq:lasfloresbarce2}, we get that
\begin{equation}
\abs{\sigma m_{\bold{e}}(x,y)-n^{-1/2}m_{\hat{w}_n}(x,y)}\leq\sup_{z\in[0,1]}\abs{n^{-1/2}\hat{w}_n(z)-\sigma \bold{e}_{z}}
\end{equation}
Therefore, it follows from \eqref{eq:papapa} that 
\[
\lim_{n\to\infty}\sup_{x,y\in[0,1]} \abs{n^{-1/2}m_{\hat{w}_n}(x,y)- \sigma m_{\bold{e}}(x,y)} = 0 \quad {\bf P}\text{-a.s. }
\]
The display above, together with the continuity of $m_{\bold{e}}(\cdot,\cdot)$, (which, in turn, follows from the continuity of $\bold{e}$) give that
\[\lim_{\delta\to0}\limsup_{n\to\infty}\sup\left\{ \abs{n^{-1/2}m_{\hat{w}_n}(x',y')-\frac{2}{\sigma_Z}m_{\bold{e}}(x,y)}: \abs{y'-y}\leq\delta,\abs{x'-x}\leq\delta\right\}=0 .\]
Display \eqref{eq:nopuedodarotrabatalla} follows from the display above and\eqref{eq:lastlines}.

\vspace{0.5cm}

{\it Step 2: Proving the main result}

\vspace{0.5cm}

Our next goal is to prove that \eqref{eq:mosa}
 Fix $i,j\leq K$ distinct and $\epsilon>0$. By Lemma \ref{close_cut_point}, with high probability there is a vertex $v^+\in V(\T^{\text{GW}}_n)$ with $b^{\T^{(n,K)}}(U_i^n,U_j^n)\prec \bar{\pi}_{\T^{\text{GW}}_n}(v^+)$ and $n^{-1/2}d^{\T^{\text{GW}}_n}(\bar{\pi}_{\T^{\text{GW}}_n}(v^+),b^{\T^{\text{GW}}_n}(U_i^n,U_j^n))\leq \epsilon$. Also by Lemma \ref{close_cut_point}, there is, with high probability, $v^-\in V(\T^{\text{GW}}_n)$ with $b^{\T^{\text{GW}}_n}(U_i^n,U_j^n)\succ \bar{\pi}_{\T^{\text{GW}}_n}(v^-)$ and $n^{-1/2}d^{\T^{\text{GW}}_n}(\bar{\pi}_{\T^{\text{GW}}_n}(v^-),b^{\T^{\text{GW}}_n}(U_i^n,U_j^n))\leq \epsilon$.  
 Therefore, with high probability
 \begin{equation}
 n^{-1/2}d^{\T^{\text{GW}}_n}(\bar{\pi}_{\T^{\text{GW}}_n}(v^-),\bar{\pi}_{\T^{\text{GW}}_n}(v^+))\leq 2\epsilon
 \end{equation} 
 and consequently,
 \begin{equation}\label{eq:toto1}
 d^{(n,K)}(\phi_{\T^{\text{GW}}_n}(\bar{\pi}_{\T^{\text{GW}}_n}(v^-)),\phi_{\T^{\text{GW}}_n}(\bar{\pi}_{\T^{\text{GW}}_n}(v^+)))\leq 2\epsilon
 \end{equation}
 with high probability.
 
  Note that, by Lemmas \ref{close_cut_point} and \ref{lem:analogoustores_2a}, we get that
\begin{equation}\label{eq:toto2}
\lim_{n\to\infty} \bold{P}_n\left[\abs{d^{(n,K)}(\text{root}^\ast,\phi_{\T^{\text{GW}}_n}(\bar{\pi}_{\T^{\text{GW}}_n}(v^+)))-\sigma_Gn^{-1/2}d^{\T^{\text{GW}}_n}(\text{root},v^+)} \geq \epsilon \right]=0
\end{equation}
and
\begin{equation}\label{eq:toto3}
\lim_{n\to\infty} \bold{P}_n\left[\abs{d^{(n,K)}(\text{root}^\ast,\phi_{\T^{\text{GW}}_n}(\bar{\pi}_{\T^{\text{GW}}_n}(v^-)))-\sigma_Gn^{-1/2}d^{\T^{\text{GW}}_n}(\text{root},v^-)} \geq \epsilon \right]=0
\end{equation}

Moreover, since $b^{\T^{\text{GW}}_n}(U_i^n,U_j^n)\prec \bar{\pi}_{\T^{\text{GW}}_n}(v^+)$ and $b^{\T^{\text{GW}}_n}(U_i^n,U_j^n)\succ \bar{\pi}_{\T^{\text{GW}}_n}(v^-)$, $b^{\T^{(n,K)}}(V^i_n,V^j_n)$ lies in the interval $[\bar{\pi}_{\T^{\text{GW}}_n}(v^-),\bar{\pi}_{\T^{\text{GW}}_n}(v^+)]$.
Therefore, it follows from \eqref{eq:toto1}, \eqref{eq:toto2} and \eqref{eq:toto3} that
\begin{equation}
\lim_{n\to\infty} \bold{P}_n\left[\abs{d^{(n,K)}(\text{root}^\ast,b^{\T^{(n,K)}}(V^i_n,V^j_n))-\sigma_Gn^{-1/2}d^{\T^{\text{GW}}_n}(\text{root},b^{\T^{\text{GW}}_n}(U_i^n,U_j^n))} \geq 4 \epsilon \right]=0.
\end{equation}
This, together with \eqref{eq:premosa}, shows \eqref{eq:mosa} for the case $i\neq j$.

For the case $i=j$, note that $b^{\frak{T}}([U_i],[U_i])=[U_i]$ and $b^{\T^{\text{GW}}_n}(U^i_n,U^i_n)=U^i_n$.
Therefore, in the case $i=j$ \eqref{eq:mosa} reduces to
\begin{equation}
n^{-1/2}d^{(n,K)}(\text{root},V_i^n)\to \sigma_dd_{\frak{T}}(\text{root},[U_i]).
\end{equation}
But the display above follows from \eqref{eq:premosa} and 
\eqref{eq:embeddedoverunderlying}.
  \end{proof}
  \begin{proof}[Proof of Lemma \ref{lem:eleele}]
 It follows directly from \eqref{eq:mosa} that if \[d_{\frak{T}}(\text{root},b^{\frak{T}}([U_i], [U_j]))>d_{\frak{T}}(\text{root},b^{\frak{T}}([U_i], [U_k]))\] then \[d^{(n,K)}(\text{root}^\ast,b^{\T^{(n,K)}}(V^i_n, V^j_n))>d^{(n,K)}(\text{root}^*,b^{\T^{(n,K)}}(V^i_n, V^k_n))\] for $n$ large enough. It remains to treat the case when 
\begin{equation}\label{eq:case}
d_{\frak{T}}(\text{root},b^{\frak{T}}([U_i], [U_j]))=d_{\frak{T}}(\text{root},b^{\frak{T}}([U_i], [U_k])).
\end{equation} 

It follows from Theorem 4.6 of \cite{DuLG} that $\frak{T}$ is a tree in which no vertex has degree larger than $3$. In particular, for all different $i,j,k\leq K$ we have that $b^{\frak{T}}([U_i],[U_j]),b^{\frak{T}}([U_j],[U_k])$ and $b^{\frak{T}}([U_i],[U_k])$ cannot be all equal. 

Therefore in case \eqref{eq:case}, since $b^{\frak{T}}([U_i], [U_j])$ and $b^{\frak{T}}([U_i], [U_k])$ lie in $[\text{root},[U_i]]$, we have \[b^{\frak{T}}([U_i], [U_j])=b^{\frak{T}}([U_i], [U_k]).\] Hence, \[b^{\frak{T}}([U_k],[U_j])\neq b^{\frak{T}}([U_i],b[U_j]).\]  Therefore, since both $b^{\frak{T}}([U_i],[U_j])$ and $b^{\frak{T}}([U_k],[U_j])$ lie in $[\text{root},[U_j]]$, either $b^{\frak{T}}([U_j], [U_k])\prec b^{\frak{T}}([U_i], [U_j])$ or $b^{\frak{T}}([U_j], [U_i])\prec b^{\frak{T}}([U_k], [U_j])$. But, since by assumption $b^{\frak{T}}([U_i], [U_j])=b^{\frak{T}}([U_i], [U_k])$  we cannot have $b^{\frak{T}}([U_j], [U_k])\prec b^{\frak{T}}([U_i], [U_j])$, therefore \[b^{\frak{T}}([U_j], [U_j])\prec b^{\frak{T}}([U_k], [U_j])\] and
\begin{equation}
d_{\frak{T}}(\text{root}, b^{\frak{T}}([U_j], [U_i]))< d_{\frak{T}}(\text{root}, b^{\frak{T}}([U_j], [U_k])).
\end{equation}
Hence, by \eqref{eq:mosa} we get that
\begin{equation}
d^{(n,K)}(\text{root}^\ast, b^{\T^{(n,K)}}(V^j_n, V^i_n))< d^{(n,K)}(\text{root}^\ast, b^{\T^{(n,K)}}(V^j_n, V^k_n))
\end{equation}
for $n$ large enough. This, together with the fact that both $b^{\T^{(n,K)}}(V^j_n, V^i_n)$ and $b^{\T^{(n,K)}}(V^j_n, V^k_n)$ lie in $[\text{root}^*,V^j_n]$, yields that
\begin{equation}
b^{\T^{(n,K)}}(V^j_n, V^i_n)\prec b^{\T^{(n,K)}}(V^j_n, V^k_n).
\end{equation}
In particular, $b^{\T^{(n,K)}}(V^j_n, V^k_n)$ lies in $[b^{\T^{(n,K)}}(V^j_n, V^i_n),V^k_n]$.
From this, it follows from the tree-structure that 
\[
 b^{\T^{(n,K)}}(V^j_n, V^i_n)= b^{\T^{(n,K)}}(V^i_n, V^k_n)
\]
and 
\begin{equation}
d^{(n,K)}(\text{root}^\ast, b^{\T^{(n,K)}}(V^j_n, V^i_n))= d^{(n,K)}(\text{root}^\ast, b^{\T^{(n,K)}}(V^i_n, V^k_n))
\end{equation}
for $n$ large enough.
This finishes the proof of the lemma.
\end{proof}

  
\subsection{Asymptotic thinness}

It remains to show that the graphs are asymptotically thin.
Our goal is to prove the two following lemmas.
\begin{lemma}\label{lem:underlyingbubblesaremicroscopic} For all $\epsilon>0$,
\[
\lim_{K\to\infty}\limsup_{n\to\infty}\bold{P}_n\left[n^{-1/2}\Delta^{(n,K)}_{\omega_n}\geq \epsilon\right]=0
\]
\end{lemma}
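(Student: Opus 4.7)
Since the embedding $\phi_{\T^{\text{GW}}_n}$ is distance-contracting ($d^{\omega_n}(\phi(\tilde v_1),\phi(\tilde v_2))\le d^{\T^{\text{GW}}_n}(\tilde v_1,\tilde v_2)$), it is enough to bound, for every $K$-sausage $S_x\subset \T^{(n,K)}$ with tree-top $\tilde x$, the quantity $d^{\T^{\text{GW}}_n}(\tilde v,\tilde x)$ uniformly in $\tilde v$ lying in the (connected) tree preimage of $S_x$, which I will also denote $S_x$.

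The plan rests on three ingredients. \emph{(i) Tree density of uniform points:} by Lemma~\ref{compacity_tnk}, for any $\epsilon,\eta>0$ one can choose $K'=K'(\epsilon,\eta)$ so that with probability at least $1-\eta/3$ the set $\{U_1^n,\ldots,U_{K'}^n\}$ forms an $(\epsilon n^{1/2}/4)$-net of $(\T^{\text{GW}}_n,d^{\T^{\text{GW}}_n})$. \emph{(ii) Proximity of $U_i^n$ to its cut-point:} by Lemma~\ref{close_cut_point} and a union bound over $i\le K'$, with probability at least $1-\eta/3$ we have $d^{\T^{\text{GW}}_n}(U_i^n,\tilde V_i^n)\le n^{(1-\epsilon')/2}$ for every $i\le K'$ and some fixed $\epsilon'>0$, where $\tilde V_i^n$ is the tree preimage of $V_i^n=\pi_n(\phi_{\T^{\text{GW}}_n}(U_i^n))$ lying on the root-to-$U_i^n$ path. \emph{(iii) Short skeleton edges:} by equation~\eqref{eq:underlyinggeometricestimate}, $\T^{\text{GW}}_n(K)$ under $n^{-1/2}d^{\T^{\text{GW}}_n}$ converges to the $K$-CRT $\mathfrak T^{(K)}$; since $\mathfrak T^{(K)}$ has bounded total length but an unbounded number of edges as $K\to\infty$, the longest edge of $\mathfrak T^{(K)}$ tends to $0$ in probability. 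Transferring this bound from $\T^{\text{GW}}_n(K)$ to $\T^{(n,K)}$ via (ii) shows that with probability at least $1-\eta/3$, for $K$ large enough, every tree-edge of $\T^{(n,K)}$ has $d^{\T^{\text{GW}}_n}$-length at most $\epsilon n^{1/2}/2$.

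Working on the intersection of the three events above, pick $U_j^n$ with $j\le K'$ and $d^{\T^{\text{GW}}_n}(\tilde v,U_j^n)\le\epsilon n^{1/2}/4$, and let $u$ be the tree LCA of $\tilde v$ and $U_j^n$. Both $u$ and $\tilde x$ are tree-ancestors of $\tilde v$, hence comparable. If $u\preceq\tilde x$ then $\tilde x$ lies on the tree-path from $u$ to $\tilde v$ and $d^{\T^{\text{GW}}_n}(\tilde v,\tilde x)\le d^{\T^{\text{GW}}_n}(\tilde v,u)\le\epsilon n^{1/2}/4$. If $\tilde x\prec u$, then $u\in S_x$ (no skeleton cut-point lies strictly between $\tilde x$ and $u$). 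If moreover $U_j^n\in S_x$, the deepest-ancestor definition of the sausage forces $\tilde V_j^n=\tilde x$, so (ii) yields $d^{\T^{\text{GW}}_n}(\tilde v,\tilde x)\le\epsilon n^{1/2}/4+n^{(1-\epsilon')/2}$. Otherwise $\tilde V_j^n$ lies strictly between $u$ and $U_j^n$ on the sibling branch; an easy check shows no skeleton cut-point lies strictly between $\tilde x$ and $\tilde V_j^n$, so $\{\tilde x,\tilde V_j^n\}$ corresponds to a single edge of $\T^{(n,K)}$ (possibly via a star-triangle branching vertex). Using (iii), $d^{\T^{\text{GW}}_n}(\tilde x,u)\le d^{\T^{\text{GW}}_n}(\tilde x,\tilde V_j^n)\le\epsilon n^{1/2}/2$, and summing gives $d^{\T^{\text{GW}}_n}(\tilde v,\tilde x)\le\epsilon n^{1/2}$.

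The main obstacle is ingredient (iii): one must combine the CRT geometric fact that the maximum edge of $\mathfrak T^{(K)}$ tends to $0$ (a statement about the $K$-CRT, requiring an argument beyond the bare convergence \eqref{eq:underlyinggeometricestimate}) with a careful transfer of the short-edge bound from the uniform-point subtree $\T^{\text{GW}}_n(K)$ to the cut-point skeleton $\T^{(n,K)}$, the transfer relying on Lemma~\ref{close_cut_point} to control the tree gap between $U_i^n$ and $\tilde V_i^n$. Once this is in place the three-ingredient case analysis, followed by a union bound over the finitely many sausages (controlled by the finiteness of $\mathfrak T^{(n,K)}$), finishes the proof.
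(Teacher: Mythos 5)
Your overall strategy --- push everything down to the tree metric via the contraction $d_{\omega_n}(\phi_{\T^{\text{GW}}_n}(\cdot),\phi_{\T^{\text{GW}}_n}(\cdot))\le d^{\T^{\text{GW}}_n}(\cdot,\cdot)$, then combine a density statement for the uniform points with Lemma~\ref{close_cut_point} --- is the right one, and your ingredients (i) and (ii) are exactly the inputs the paper's proof uses. However, two steps do not hold as stated. First, the justification of ingredient (iii) is a non sequitur: a tree of bounded total length with an unbounded number of edges can keep a single edge of macroscopic length forever. The correct reason the edges of $\mathfrak{T}^{(K)}$ shrink is that the leaves $[U_i]$, $i\le K$, become dense in $\mathfrak{T}$ (the midpoint of an edge of the reduced subtree is at distance at least half the edge length from every $[U_i]$). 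Moreover, what your case analysis actually requires is not the $d_{G_n}$-length of a skeleton edge but the \emph{tree} distance between consecutive preimages of skeleton cut-points on $[\text{root},U_j^n]$; for that, the CRT detour controls the wrong metric and is also unnecessary, since between two such consecutive preimages there is no loopless-image vertex and Lemma~\ref{close_cut_point} already bounds their tree distance by $n^{(1-\epsilon')/2}$.

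Second, the case analysis has a genuine hole. When $\tilde x\prec u$ and $U_j^n\notin S_x$, it is not true that no skeleton cut-point lies strictly between $\tilde x$ and $\tilde V_j^n$: there can be skeleton cut-points with preimages in $(u,\tilde V_j^n)$, which separate $\phi_{\T^{\text{GW}}_n}(U_j^n)$ but not $\phi_{\T^{\text{GW}}_n}(\tilde v)$ from the root, so they contradict neither $\pi^{(n,K)}(\phi_{\T^{\text{GW}}_n}(\tilde v))=x$ nor the maximality of $V_j^n$. (This is patchable: replace $\tilde V_j^n$ by the first skeleton cut-point after $\tilde x$ in the direction of $U_j^n$, which must lie strictly beyond $u$.) More fundamentally, you repeatedly use the implication ``a tree-ancestor of $\tilde v$ whose image is a cut-point separates $\phi_{\T^{\text{GW}}_n}(\tilde v)$ from the root in $\omega_n$''; this is automatic only for points with \emph{loopless} image, and handling the general case is precisely the delicate content of the paper's Lemma~\ref{lem:preasymthin}. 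That lemma gives a cleaner reduction than your LCA case analysis: for each $v$ it produces a preimage $v'$ of $\pi^{(n,K)}(\phi_{\T^{\text{GW}}_n}(v))$ with $\overline{\pi}_{\T^{\text{GW}}_n}(\pi_{\T^{\text{GW}}_n(K)}(v))\preceq v'\preceq v$, so that $d^{\T^{\text{GW}}_n}(v,v')$ is bounded by the distance from $v$ to the reduced subtree (display \eqref{eq:equation1}) plus the distance from the projection to its first loopless ancestor (Lemma~\ref{close_cut_point}), with no edge-length input at all. I would encourage you to restructure your argument around such a sandwich rather than the three-ingredient case analysis.
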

\begin{lemma}\label{lem:asymthin}
For all $\epsilon>0$
\[\lim_{K\to\infty}\lim_{n\to\infty}\bold{P}_n[n^{-1/4}\Delta^{(n,K)}_{\Z^d}\geq\epsilon]=0.\]
\end{lemma}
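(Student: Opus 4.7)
The strategy is to combine the intrinsic-diameter estimate of Lemma~\ref{lem:underlyingbubblesaremicroscopic} with the uniform embedding-continuity estimate~\eqref{eq:equation2} furnished by the tour convergence in Proposition~\ref{prop:tourconvergence}. Condition~\eqref{eq:equation2} guarantees that for each $\eta>0$ there exists $\delta>0$ such that, with probability tending to one under ${\bf P}_n$, every pair $v_1,v_2\in V(\T^{\text{GW}}_n)$ with $d^{\T^{\text{GW}}_n}(v_1,v_2)\leq\delta n^{1/2}$ satisfies $d_{\Z^d}(\phi_{\T^{\text{GW}}_n}(v_1),\phi_{\T^{\text{GW}}_n}(v_2))\leq\eta n^{1/4}$. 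It therefore suffices to prove the following tree-level statement: for every $\delta>0$, with high probability for $K$ large, the preimage in $\T^{\text{GW}}_n$ of each $K$-sausage is contained in a subset of $V(\T^{\text{GW}}_n)$ of tree-diameter at most $\delta n^{1/2}$. Note that the naive route---bounding $d_{\Z^d}$ by $d^{\omega_n}$ directly---fails, since it only yields a bound of order $\epsilon' n^{1/2}$ rather than $\eta n^{1/4}$.

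To establish the tree-level bound, I would proceed as follows. Fix a sausage $S_x$ and a point $y=\phi_{\T^{\text{GW}}_n}(v)\in S_x$. Using (an appropriate generalization of) Lemma~\ref{close_cut_point}, with high probability $v$ admits a loopless ancestor $v^\ell\preceq v$ with $d^{\T^{\text{GW}}_n}(v,v^\ell)\leq n^{(1-\epsilon')/2}$. By Remark~\ref{rem_strongcut}, $\phi_{\T^{\text{GW}}_n}(v^\ell)$ is a cut-point in $\omega_n$ lying on the root-to-$y$ path; since $x$ is the last selected cut-point along that path, $\phi_{\T^{\text{GW}}_n}(v^\ell)$ must lie in the $\omega_n$-bubble around $x$. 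Invoking Lemma~\ref{lem:underlyingbubblesaremicroscopic}, we conclude that $d^{\omega_n}(\phi_{\T^{\text{GW}}_n}(v^\ell),x)\leq\epsilon' n^{1/2}$.

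The crucial technical step is then to upgrade this intrinsic bound to a tree-distance bound. The key observation is that for any two nested loopless ancestors $v_1^\ell\preceq v_2^\ell$, the equality
\[
d^{\T^{\text{GW}}_n}(v_1^\ell,v_2^\ell)=d^{\omega_n}(\phi_{\T^{\text{GW}}_n}(v_1^\ell),\phi_{\T^{\text{GW}}_n}(v_2^\ell))
\]
holds. Indeed, looplessness (Definition~\ref{def_strongcut}) implies that a loopless vertex is the unique tree-preimage of its image, so any $\omega_n$-path of length $L$ between the two cut-points lifts to a tree walk of length $L$ starting at $v_1^\ell$ and forced to end at the unique preimage $v_2^\ell$; the reverse inequality is immediate from the fact that the tree geodesic projects to an $\omega_n$-path of the same length. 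Applying this equality to loopless ancestors of preimages of sausage points (and to a loopless preimage of $x$ itself) shows that all such loopless ancestors lie within tree-distance $\epsilon' n^{1/2}$ of one another. Combining with the $n^{(1-\epsilon')/2}$ slack from Lemma~\ref{close_cut_point} yields that the full preimage of $S_x$ has tree-diameter at most $(2\epsilon'+o(1))n^{1/2}\leq\delta n^{1/2}$ for $\epsilon'$ small and $n$ large.

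The main anticipated obstacle is ensuring that these estimates hold \emph{uniformly over all $K$-sausages} rather than merely along selected root-to-$U^i_n$ paths; I expect this to be handled by a union bound together with the moment control on the sausage count implicit in the proof of Lemma~\ref{lem:underlyingbubblesaremicroscopic}. Once this uniformization is in place, the proof assembles Lemmas~\ref{close_cut_point} and~\ref{lem:underlyingbubblesaremicroscopic} together with~\eqref{eq:equation2}, first letting $n\to\infty$ and then $K\to\infty$, to yield the desired convergence.
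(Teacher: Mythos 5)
Your high-level plan is sound: reduce the extrinsic diameter to a tree-distance bound via the uniform continuity estimate~\eqref{eq:equation2}. This is exactly what the paper does. But the ``crucial technical step'' in the middle contains a genuine error: the equality $d^{\T^{\text{GW}}_n}(v_1^\ell,v_2^\ell)=d^{\omega_n}(\phi_{\T^{\text{GW}}_n}(v_1^\ell),\phi_{\T^{\text{GW}}_n}(v_2^\ell))$ for nested loopless vertices is false, and the inequality you need ($d^{\T^{\text{GW}}_n}\leq d^{\omega_n}$) is precisely the direction that fails. Looplessness of $v_1^\ell$ and $v_2^\ell$ prevents collisions between ancestors of $v_1^\ell$ and strict descendants of $v_1^\ell$, and between ancestors of $v_2^\ell$ and non-ancestors of $v_2^\ell$; but it does \emph{not} prevent two intermediate vertices $v\prec v'$, both lying strictly in $[v_1^\ell,v_2^\ell]$, from having $\phi(v)=\phi(v')$. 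Concretely, on a line $a_0\prec a_1\prec\cdots\prec a_4$ with embedding steps $0\to1\to2\to1\to2$, both endpoints $a_0,a_4$ are loopless, yet $d^{\T}(a_0,a_4)=4$ while $d^{\omega_n}(\phi(a_0),\phi(a_4))=2$. Since the intrinsic bound from Lemma~\ref{lem:underlyingbubblesaremicroscopic} therefore cannot be upgraded to a tree-distance bound in the way you propose, the argument has a gap. (There is also a secondary issue: your unique-preimage justification is not quite what looplessness gives, and the target cut-point $x\in V^*(\T^{(n,K)})$ need not be the image of a loopless vertex at all, only of a cut-point, so the equality would not even be applicable as stated.)

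The paper's route avoids all of this and, in particular, never passes through $\omega_n$-distances. The object you want is supplied directly by Lemma~\ref{lem:preasymthin}: for each $v$, one constructs $v'\in V(\T^{\text{GW}}_n(K))$ with $\phi_{\T^{\text{GW}}_n}(v')=\pi^{(n,K)}(\phi_{\T^{\text{GW}}_n}(v))$ and with $v'\preceq v$ on the tree path from the root, so that $d^{\T^{\text{GW}}_n}(v,v')$ is bounded by $d^{\T^{\text{GW}}_n}(v,\pi_{\T^{\text{GW}}_n(K)}(v))$ (controlled by~\eqref{eq:equation1}) plus $d^{\T^{\text{GW}}_n}(\pi_{\T^{\text{GW}}_n(K)}(v),\bar\pi_{\T^{\text{GW}}_n}(\pi_{\T^{\text{GW}}_n(K)}(v)))$ (controlled by Lemma~\ref{close_cut_point}). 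Combined with the triangle inequality $\Delta^{(n,K)}_{\Z^d}\leq 2\max_v d_{\Z^d}(\phi_{\T^{\text{GW}}_n}(v),\pi^{(n,K)}(\phi_{\T^{\text{GW}}_n}(v)))$ and then~\eqref{eq:equation2}, this finishes the proof in three lines. Your detour through Lemma~\ref{lem:underlyingbubblesaremicroscopic} is also circular in spirit: that lemma's proof already goes through the very tree-distance bound of Lemma~\ref{lem:preasymthin}, so you are effectively trying to reconstruct, in a weaker and broken form, an estimate that is already available.
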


The strategy to prove the two lemmas above is to adapt \eqref{eq:equation1} and \eqref{eq:equation2} from $(\T^{\text{GW}}_n,\phi_{\T^{\text{GW}}_n})$ to $(\T^{(n,K)},\phi_{\T^{\text{GW}}_n})$.  We will need the following result.

\subsubsection{A preliminar lemma}
\begin{lemma}\label{lem:preasymthin}
 For each $v\in V(\T^{\text{GW}}_n)$, we can find a vertex $v'=v'(v,K)\in  V(\T^{\text{GW}}_n(K))$ that satisfies $\phi_{\T^{\text{GW}}_n}(v')=\pi^{(n,K)}(\phi_{\T^{\text{GW}}_n}(v))$ and such that 
\begin{equation}
\lim_{K\to\infty}\limsup_{n\to \infty} {\bf P}_n\Bigl[\sup_{v\in V(\T^{\text{GW}}_n)} n^{-1/2}d^{\T^{\text{GW}}_n}(v,v') \geq \epsilon \Bigr]=0,
\end{equation}
for all $\epsilon>0$.
\end{lemma}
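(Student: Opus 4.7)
The idea is to combine the tree-projection control of Proposition \ref{prop:croydonequivalence}(i) with the loopless cut-point control of Lemma \ref{close_cut_point}. The target $v'$ will be the genealogically latest tree-ancestor of $v$ whose image in $\omega_n$ lies in the set $V^\ast(\T^{(n,K)})$ of $\omega_n$-cut-points appearing in the $K$-skeleton; the bulk of the work is to exhibit such an ancestor of $v$ that is $o(n^{1/2})$-close to $v$ in the tree metric.

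First I would project $v$ onto the $K$-reduced subtree. For $v \in V(\T^{\text{GW}}_n)$, set $\hat v := \pi_{\T^{\text{GW}}_n(K)}(v) \in V(\T^{\text{GW}}_n(K))$. Under the coupling of Section \ref{s:usefcoupl}, Proposition \ref{prop:croydonequivalence}(i) gives
\[
\lim_{K\to\infty}\limsup_{n\to\infty} n^{-1/2} \sup_{v \in V(\T^{\text{GW}}_n)} d^{\T^{\text{GW}}_n}(v, \hat v) = 0.
\]
From $\hat v$ I would produce an ancestor $\bar v$ of $v$ that lies on the path from the root to some $U^n_i$, $i\leq K$, with $d^{\T^{\text{GW}}_n}(v,\bar v)\leq d^{\T^{\text{GW}}_n}(v,\hat v)+O(1)$: either $\hat v$ itself when it is an ancestor of $v$, or, when $v$ is itself an ancestor of some $U^n_i$, take $\bar v = v$. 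This handles both cases cleanly.

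Next I would apply Lemma \ref{close_cut_point} to each of $U^n_1,\dots,U^n_K$ and union-bound over the $K$ (fixed) indices: with probability tending to $1$, for every $i\le K$ and every $x\prec U^n_i$ there is a loopless-image ancestor of $x$ within tree-distance $n^{(1-\epsilon')/2}$. In particular $\bar v$ has such an ancestor $\tilde v$. By Remark \ref{rem_strongcut} the image $\phi(\tilde v)$ is a cut-point of $\omega_n$, and since $\tilde v \preceq \bar v \preceq U^n_i$, it lies on the path from $0$ to $V^n_i = \pi_n(\phi(U^n_i))$, so $\phi(\tilde v)\in V^\ast(\T^{(n,K)})$. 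Moreover $\tilde v$ is an ancestor of $v$. Now define $v'$ as the latest (in the genealogical order) ancestor of $v$ whose image lies in $V^\ast(\T^{(n,K)})$; this set is nonempty (it contains $\tilde v$) and finite, so $v'$ is well-defined, and $\tilde v \preceq v' \preceq v$ yields
\[
d^{\T^{\text{GW}}_n}(v,v') \;\le\; d^{\T^{\text{GW}}_n}(v,\tilde v) \;\le\; d^{\T^{\text{GW}}_n}(v,\bar v)+d^{\T^{\text{GW}}_n}(\bar v,\tilde v) \;=\; o(n^{1/2})+n^{(1-\epsilon')/2},
\]
uniformly in $v$, as required.

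The main obstacle is the identification $\phi(v')=\pi^{(n,K)}(\phi(v))$: $v'$ is by construction a loopless tree-ancestor of $v$, so $\phi(v')$ is a cut-point of $\omega_n$ in $V^\ast(\T^{(n,K)})$ lying on the $\omega_n$-path from $0$ to $\phi(v)$, and maximality of $v'$ among such ancestors together with the star-triangle construction of $\T^{(n,K)}$ (Section \ref{sect:skeleton}) must be used to rule out a strictly later cut-point of $V^\ast(\T^{(n,K)})$ on that $\omega_n$-path, since any such later cut-point would, by going back through the star-triangle inverse, correspond to a strictly later loopless tree-ancestor of $v$ with image in $V^\ast(\T^{(n,K)})$. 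Aligning this abstract $\omega_n$-cut-point description with the tree-ancestor description of $v'$ is the most delicate point and is where the bookkeeping of Section \ref{sect_constr_tnk} has to be used carefully.
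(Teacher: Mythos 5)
You correctly identify the two controls that must be combined: the tree-projection control from Proposition \ref{prop:croydonequivalence}(i) and the loopless-cut-point control from Lemma \ref{close_cut_point}, and your use of $v''=\overline{\pi}_{\T^{\text{GW}}_n}(\pi_{\T^{\text{GW}}_n(K)}(v))$ (which you call $\tilde v$) to produce a distance bound is the right idea. The gap you yourself flag at the end, however, is a real one and you have not closed it: you define $v'$ as the genealogically latest ancestor of $v$ whose image lies in $V^*(\T^{(n,K)})$, and then assert that $\phi(v')=\pi^{(n,K)}(\phi(v))$ ``must be'' ruled in by maximality and the star-triangle bookkeeping. That step is precisely where your definition makes trouble. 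Your claim that ``$v'$ is by construction a loopless tree-ancestor of $v$'' is not correct: membership of $\phi(v')$ in $V^*(\T^{(n,K)})$ only says $\phi(v')$ is a cut-point of $\omega_n$, not that $v'$ has a loopless image in the sense of Definition \ref{def_strongcut} (Remark \ref{rem_strongcut} gives only the implication in the other direction). Without looplessness of $v'$ there is no clean reason that the tree ordering $u_1\prec u_2\preceq v$ of ancestors with images in $V^*(\T^{(n,K)})$ transfers to the separation ordering of the corresponding cut-points on the $\omega_n$-path from $0$ to $\phi(v)$, which is exactly what you would need to conclude that the latest tree-ancestor gives the last cut-point $\pi^{(n,K)}(\phi(v))$.

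The paper sidesteps all of this by choosing $v'$ from the other direction: it takes $v'$ to be a pre-image of $\pi^{(n,K)}(\phi(v))$ lying on $[\text{root},v]$, which exists because $\pi^{(n,K)}(\phi(v))$ is a cut-point separating $0$ from $\phi(v)$, so the tree path must pass through it, and which moreover lies in $V(\T^{\text{GW}}_n(K))$. With this definition the identity $\phi(v')=\pi^{(n,K)}(\phi(v))$ is immediate, and the only thing to prove is $v''\preceq v'$. The paper proves this by contradiction in two lines using precisely the looplessness of $v''$ (not of $v'$): if $v'\prec v''$ then the cut-point $\phi(v'')\in V^*(\T^{(n,K)})$ sits on the $\omega_n$-path from $0$ to $\phi(v)$ strictly after $\phi(v')$, contradicting that $\phi(v')$ is the last such cut-point. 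If you rewrite your argument to pick $v'$ this way, your distance estimate $d^{\T^{\text{GW}}_n}(v,v')\le d^{\T^{\text{GW}}_n}(v,v'')\le d^{\T^{\text{GW}}_n}(v,\pi_{\T^{\text{GW}}_n(K)}(v))+d^{\T^{\text{GW}}_n}(\pi_{\T^{\text{GW}}_n(K)}(v),v'')$ together with \eqref{eq:equation1} and the union bound over $K$ applications of Lemma \ref{close_cut_point} completes the proof without any delicate star-triangle bookkeeping.
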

\begin{proof}[Proof of Lemma \ref{lem:preasymthin}]
For each $v\in V(\T^{\text{GW}}_n)$, consider the vertex $v''$ of $\T^{\text{GW}}_n(K)$ given by $v''=\overline{\pi}_{\T^{\text{GW}}_n}(\pi_{\T^{\text{GW}}_n(K)}(v)))$. It follows directly from the definitions of $\overline{\pi}_{\T^{\text{GW}}_n}$ (see Definition \ref{def_strongcut}) and $\pi_{\T^{\text{GW}}_n(K)}$ (see display \eqref{eq:defoftgwk}) that $v''\in [\text{root},v]$. 

Observe that, by definition, $\pi^{(n,K)}(\phi_{\T^{\text{GW}}_n}(v))$ has a pre-image which is in $[\text{root},v]$, i.e., there exists $v'\in [\text{root},v]$ with $\phi_{\T^{\text{GW}}_n}(v')=\pi^{(n,K)}(\phi_{\T^{\text{GW}}_n}(v))$. 
Moreover, by the definition of $\pi^{(n,K)}$, we have that $v'$ can be chosen to belong in $[\text{root},\pi_{\T^{\text{GW}}_n(K)}(v)]$ and therefore $v'\in V(\T^{\text{GW}}_n(K))$.

Our next goal is to show that $v''\preceq v'$. Assume for contradiction that $v'\prec v''$. Since $v''$ has a loopless image (recall Definition \ref{def_strongcut}), and we are assuming $v'\prec v''$, we have that $\phi_{\T^{\text{GW}}_n}(v')$ is outside the connected component of $\phi_{\T^{\text{GW}}_n}(v)$ when $\phi_{\T^{\text{GW}}_n}(v'')$ is removed from $\phi_{\T^{\text{GW}}_n}(\T^{\text{GW}}_n)$. Moreover, using again that $v''$ has a loopless image, we get that $\phi_{\T^{\text{GW}}_n}(v'')$ is one of the cut-points which is in the path from $\phi_{\T^{\text{GW}}_n}(v)$ to $\text{root}^\ast$. Furthermore, since $v''\in \T^{\text{GW}}_n(K)$, we have that $\phi_{\T^{\text{GW}}_n}(v'')\in V^\ast(\T^{(n,K)})$. 

Since, by definition of $v'$, we have $\phi_{\T^{\text{GW}}_n}(v')=\pi^{(n,K)}(\phi_{\T^{\text{GW}}_n}(v))$, the two previous facts are in contradiction with the definition of $\pi^{(n,K)}(\phi_{\T^{\text{GW}}_n}(v))$ as the first cut-point in $V^\ast(\T^{(n,K)})$ which is in the path from $\phi_{\T^{\text{GW}}_n}(v)$ to $\text{root}^*$. 

We have shown that $v''\preceq v' \preceq v$, which, in turn, implies that $d^{\T^{\text{GW}}_n}(v',v)\leq d^{\T^{\text{GW}}_n}(v'',v)$. Therefore, since we already know that $v'$ satisfies $\phi_{\T^{\text{GW}}_n}(v')=\pi^{(n,K)}(\phi_{\T^{\text{GW}}_n}(v))$, the lemma will follow once we have shown that
\begin{equation}\label{eq:vprimeprime}
\lim_{K\to\infty}\limsup_{n\to \infty} {\bf P}_n\Bigl[\sup_{v\in V(\T^{\text{GW}}_n)} n^{-1/2}d^{\T^{\text{GW}}_n}(v,v'') \geq \epsilon \Bigr]=0,
\end{equation}
for all $\epsilon>0$.

We have that
\begin{align} \label{last_thursday1}
\max_{v\in V(\T^{\text{GW}}_n)} d^{\T^{\text{GW}}_n}(v,v'') & \leq \max_{v\in V(\T^{\text{GW}}_n)} d^{\T^{\text{GW}}_n}(v,\pi_{\T^{\text{GW}}_n(K)}(v)) \\ \nonumber
   & \qquad +\max_{v\in V(\T^{\text{GW}}_n)} d^{\T^{\text{GW}}_n}(\pi_{\T^{\text{GW}}_n(K)}(v),v'').
\end{align}

By~\eqref{eq:equation2} (which hold as a consequence of Propositions \ref{prop:tourconvergence} and \ref{prop:croydonequivalence}), we know that for any $\epsilon >0$,
\begin{equation} \label{last_thursday2}
\lim_{K\to\infty}\limsup_{n\to\infty}\bold{P}_n\left[ n^{-1/2} \max_{v\in V(\T^{\text{GW}}_n)} d^{\T^{\text{GW}}_n}(v,\pi_{\T^{\text{GW}}_n(K)}(v))\geq \epsilon \right]=0.
\end{equation}

Since $v''=\overline{\pi}_{\T^{\text{GW}}_n}(\pi_{\T^{\text{GW}}_n(K)}(v)))$, it follows from Lemma \ref{close_cut_point} and a straightforward union bound over $K$ events to see that
\[
\limsup_{n\to \infty} {\bf P}_n\Bigl[ n^{-1/2} \max_{v\in V(\T^{\text{GW}}_n)} d^{\T^{\text{GW}}_n}(\pi_{\T^{\text{GW}}_n(K)}(v), v'') \geq \epsilon \Bigr]=0,
\]
for any $\epsilon>0$. 

The two last displays prove the lemma.

\subsubsection{Proof of asymptotic thinnes}
\end{proof}
\begin{proof}[Proof of Lemma \ref{lem:underlyingbubblesaremicroscopic}]
Since 
\[
 \Delta^{(n,K)}_{\omega_n}\leq 2\max_{x\in V(\omega_n)} d_{\omega_n}(x,\pi^{(n,K)}(x)),
 \]
 it is enough to control the maximum in the right hand side above. 
 
 For any $x\in V(\omega_n)$ consider $v=v(x)$ one of the pre-images of $x$ under $\phi_{\T^{\text{GW}}_n}$. By Lemma \ref{lem:preasymthin}, there also exists $v'(x)$ such that $\phi_{\T^{\text{GW}}_n}(v'(x))=\pi^{(n,K)}(\phi_{\T^{\text{GW}}_n}(v))$ and 
 \[
 \lim_{K\to\infty}\limsup_{n\to \infty} {\bf P}_n\Bigl[\max_{x\in V(\omega_n)} n^{-1/2}d^{\T^{\text{GW}}_n}(v(x),v'(x)) \geq \epsilon \Bigr]=0
 \]
 for all $\epsilon>0$.
 
 Since $d_{\omega_n}(\phi_{\T^{\text{GW}}_n}(v_1),\phi_{\T^{\text{GW}}_n}(v_2))\leq d^{\T^{\text{GW}}_n}(v_1,v_2)$ for any $v_1,v_2\in V(\T^{\text{GW}}_n)$,
 we get that
 \begin{align*}
\max_{x\in V(\omega_n)} d_{\omega_n}(x,\pi^{(n,K)}(x)) & =  \max_{x\in V(\omega_n)} d_{\omega_n}(\phi_{\T^{\text{GW}}_n}(v(x)),\phi_{\T^{\text{GW}}_n}(v'(x))) \\
& \leq \max_{x\in V(\omega_n)} d^{\T^{\text{GW}}_n}(v(x),v'(x)),
 \end{align*}
  the lemma follows directly from the previous equation.

\end{proof}

\begin{proof}[Proof of Lemma \ref{lem:asymthin}]
By the triangular inequality 
\[\Delta^{(n,K)}_{\Z^d}\leq 2\max_{v\in V(\T_n^{\text{GW}})} d_{\Z^d}(\phi_{\T_n^{\text{GW}}}(v),\pi^{(n,K)}(\phi_{\T_n^{\text{GW}}}(v))),\]
therefore, it suffices to control the maximum in the right hand side above.
For any $v\in V(\T_n^{\text{GW}})$, let $v'$ be as in Lemma \ref{lem:preasymthin}. We have that
\begin{equation}\label{eq:fargodakota}
\max_{v\in V(\T_n^{\text{GW}})} d_{\Z^d}(\phi_{\T_n^{\text{GW}}}(v),\pi^{(n,K)}(\phi_{\T_n^{\text{GW}}}(v)))=\max_{v\in V(\T_n^{\text{GW}})} d_{\Z^d}(\phi_{\T_n^{\text{GW}}}(v),\phi_{\T_n^{\text{GW}}}(v')).
\end{equation}

By Lemma \ref{lem:preasymthin} we get that, for any $\delta>0$,
\[
\lim_{K\to\infty}\limsup_{n\to \infty} {\bf P}_n\Bigl[\sup_{v\in V(\T^{\text{GW}}_n)} n^{-1/2}d^{\T^{\text{GW}}_n}(v,v') \geq \delta \Bigr]=0.
\]
Hence, by display \eqref{eq:equation2} (which hold as a consequence of Propositions \ref{prop:tourconvergence} and \ref{prop:croydonequivalence}) we get that
\[
\lim_{K\to\infty}\limsup_{n\to \infty} {\bf P}_n\Bigl[\sup_{v\in V(\T^{\text{GW}}_n)} n^{-1/4}d_{\Z^d}(\phi_{\T_n^{\text{GW}}}(v),\phi_{\T_n^{\text{GW}}}(v')) \geq \epsilon \Bigr]=0.
\]
The lemma follows from the display above and \eqref{eq:fargodakota}. 

\end{proof}

\section{Volume measure convergence}\label{sect:condV}
Recall our choice of random augmented graphs $(\omega_n,( V^i_n)_{i\in\N})_{n\in\N}$ fom Section \ref{sect_abstract_thm}. That is, $\omega_n$ is the trace of critical branching random walks and $V^i_n=\pi_n(\phi_{\T^{\text{GW}}_n}(U^i_n))$, where $(U^i_n)_{i\in\N}$ is an i.i.d.~sequence of uniform vertices of $\T^{\text{GW}}_n$.  
The purpose of this section is to prove condition $(V)_\nu$ for  $(\omega_n,(V^i_n)_{i\in\N})_{n\in\N}$.
We will start by defining the constant $\nu$ of condition $(V)_\nu$.

Recall that Theorem \ref{teo:legalllin} (which was proved in ~\cite{LGL}) states that, in some sense, the vertex-cardinal of the graph obtained by the embedding the tree $\T^{\text{GW}}_n$ is linear in $n$. The strategy of proof used in Section 2 of~\cite{LGL} can be adapted without serious difficulties to prove that the edge-cardinal of the embedded graphs is linear in $n$. It can be shown that 

\begin{theorem}\label{teo:legalllinedge}
Assume ${\bf E}[Z^2]<\infty$ and $d\geq 5$. Let $v_1<v_2<\dots<v_n$ be the vertices of $\T^{\text{GW}}_n$ labeled in increasing lexicographical order. Let us denote 
\begin{align*}
E(\text{vol})_n(k)&=\{(x_1,x_2)\in E(\Z^d): \exists  0\leq i,j\leq k \text{ such that }(v_i,v_j)\in E(\T^{\text{GW}}_n) \\
  &  \qquad \qquad  \text{ and }   \phi_{n}(v_i)=x_1,\phi_{\T^{\text{GW}}_n}(v_j)=x_2\}.
\end{align*}

 Then, there exists $\nu>0$ such that for all $a\in [0,1]$, we have that
\begin{equation}\label{def_nu}
\frac 1n \abs{E(\text{vol})_n(\lfloor an\rfloor)} \to_{n\to \infty} a \nu,
\end{equation}
in probability where $\omega_n$ is chosen under ${\bf P}_n$.
\end{theorem}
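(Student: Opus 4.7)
The plan is to follow the Le Gall--Lin strategy underlying Theorem~\ref{teo:legalllin} line by line, substituting ``tree-edge'' for ``tree-vertex'' throughout, and to deduce convergence in probability from first- and second-moment estimates via Chebyshev. Throughout, I exploit the fact that a tree-edge $e=\{v,\mathrm{parent}(v)\}$ of $\T^{\text{GW}}_n$ is in canonical bijection with the non-root vertex $v$, so that ``a uniform edge'' is essentially a uniform non-root vertex; this lets me reuse the spine/size-bias decompositions of~\cite{LGL} directly.

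First I would establish the first-moment asymptotic $n^{-1}\ES_n[\abs{E(\text{vol})_n(\lfloor an\rfloor)}]\to a\nu$. Writing
\[
\abs{E(\text{vol})_n(k)}=\sum_{e\in E(\T^{\text{GW}}_n,k)}\frac{1}{M_{n,k}(e)},
\]
where $M_{n,k}(e)$ is the number of tree-edges in $E(\T^{\text{GW}}_n,k)$ whose spatial image coincides with that of $e$, the expectation on the left is $\ES_n[\abs{E(\T^{\text{GW}}_n,k)}/M_{n,k}(\mathbf{e}_*)]$ for $\mathbf{e}_*$ a uniformly chosen edge among the first $k$ vertices. Applying the standard size-biasing of ${\bf P}_n$ at a uniform non-root vertex $U_1^n$ (as in Section~2 of~\cite{LGL}), the tree around $U_1^n$ converges to the infinite spinal tree $\T^{\text{GW}}_{\infty,+}$ introduced in Section~\ref{sect_transfer}, and the spatial embedding converges to the analogous Brownian snake object. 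One then checks, exactly as in~\cite{LGL}, that the expected number of distinct tree-edges $e'\neq e$ of the limiting infinite object whose $\Z^d$-image coincides with that of the spinal edge at $U_1^n$ is a finite constant $\beta\in[0,\infty)$; the bound uses only simple-random-walk Green-function sums of the form $\sum_{k\geq 1} k\,\overline p_k(x)^{2}$ (see Corollary~\ref{bound_q} / Lemma~\ref{cutp_1}), which converge as soon as $d\geq 5$. Dominated convergence plus the convergence of $\abs{E(\T^{\text{GW}}_n,\lfloor an\rfloor)}/n$ to $a$ then gives the announced limit with $\nu:=1/(1+\beta)>0$.

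For the variance bound I would pick \emph{two} independent uniform vertices $U_1^n,U_2^n$ and study the joint law of the two corresponding spinal edges. The two-spine decomposition of~\cite{LGL} (the tree grafted onto the segment joining $U_1^n$ and $U_2^n$, each side conditioned to look asymptotically like an independent IICBRW) yields an asymptotic product formula for $\ES_n[1/(M_{n,k}(e_1)M_{n,k}(e_2))]$. The same Green-function estimates used in the first-moment step then show
\[
\mathrm{Var}\bigl(\abs{E(\text{vol})_n(\lfloor an\rfloor)}\bigr)=o(n^2),
\]
and Chebyshev gives the convergence in probability.

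The genuine work to be done (and the only real obstacle) is bookkeeping: two tree-edges give rise to the same $\Z^d$-edge only when \emph{both} endpoints match in $\Z^d$, so the collision count along the spine is an edge-collision, not a vertex-collision, and one extra simple-random-walk step must be tracked in every occurrence estimate. This reshuffles constants and makes the identification of $\beta$ (and hence $\nu$) more involved, but does not alter convergence: the relevant integrals remain finite for $d\geq 5$, exactly as in~\cite{LGL}. The resulting $\nu$ is strictly positive since $\beta<\infty$, completing the proof.
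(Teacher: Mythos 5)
The paper does not actually give a proof of this theorem; it only remarks that ``the strategy of proof used in Section~2 of~\cite{LGL} can be adapted without serious difficulties'' from the vertex-count statement of Theorem~\ref{teo:legalllin} to the edge-count statement. Your proposal is a sketch of exactly that adaptation, so in terms of strategy it coincides with what the authors intend: the decomposition $\abs{E(\text{vol})_n(k)}=\sum_e 1/M_{n,k}(e)$, the one-spine/size-bias computation for the first moment, the two-spine computation for the variance, and Chebyshev. The crucial observation that tree-edges are in bijection with non-root vertices, so that the spine decompositions of~\cite{LGL} transfer verbatim with an extra random-walk step to track the second endpoint, is the right reason why $d\geq 5$ survives unchanged. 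So the route you take is the intended one and essentially sound.

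There is, however, a concrete slip in the identification of the constant. You define $\beta$ as the \emph{expected} number of colliding tree-edges in the limiting spinal object, i.e.~$\beta=\mathbb{E}[M_\infty-1]$, and then set $\nu:=1/(1+\beta)=1/\mathbb{E}[M_\infty]$. But the quantity produced by the first-moment computation is $\mathbb{E}[1/M_{n,k}(\mathbf e_\ast)]$, whose limit is $\mathbb{E}[1/M_\infty]$, and by Jensen these are not the same (indeed $\mathbb{E}[1/M_\infty]\geq 1/\mathbb{E}[M_\infty]$ with equality only if $M_\infty$ is a.s.~constant). The finiteness of $\beta$ is still what you need — it guarantees $M_\infty<\infty$ a.s.~and hence $\nu=\mathbb{E}[1/M_\infty]>0$, and it is what licenses the dominated-convergence step — but it does not give $\nu$ as a closed-form arithmetic inverse. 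A second point worth making explicit, which you leave implicit, is why the limit is the \emph{same} constant for all $a\in(0,1]$: the multiplicity $M_{n,\lfloor an\rfloor}(\mathbf e_\ast)$ is truncated to indices $\leq\lfloor an\rfloor$, and one must argue that the collisions cut off by this truncation are asymptotically negligible (for a typical uniform $\mathbf e_\ast$ they come from tree-edges at large tree distance, and the Green-function sum $\sum_l l\,p_{2l}(0,0)<\infty$ for $d\geq5$ bounds their expected number). Neither of these is a structural defect of the approach — the conclusion $\nu>0$ is unharmed — but both should be fixed before the sketch could be called a proof.
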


The main result of this section is the following:
\begin{proposition}\label{lem:condV}
The sequence of random augmented graphs $(\omega_n,(V^i_n)_{i\in\N})_{n\in\N}$ satisfies condition $(V)_{\nu}$, where the constant $\nu$ is as in Theorem \ref{teo:legalllinedge}.
\end{proposition}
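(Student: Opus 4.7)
The plan is to express both $\nu\lambda^{(n,K)}(\overrightarrow{\T^{(n,K)}_x})$ and $\mu^{(n,K)}(\overrightarrow{\T^{(n,K)}_x})$ as approximations of a common quantity: $\nu$ times the proportion of Galton-Watson vertices $v\in\T^{\text{GW}}_n$ whose image is sent by $\pi^{(n,K)}\circ \phi_{\T^{\text{GW}}_n}$ into the subtree below $x$. Denoting by $\mathcal{S}_x$ the set of such vertices, the goal is to show that $\mu^{(n,K)}(\overrightarrow{\T^{(n,K)}_x}) = \nu |\mathcal{S}_x|/n + o(1)$ and $\lambda^{(n,K)}(\overrightarrow{\T^{(n,K)}_x}) = |\mathcal{S}_x|/n + o(1)$ in probability, uniformly in $x$.

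First I would reduce the supremum over $x\in V(\T^{(n,K)})$ to a supremum over the (at most $2K+1$) vertices of $V(\mathfrak{T}^{(n,K)})$. Both $x\mapsto \lambda^{(n,K)}(\overrightarrow{\T^{(n,K)}_x})$ and $x\mapsto \mu^{(n,K)}(\overrightarrow{\T^{(n,K)}_x})$ are monotone along genealogical lines. Along any single edge of $\mathfrak{T}^{(n,K)}$, the total variation of $\lambda^{(n,K)}$ equals the normalized length of that edge, which is uniformly small when $K$ is large by Condition~$(G)$ (Lemma~\ref{lem:condG}) together with the continuity of the limiting $K$-CRT. The total variation of $\mu^{(n,K)}$ along the same edge equals $n^{-1}$ times the number of $\omega_n$-edges in sausages attached along it, a quantity whose uniform smallness (in $n$ and in the edge) follows from asymptotic thinness (Lemma~\ref{lem:underlyingbubblesaremicroscopic}) combined with Theorem~\ref{teo:legalllinedge}. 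A union bound over the $O(K)$ edges then reduces the problem to a pointwise estimate for each $x\in V(\mathfrak{T}^{(n,K)})$.

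Fix such an $x$. Up to a boundary error involving only Galton-Watson vertices whose images lie in the $O(K)$ sausages straddling $x$ or its descendants in $V(\mathfrak{T}^{(n,K)})$, the set $\mathcal{S}_x$ decomposes as a disjoint union of $O(K)$ intervals of consecutive indices in any lexicographical enumeration of $\T^{\text{GW}}_n$. Applying Theorem~\ref{teo:legalllinedge} separately to each of these intervals yields $\mu^{(n,K)}(\overrightarrow{\T^{(n,K)}_x}) = \nu|\mathcal{S}_x|/n + o(1)$ in probability. For the $\lambda^{(n,K)}$ side, the convergence of the rescaled $\T^{\text{GW}}_n$ to the CRT (Theorem in Section~\ref{sect:crt}) combined with Condition~$(G)$ implies that $|\mathcal{S}_x|/n$ and $\lambda^{(n,K)}(\overrightarrow{\T^{(n,K)}_x})$ share the same limit, namely the normalized Lebesgue measure of the descendants of $[x]$ in the $K$-CRT.

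The main difficulty will be controlling the boundary error: Galton-Watson vertices whose image sits inside a sausage straddling the cut-point at the boundary of $\overrightarrow{\T^{(n,K)}_x}$ are ambiguously attributed. These vertices live in at most $O(K)$ specific sausages, each of intrinsic diameter at most $\Delta^{(n,K)}_{G_n}$. Asymptotic thinness (Definition~\ref{def_athin}, established for our model in Lemma~\ref{lem:underlyingbubblesaremicroscopic}) ensures that $n^{-1/2}\Delta^{(n,K)}_{G_n}\to 0$ as $K\to\infty$ uniformly in $n$, and a further application of Theorem~\ref{teo:legalllinedge} at the scale of individual sausages bounds the number of borderline vertices by $o(n)$ uniformly in $x$. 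Sending $n\to\infty$ first and $K\to\infty$ second then yields Condition~$(V)_\nu$.
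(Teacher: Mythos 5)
Your overall architecture is close to the paper's: both proofs reduce $\mu^{(n,K)}(\overrightarrow{\T^{(n,K)}_x})$ to $\nu$ times a GW-vertex (or edge) count $|\mathcal{S}_x|/n$ via Theorem~\ref{teo:legalllinedge} applied to lexicographical intervals (this is exactly Lemma~\ref{lem:Prokhorov} and Lemma~\ref{lem:croydonconvergenceofmeasures} in the paper), and the reduction of the supremum plus the boundary-error control are in the right spirit. But there is one genuine gap in the $\lambda^{(n,K)}$ leg.

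You claim that \emph{for fixed $K$} the quantities $|\mathcal{S}_x|/n$ and $\lambda^{(n,K)}(\overrightarrow{\T^{(n,K)}_x})$ ``share the same limit, namely the normalized Lebesgue measure of the descendants of $[x]$ in the $K$-CRT.'' This is false. As $n\to\infty$ the two quantities converge to two genuinely different measures on $\mathfrak{T}^{(K)}$: $|\mathcal{S}_x|/n$ (being a count of GW vertices projected into $\overrightarrow{\T^{(n,K)}_x}$) converges to $\mu^{(K)}(\overrightarrow{\mathfrak{T}^{(K)}_{\cdot}})$, where $\mu^{(K)}=\pi_{\mathfrak{T}^{(K)}}\circ\lambda^{\mathfrak{T}}$ is the push-forward of the CRT's \emph{mass} measure onto the finite tree; whereas $\lambda^{(n,K)}(\overrightarrow{\T^{(n,K)}_x})$ converges to $\lambda^{(K)}(\overrightarrow{\mathfrak{T}^{(K)}_{\cdot}})$, the normalized \emph{length} (Lebesgue) measure on $\mathfrak{T}^{(K)}$. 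For any fixed $K$ these two probability measures on $\mathfrak{T}^{(K)}$ do not coincide — the CRT has Hausdorff dimension $2$, so its mass measure restricted to a line segment is not a constant multiple of the length measure. The gap between them closes only as $K\to\infty$, and this requires a separate argument: the paper's Lemma~\ref{lem:lambdaktomuk} shows $\sup_x|\lambda^{(K)}(\overrightarrow{\mathfrak{T}^{(K)}_x})-\mu^{(K)}(\overrightarrow{\mathfrak{T}^{(K)}_x})|\to 0$ as $K\to\infty$, using Aldous's result that $\lambda^{(K)}\Rightarrow\lambda^{\mathfrak{T}}$, Croydon's result that $\pi_{\mathfrak{T}^{(K)}}$ converges uniformly to the identity, and the atomlessness of $\lambda^{\mathfrak{T}}$. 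Your ``send $n\to\infty$, then $K\to\infty$'' step is therefore correct as an outline, but the $K\to\infty$ step does real work that your proposal omits by prematurely identifying the mass and length measures. The boundary-error discussion you give (sausages straddling the cut-point, asymptotic thinness) addresses the $n\to\infty$ error, not this $K\to\infty$ discrepancy, and would not save the argument as written.
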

 During the whole of this section we will be working under the couplings at \eqref{eq:papapa} and \eqref{eq:secondcoupling}.


\subsection{Proof of condition $(V)$ assuming three estimates}

For the proof of Proposition \ref{lem:condV} we will need some preliminary definitions. Recall the definition of $\lambda^{\frak{T}}$ from \eqref{eq:lambdaT}. Let $\pi_{\frak{T}^{(K)}}$ be the projection of $\frak{T}$ onto $\frak{T}^{(K)}$ and $\mu^{(K)}$ be the probability measure on $\frak{T}^{(K)}$ defined as $\mu^{(K)}=\pi_{\frak{T}^{(K)}}\circ\lambda^{\frak{T}}$. We use the notation $\mu^{(K)}$ instead of the more natural $\lambda^{(K)}$ since $\lambda^{(K)}$ denotes the (normalized) Lebesgue measure on $\frak{T}^{(K)}$ (recall Section \ref{sect_KISE}). 

Since $\frak{T}^{(n,K)}$ and $\T^{(n,K)}$ are the same objects when viewed as real trees the measures $\mu^{(n,K)}$ and  $\lambda^{(n,K)}$ can also be viewed as defined on $\frak{T}^{(n,K)}$.

\subsubsection{Three technical estimates}

For any $x\in\frak{T}^{(K)}$ let $\overrightarrow{\frak{T}^{(K)}_x}$ be the set of descendants of $x$ (including $x$) in $\frak{T}^{(K)}$.
\begin{lemma}\label{l:convergenceofmuk} 
Let $\mathcal{A}^{n,K}(\epsilon)$ be the event in which
\[
\nu\mu^{(K)}(\overrightarrow{\frak{T}^{(K)}_x})- \nu\mu^{(K)}(x)-\epsilon\leq \mu^{(n,K)}(\Upsilon_{n,K}^{-1}(\overrightarrow{\frak{T}^{(K)}_x})) \leq \nu\mu^{(K)}(\overrightarrow{\frak{T}^{(K)}_x}) +\epsilon
\]
holds for all $x\in \frak{T}^{(K)} $. For all $K\in\N$, 
\[\lim_{n\to\infty}{\bf P}_n\left[\mathcal{A}^{n,K}(\epsilon)\right]=1,\]
for all $\epsilon >0$.
\end{lemma}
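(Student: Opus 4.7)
My plan is to reduce the statement to finitely many applications of Theorem~\ref{teo:legalllinedge} by identifying the edges of $\omega_n$ contributing to $\mu^{(n,K)}(\Upsilon_{n,K}^{-1}(\overrightarrow{\frak{T}^{(K)}_x}))$ with a contiguous range in the lexicographic ordering of $\T^{\text{GW}}_n$. As a preliminary reduction, I observe that $\mu^{(n,K)}$ is a purely atomic measure supported on $V^*(\T^{(n,K)})$, and since $\Upsilon_{n,K}$ preserves the tree structure, the function $x\mapsto \mu^{(n,K)}(\Upsilon_{n,K}^{-1}(\overrightarrow{\frak{T}^{(K)}_x}))$ jumps only at the finite set $V(\frak{T}^{(K)})$. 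The function $x\mapsto \mu^{(K)}(\overrightarrow{\frak{T}^{(K)}_x})$ is monotone non-increasing along any root-descended path, and its jumps correspond to atoms of $\mu^{(K)}$; the $-\nu\mu^{(K)}(x)$ correction in the lower bound is precisely what is needed to absorb the jump at $x$ itself. A simple monotonicity and union-bound argument therefore reduces the problem to the pointwise convergence
\[
\mu^{(n,K)}(\Upsilon_{n,K}^{-1}(\overrightarrow{\frak{T}^{(K)}_v})) \xrightarrow{n\to\infty} \nu\,\mu^{(K)}(\overrightarrow{\frak{T}^{(K)}_v})
\]
in probability, uniformly in $v$ ranging over the finite set $V(\frak{T}^{(K)})$ augmented by the endpoints of a finite $\epsilon$-net in each edge.

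To prove this pointwise convergence, I would fix $v\in V(\frak{T}^{(K)})$ and let $w=\Upsilon_{n,K}^{-1}(v)\in V(\frak{T}^{(n,K)})$. By the definition of $\mu^{(n,K)}$ and $\pi^{(n,K)}$ from Section~\ref{sect_mu}, the quantity $\mu^{(n,K)}(\overrightarrow{\frak{T}^{(n,K)}_w})$ equals $n^{-1}$ times the number of edges $(y_1,y_2)\in E(\omega_n)$ with $\pi^{(n,K)}(y_1)\in \overrightarrow{\frak{T}^{(n,K)}_w}\cap V^*(\T^{(n,K)})$. Under the coupling $U^n_i=w_n(\gamma_n(U_i))$ from \eqref{eq:secondcoupling} and the structure of the reduced tree $\T^{\text{GW}}_n(K)$, this set of edges is (up to a boundary term coming from the bubble at the image of $v$ itself) exactly the image under $\phi_{\T^{\text{GW}}_n}$ of the bonds of $\T^{\text{GW}}_n$ both of whose endpoints descend in $\T^{\text{GW}}_n$ from the corresponding vertex $\bar w\in V(\T^{\text{GW}}_n(K))$. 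In the lexicographic order, the descendants of $\bar w$ form a contiguous interval $v^{\T^{\text{GW}}_n}_{\ell_n(v)+1},\ldots,v^{\T^{\text{GW}}_n}_{r_n(v)}$, so the edge count becomes $|E(\text{vol})_n(r_n(v))|-|E(\text{vol})_n(\ell_n(v))|$ plus a controlled boundary term.

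The tour convergence \eqref{eq:papapa} together with the coupling \eqref{eq:secondcoupling} then give $n^{-1}\ell_n(v)\to \ell(v)$ and $n^{-1}r_n(v)\to r(v)$ in probability, where $\ell(v)$ and $r(v)$ are the endpoints of the Brownian excursion interval of $\bold{e}$ encoding $\overrightarrow{\frak{T}^{(K)}_v}$; by definition of $\lambda^{\frak{T}}$ as the push-forward of Lebesgue measure on $[0,1]$, one has $r(v)-\ell(v)=\lambda^{\frak{T}}(\pi_{\frak{T}^{(K)}}^{-1}(\overrightarrow{\frak{T}^{(K)}_v}))=\mu^{(K)}(\overrightarrow{\frak{T}^{(K)}_v})$. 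Upgrading Theorem~\ref{teo:legalllinedge} to random endpoints via the monotonicity of $|E(\text{vol})_n(\cdot)|$ and evaluating at $\ell_n(v)/n$ and $r_n(v)/n$ yields the displayed pointwise convergence.

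The main obstacle is the branch-point correction. When $v$ is a branching vertex of $\frak{T}^{(K)}$, its image in $\T^{(n,K)}$ is an artificial vertex of the form $v_{a,b,c}$ (not in $V^*$), and the surrounding bubble of $\omega_n$ carries macroscopically many edges, of order $\nu\mu^{(K)}(\{v\})\,n$, which the star-triangle transformation of Section~\ref{sect:skeleton} allocates among the three incident cut-points $a,b,c$ in a potentially unfavorable way. This is precisely what forces the $-\nu\mu^{(K)}(x)$ correction in the lower bound; by applying the lexicographic identification to the interval corresponding to the bubble alone and invoking Theorem~\ref{teo:legalllinedge} once more, I would show that this bubble's edge mass converges to $\nu\mu^{(K)}(\{v\})$ in probability, so that the worst-case misallocation is bounded by $\nu\mu^{(K)}(\{v\})+\epsilon/2$ on an event of probability tending to $1$.
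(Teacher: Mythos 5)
Your proposal is built on the same two ingredients as the paper's argument (the lexicographic-interval identification together with Theorem~\ref{teo:legalllinedge}, and the tour convergence \eqref{eq:papapa}), but the paper organizes them differently. The paper factors through the auxiliary measure $\hat{\mu}^{(n,K)}$ which counts \emph{tree} edges of $\T^{\text{GW}}_n$ rather than edges of $\omega_n$: Lemma~\ref{lem:Prokhorov} isolates the appeal to Theorem~\ref{teo:legalllinedge} (as the comparison $\mu^{(n,K)}\approx\nu\hat\mu^{(n,K)}$), and Lemma~\ref{lem:croydonconvergenceofmeasures} then proves the weak convergence $\hat\mu^{(n,K)}\to\mu^{(K)}$ via uniform convergence of the projection maps $\Upsilon_{n,K}\circ\pi^{(n,K)}\circ\phi_{\T^{\text{GW}}_n}\circ[\cdot]_n \to \pi_{\frak{T}^{(K)}}\circ[\cdot]$ in display~\eqref{eq:phinktophik}. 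You fuse these two steps, which is fine in principle, but the fusion obscures where each technical input actually enters.

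The genuine gap is in your final paragraph. You claim the ``main obstacle'' is at a branching vertex $v$ of $\frak{T}^{(K)}$, whose preimage in $\T^{(n,K)}$ is an artificial vertex $v_{a,b,c}$, and that the surrounding bubble carries edge mass of order $\nu\mu^{(K)}(\{v\})\,n$. Both claims are wrong. First, $\mu^{(K)}$ a.s.\ has \emph{no atom at any branching vertex} of $\frak{T}^{(K)}$: by Theorem~4.6 of \cite{DuLG} the CRT has only degree-$3$ points, and at $v=b^{\frak{T}}([U_i],[U_j])$ all three incident arcs already lie in $\frak{T}^{(K)}$, so $\pi_{\frak{T}^{(K)}}^{-1}(\{v\})=\{v\}$ and $\mu^{(K)}(\{v\})=\lambda^{\frak{T}}(\{v\})=0$. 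Second, a single bubble of $\omega_n$ contains $O(1)$ edges, i.e.\ an edge mass of $o(n)$, so the star--triangle misallocation you are worrying about is a vanishing correction, not a macroscopic one. The actual reason the correction $-\nu\mu^{(K)}(x)$ is needed is the atoms of $\mu^{(K)}$ at \emph{non-vertex} points in the interior of edges of $\frak{T}^{(K)}$, which correspond to subtrees of $\frak{T}$ with positive $\lambda^{\frak{T}}$-mass hanging off $\frak{T}^{(K)}$. In $\omega_n$ such a subtree's mass is absorbed into the sausage of some cut-point $w^*\in V^*(\T^{(n,K)})$, and the function $x\mapsto\mu^{(n,K)}(\Upsilon_{n,K}^{-1}(\overrightarrow{\frak{T}^{(K)}_x}))$ consequently has a macroscopic jump at $\Upsilon_{n,K}(w^*)$, a point whose location along the edge (measured in $d^{(n,K)}$) only converges to $x_0$ in probability and may lie on either side of it. Your $\epsilon$-net plus monotonicity reduction does not survive this: for $x$ in the small random interval between $\Upsilon_{n,K}(w^*)$ and $x_0$ the one-sided bounds are precisely where the argument is delicate, and this is exactly what the paper's uniform-projection-map convergence \eqref{eq:phinktophik} and the closed-set (Portmanteau-style) reasoning in Lemma~\ref{lem:croydonconvergenceofmeasures} are designed to handle. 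Your bubble estimate at triangles is solving a problem that is not there, while the problem that is there is left untouched.
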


 For any $x\in\frak{T}^{(n,K)}$ let $\overrightarrow{\frak{T}^{(n,K)}_x}$ be the set of descendants of $x$ (including $x$) in $\frak{T}^{(n,K)}$. Recall the coupling between $\T^{(n,K)}$ and $\frak{T}^{(K)}$ described in Section \ref{s:usefcoupl}.
\begin{lemma}\label{l:lambdarestolambdak}
For all $K\in\N$,
\[\lim_{n\to\infty}{\bf P}_n\left[\sup_{x\in \frak{T}^{(n,K)}}\abs{\lambda^{(n,K)}(\overrightarrow{\frak{T}_x^{(n,K)}})  - \lambda^{(K)}(\Upsilon_{n,K}(\overrightarrow{\frak{T}^{(n,K)}_x}))}\geq\epsilon\right]=0,\]
for all $\epsilon>0$.
\end{lemma}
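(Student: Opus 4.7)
The plan is to leverage the homeomorphism $\Upsilon_{n,K}:\mathfrak{T}^{(n,K)}\to\mathfrak{T}^{(K)}$ constructed during the proof of Lemma \ref{lem:condG}. Recall that, on an event of probability tending to $1$ as $n\to\infty$, the two graph spatial trees have the same combinatorial shape, and $\Upsilon_{n,K}$ is an order-preserving homeomorphism which is affine along each edge. Because it preserves the genealogical order, $\Upsilon_{n,K}\bigl(\overrightarrow{\mathfrak{T}^{(n,K)}_x}\bigr)=\overrightarrow{\mathfrak{T}^{(K)}_{\Upsilon_{n,K}(x)}}$, so it suffices to compare $\lambda^{(n,K)}(\overrightarrow{\mathfrak{T}^{(n,K)}_x})$ with $\lambda^{(K)}(\overrightarrow{\mathfrak{T}^{(K)}_{\Upsilon_{n,K}(x)}})$ uniformly in $x$.

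Writing $L_n:=\sum_{e\in E(\mathfrak{T}^{(n,K)})}|e|_{d^{(n,K)}}$ and $L_n(x)$ for the total $d^{(n,K)}$-length of $\overrightarrow{\mathfrak{T}^{(n,K)}_x}$, and using analogous notation $L$ and $L(\Upsilon_{n,K}(x))$ on the $\mathfrak{T}^{(K)}$ side, we have
\[
\lambda^{(n,K)}\bigl(\overrightarrow{\mathfrak{T}^{(n,K)}_x}\bigr)=\frac{L_n(x)}{L_n}, \qquad \lambda^{(K)}\bigl(\overrightarrow{\mathfrak{T}^{(K)}_{\Upsilon_{n,K}(x)}}\bigr)=\frac{L(\Upsilon_{n,K}(x))}{L}.
\]
The quantity $L_n(x)$ decomposes as $(1-\alpha(x))|e^n(x)|_{d^{(n,K)}}+\sum_{e\prec e^n(x)}|e|_{d^{(n,K)}}$, where $e^n(x)$ is the edge of $\mathfrak{T}^{(n,K)}$ containing $x$ and $\alpha(x)\in[0,1]$ is the relative position of $x$ along it (measured from the ancestor endpoint). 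Since $\Upsilon_{n,K}$ is affine along each edge, the image $\Upsilon_{n,K}(x)$ sits at the same relative position $\alpha(x)$ along the corresponding edge of $\mathfrak{T}^{(K)}$, so an identical formula holds for $L(\Upsilon_{n,K}(x))$ in terms of the $d_{\mathfrak{T}}$-edge lengths.

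From the proof of Lemma \ref{lem:condG} we have $|e^n_j(i)|_{d^{(n,K)}}\to\sigma_d|e_j(i)|_{d_{\mathfrak{T}}}$ in probability for every edge, and since $\mathfrak{T}^{(n,K)}$ contains at most $4K+2$ edges this convergence is joint over all edges. Consequently $L_n(x)\to\sigma_dL(\Upsilon_{n,K}(x))$ uniformly in $x$, and $L_n\to\sigma_dL$. Since $L>0$ almost surely (the CRT has positive total length), taking the quotient yields uniform convergence of the ratios, establishing the claim. The only mild subtlety is the complementary event on which the shapes of $\mathfrak{T}^{(n,K)}$ and $\mathfrak{T}^{(K)}$ do not match and $\Upsilon_{n,K}$ is undefined; since this event has probability tending to $0$, it is absorbed into the in-probability statement and poses no substantial obstacle. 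The proof is essentially bookkeeping from Lemma \ref{lem:condG}.
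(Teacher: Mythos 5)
Your proposal is correct and takes essentially the same approach as the paper: both decompose $\lambda^{(n,K)}(\overrightarrow{\mathfrak{T}^{(n,K)}_x})$ into a sum of full-edge lengths over descendant edges plus the partial length of the edge containing $x$, invoke the edge-length convergence $|e^n_j|_{d^{(n,K)}}\to\sigma_d|e_j|_{d_{\mathfrak{T}}}$ (the paper cites \eqref{eq:underlyinggeometricestimate}, but as you implicitly recognize the precise source is the edge-length convergence established via \eqref{eq:mosa} in Step 2 of the proof of Lemma~\ref{lem:condG}), and use that $\Upsilon_{n,K}$ is affine along edges to handle the partial-edge term. (Minor: your summation index $e\prec e^n(x)$ is written backwards relative to the paper's ancestry convention and should range over edges that are descendants of $e^n(x)$; this does not affect the argument.)
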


Finally we have
\begin{lemma}\label{lem:lambdaktomuk}
For all $\epsilon>0$,
\[\lim_{K\to\infty}\sup_{x\in\frak{T}^{(K)}}\abs{\lambda^{(K)}(\overrightarrow{\frak{T}_x^{(K)}})-\mu^{(K)}(\overrightarrow{\frak{T}_x^{(K)}})}=0.\quad{\bf P}_n\text{-a.s.}\]
\end{lemma}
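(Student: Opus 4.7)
The plan is to first establish the key identity $\mu^{(K)}(\overrightarrow{\mathfrak{T}^{(K)}_x}) = \lambda^{\mathfrak{T}}(\overrightarrow{\mathfrak{T}_x})$ for every $x \in \mathfrak{T}^{(K)}$, which reduces the lemma to a comparison between the normalized length measure $\lambda^{(K)}$ on $\mathfrak{T}^{(K)}$ and the volume measure $\lambda^{\mathfrak{T}}$ on the CRT. The identity follows from the observation that for $y \in \mathfrak{T}$, its projection $\pi_{\mathfrak{T}^{(K)}}(y)$ is obtained by following the path from $y$ to the root until one first hits $\mathfrak{T}^{(K)}$; consequently, $x$ is an ancestor of $\pi_{\mathfrak{T}^{(K)}}(y)$ in $\mathfrak{T}^{(K)}$ if and only if $x$ is an ancestor of $y$ in $\mathfrak{T}$, and $\mu^{(K)} = \pi_{\mathfrak{T}^{(K)}} \circ \lambda^{\mathfrak{T}}$ then delivers the identity.

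After this reduction, I would invoke the almost-sure convergence of the reduced measured real trees $(\mathfrak{T}^{(K)}, \lambda^{(K)})$ to $(\mathfrak{T}, \lambda^{\mathfrak{T}})$ in the Gromov--Hausdorff--Prokhorov sense. This convergence is implicit in Section~8 of \cite{Croydon_arc} (used in the proof of Proposition~\ref{prop_approx_bisek}) and can also be read off from Aldous' recursive construction of the CRT. In particular, it yields that the maximum edge-length of $\mathfrak{T}^{(K)}$ tends to $0$ and that, for any fixed $\xi \in \mathfrak{T}$ which is not a branching point or a leaf, $\lambda^{(K)}(\overrightarrow{\mathfrak{T}^{(K)}_\xi}) \to \lambda^{\mathfrak{T}}(\overrightarrow{\mathfrak{T}_\xi})$ almost surely.

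To pass from pointwise to uniform convergence, I would run a Glivenko--Cantelli style argument that exploits the fact that both $F_K(x) := \lambda^{(K)}(\overrightarrow{\mathfrak{T}^{(K)}_x})$ and $G(x) := \lambda^{\mathfrak{T}}(\overrightarrow{\mathfrak{T}_x})$ are non-increasing along ancestral paths. Fixing $\epsilon > 0$, I would choose a finite set $\{\xi_1, \ldots, \xi_M\} \subset \mathfrak{T}$ dense enough that every ancestral path in $\mathfrak{T}$ can be sandwiched by consecutive net points $\xi_j \preceq \xi_{j+1}$ with $G(\xi_j) - G(\xi_{j+1}) < \epsilon$; this is possible because $\lambda^{\mathfrak{T}}$ is a finite non-atomic measure on a compact real tree. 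Pointwise convergence of $F_K$ to $G$ at the finitely many $\xi_j$'s, together with monotonicity and the vanishing of the mesh of $\mathfrak{T}^{(K)}$, then sandwiches $|F_K(x) - G(x)|$ by $O(\epsilon)$ uniformly in $x \in \mathfrak{T}^{(K)}$ for $K$ large.

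The main obstacle I foresee is the uniformization step: one must show that the monotone, CDF-like functions $F_K$ and $G$, which live on the different but nested trees $\mathfrak{T}^{(K)} \subseteq \mathfrak{T}$, can be simultaneously controlled by their values on a fixed finite net, despite the fact that $F_K$ depends on $K$ and the domain $\mathfrak{T}^{(K)}$ varies with $K$. Handling the boundary effect --- namely, that the net points $\xi_j \in \mathfrak{T}$ need not lie in $\mathfrak{T}^{(K)}$ --- requires either invoking the vanishing-mesh property or replacing each $\xi_j$ by its nearest ancestor in $\mathfrak{T}^{(K)}$, and the resulting chain of monotonicity inequalities has to be tracked carefully to ensure the error on each edge is absorbed into $O(\epsilon)$.
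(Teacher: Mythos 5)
Your approach is correct and closely parallels the paper's, but with one genuinely different step that is worth pointing out. Your key identity $\mu^{(K)}(\overrightarrow{\mathfrak{T}^{(K)}_x}) = \lambda^{\mathfrak{T}}(\overrightarrow{\mathfrak{T}_x})$ (which is valid: for a subtree $\mathfrak{T}^{(K)}$ containing the root and any $x \in \mathfrak{T}^{(K)}$, one checks that $\pi_{\mathfrak{T}^{(K)}}^{-1}(\overrightarrow{\mathfrak{T}^{(K)}_x}) = \overrightarrow{\mathfrak{T}_x}$ since $x \preceq \pi_{\mathfrak{T}^{(K)}}(z)$ iff $x \preceq z$) is an observation the paper does not make. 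It eliminates the $K$-dependence on the $\mu^{(K)}$ side entirely, so the lemma becomes a statement about one sequence $F_K(x)=\lambda^{(K)}(\overrightarrow{\mathfrak{T}^{(K)}_x})$ converging uniformly to a fixed function $G(x)=\lambda^{\mathfrak{T}}(\overrightarrow{\mathfrak{T}_x})$. The paper instead establishes weak convergence of both $\lambda^{(K)}$ and $\mu^{(K)}$ to $\lambda^{\mathfrak{T}}$ (the latter via Lemma~2.4 of~\cite{Croydon_crt}, namely $\sup_x d_{\mathfrak{T}}(\pi_{\mathfrak{T}^{(K)}}(x),x)\to 0$), and then asserts in one line that these two weak limits together with non-atomicity of $\lambda^{\mathfrak{T}}$ give the uniform statement. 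That final assertion is precisely the Glivenko--Cantelli / P\'olya-type step you are making explicit. So both arguments hinge on the same ingredients (Aldous' Theorem~3~(ii), Croydon's projection estimate, non-atomicity); yours packages the $\mu^{(K)}$-part more efficiently but in exchange must actually carry out the monotone-net sandwich that the paper leaves implicit.

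The concern you flag about the uniformization step is legitimate but surmountable, and you already name the right tools. One clean way to organize it: for $\eta=\epsilon/2$ the set $S_\eta=\{G\geq \eta\}$ is a compact subtree with at most $1/\eta$ leaves (since distinct leaves carry disjoint descendant sets), and $G$ is continuous and decreasing along ancestral paths, so a finite net $N$ on $S_\eta$ with consecutive $G$-increments below $\eta$ exists and automatically also controls every $x$ lying beyond $S_\eta$ (where $G(x)<\eta$). The remaining issue that $N\not\subset\mathfrak{T}^{(K)}$ is handled precisely by the projection estimate $\sup_x d_{\mathfrak{T}}(\pi_{\mathfrak{T}^{(K)}}(x),x)\to 0$ together with the continuity of $G$ along ancestral paths, by replacing each $\xi\in N$ with $\pi_{\mathfrak{T}^{(K)}}(\xi)$ for $K$ large. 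So no gap, but the chain of inequalities does need to be tracked carefully as you anticipate.
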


\subsubsection{Proof of condition (V)}

Here we will prove condition $(V)$ using the results of the previous section.
\begin{proof}[Proof of Proposition \ref{lem:condV}]
We have that
\begin{align*}
&\sup_{x\in V(\T^{(n,K)})}\abs{\mu^{(n,K)}(\overrightarrow{\T_x^{(n,K)}})-\nu\lambda^{(n,K)}(\overrightarrow{\T_x^{(n,K)}})}\\
\leq& \sup_{x\in\frak{T}^{(n,K)}}\abs{\mu^{(n,K)}(\overrightarrow{\frak{T}_x^{(n,K)}})-\nu\mu^{(K)}(\Upsilon_{n,K}(\overrightarrow{\frak{T}_x^{(n,K)}}))}\\
&+\nu\sup_{x\in\frak{T}^{(n,K)}}\abs{\mu^{(K)}(\Upsilon_{n,K}(\overrightarrow{\frak{T}_x^{(n,K)}}))-\lambda^{(K)}(\Upsilon_{n,K}(\overrightarrow{\frak{T}_x^{(n,K)}}))}\\
&+\nu\sup_{x\in\frak{T}^{(n,K)}}\abs{\lambda^{(K)}(\Upsilon_{n,K}(\overrightarrow{\frak{T}_x^{(n,K)}}))-\lambda^{(n,K)}(\overrightarrow{\frak{T}_x^{(n,K)}})}.
\end{align*}

The second summand in the display above converges to $0$ in ${\bf P}_n$-probability as $K\to\infty$ by virtue of Lemma \ref{lem:lambdaktomuk} and the third summand converges to $0$ in ${\bf P}_n$-probability as $n\to\infty$ by virtue of Lemma \ref{l:lambdarestolambdak}. Therefore, it is enough to control the first summand.

It can be deduced from Lemma \ref{lem:lambdaktomuk} and the fact that $\lambda^{(K)}$ has no atoms that, for all $\epsilon>0$,
\[\lim_{K\to\infty}\limsup_{n\to\infty}{\bf P}_n\left[\sup_{x\in\frak{T}^{(n,K)}(x)}\mu^{(K)}(\Upsilon_{n,K}(x))\geq\epsilon\right]=0.\]

Therefore, it follows from Lemma \ref{l:convergenceofmuk} and the fact that $\Upsilon_{n,K}(\overrightarrow{\frak{T}_x^{(n,K)}})=\overrightarrow{\frak{T}_{\Upsilon_{n,K}(x)}^{(K)}}$  that
\[\lim_{K\to\infty}\limsup_{n\to\infty} {\bf P}_n\left[ \sup_{x\in\frak{T}^{(n,K)}}\abs{\mu^{(n,K)}(\overrightarrow{\frak{T}_x^{(n,K)}})-\nu\mu^{(K)}(\Upsilon_{n,K}(\overrightarrow{\frak{T}_x^{(n,K)}}))}\geq\epsilon
\right]=0,\]
for all $\epsilon>0$ and this finishes the proof.
\end{proof}

\subsection{Technical estimates}

The goal of this section is to prove Lemma~\ref{l:convergenceofmuk}, Lemma~\ref{l:lambdarestolambdak} and Lemma~\ref{lem:lambdaktomuk}.

\subsubsection{Proof of Lemma~\ref{l:convergenceofmuk}}

For each $v\in V^\ast(\T^{(n,K)})$, let
\[
\hat{\mu}^{(n,K)}(v):=n^{-1}\abs{\{(e_-,e_+)\in E(\T^{\text{GW}}_n): e_- \prec e_+ \text{ and } \pi^{(n,K)}(\phi_{\T^{\text{GW}}_n}(e_+))=v \}}.
\]
and
\begin{equation}\label{eq:defofmuhat}
\hat{\mu}^{(n,K)}:=\sum_{v\in V^{\ast}(\T^{(n,K)})} \hat{\mu}^{(n,K)}(v)\delta_v.\end{equation}

Notice that this definition counts the edge-volume of $\T^{\text{GW}}_n$ whereas $\mu^{(n,K)}$ counts the edge-volume of $\omega_n$.
As we argued before, we can regard $\hat{\mu}^{(n,K)}$ as a measure on $\frak{T}^{(n,K)}$.
 Lemma \ref{l:convergenceofmuk} will be a direct consequence of the following lemma.
\begin{lemma}\label{lem:croydonconvergenceofmeasures}
Let $\mathcal{B}^{n,K}(\epsilon)$ be the event in which
\[
\mu^{(K)}(\overrightarrow{\frak{T}^{(K)}_x})- \mu^{(K)}(x)-\epsilon\leq \hat{\mu}^{(n,K)}(\Upsilon_{n,K}^{-1}(\overrightarrow{\frak{T}^{(K)}_x})) \leq \mu^{(K)}(\overrightarrow{\frak{T}^{(K)}_x}) +\epsilon
\]
holds for all $x\in \frak{T}^{(K)} $. For all $K\in\N$,
\[\lim_{n\to\infty}{\bf P}_n\left[\mathcal{B}^{n,K}(\epsilon)\right]=1,\]
for all $\epsilon >0$.
\end{lemma}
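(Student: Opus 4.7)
The plan is to translate the edge-counting measure $\hat{\mu}^{(n,K)}$ into a vertex-counting measure on $\T^{\text{GW}}_n$ and then pass to the limit via the convergence of $\T^{\text{GW}}_n$ to the CRT $\frak{T}$ provided by Proposition~\ref{prop:tourconvergence}. Since every non-root vertex of $\T^{\text{GW}}_n$ is the descendant-endpoint of exactly one tree edge, the definition of $\hat{\mu}^{(n,K)}$ from~\eqref{eq:defofmuhat} yields
\[
n\,\hat{\mu}^{(n,K)}(\Upsilon_{n,K}^{-1}(\overrightarrow{\frak{T}^{(K)}_x})) = |\{u \in V(\T^{\text{GW}}_n)\setminus\{\text{root}\}: v'(u) \in \Upsilon_{n,K}^{-1}(\overrightarrow{\frak{T}^{(K)}_x})\}|,
\]
where $v'(u) \in V(\T^{\text{GW}}_n(K))$ is the vertex furnished by Lemma~\ref{lem:preasymthin} satisfying $\phi_{\T^{\text{GW}}_n}(v'(u)) = \pi^{(n,K)}(\phi_{\T^{\text{GW}}_n}(u))$.

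The next step is to replace $v'(u)$ by the tree projection $\pi_{\T^{\text{GW}}_n(K)}(u)$. Both lie on the ancestral segment from $\text{root}$ to $u$ as vertices of $\T^{\text{GW}}_n(K)$, and both are within $o(n^{1/2})$ of $u$ in rescaled tree distance uniformly in $u$, by Lemma~\ref{lem:preasymthin} and~\eqref{eq:equation1} respectively. Since the edges of $\T^{\text{GW}}_n(K)$ typically have length of order $n^{1/2}$, this forces $v'(u)$ and $\pi_{\T^{\text{GW}}_n(K)}(u)$ to be either equal or adjacent in $\T^{\text{GW}}_n(K)$, with high probability. Swapping the two projections therefore changes membership in $\overrightarrow{\T^{\text{GW}}_n(K)_y}$ (where $y := \Upsilon_{n,K}^{-1}(x)$) only for vertices $u$ whose ancestral line crosses exactly the edge incident to $y$ on the root side; this boundary contribution is what produces the $\mu^{(K)}(x)$ slack in the lower bound of $\mathcal{B}^{n,K}(\epsilon)$.

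The final step is the CRT scaling limit. After the swap, the quantity becomes $n^{-1}|\{u \in V(\T^{\text{GW}}_n): \pi_{\T^{\text{GW}}_n(K)}(u) \in \overrightarrow{\T^{\text{GW}}_n(K)_y}\}|$; through the depth-first encoding $w_n \circ \gamma_n$, which uniformizes non-root vertices over $[0,1]$, this equals (up to $O(1/n)$) the Lebesgue measure of $\{t \in [0,1]: \pi_{\T^{\text{GW}}_n(K)}(w_n(\gamma_n(t))) \in \overrightarrow{\T^{\text{GW}}_n(K)_y}\}$. Under the Skorohod coupling~\eqref{eq:papapa}, a portmanteau argument combined with the fact that $\pi_{\frak{T}^{(K)}}$ is continuous on $\frak{T}$ off a $\lambda^{\frak{T}}$-null set yields convergence to $\lambda^{\frak{T}}(\pi_{\frak{T}^{(K)}}^{-1}(\overrightarrow{\frak{T}^{(K)}_x})) = \mu^{(K)}(\overrightarrow{\frak{T}^{(K)}_x})$, with a boundary discrepancy again at most $\mu^{(K)}(\{x\})$. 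A union bound over the finitely many vertices $x \in \frak{T}^{(K)}$ then delivers the uniform-in-$x$ statement.

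The main obstacle is the projection-swap step: uniformly controlling the discrepancy between $\pi^{(n,K)}$ (adapted to the cut-point skeleton, hence natural for $\hat{\mu}^{(n,K)}$) and $\pi_{\T^{\text{GW}}_n(K)}$ (adapted to the tree structure, hence natural for Janson--Marckert). Proving that the boundary contribution at each branching point $x$ is bounded by $\mu^{(K)}(\{x\}) + o(1)$ requires combining Lemma~\ref{lem:preasymthin} with careful counting of vertices whose ancestral path passes exactly through the corresponding skeleton vertex $y$, and is precisely what gives rise to the $\mu^{(K)}(x)$ correction in the statement.
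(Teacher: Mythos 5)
Your plan follows the same two–stage architecture the paper uses: view $\hat\mu^{(n,K)}$ and $\mu^{(K)}$ as push-forwards of Lebesgue measure on $[0,1]$, then compare the two projection maps (the embedded one $\pi^{(n,K)}\circ\phi_{\T^{\text{GW}}_n}\circ[\cdot]_n$ and the tree one $\pi_{\frak{T}^{(K)}}\circ[\cdot]$) by passing through the intermediary $\pi_{\T^{\text{GW}}_n(K)}$. The identification of $\hat\mu^{(n,K)}$ with vertex counting, the use of Lemma~\ref{lem:preasymthin}, and the reliance on the coupled depth-first encoding are all the correct ingredients. But there are two genuine gaps in the execution.

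\textbf{Uniformity over $x$.} You prove (or sketch) that for each fixed $x$ a portmanteau argument yields
$\hat\mu^{(n,K)}(\Upsilon_{n,K}^{-1}(\overrightarrow{\frak{T}^{(K)}_x}))\to\mu^{(K)}(\overrightarrow{\frak{T}^{(K)}_x})$ up to a $\mu^{(K)}(\{x\})$ boundary correction, and then conclude by a ``union bound over the finitely many vertices $x\in\frak{T}^{(K)}$.'' This does not deliver the lemma: the event $\mathcal{B}^{n,K}(\epsilon)$ requires the two-sided estimate simultaneously for \emph{every} $x\in\frak{T}^{(K)}$, which is a continuum; the relevant $x$ include all interior points of edges, and the functions $x\mapsto\hat\mu^{(n,K)}(\Upsilon_{n,K}^{-1}(\overrightarrow{\frak{T}^{(K)}_x}))$ and $x\mapsto\mu^{(K)}(\overrightarrow{\frak{T}^{(K)}_x})$ can both jump along an edge (indeed $\mu^{(K)}$ is essentially purely atomic), so a finite net plus monotonicity does not close the gap. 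The paper instead proves \emph{uniform} (in $t\in[0,1]$) convergence of the maps themselves,
\[
\lim_{n\to\infty}{\bf P}_n\Bigl[\sup_{t\in[0,1]}d_{\frak{T}}\bigl(\Upsilon_{n,K}(\pi^{(n,K)}(\phi_{\T^{\text{GW}}_n}([t]_n))),\,\pi_{\frak{T}^{(K)}}([t])\bigr)\geq\epsilon\Bigr]=0,
\]
using the explicit closed-form expressions for both projections in terms of running minima of the depth/excursion processes (the paper's displays \eqref{eq:nicaragua1}--\eqref{eq:nicaragua2}). This uniform map estimate is precisely what converts into a statement holding for all closed sets $\overrightarrow{\frak{T}^{(K)}_x}$ at once; a pointwise portmanteau argument cannot substitute for it.

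\textbf{The projection swap.} You correctly identify the swap from the cut-point projection $v'(u)$ to the tree projection $\pi_{\T^{\text{GW}}_n(K)}(u)$ as the main obstacle, but the argument offered does not close it. The claim that ``the edges of $\T^{\text{GW}}_n(K)$ typically have length of order $n^{1/2}$, [so this] forces $v'(u)$ and $\pi_{\T^{\text{GW}}_n(K)}(u)$ to be either equal or adjacent in $\T^{\text{GW}}_n(K)$'' is imprecise: both are vertices of the \emph{discrete} tree $\T^{\text{GW}}_n(K)$, whose edges all have length one, and being $o(n^{1/2})$ apart in tree distance certainly does not make them adjacent. What is actually needed is that their images under the respective homeomorphisms $\Upsilon_{n,K}$ and $\hat\Upsilon_{n,K}$ onto $\frak{T}^{(K)}$ are uniformly close; the paper establishes this through Lemma~\ref{lem:analogoustores_2a} (which supplies the multiplicative constant $\sigma_G$ connecting $d^{(n,K)}$ to $n^{-1/2}d^{\T^{\text{GW}}_n}$) combined with Lemma~\ref{close_cut_point}, together with the fact that the two homeomorphisms rescale linearly edge-by-edge, so that closeness of images of vertices propagates to closeness of the maps. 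You point at Lemma~\ref{lem:preasymthin} and ``careful counting'' but do not carry out the estimate, and without the constant $\sigma_G$ entering, the two projections live on differently-scaled copies of $\frak{T}^{(K)}$ and cannot be compared directly. So the ``main obstacle'' you flag is indeed the missing piece, and the brief description given does not resolve it.
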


\begin{proof}

Recall the definition of $\hat{\mu}_n^{(K)}$ in \eqref{eq:defofmuhat} and that of $[\cdot]_n:[0,1]\to V(\T^{\text{GW}}_n)$ from \eqref{eq:defof[]}. We have that
\[
\hat{\mu}_n^{(K)}=\pi^{(n,K)}\circ\phi_{\T^{\text{GW}}_n}\circ[\cdot]_n\circ\text{Leb} ,
\]
and
\[
\mu^{(K)}=\pi_{\frak{T}^{(K)}}\circ[\cdot]\circ \text{Leb},
\]
The last two displays show that both $\hat{\mu}_n^{(K)}$ and $\mu^{(K)}$ are push forwards of the Lebesgue measure in $[0,1]$ through the functions $\pi^{(n,K)}\circ\phi_{\T^{\text{GW}}_n}\circ[\cdot]_n$ and $\pi_{\frak{T}^{(K)}}\circ[\cdot]$ respectively. Therefore, the convergence  of $\Upsilon_{n,K}\circ \pi^{(n,K)}\circ\phi_{\T^{\text{GW}}_n}\circ[\cdot]_n$ towards $\pi_{\frak{T}^{(K)}}\circ[\cdot]$ implies the weak convergence of $\Upsilon_{n,K}\circ\hat{\mu}^{(n,K)}$ towards $\mu^{(K)}$. Therefore, recalling that $\overrightarrow{\frak{T}_x^{(K)}}$ is a closed set of $\frak{T}^{(K)}$, we can see that to prove the lemma it suffices to show that
\begin{equation}\label{eq:phinktophik}
\lim_{n\to\infty}{\bf P}_n\left[\sup_{x\in[0,1]}d_{\mathfrak{T}}(\Upsilon_{n,K}(\pi^{(n,K)}(\phi_{n}([x]_n))),\pi_{\frak{T}^{(K)}}([x]))\geq \epsilon \right] = 0,
\end{equation}
for all $\epsilon>0$.

By \eqref{eq:underlyinggeometricestimate}, ${\bf P}_n$-almost surely, for $n$ large enough, there exists $\hat{\Upsilon}_{n,K}:\frak{T}^{\text{GW}}_{n}(K)\to \frak{T}^{(K)}$ a genealogical order preserving homeomorphism between $(\frak{T}^{\text{GW}}_n(K), n^{-1/2}d^{\T^{\text{GW}}_n})$ and $(\frak{T}^{(K)},\sigma d_{\frak{T}})$ which is linear along the edges of $\frak{T}^{\text{GW}}_n(K)$.
The display above will be proved after we have proved the following two statements
\begin{equation}\label{eq:croydonmeasure}
\sup_{x\in[0,1]}d_{\mathfrak{T}^{(K)}}(\hat{\Upsilon}_{n,K}(\pi_{\T^{\text{GW}}_n(K)}([x]_n)),\pi_{\frak{T}^{(K)}}([x])) \to 0 \quad \text{as } n \to \infty, \quad  {\bf P}_n\text{-a.s.}
\end{equation}
and
\begin{equation}\label{eq:fromcroydontoours}
\lim_{n\to\infty}{\bf P}_n\left[\sup_{v\in V(\T^{\text{GW}}_n)}d_{\mathfrak{T}^{(K)}}(\hat{\Upsilon}_{n,K}(\pi_{\T^{\text{GW}}_n(K)}(v)),\Upsilon_{n,K}(\pi^{(n,K)}(\phi_{\T^{\text{GW}}_n}(v)))) \geq \epsilon \right] =0.
\end{equation}

\vspace{0.2cm}

{\it Proof of equation~\eqref{eq:croydonmeasure}}

\vspace{0.2cm}

Let us denote the non-root leaves of $\frak{T}^{(K)}$ as $l_i:=[U_i],i=1,\dots,K$. The branching point $b^{\frak{T}}(l_i,l_j)$ between $l_i$ and $l_j$, $i\neq j$ will be denoted by $b_{i,j}$. Setting $U_{-1}=1$ and $U_0=0$, we have that for any $x\in[0,1]$ there exists unique $i=i(x),j=j(x)\in\{-1,\dots,K\}$ such that $x\in[U_i,U_j]$ and $(U_i,U_j)\cap\{U_i:i=-1,\dots,K\}=\emptyset$. For $\alpha\in[0,d_{\frak{T}}(b_{i,j},l_i)]$, let $\alpha \langle b_{i,j},l_i \rangle $ be the point in $\frak{T}^{(K)}$ at distance $\alpha$ of $b_{i,j}$ and lying in the line segment whose endpoints are $b_{i,j}$ and $l_i$.
 Recall that, for any $w:[0,1]\to \R$, $m_{w}(x,y)$ denotes $\min\{w(t):t\in [x,y]\}$.
  It follows (see Figure 8) that
\begin{equation}\label{eq:nicaragua1}
\pi_{\frak{T}^{(K)}}([x])=
\begin{cases}
 (m_{\bold{e}}(x,U_i)-m_{\bold{e}}(x,U_j)) \langle b_{i,j},l_i \rangle \quad \text{if } m_{\bold{e}}(x,U_i)\geq m_{\bold{e}}(x,U_j)\\
 (m_{\bold{e}}(x,U_j)-m_{\bold{e}}(x,U_i)) \langle b_{i,j},l_j \rangle \quad \text{otherwise}.
\end{cases}
\end{equation}

\begin{figure}
  \includegraphics[width=\linewidth]{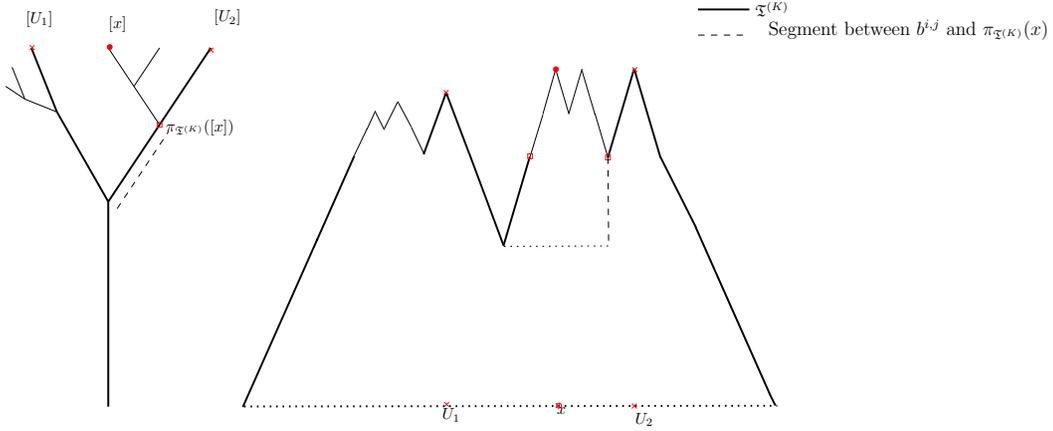}
  \caption{Expresion \eqref{eq:nicaragua1}: The tree $\frak{T}$ and its corresponding first-depth process $\bold{e}$. The subtree $\frak{T}^{(K)}$ is highlighted by bold dark lines. In the figure $K=2$. In this case the point $\pi_{\frak{T}^{(K)}}([x])$ lies in the edge $\langle b^{1,2},[U_1] \rangle$. The segment between $b^{1,2}$ and $\pi_{\frak{T}^{(K)}}([x])$ is depicted by a dashed line. The length of said interval corresponds to $m_{\bold{e}}(x,U_1)-m_{\bold{e}}(x,U_2)$.}
\end{figure}
A similar expression can be deduced for $\pi_{\T^{\text{GW}}_n(K)}( [x]_n)$ in terms of $\hat{w}_n$ instead of $\bold{e}$: Let $l_i^n:=[U_i]_n,i=1,\dots,K$ be the non-root leaves\footnote{ It might be that, for $n$ small, some of the $l_i^n,i=1,\dots,K$ are not actually leaves, but in virtue of \eqref{eq:underlyinggeometricestimate}, for $n$ sufficiently large, all of the $l_i^n,n=1,\dots,K$ are leaves } of $\frak{T}^{\text{GW}}_n(K)$ and let us denote the branching point  $b^{\frak{T}^{\text{GW}}_n}([U_i]_n,[U_j]_n)$ between $l_i^n$ and $l_j^n$ as $b_{i,j}^n$. As before, set $U_{-1}=1$ and $U_0=0$. For $x$ in $[0,1]$, let $i,j$ be the indices such that $x\in [U_i,U_j]$ and $(U_i,U_j)\cap\{U_i:i=-1,\dots,K\}=\emptyset$.
For $\alpha\in[0,d_{\T^{\text{GW}}_n(K)}(b_{i,j}^n,l_i^n)]$, let $\alpha\langle b^n_{i,j},l_i^n \rangle$ be the point which is at $n^{-1/2}d^{\T^{\text{GW}}_n}$-distance $\alpha$ of $b^n_{i,j}$ and lying in the line segment whose endpoints are $b_{i,j}^n$ and $l_i^n$. Then
\begin{equation}\label{eq:nicaragua2}
\begin{aligned}
&\pi_{\T^{\text{GW}}_n(K)}([x]_n)=\\
&\begin{cases}
 n^{-1/2}(m_{\hat{w}_n}(\gamma_n(x),\gamma_n(U_i))-m_{\hat{w}_n}(\gamma_n(x),\gamma_n(U_j))) \langle b^n_{i,j},l^n_i \rangle \quad \text{if } m_{\hat{w}_n}(x,U_i)\geq m_{\hat{w}_n}(x,U_j)\\
 n^{-1/2}(m_{\hat{w}_n}(\gamma_n(x),\gamma_n(U_j))-m_{\hat{w}_n}(\gamma_n(x),\gamma_n(U_i))) \langle b_{i,j}^n,l^n_j \rangle \quad \text{otherwise.}
\end{cases}
\end{aligned}
\end{equation}
Let $x\in[0,1]$ and $e(x)$ the edge of $\frak{T}^{(K)}$ which contains $[x]$ and $e_n(x)$ be the edge of $\frak{T}^{\text{GW}}_n$ which contains $[x]_n$.
It follows from the definition of $\hat{\Upsilon}_{n,K}$ that 
\begin{equation}\label{eq:werring}
\hat{\Upsilon}_{n,K}(\pi_{\T^{\text{GW}}_n(K)}([x]_n))
=
\frac{d^{\T^{\text{GW}}_n}(b_{i,j}^n,\pi_{\T^{\text{GW}}_n(K)}([x]_n))}{d^{\T^{\text{GW}}_n}(e(x))} d_{\frak{T}}(e(x))\langle e(x)\rangle,
\end{equation}
where $\alpha\langle e\rangle$ denotes the point at distance $\alpha $ along $e$.
On the other hand, \[\pi_{\frak{T}^{(K)}}([x])=d_{\frak{T}}(b_{i,j},\pi_{\frak{T}^{(K)}}([x]))\langle e(x)\rangle
.\]
Therefore, \eqref{eq:croydonmeasure} follows after we have showed that
\begin{equation}
\lim_{n\to\infty}\sup_{x\in[0,1]} \abs{d_{\frak{T}}(b_{i,j},\pi_{\frak{T}^{(K)}}([x]))-\frac{d^{\T^{\text{GW}}_n}(b_{i,j}^n,\pi_{\T^{\text{GW}}_n(K)}([x]_n))}{d^{\T^{\text{GW}}_n}(e)} d_{\frak{T}}(e)}=0.
\end{equation}
 By \eqref{eq:underlyinggeometricestimate} we have that
\begin{equation}\label{eq:thisshitiseverlasting}
\lim_{n\to\infty} n^{-1/2}d^{\T^{\text{GW}}_n}(e)=\sigma d_{\frak{T}}(e).
\end{equation}

On the other hand, it follows from \eqref{eq:nopuedodarotrabatalla}
that
\begin{equation}
\lim_{n\to\infty} \abs{n^{-1/2}(m_{\hat{w}_n}(\gamma_n(x),\gamma_n(U_i))-m_{\hat{w}_n}(\gamma_n(x),\gamma_n(U_j)))-\sigma (m_{\bold{e}}(x,U_i)-m_{\bold{e}}(x,U_j))}=0
\end{equation}
and 
\begin{equation}
\lim_{n\to\infty} \abs{n^{-1/2}(m_{\hat{w}_n}(\gamma_n(x),\gamma_n(U_j))-m_{\hat{w}_n}(\gamma_n(x),\gamma_n(U_i)))-\sigma(m_{\bold{e}}(x,U_j)-m_{\bold{e}}(x,U_i))}=0.
\end{equation}
 Therefore, from the expressions \eqref{eq:nicaragua1} and \eqref{eq:nicaragua2} we get that 
 \begin{equation}\label{eq:fasho}
\lim_{n\to\infty}\sup_{x\in[0,1]}\abs{n^{-1/2}d^{\T^{\text{GW}}_n}(b_{i,j}^n,\pi_{\T^{\text{GW}}_n(K)}([x]_n))-\sigma d_{\frak{T}}(b_{i,j},\pi_{\frak{T}^{(K)}}([x])) }=0
\end{equation}

Display \eqref{eq:croydonmeasure} follows from \eqref{eq:thisshitiseverlasting} and \eqref{eq:fasho}.

\vspace{0.2cm}

{\it Proof of equation~\eqref{eq:fromcroydontoours}}

\vspace{0.2cm}

Fix $K\in\N$. For any $v\in V(\T^{\text{GW}}_n)$,  consider $v''=\bar{\pi}_{\T^{\text{GW}}_n}(\pi_{\T^{\text{GW}}_n(K)}(v))$ and $v'$ as in Lemma \ref{lem:preasymthin}.
It is clear that $v'\preceq\pi_{\T^{\text{GW}}_n}(v)$. 
We have that
\begin{equation}
\begin{aligned}
&\abs{d^{(n,K)}(\text{root}^*, \pi^{(n,K)}(\phi_{\T^{\text{GW}}_n}(v)))-\sigma_Gn^{-1/2}d^{\T^{\text{GW}}_n}(\text{root},\pi_{\T^{\text{GW}}_n(K)}(v))}\\
\leq
&\abs{d^{(n,K)}(\text{root}^*, \pi^{(n,K)}(\phi_{\T^{\text{GW}}_n}(v)))- d^{(n,K)}(\text{root}^\ast,\phi_{\T^{\text{GW}}_n}(v'))}\\
+&\abs{d^{(n,K)}(\text{root}^\ast,\phi_{\T^{\text{GW}}_n}(v'))-d^{(n,K)}(\text{root}^\ast,\phi_{\T^{\text{GW}}_n}(\bar{\pi}_{\T^{\text{GW}}_n}(v')))}\\
+&\abs{d^{(n,K)}(\text{root}^\ast,\phi_{\T^{\text{GW}}_n}(\bar{\pi}_{\T^{\text{GW}}_n}(v')))-\sigma_Gn^{-1/2}d^{\T^{\text{GW}}_n}(\text{root},v')}\\
+& \sigma_Gn^{-1/2}\abs{d^{\T^{\text{GW}}_n}(\text{root},\pi_{\T^{\text{GW}}_n(K)}(v))-d^{\T^{\text{GW}}_n}(\text{root},v')}.
\end{aligned}
\end{equation}

Since $\pi^{(n,K)}(\phi_{\T^{\text{GW}}_n}(v))=\phi_{\T^{\text{GW}}_n}(v')$ we have that the first summand above is identically $0$.
Moreover, since $v'\in V(\T^{\text{GW}}_n(K)),$ we have by Lemma \ref{lem:analogoustores_2a},
\begin{equation}
\bold{P}\left[ \sup_{v\in V(\T^{\text{GW}}_n)} \abs{d^{(n,K)}(\text{root}^\ast,\phi_{\T^{\text{GW}}_n(K)}(\bar{\pi}_{\T^{\text{GW}}_n}(v')))-\sigma_Gn^{-1/2}d^{\T^{\text{GW}}_n}(\text{root},v')}\geq \epsilon \right]\stackrel{n\to\infty}{\to} 0
\end{equation}
for all $\epsilon>0$.
On the other hand, since $v'\in[\bar{\pi}_{\T^{\text{GW}}_n}(\pi_{ \T^{\text{GW}}_n(K)}(v)),\pi_{ \T^{\text{GW}}_n(K)}(v)]$ and $\pi_{ \T^{\text{GW}}_n(K)}(v)\in \T^{\text{GW}}_n(K)$, it follows from Lemma \ref{close_cut_point} (and a simple union bound on $K$ events) that
\begin{equation}
\bold{P}\left[\max_{v\in V(\T^{\text{GW}}_n(K))}n^{-1/2}\abs{d^{\T^{\text{GW}}_n}(\text{root},\pi_{\T^{\text{GW}}_n(K)}(v))-d^{\T^{\text{GW}}_n}(\text{root},v')}\geq\epsilon\right]\stackrel{n\to\infty}{\to} 0
\end{equation}
and
\begin{equation}
\bold{P}\left[\max_{v\in V(\T^{\text{GW}}_n(K))}n^{-1/2}\abs{d^{(n,K)}(\text{root}^\ast,\phi_{\T^{\text{GW}}_n}(v'))-d^{(n,K)}(\text{root}^\ast,\phi_{\T^{\text{GW}}_n}(\bar{\pi}_{\T^{\text{GW}}_n}(v')))}\geq\epsilon\right]\stackrel{n\to\infty}{\to} 0
\end{equation}
for all $\epsilon>0$.
Therefore we get that
\begin{equation}\label{eq:olimpic}
\bold{P}\left[\sup_{v\in V(\T^{\text{GW}}_n)}\abs{d^{(n,K)}(\text{root}^*, \pi^{(n,K)}(\phi_{\T^{\text{GW}}_n}(v)))-\sigma_Gn^{-1/2}d^{\T^{\text{GW}}_n}(\text{root},\pi_{\T^{\text{GW}}_n}(v))}\geq\epsilon\right]\stackrel{n\to\infty}{\to}0.
\end{equation}

Let $(a^{n,K}(v),b^{n,K}(v))$ be the edge of $\frak{T}^{\text{GW}}_n$ which contains $\pi_{\T^{\text{GW}}_n(K)}(v)$ and $(\alpha^{(n,K)}(v),\beta^{(n,K)}(v))$ be the edge of $\frak{T}^{(n,K)}$ which contains $\pi^{(n,K)}(\phi_{\T^{\text{GW}}_n}(v))$. Since for any $v\in V(\T^{\text{GW}}_n)$,
\[d^{\T^{\text{GW}}_n}(a^{n,K}(v),\pi_{\T^{\text{GW}}_n}(v))=d^{\T^{\text{GW}}_n}(\text{root},\pi_{\T^{\text{GW}}_n}(v))-d^{\T^{\text{GW}}_n}(\text{root}, a^{n,K}(v))\]
(and a similar expression holds for $d^{(n,K)}(\alpha^{n,K}(v), \pi^{(n,K)}(\phi_{\T^{\text{GW}}_n}(v)))$), it can be shown from 
\eqref{eq:olimpic} that 
\begin{equation}\label{eq:kendrick}
\bold{P}\left[\sup_{v\in V(\T^{\text{GW}}_n)}\abs{d^{(n,K)}(\alpha^{n,K}(v), \pi^{(n,K)}(\phi_{\T^{\text{GW}}_n}(v)))-\sigma_Gn^{-1/2}d^{\T^{\text{GW}}_n}(a^{n,K}(v),\pi_{\T^{\text{GW}}_n}(v))}\geq\epsilon\right]\stackrel{n\to\infty}{\to}0.
\end{equation}

On the other hand, it follows from \eqref{eq:underlyinggeometricestimate} and Lemma \ref{lem:eleele} that
\begin{equation}\label{eq:madgibbs}
\bold{P}\left[\sup_{v\in V(\T^{\text{GW}}_n)}\abs{|(\alpha^{n,K}(v),\beta^{n,K}(v)|_{d^{(n,K)}}-\sigma_Gn^{-1/2}|(a^{n,K}(v),b^{(n,K)}(v))|_{d^{\T^{\text{GW}}_n}}}\geq\epsilon\right]\to0.
\end{equation}

On the other hand, it follows from the definition of $\Upsilon$ that 
\begin{equation}
\Upsilon_{n,K}(\pi^{(n,K)}(\phi_{n}([x]_n)))= 
\frac{d^{(n,K)}(e_-^n(x),\pi^{(n,K)}(\phi_{\T^{\text{GW}}_n}([x]_n)))}{d^{(n,K)}(e^n(x))} d_{\frak{T}}(e(x))\langle e(x)\rangle
\end{equation}

Therefore, in view of \eqref{eq:werring}, display \eqref{eq:fromcroydontoours} follows from \eqref{eq:kendrick} and \eqref{eq:madgibbs}.

\end{proof}

The next lemma shows that the measure $\mu^{(n,K)}$ is approximately equal to $\hat{\mu}^{(n,K)}$, scaled by the constant $\nu$ defined above. 
\begin{lemma}\label{lem:Prokhorov} Let $\nu$ be the constant appearing in Lemma \ref{l:convergenceofmuk}. For all $K\in\N$, 
\[\lim_{n\to\infty} \sup_{x\in \frak{T}^{(n,K)}}\abs{ \mu^{(n,K)}(\overrightarrow{\frak{T}_x^{(n,K)}})-\nu\hat{\mu}^{(n,K)}(\overrightarrow{\frak{T}_x^{(n,K)}})} =0,\]
where the convergence is in $ {\bf P}_n$-probability.
\end{lemma}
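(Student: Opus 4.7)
The plan is to reduce both $\mu^{(n,K)}(\overrightarrow{\frak{T}^{(n,K)}_x})$ and $\hat{\mu}^{(n,K)}(\overrightarrow{\frak{T}^{(n,K)}_x})$ to counts over the set of tree-vertices
\[
A_x := \bigl\{w \in V(\T^{\text{GW}}_n): \pi^{(n,K)}(\phi_{\T^{\text{GW}}_n}(w)) \in \overrightarrow{\frak{T}^{(n,K)}_x}\bigr\}
\]
and then apply Theorem~\ref{teo:legalllinedge} to bridge between the tree-edge and $\omega_n$-edge accountings over such sets.

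First, since every non-root vertex of $\T^{\text{GW}}_n$ has exactly one parent-edge, the defining formula for $\hat{\mu}^{(n,K)}$ gives $n\hat{\mu}^{(n,K)}(\overrightarrow{\frak{T}^{(n,K)}_x}) = \abs{A_x}$ up to a $\pm 1$ correction at the root. Unraveling the definition of $v_{\T^{(n,K)}}$ from Section~\ref{sect_mu} similarly expresses $n\mu^{(n,K)}(\overrightarrow{\frak{T}^{(n,K)}_x})$ as an edge count in $\omega_n$ localized to the union of sausages corresponding to descendant cut-points of $x$ in the skeleton. Because $\frak{T}^{(n,K)}$ has at most $2K+1$ distinguished vertices and only $O(K)$ cut-points of $\frak{T}^{(n,K)}$ lie weakly below $x$, up to an $O_K(1)$ boundary error this edge count equals the number $\abs{E_{A_x}(\omega_n)}$ of $\omega_n$-edges both of whose endpoints lie in $\phi_{\T^{\text{GW}}_n}(A_x)$. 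The lemma then reduces to establishing
\[
\sup_{x \in \frak{T}^{(n,K)}} \bigl| n^{-1}\abs{E_{A_x}(\omega_n)} - \nu \, n^{-1}\abs{A_x} \bigr| \xrightarrow[n\to\infty]{} 0
\]
in ${\bf P}_n$-probability.

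The key observation is that $A_x$ is a disjoint union of the subtrees of $\T^{\text{GW}}_n$ rooted at the cut-points of $\frak{T}^{(n,K)}$ descending from $x$, and each such subtree forms a contiguous block $\{v_i,\ldots,v_j\}$ in the lexicographical ordering $v_1<\cdots<v_n$ of $V(\T^{\text{GW}}_n)$. Theorem~\ref{teo:legalllinedge} states that $n^{-1}\abs{E(\text{vol})_n(\lfloor an \rfloor)} \to a\nu$ in probability for each $a\in [0,1]$; monotonicity of $k\mapsto\abs{E(\text{vol})_n(k)}$ together with the continuity of $a\mapsto a\nu$ upgrades this (by a standard dense-subset/Dini-type argument) to convergence uniform in $a$. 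Taking differences controls the $\omega_n$-edge count on arbitrary lex-intervals, and summing over the bounded number of intervals comprising $A_x$ produces the required display. Uniformity in $x$ follows from the fact that as $x$ moves along $\frak{T}^{(n,K)}$, the associated interval-endpoints vary over a discrete set determined by the cut-points of $\T^{(n,K)}$ intersected with the skeleton, so the supremum reduces to a supremum over a collection of interval-endpoints of cardinality $O_{n,K}(1)$ per block-shape.

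The hard part will be the second step, namely verifying that the definition of $v_{\T^{(n,K)}}$ (which involves ordered endpoint pairs $(y,z)$, excludes edges whose $y$-endpoint coincides with the cut-point $x$, and treats edges incident to cut-points asymmetrically) does yield, up to $o(n)$ uniform boundary terms, the clean edge count $\abs{E_{A_x}(\omega_n)}$ with the correct factor $\nu$. One must carefully track how edges straddling between neighboring sausages, and edges with at least one cut-point endpoint in $V^*(\T^{(n,K)})$, contribute to $\mu^{(n,K)}$; fortunately all such discrepancies come from the $O(K)$ distinguished cut-points and the edges incident to them, whose total cardinality is $O_K(1)$ and therefore vanishes after dividing by $n$.
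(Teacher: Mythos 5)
Your proposal follows essentially the same route as the paper: reduce both measures, evaluated on a descendant set, to counts over lexicographical intervals of $V(\T^{\text{GW}}_n)$, and then invoke Theorem~\ref{teo:legalllinedge}. The paper's own proof is extremely terse (three sentences: the measure is determined by its values on lex-intervals of the form $\{x':x'\leq x\}$; descendants form lex-intervals; apply Theorem~\ref{teo:legalllinedge}), whereas you spell out the two ingredients the paper leaves implicit — the upgrade from pointwise convergence at each $a$ to convergence uniform in $a$ via monotonicity of $k\mapsto\abs{E(\text{vol})_n(k)}$ and continuity of the limit $a\mapsto a\nu$, and the accounting of the $O_K(1)$ boundary edges near the distinguished cut-points that distinguishes $\mu^{(n,K)}$ from a clean edge count over $A_x$. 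These are genuine steps that the paper compresses into ``follows by Theorem~\ref{teo:legalllinedge}''; you correctly identify that they must be checked and that they vanish after dividing by $n$, so the proposal is sound and is a fleshed-out version of the paper's argument rather than a different one.
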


\begin{proof}
Begin noticing that the collection $\mu^{(n,K)}(\{x':x'\leq x\}), x\in V(\T^{\text{GW}}_n(K))$, where $\leq$ denotes the lexicographical total order in $\T^{\text{GW}}_n$, determines completely the measure $\mu^{(n,K)}$. 
Therefore, it is enough to show that
\[\lim_{n\to\infty}\sup_{x\in V(T_n(k))} n^{-1}|\mu^{(n,K)}(\{x':x'\leq x\})-c_2\hat{\mu}_n^{(K)}(\{x':x'\leq x\})|= 0,\]
in ${\bf P}_n$-probability.
Notice that the descendants of a given point $x$ is always an interval with respect  to the lexicographical order. 
Hence, the display above follows by Theorem \ref{teo:legalllinedge}.
\end{proof}

We can now prove  Lemma \ref{l:convergenceofmuk}.
\begin{proof}[Proof of Lemma \ref{l:convergenceofmuk}]
Note that
$\Upsilon^{-1}_{n,K}(\overrightarrow{\frak{T}_x^{(K)}})=\overrightarrow{\frak{T}_{\Upsilon^{-1}_{n,K}(x)}^{(n,K)}}$. Therefore, by Lemma \ref{lem:Prokhorov}, it follows that 
\[\lim_{n\to\infty} \sup_{x\in \frak{T}^{(K)}}\abs{\nu\hat{\mu}^{(n,K)}(\Upsilon_{n,K}^{-1}(\overrightarrow{\frak{T}_x^{(K)}}))-\mu^{(n,K)}(\Upsilon_{n,K}^{-1}(\overrightarrow{\frak{T}_x^{(K)}})) }=0. \]
Therefore, the result is a consequence of Lemma \ref{lem:croydonconvergenceofmeasures}. 
\end{proof}

\subsubsection{Proof of Lemma \ref{l:lambdarestolambdak}}

Let us prove the second technical estimate.

\begin{proof}[Proof of Lemma \ref{l:lambdarestolambdak}]
Let $e^n_j=(a_j^n,b_j^n), a_j\prec b_j$ be the edges to $\frak{T}^{(n,K)}$. Let $x\in\frak{T}^{(n,K)}$ and $e^n(x)=(a^n(x),b^n(x))$ be the edge to which $x$ belongs. 
We have that
\[
\lambda^{(n,K)}(\overrightarrow{\frak{T}_x^{(n,K)}})=\frac{\sum_{j: x\prec a_j^n} |e^n_j|_{d^{(n,K)}}+d^{(n,K)}(x,b^n(x))}{| \frak{T}^{(n,K)}|_{d^{(n,K)}}}.
\]
On the other hand, let $e_j=(a_j,b_j), a_j\prec b_j$ be the edges to $\frak{T}^{(K)}$. Let $x\in\frak{T}^{(K)}$ and $e(x)=(a(x),b(x))$ be the edge to which $\Upsilon_{n,K}(x)$ belongs. Then
\[
\lambda^{(K)}(\Upsilon_{n,K}(\overrightarrow{\frak{T}_x^{(n,K)}}))=\frac{\sum_{j: \Upsilon_{n,K}(x)\prec a_j} |e_j|_{d_{\frak{T}}}+d_{\frak{T}}(\Upsilon_{n,K}(x),b(x))}{| \frak{T}^{(K)}|_{d_{\frak{T}}}}.
\]
It follows from \eqref{eq:underlyinggeometricestimate} that
\[\lim_{n\to\infty} \sup_{x\in\frak{T}^{(n,K)}}\abs{\frac{\sum_{j: x\prec a_j^n} |e^n_j|_{d^{(n,K)}}}{| \frak{T}^{(n,K)}|_{d^{(n,K)}}}-\frac{\sum_{j: \Upsilon_{n,K}(x)\prec a_j} |e_j|_{d_{\frak{T}}}}{| \frak{T}^{(K)}|_{d_{\frak{T}}}}}=0, \quad {\bf P}_n\text{-a.s.}\]
It remains to show that
\begin{equation}\label{eq:todoeshorizonte}
\lim_{n\to\infty}\sup_{x\in\frak{T}^{(n,K)}}\abs{\frac{d^{(n,K)}(x,b^n(x))}{| \frak{T}^{(n,K)}|_{d^{(n,K)}}}-\frac{d_{\frak{T}}(\Upsilon_{n,K}(x),b(x))}{| \frak{T}^{(K)}|_{d_{\frak{T}}}}},\quad {\bf P}_n\text{-a.s.}
\end{equation}
Note that, by the definition of $\Upsilon_{n,K}$, 
\[ d_{\frak{T}}(\Upsilon_{n,K}(x),b(x))= d^{(n,K)}(x,b^n(x))\frac{|e_i|_{d_{\frak{T}}}}{|e_i^n|_{d^{(n,K)}}}.\]
Therefore \eqref{eq:todoeshorizonte} follows from \eqref{eq:underlyinggeometricestimate}.
\end{proof}

\subsubsection{Proof of Lemma \ref{lem:lambdaktomuk}}

Let us prove the third technical estimate.
\begin{proof}[Proof of Lemma \ref{lem:lambdaktomuk}]
It follows from \cite[Theorem 3, (ii)]{Al1} that $\lambda^{(K)}$ converges weakly to $\lambda_{\frak{T}}$ as $K\to\infty$. On the other hand, by \cite[Lemma 2.4]{Croydon_crt} we know that the projection $\pi_{\frak{T}^{(K)}}$ satisfies
\[
\lim_{K\to\infty}\sup_{x\in\frak{T}}d_{\frak{T}}(\pi_{\frak{T}^{(K)}}(x),x)=0, \quad {\bf P}_n\text{-a.s.}.
\]
Therefore, it follows from the definition of $\mu^{(K)}$ and $\lambda^{\frak{T}}$ that
$\mu^{(K)}$ converges weakly to $\lambda^{\frak{T}}$ as $K\to\infty$. The two previous facts together with the fact that $\lambda^{\frak{T}}$ has no atoms imply the lemma.
\end{proof}



\appendix

\section{Basic estimates on branching random walks}\label{sect_be_brw}

The  goal for this section is to prove the existence of pivotal points for branching random walks (see Section~\ref{sect_res_est_not} for notations).

Before starting, let us recall some estimates on simple random walks. We denote $p_n(x,y)=P_x[X_n=y]$ the heat kernel of the simple random walk on $\Z^d$. It verifies (see e.g.~\cite{Varo})
\begin{equation}\label{gauss_bound}
 p_n(x,y) \leq C n^{-d/2} \exp\bigl(-\frac{\abs{x-y}^2}n\bigr).
\end{equation}
and (see e.g.~\cite{Grigoryan})
\begin{equation} \label{gauss_bound2}
c n^{-d/2} \exp\bigl(-\frac{\abs{x-y}^2}n\bigr)\leq  p_n(x,y)+ p_{n+1}(x,y)  
\end{equation}

We also set $G_{d}(x,y)={\bf E}_x\Bigl[\sum_{n\in \N} \1{X_n=y}\Bigr]=\sum_{n\in \N} p_n(x,y)$ the Green function in $\Z^d$. For $d\geq 3$, we have (see e.g.~Theorem 3.5 in~\cite{LG4})
\begin{equation}
\label{bound_green}
G_{d}(x,y)\leq C \abs{x-y}^{-(d-2)}.
\end{equation}

\subsection{Probability of intersection of critical branching random walks}

We recall that $\mathcal{B}(x)$ and $\T^{\text{GW}}_*$ were defined in Section~\ref{sect_res_est_not} . We have
\begin{lemma}\label{appendix_lemma}
Fix $d\geq 3$. For any $x,y\in \Z^d$, we have
\[
{\bf E}[ \abs{\mathcal{B}(x)\cap \mathcal{B}(y)}] \leq C \abs{x-y}^{-(d-4)}.
\]
\end{lemma}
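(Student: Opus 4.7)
The plan is to reduce this expected intersection to a classical convolution estimate for the Green function of simple random walk on $\mathbb{Z}^d$.

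First, I would use that by construction $\mathcal{B}(x)$ and $\mathcal{B}(y)$ are independent: they arise from two independent copies of the modified Galton--Watson tree $\T^{\text{GW}}_*$ and independent families of i.i.d.\ simple-random-walk increments. Writing $N_x(z) := \#\{v \in \T^{\text{GW}}_*(x) : \phi(v) = z\}$, the trivial bound $\mathbf{1}\{z \in \mathcal{B}(x)\} \leq N_x(z)$ gives
\[
{\bf E}[|\mathcal{B}(x) \cap \mathcal{B}(y)|] \;=\; \sum_{z \in \Z^d} \PR[z \in \mathcal{B}(x)]\,\PR[z \in \mathcal{B}(y)] \;\leq\; \sum_{z \in \Z^d} {\bf E}[N_x(z)]\,{\bf E}[N_y(z)].
\]

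Next, I would compute ${\bf E}[N_x(z)]$ by decomposing over the generation of the vertex $v$. The root of $\T^{\text{GW}}_*$ has $\tilde Z - 1$ children with mean ${\bf E}[Z^2] - 1 = \sigma_Z^2$ (finite by the hypothesis ${\bf E}[Z^2] < \infty$), and each of their descending subtrees is a critical ${\bf P}$-Galton--Watson tree, whose per-generation expected size is $1$. Hence ${\bf E}[Z_0(\T^{\text{GW}}_*)] = 1$ and ${\bf E}[Z_n(\T^{\text{GW}}_*)] = \sigma_Z^2$ for all $n \geq 1$. Since, conditionally on $\T^{\text{GW}}_*$, the position of a vertex at generation $n$ is the endpoint of an $n$-step simple random walk from $x$, we obtain
\[
{\bf E}[N_x(z)] \;=\; \mathbf{1}\{x=z\} + \sigma_Z^2 \sum_{n \geq 1} p_n(x,z) \;\leq\; C\bigl(G_d(x,z) + \mathbf{1}\{x=z\}\bigr) \;\leq\; C\,(|x-z|\vee 1)^{-(d-2)},
\]
where the last inequality is~\eqref{bound_green}.

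Substituting, the lemma reduces to the deterministic convolution estimate
\[
\sum_{z \in \Z^d} (|x-z|\vee 1)^{-(d-2)}\,(|y-z|\vee 1)^{-(d-2)} \;\leq\; C\,|x-y|^{-(d-4)}.
\]
I would prove this classical bound by splitting $\Z^d$ into three regions depending on whether $|x-z| \leq |x-y|/2$, $|y-z| \leq |x-y|/2$, or both exceed $|x-y|/2$. In the first region, $|y-z| \geq |x-y|/2$ bounds the second factor by $C|x-y|^{-(d-2)}$, while a radial comparison shows $\sum_{|x-z| \leq r} (|x-z| \vee 1)^{-(d-2)} \leq C r^2$; applied with $r = |x-y|/2$, this gives a contribution $C|x-y|^{-(d-4)}$. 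The second region is symmetric. In the third region, a dyadic decomposition into shells $\{z : 2^k |x-y| \leq |x-z| < 2^{k+1}|x-y|\}$ ($k \geq 0$) around $x$, on each of which both factors are of order $(2^k|x-y|)^{-(d-2)}$, gives a geometric series that sums to the same bound (provided $d$ is large enough for convergence, which is the regime in which the estimate is applied via Lemma~\ref{cutp_1}).

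The main obstacle is this last convolution estimate: the moment reduction in the first two paragraphs is routine, whereas the region decomposition requires careful accounting to obtain the sharp $|x-y|^{-(d-4)}$ decay.
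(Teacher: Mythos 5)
Your proof is correct, and it shares the paper's overall structure (independence of $\mathcal{B}(x)$ and $\mathcal{B}(y)$, then a first-moment bound using ${\bf E}[Z_n(\T^{\text{GW}}_*)]=\sigma_Z^2$ per generation) but diverges in how it handles the remaining double sum $\sum_{i,j}\sum_z p_i(x,z)p_j(y,z)$. The paper collapses the $z$-sum first via the Chapman--Kolmogorov identity $\sum_z p_i(x,z)p_j(y,z)=p_{i+j}(x,y)$, leaving $\sum_k k\,p_k(x,y)$, which it compares to the $(d-2)$-dimensional Green function through the two-sided Gaussian heat-kernel bounds \eqref{gauss_bound}--\eqref{gauss_bound2}. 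You instead collapse the generation sums first, landing on ${\bf E}[N_x(z)]\leq C\,G_d(x,z)$, and then handle the residual $z$-sum as the convolution $\sum_z G_d(x,z)G_d(y,z)$ by a three-region dyadic decomposition of $\Z^d$. Both routes are proving, in different guises, that $G_d * G_d \lesssim G_{d-2}$: the paper's way is shorter and leans on the semigroup property, while yours is more elementary and self-contained at the cost of the bookkeeping in the third region (which you carry out correctly, since $|y-z|\gtrsim 2^k|x-y|$ on the $k$-th shell for $k\geq 1$ and $|y-z|\gtrsim|x-y|$ for $k\in\{-1,0\}$). One minor caveat worth noting explicitly: neither argument makes sense literally for $d\in\{3,4\}$ --- your convolution diverges and the paper's $G_{d-2}$ is undefined --- and for such $d$ the stated bound $|x-y|^{-(d-4)}$ is non-decaying and vacuous anyway; you flagged this, and the lemma is only ever invoked for $d\geq 5$ via Corollary~\ref{bound_q} and Lemma~\ref{cutp_1b}.
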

\begin{proof}
In order to estimate the number of intersections between $\mathcal{B}(x)$ and $\mathcal{B}(y)$, we shall do a union bound on all generations $i^{(x)}$ of $\T^{\text{GW}}_*(x)$ (resp.~$i^{(y)}$ of $\T^{\text{GW}}_*(y)$) and on all points $z\in \T^{\text{GW}}_*(x)$ with $\abs{z}= i^{(x)}$ (resp.~$z\in \T^{\text{GW}}_*(y)$ with $\abs{z}=i^{(y)}$). This yields, averaging over the randomness of the embedding,
\begin{align*}
& {\bf E}[ \abs{\mathcal{B}(x)\cap \mathcal{B}(y)}]\\
\leq & {\bf E}\Bigl[ \sum_{\substack{i^{(x)} \in [0,H(\T^{\text{GW}}_*(x))] \\ i^{(y)}\in [0,H(\T^{\text{GW}}_*(y))]}} \sum_{\substack{z\in \T^{\text{GW}}_*(x),\ \abs{z}=i^{(x)} \\z\in \T^{\text{GW}}_*(y),\ \abs{z}=i^{(y)} }}\sum_{z\in \Z^d}p_{i^{(x)}}(x,z)p_{i^{(y)}}(y,z)\Bigr] \\
\leq &  {\bf E}\Bigl[ \sum_{\substack{i^{(x)} \in [0,H(\T^{\text{GW}}_*(x))] \\ i^{(y)}\in [0,H(\T^{\text{GW}}_*(y))]}}  \sum_{\substack{z\in \T^{\text{GW}}_*(x),\ \abs{z}=i^{(x)} \\z\in \T^{\text{GW}}_*(y),\ \abs{z}=i^{(y)} }}p_{i^{(x)}+i^{(y)}}(x,y)\Bigr] \\
\leq &   {\bf E}\Bigl[ \sum_{\substack{i^{(x)} \in [0,H(\T^{\text{GW}}_*(x))] \\ i^{(y)}\in [0,H(\T^{\text{GW}}_*(y))]}} Z_{i^{(x)}}(\T^{\text{GW}}_*(x)) Z_{i^{(y)}}(\T^{\text{GW}}_*(y)) p_{i^{(x)}+i^{(y)}}(x,y)\Bigr], \end{align*}
where $Z_{n}(\T^{\text{GW}}_*(z))$ denotes the cardinal of $\T^{\text{GW}}_*(z)$ at generation $n$.

Since the trees $\T^{\text{GW}}_*(x)$ and $\T^{\text{GW}}_*(y)$ are critical Galton-Watson trees with a special initial distribution $\tilde{Z}_0$. The distribution of $1+\tilde{Z}_0$ is a size-biased version of $Z_1$ under ${\bf P}$ and hence ${\bf E}[\tilde{Z}_1]<\infty$ since ${\bf E}[Z_1^2]<\infty$. Recalling that $E[Z_1]=1$, we know that $E[Z_{i^{(x)}}(\T^{\text{GW}}_*(x))]=Z_{i^{(y)}}(\T^{\text{GW}}_*(y))={\bf E}[\tilde{Z}_1]<\infty$, which implies
\begin{align*}
{\bf E}[ \abs{\mathcal{B}(x)\cap \mathcal{B}(y)}] & \leq  C \sum_{i^{(x)}, i^{(y)}\in \N^2}  p_{i^{(x)}+i^{(y)}}(x,y)\\
&\leq  C\sum_{k=0}^{\infty} k p_k(x,y) \\
&\leq  C \sum_{k=0}^{\infty} k k^{-d/2} \exp\bigl(-\frac{\abs{x-y}^2}k\bigr),
\end{align*}
where we used~\eqref{gauss_bound}. By using~\eqref{gauss_bound2} as well as and~\eqref{bound_green}, we see that
\[
{\bf E}[ \abs{\mathcal{B}(x)\cap \mathcal{B}(y)}] \leq  CG_{d-2}(x,y) \leq  C\abs{x-y}^{-(d-4)}.
\]
\end{proof}

Let us set
\[
q(x,y)={\bf P}[ \mathcal{B}(x)\cap \mathcal{B}(y)\neq \emptyset].
\]

By Markov's inequality, we have
\begin{corollary}\label{bound_q}
Fix $d\geq 3$.  For any $x,y\in \Z^d$, we have
\[
q(x,y) \leq C \abs{x-y}^{-(d-4)}.
\]
\end{corollary}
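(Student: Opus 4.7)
The plan is essentially immediate from the preceding lemma. I would observe that the event $\{\mathcal{B}(x) \cap \mathcal{B}(y) \neq \emptyset\}$ coincides with the event $\{|\mathcal{B}(x) \cap \mathcal{B}(y)| \geq 1\}$, so by Markov's inequality
\[
q(x,y) = {\bf P}\bigl[|\mathcal{B}(x) \cap \mathcal{B}(y)| \geq 1\bigr] \leq {\bf E}\bigl[|\mathcal{B}(x) \cap \mathcal{B}(y)|\bigr].
\]
Applying Lemma~\ref{appendix_lemma} to the right-hand side then yields the bound $q(x,y) \leq C|x-y|^{-(d-4)}$.

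There is no real obstacle here: all the work is absorbed in the first-moment estimate of Lemma~\ref{appendix_lemma}, which in turn rested on the Green function bound~\eqref{bound_green} via a union bound over generations of the two modified Galton–Watson trees $\T^{\text{GW}}_*(x)$ and $\T^{\text{GW}}_*(y)$ and the heat-kernel convolution identity $\sum_z p_i(x,z) p_j(y,z) = p_{i+j}(x,y)$. The corollary is therefore just the standard first-moment-method packaging of that estimate.
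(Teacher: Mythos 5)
Your proof is correct and is exactly the paper's argument: the paper itself simply states that the corollary follows from Lemma~\ref{appendix_lemma} ``by Markov's inequality,'' which is precisely the first-moment bound you spell out. Nothing further to add.
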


\subsection{Probability of intersection of two IICBRW}\label{proof_intersect}

We recall that $\mathcal{B}(X_{(R,\infty)})$ for $R>0$ was defined in Section~\ref{sect_res_est_not}. Let us first prove the following lemma
\begin{lemma}\label{cutp_0}
Fix $d\geq 3$. For $\alpha(n)$ a simple random walk started at $0$, we have for any $R>0$
\[
{\bf P}[\mathcal{B}(0)\cap \mathcal{B}(\alpha((R,\infty))) \neq \emptyset] \leq C R^{-(d-6)/2}.
\]
\end{lemma}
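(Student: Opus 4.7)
The strategy is to reduce to an expected-number-of-intersections estimate and apply Lemma~\ref{appendix_lemma} together with standard heat kernel bounds. Concretely, write $\mathcal{B}(\alpha((R,\infty))) = \bigcup_{k>R} \mathcal{B}^{(k)}(\alpha(k))$, where the $\mathcal{B}^{(k)}(\cdot)$ are independent copies of the modified critical spatial tree (independent also of the backbone). By a union bound and Markov's inequality,
\[
{\bf P}^{-\infty,\infty}[\mathcal{B}(0) \cap \mathcal{B}(\alpha((R,\infty))) \neq \emptyset] \leq \sum_{k>R} {\bf E}^{-\infty,\infty}\bigl[\abs{\mathcal{B}(0) \cap \mathcal{B}^{(k)}(\alpha(k))}\bigr].
\]

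Next, I would condition on $\alpha(k)=y$. Since $\mathcal{B}(0)$ and $\mathcal{B}^{(k)}(y)$ are then independent copies of the branching structures based at $0$ and $y$, Lemma~\ref{appendix_lemma} yields
\[
{\bf E}^{-\infty,\infty}\bigl[\abs{\mathcal{B}(0) \cap \mathcal{B}^{(k)}(\alpha(k))}\bigr] = \sum_{y\in \Z^d} p_k(0,y)\, {\bf E}[\abs{\mathcal{B}(0) \cap \mathcal{B}(y)}] \leq C \sum_{y\in \Z^d} p_k(0,y)\, (\abs{y}\vee 1)^{-(d-4)},
\]
where the $y=0$ (and small $|y|$) contribution is absorbed into the constant via the Green-function bound $G_{d-2}(0,y) \leq C(1+\abs{y})^{-(d-4)}$ used in the proof of Lemma~\ref{appendix_lemma}.

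A standard heat-kernel change of variables $y=\sqrt{k}\,u$ combined with~\eqref{gauss_bound} then gives
\[
\sum_{y\in \Z^d} p_k(0,y)(\abs{y}\vee 1)^{-(d-4)} \leq C\, k^{-(d-4)/2} \int_{\R^d} e^{-\abs{u}^2} \abs{u}^{-(d-4)}\, du \leq C\, k^{-(d-4)/2},
\]
the integral being convergent (at infinity trivially, and at the origin because $\abs{u}^{-(d-4)}$ is integrable in dimension $d$ as soon as $d-4 < d$). Finally, summing over $k > R$,
\[
\sum_{k>R} k^{-(d-4)/2} \leq C R^{-(d-6)/2}
\]
provided $d>6$; for $d\leq 6$ the asserted bound is trivial since $R^{-(d-6)/2}\geq 1$.

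The only mild obstacle is the bookkeeping near $y=0$, which is why I would work with the uniform form $(|y|\vee 1)^{-(d-4)}$ (equivalently use the Green function estimate directly) rather than the pointwise bound of Lemma~\ref{appendix_lemma}; everything else reduces to the elementary heat-kernel computation above.
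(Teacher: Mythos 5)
Your proof is correct and follows essentially the same route as the paper: decompose $\mathcal{B}(\alpha((R,\infty)))$ along the backbone, use a union bound over $k>R$, condition on $\alpha(k)=y$, invoke the expected-intersection estimate of Lemma~\ref{appendix_lemma}, and finish with the standard heat-kernel computation. The paper phrases the union bound via Corollary~\ref{bound_q} (which itself is Lemma~\ref{appendix_lemma} plus Markov) and partitions $\Z^d$ into spheres before approximating by an integral, whereas you apply Markov directly and use the continuous change of variables $y=\sqrt{k}\,u$ — these are cosmetic variants of the same computation. Your two added remarks are welcome tidying: noting that the $y=0$ contribution is harmless (via the uniform Green-function form of the bound rather than the singular $|y|^{-(d-4)}$), and noting that the claimed inequality is vacuous for $d\leq 6$ (so the convergence requirement $\sum_{k>R}k^{-(d-4)/2}<\infty$, which needs $d>6$, costs nothing). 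The paper is slightly less explicit on both points but is proving the same thing.
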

\begin{proof}

We have by~\eqref{gauss_bound} and Corollary~\ref{bound_q}
\begin{align*}
{\bf P}[\mathcal{B}(0)\cap \mathcal{B}(\alpha((R,\infty))) \neq \emptyset]  &\leq  \sum_{n>R} \sum_{y\in\Z^d} p_n(0,y) q(0,y) \\
&\leq C \sum_{n>R} \sum_{y\in\Z^d}  n^{-d/2} \exp\bigl(-\frac{\abs{y}^2}n\bigr) \abs{y}^{-(d-4)} \\
&\leq  C \sum_{n>R} n^{-d/2} \sum_{k=0}^{\infty} k^{d-1} k^{-(d-4)} \exp\bigl(-\frac{\abs{k}^2}n\bigr),
\end{align*}
where, to obtain the last line, we decided to partition $\Z^d$ into spheres. From here, we may notice that
\[
\sum_{k=0}^{\infty} k^3 \exp\bigl(-\frac{\abs{k}^2}n\bigr) \leq C \int_0^{\infty} x^3\exp\bigl(-\frac{x^2}n\bigr) dx \leq C n^{2}\int_0^{\infty} u^3\exp(-u^2)du,
\]
which leads us to the following bound
\begin{align*}
{\bf P}[\mathcal{B}(0)\cap \mathcal{B}(X_{(R,\infty)}) \neq \emptyset] &\leq C  \sum_{n>R} n^{-d/2} n^{2} \\
 &\leq  C\sum_{n>R} n^{-(d-4)/2} \\
 &\leq  CR^{-(d-6)/2}.
\end{align*}
\end{proof}

Our main estimate for this section is the following
\begin{lemma}\label{cutp_1b}
Fix $d\geq 3$. For $(\alpha(n))_{n\in \Z}$ a simple random walk started at $0$, we have,
\[
{\bf P}[\mathcal{B}(\alpha((-\infty, 0)))\cap \mathcal{B}(\alpha((R,\infty))) \neq \emptyset] \leq C R^{-(d-8)/2}.
\]
\end{lemma}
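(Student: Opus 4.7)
The plan is to extend the computation of Lemma \ref{cutp_0} by a straightforward union bound over pairs of backbone indices, then to use independence of the branches attached at different backbone points (conditionally on $\alpha$) together with Corollary \ref{bound_q} and a Gaussian heat kernel estimate.

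First I would write, via a union bound,
\[
{\bf P}^{-\infty,\infty}[\mathcal{B}(\alpha((-\infty, 0)))\cap \mathcal{B}(\alpha((R,\infty))) \neq \emptyset] \leq \sum_{i<0}\sum_{j>R} {\bf P}^{-\infty,\infty}[\mathcal{B}(\alpha(i))\cap \mathcal{B}(\alpha(j)) \neq \emptyset].
\]
Conditionally on the backbone $\alpha$, the branches $\mathcal{B}(\alpha(i))$ and $\mathcal{B}(\alpha(j))$ (for $i\neq j$) are independent copies of $\mathcal{B}(\alpha(i))$ and $\mathcal{B}(\alpha(j))$ in the sense of Section \ref{sect_res_est_not}. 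Hence each summand equals ${\bf E}[q(\alpha(i),\alpha(j))]$, and by Corollary \ref{bound_q} together with translation invariance it is bounded by $C\,{\bf E}[\abs{\alpha(j)-\alpha(i)}^{-(d-4)}]$.

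Next, since $\alpha(j)-\alpha(i)$ has the law of a simple random walk at time $j-i$, I would compute, exactly as in the proof of Lemma \ref{cutp_0},
\[
{\bf E}\bigl[\abs{\alpha(j)-\alpha(i)}^{-(d-4)}\bigr]\leq C\sum_{k\geq 1} k^{d-1}k^{-(d-4)}(j-i)^{-d/2}\exp\bigl(-k^{2}/(j-i)\bigr) \leq C(j-i)^{-(d-4)/2},
\]
where the last bound follows from $\sum_{k\geq 1} k^{3}\exp(-k^{2}/n)\leq Cn^{2}$. (The singular point $k=0$ contributes an $O((j-i)^{-d/2})$ term which is smaller.)

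Finally I would carry out the combinatorial sum. Writing $n=j-i$, the number of pairs $(i,j)$ with $i<0$, $j>R$ and $j-i=n$ equals $\max(0,n-R-1)\leq n$, so
\[
\sum_{i<0}\sum_{j>R}(j-i)^{-(d-4)/2}\leq \sum_{n>R} n\cdot n^{-(d-4)/2}=\sum_{n>R}n^{-(d-6)/2}\leq CR^{-(d-8)/2},
\]
which gives the claim. The only mildly subtle point is to note that the conditional independence of different branches under ${\bf P}^{-\infty,\infty}$ — which is built into the construction of Section \ref{sect_res_est_not} — is what allows the bound on each summand to be written as ${\bf E}[q(\alpha(i),\alpha(j))]$; the rest is the same heat kernel accounting already used in Lemma \ref{cutp_0}, just applied to pairs of backbone points rather than one pair (backbone origin, backbone tail).
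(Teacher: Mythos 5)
Your proof is correct and follows essentially the same route as the paper's: a union bound over pairs of backbone indices, the per-pair estimate ${\bf E}[q(\alpha(i),\alpha(j))]\le C(j-i)^{-(d-4)/2}$ obtained from Corollary~\ref{bound_q} and the Gaussian heat kernel bound, and a count of the pairs at each separation $n=j-i$. The paper merely organizes the same double sum differently, summing first over the negative index and invoking translation invariance together with Lemma~\ref{cutp_0}, whose proof contains exactly your per-pair computation.
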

\begin{proof}
We have
\begin{align*}
{\bf P}[\mathcal{B}(\alpha((-\infty, 0)))\cap \mathcal{B}(\alpha((R,\infty))) \neq \emptyset]  & \leq \sum_{l=0}^{\infty} {\bf P}[\mathcal{B}(\alpha(-l))\cap \mathcal{B}(\alpha((R,\infty))) \neq \emptyset]  \\
& \leq   \sum_{l=0}^{\infty} {\bf P}[\mathcal{B}(0)\cap \mathcal{B}(\alpha((R+l,\infty))) \neq \emptyset],
\end{align*}
where we use translation invariance. Then, we can see by Lemma~\ref{cutp_0}
\begin{align*}
 \sum_{l=0}^{\infty} {\bf P}[\mathcal{B}(0)\cap \mathcal{B}(\alpha((R+l,\infty))) \neq \emptyset] & \leq  C\sum_{l=0}^{\infty} (R+l)^{-(d-6)/2} \\
& \leq  C \int_0^{\infty} (R+x)^{-(d-6)/2} dx \\
&\leq  C \int_R^{\infty} x^{-(d-6)/2} dx \\
& \leq  C R^{-(d-8)/2},
\end{align*}
and the lemma follows.
\end{proof}

\subsection{Existence of pivotal points}~\label{proof_pivotal}

The notion of pivotal point was defined in Section~\ref{sect_res_est_not}. The goal of this part is to prove the existence of such points in high enough dimensions, more specifically we want to prove Lemma~\ref{cutp_2}. Our proof is inspired by~\cite{erdHos1960some}.
\begin{proof}
Denote
\[
r(n)={\bf P}[\mathcal{B}(\alpha((-\infty, 0]))\cap \mathcal{B}(\alpha([1,n])) = \emptyset],
\]
and 
\[
 s(k)={\bf P}[\mathcal{B}(\alpha((-\infty, 0]))\cap \mathcal{B}(\alpha(k)) \neq \emptyset] .
\]

Notice that $r(n)$ is a non-negative non-increasing function of $n$ and as such admits a limit $\alpha:=\lim_{n\to \infty} r(n)$. It turns out that $\alpha=P[0\text{ is a pivotal point}]$ and hence our goal is to show that $\alpha>0$. We see that
\begin{equation}\label{cut_series}
\sum_{k=1}^{\infty} s(k) <\infty,
\end{equation}
since $s(k)\leq C k^{-(d-8)/2}$ by Lemma~\ref{cutp_1b} and $d>10$.

On the event $\{\mathcal{B}(\alpha((-\infty, 0]))\cap \mathcal{B}(\alpha([1,n])) \neq \emptyset \}$, we know that there exists $k\in [1,n]$ such that $\mathcal{B}(\alpha(k))\cap \mathcal{B}(\alpha((-\infty, 0])) \neq \emptyset$. Decomposing along the largest such $k$, we see that
\[
\sum_{k=1}^n s(k)r(n-k)=1-r(n).
\]

Moreover $r(n)$ is a non-increasing function of $n$ with $r(n)\leq 1$, which implies that
\[
r(n-\lfloor n/2\rfloor ) \sum_{k=1}^{\lfloor n/2\rfloor } s(k) +\sum_{\lfloor n/2\rfloor }^n s(k) \geq 1-r(n).
\]

Recalling~\eqref{cut_series}, we can take the limit as $n\to \infty$ in the previous equation, which allows us to see that
\[
\alpha \sum_{k=1}^{\infty} s(k)\geq 1-\alpha,
\]
which implies that $\alpha>0$. As mentioned at the beginning of the proof, this proves the lemma.
\end{proof}

\section*{Glossary of notations}

\vspace{0.5cm}

\def\qq{&}
\begin{center}
\halign{
#\quad\hfill&#\quad\hfill&\quad\hfill#\cr
${\bf P}$ \qq Law of the Galton-Watson tree \fff{P}
$Z$ \qq Number of offspring in a Galton-Watson \fff{Z}
$\T^{\text{GW}}$ \qq a Galton-Watson tree \fff{gw}
${\bf P}_n$ \qq law of the Galton-Watson tree with $n$ vertices \fff{Pn}
$\T^{\text{GW}}_n$ \qq a Galton-Watson tree conditioned to have $n$ vertices \fff{phin}
$\phi_{\T^{\text{GW}}_n}$ \qq random embedding of $\T^{\text{GW}}_n$ \fff{phin}
$\omega_n$ \qq graph of the critical branching random walk from $\T^{\text{GW}}_n$ \fff{omega}
$\mathfrak{T}$ \qq The Continuum random tree (CRT) \fff{crt}
$\phi_{\mathfrak{T}}(\mathfrak{T})$ \qq  integrated super-Brownian excursion (ISE) \fff{ise}
$\sigma_Z$ \qq variance of $Z$ \fff{sigZ}
$B^{ISE}$  \qq Brownian motion on the ISE \fff{bise}
$B^{CRT}$  \qq Brownian motion on the CRT \fff{bise}
$\mathfrak{T}^{(K)}$ \qq $K$-CRT \fff{kcrt}
$B^{(K)}$ \qq Brownian motion on $K$-CRT \fff{bcrtk}
$B^{K-ISE}$\qq Brownian motion on $K$-ISE \fff{bisek} 
$(G_n,(V_i^n)_{i\in \N})_{n\in \N}$ \qq sequence of random augmented graphs \fff{gv}
$\T^{(n,K)}$ \qq  $K$-skeleton of our $G_n$ \fff{mass}
$\text{root}^*$ \qq root of the skeleton of $\T^{(n,K)}$ \fff{rootstar}
$\mathfrak{T}^{(n,K)}$ \qq  version of $\T^{(n,K)}$ with fewer vertices \fff{mass}
$V^*(\T^{(n,K)})$ \qq  vertices of $\T^{(n,K)}$ corresponding to cut-points \fff{mass}
$E^*(\T^{(n,K)})$ \qq  edges of $\T^{(n,K)}$  between vertices of $V^*(\T^{(n,K)})$ \fff{mass}
$\phi^{(n,K)}$ \qq random embedding of $\T^{(n,K)}$ \fff{mass}
$v^{(n,K)}(x)$ \qq volume of the sausage attached at $x\in V^*(\T^{(n,K)})$ \fff{mass}
$\reff^{(n,K)}(e)$ \qq resistance of the edge $e \in E(\T^{(n,K)})$ \fff{mass}
$\pi^{(n,K)}(x)$ \qq projection of $x\in G_n$ onto $\T^{(n,K)}$ \fff{mass}
$d^{(n,K)}(\cdot,\cdot)$ \qq rescaled intrinsic distance on $\T^{(n,K)}$ \fff{mass}
 $d^{(n,K)}_{\text{res}}(\cdot,\cdot)$ \qq rescaled resistance distance on $\T^{(n,K)}$ \fff{mass}
  $\mu^{(n,K)}$ \qq volume measure on $\T^{(n,K)}$ \fff{mass}
  $\lambda_{T}$  \qq renormalized Lebesgue measure on $T$ \fff{lambdaT}
$\Delta^{(n,K)}_{\Z^d}$ \qq maximal $\Z^d$-diameter of sausages \fff{delta1}
$\Delta^{(n,K)}_{\Z^d}$ \qq maximal $\omega_n$-diameter of sausages \fff{delta2}
$\overrightarrow{\T^{(n,K)}_{x}}$ \qq are the descendants of $x$ in $\T^{(n,K)}$ \fff{arrowt}
$\pi_n(x)$ \qq  first cut-point encountered on any path from $x$ to $0$ \fff{pin}

\cr}\end{center}

\bibliographystyle{plain}
\bibliography{RWBRW}

\def\cprime{$'$}
\begin{thebibliography}{10}

\bibitem{adj}
Louigi Addario-Berry, Luc Devroye, and Svante Janson.
\newblock Sub-{G}aussian tail bounds for the width and height of conditioned
  {G}alton-{W}atson trees.
\newblock {\em Ann. Probab.}, 41(2):1072--1087, 2013.

\bibitem{Aldous_fringe}
David Aldous.
\newblock Asymptotic fringe distributions for general families of random trees.
\newblock {\em Ann. Appl. Probab.}, 1(2):228--266, 1991.

\bibitem{Al1}
David Aldous.
\newblock The continuum random tree. {I}.
\newblock {\em Ann. Probab.}, 19(1):1--28, 1991.

\bibitem{Al2}
David Aldous.
\newblock The continuum random tree. {II}. {A}n overview.
\newblock In {\em Stochastic analysis ({D}urham, 1990)}, volume 167 of {\em
  London Math. Soc. Lecture Note Ser.}, pages 23--70. Cambridge Univ. Press,
  Cambridge, 1991.

\bibitem{Al3}
David Aldous.
\newblock The continuum random tree. {III}.
\newblock {\em Ann. Probab.}, 21(1):248--289, 1993.

\bibitem{Al4}
David Aldous.
\newblock Tree-based models for random distribution of mass.
\newblock {\em J. Statist. Phys.}, 73(3-4):625--641, 1993.

\bibitem{aldous1998Brownian}
David~J Aldous.
\newblock Brownian excursion conditioned on its local time.
\newblock {\em Elect. Comm. in Probab}, 3:79--90, 1998.

\bibitem{AO}
S.~Alexander and R.~Orbach.
\newblock Density of states on fractals: fractons.
\newblock {\em J. Theoret. Probab.}, 43:625--631, 1982.

\bibitem{angel2013scaling}
Omer Angel, Jesse Goodman, Mathieu Merle, et~al.
\newblock Scaling limit of the invasion percolation cluster on a regular tree.
\newblock {\em The Annals of Probability}, 41(1):229--261, 2013.

\bibitem{BJKS}
Martin~T. Barlow, Antal~A. J{\'a}rai, Takashi Kumagai, and Gordon Slade.
\newblock Random walk on the incipient infinite cluster for oriented
  percolation in high dimensions.
\newblock {\em Comm. Math. Phys.}, 278(2):385--431, 2008.

\bibitem{Barlow_Kumagai}
Martin~T. Barlow and Takashi Kumagai.
\newblock Random walk on the incipient infinite cluster on trees.
\newblock {\em Illinois J. Math.}, 50(1-4):33--65 (electronic), 2006.

\bibitem{BCFa}
G\'erard Ben~Arous, Manuel Cabezas, and Alexander Fribergh.
\newblock The ant in high dimensional labyrinths.
\newblock {\em Arxiv}, 2015.

\bibitem{benarous_fribergh}
G\'erard Ben~Arous and Alexander Fribergh.
\newblock Biased random walks on random graphs.
\newblock {\em arXiv preprint arXiv:1406.5076}, 2014.

\bibitem{bolthausen2003cut}
Erwin Bolthausen, Alain-Sol Sznitman, and Ofer Zeitouni.
\newblock Cut points and diffusive random walks in random environment.
\newblock {\em Ann. Inst. H. Poincar\'e Probab. Statist.}, 39(3):527--555,
  2003.

\bibitem{Croydon_crt}
David Croydon.
\newblock Convergence of simple random walks on random discrete trees to
  {B}rownian motion on the continuum random tree.
\newblock {\em Ann. Inst. Henri Poincar\'e Probab. Stat.}, 44(6):987--1019,
  2008.

\bibitem{Croydon_arc}
David~A. Croydon.
\newblock Hausdorff measure of arcs and {B}rownian motion on {B}rownian spatial
  trees.
\newblock {\em Ann. Probab.}, 37(3):946--978, 2009.

\bibitem{deGennes1976}
P.~de~Gennes.
\newblock La percolation: un concept unificateur.
\newblock {\em La Recherche}, 7:919--927, 1976.

\bibitem{Sl1}
Eric Derbez and Gordon Slade.
\newblock The scaling limit of lattice trees in high dimensions.
\newblock {\em Comm. Math. Phys.}, 193(1):69--104, 1998.

\bibitem{drmota1997profile}
Michael Drmota and Bernhard Gittenberger.
\newblock On the profile of random trees.
\newblock {\em Random Structures and Algorithms}, 10(4):421--451, 1997.

\bibitem{DuLG}
Thomas Duquesne and Jean-Fran{\c{c}}ois Le~Gall.
\newblock Probabilistic and fractal aspects of {L}\'evy trees.
\newblock {\em Probab. Theory Related Fields}, 131(4):553--603, 2005.

\bibitem{durrett2010probability}
Rick Durrett.
\newblock {\em Probability: theory and examples}, volume~3.
\newblock Cambridge university press, 2010.

\bibitem{erdHos1960some}
P~Erd{\H{o}}s and SJ~Taylor.
\newblock Some intersection properties of random walk paths.
\newblock {\em Acta Mathematica Hungarica}, 11(3):231--248, 1960.

\bibitem{fvdh2}
Robert Fitzern and Remco van~der Hofstad.
\newblock Generalized approach to the non-backtracking lace expansion.
\newblock {\em arXiv preprint arXiv:1506.07969}, 2015.

\bibitem{fvdh1}
Robert Fitzern and Remco van~der Hofstad.
\newblock Nearest-neighbor percolation function is continuous for $d>10$.
\newblock {\em arXiv preprint arXiv:1506.07977}, 2015.

\bibitem{geiger1999elementary}
Jochen Geiger et~al.
\newblock Elementary new proofs of classical limit theorems for galton-watson
  processes.
\newblock {\em Journal of applied probability}, 36(2):301--309, 1999.

\bibitem{Grigoryan}
Alexander Grigor{\cprime}yan and Andras Telcs.
\newblock Sub-{G}aussian estimates of heat kernels on infinite graphs.
\newblock {\em Duke Math. J.}, 109(3):451--510, 2001.

\bibitem{Hara}
Takashi Hara.
\newblock Decay of correlations in nearest-neighbor self-avoiding walk,
  percolation, lattice trees and animals.
\newblock {\em Ann. Probab.}, 36(2):530--593, 2008.

\bibitem{Sl2}
Takashi Hara and Gordon Slade.
\newblock The scaling limit of the incipient infinite cluster in
  high-dimensional percolation. {II}. {I}ntegrated super-{B}rownian excursion.
\newblock {\em J. Math. Phys.}, 41(3):1244--1293, 2000.
\newblock Probabilistic techniques in equilibrium and nonequilibrium
  statistical physics.

\bibitem{PIMS}
Markus Heydenreich and Remco van~der Hofstad.
\newblock Progress in high-dimensional percolation and random graphs.
\newblock {\em preprint}, 2015.

\bibitem{HHH-revisited}
Markus Heydenreich, Remco van~der Hofstad, and Tim Hulshof.
\newblock High-dimensional incipient infinite clusters revisited.
\newblock {\em J. Stat. Phys.}, 155(5):966--1025, 2014.

\bibitem{IM}
Kiyosi It{\^o} and Henry~P. McKean, Jr.
\newblock {\em Diffusion processes and their sample paths}.
\newblock Springer-Verlag, Berlin-New York, 1974.
\newblock Second printing, corrected, Die Grundlehren der mathematischen
  Wissenschaften, Band 125.

\bibitem{JM}
Svante Janson and Jean-Fran{\c{c}}ois Marckert.
\newblock Convergence of discrete snakes.
\newblock {\em J. Theoret. Probab.}, 18(3):615--647, 2005.

\bibitem{janson2005convergence}
Svante Janson and Jean-Fran{\c{c}}ois Marckert.
\newblock Convergence of discrete snakes.
\newblock {\em Journal of Theoretical Probability}, 18(3):615--645, 2005.

\bibitem{JN}
Antal~A. J{\'a}rai and Asaf Nachmias.
\newblock Electrical resistance of the low dimensional critical branching
  random walk.
\newblock {\em Comm. Math. Phys.}, 331(1):67--109, 2014.

\bibitem{KNS}
H.~Kesten, P.~Ney, and F.~Spitzer.
\newblock The {G}alton-{W}atson process with mean one and finite variance.
\newblock {\em Teor. Verojatnost. i Primenen.}, 11:579--611, 1966.

\bibitem{kesten1986subdiffusive}
Harry Kesten.
\newblock Subdiffusive behavior of random walk on a random cluster.
\newblock {\em Ann. Inst. H. Poincar\'e Probab. Statist.}, 22(4):425--487,
  1986.

\bibitem{kesten1966galton}
Harry Kesten, P~Ney, and F~Spitzer.
\newblock The galton-watson process with mean one and finite variance.
\newblock {\em Theory of Probability \& Its Applications}, 11(4):513--540,
  1966.

\bibitem{Kigami_Harm}
Jun Kigami.
\newblock Harmonic calculus on limits of networks and its application to
  dendrites.
\newblock {\em J. Funct. Anal.}, 128(1):48--86, 1995.

\bibitem{Kolchin}
Valentin~F. Kolchin.
\newblock {\em Random mappings}.
\newblock Translation Series in Mathematics and Engineering. Optimization
  Software, Inc., Publications Division, New York, 1986.
\newblock Translated from the Russian, With a foreword by S. R. S. Varadhan.

\bibitem{KN}
Gady Kozma and Asaf Nachmias.
\newblock The {A}lexander-{O}rbach conjecture holds in high dimensions.
\newblock {\em Invent. Math.}, 178(3):635--654, 2009.

\bibitem{kumagai}
Takashi Kumagai.
\newblock {\em Random walks on disordered media and their scaling limits},
  volume 2101 of {\em Lecture Notes in Mathematics}.
\newblock Springer, Cham, 2014.
\newblock Lecture notes from the 40th Probability Summer School held in
  Saint-Flour, 2010, {\'E}cole d'{\'E}t{\'e} de Probabilit{\'e}s de
  Saint-Flour. [Saint-Flour Probability Summer School].

\bibitem{LG4}
J.-F. Le~Gall.
\newblock Propri\'et\'es d'intersection des marches al\'eatoires. {I}.
  {C}onvergence vers le temps local d'intersection.
\newblock {\em Comm. Math. Phys.}, 104(3):471--507, 1986.

\bibitem{LG}
Jean-Fran{\c{c}}ois Le~Gall.
\newblock Random real trees.
\newblock {\em Ann. Fac. Sci. Toulouse Math. (6)}, 15(1):35--62, 2006.

\bibitem{LG1}
Jean-Fran{\c{c}}ois Le~Gall.
\newblock It\^o's excursion theory and random trees.
\newblock {\em Stochastic Process. Appl.}, 120(5):721--749, 2010.

\bibitem{le2005random}
Jean-Fran{\c{c}}ois Le~Gall et~al.
\newblock Random trees and applications.
\newblock {\em Probab. Surv}, 2(245-311):17--33, 2005.

\bibitem{LGL}
Jean-Fran{\c{c}}ois Le~Gall and Shen Lin.
\newblock The range of tree-indexed random walk.
\newblock {\em Arxiv}.

\bibitem{lyons2005probability}
Russell Lyons and Yuval Peres.
\newblock Probability on trees and networks, 2005.

\bibitem{P}
J.~Pitman.
\newblock {\em Combinatorial stochastic processes}, volume 1875 of {\em Lecture
  Notes in Mathematics}.
\newblock Springer-Verlag, Berlin, 2006.
\newblock Lectures from the 32nd Summer School on Probability Theory held in
  Saint-Flour, July 7--24, 2002, With a foreword by Jean Picard.

\bibitem{SZ1}
Alain-Sol Sznitman.
\newblock Topics in random walks in random environment.
\newblock In {\em School and {C}onference on {P}robability {T}heory}, ICTP
  Lect. Notes, XVII, pages 203--266 (electronic). Abdus Salam Int. Cent.
  Theoret. Phys., Trieste, 2004.

\bibitem{SZ2}
Alain-Sol Sznitman.
\newblock Random motions in random media.
\newblock In {\em Mathematical statistical physics}, pages 219--242. Elsevier
  B. V., Amsterdam, 2006.

\bibitem{SZ}
Alain-Sol Sznitman and Martin Zerner.
\newblock A law of large numbers for random walks in random environment.
\newblock {\em Ann. Probab.}, 27(4):1851--1869, 1999.

\bibitem{van2006infinite}
Remco van~der Hofstad.
\newblock Infinite canonical super-brownian motion and scaling limits.
\newblock {\em Communications in mathematical physics}, 265(3):547--583, 2006.

\bibitem{vdhjarai}
Remco van~der Hofstad and Antal~A. J{\'a}rai.
\newblock The incipient infinite cluster for high-dimensional unoriented
  percolation.
\newblock {\em J. Statist. Phys.}, 114(3-4):625--663, 2004.

\bibitem{Varo}
Nicholas~Th. Varopoulos.
\newblock Long range estimates for {M}arkov chains.
\newblock {\em Bull. Sci. Math. (2)}, 109(3):225--252, 1985.

\bibitem{vervaat1979relation}
Wim Vervaat.
\newblock A relation between brownian bridge and brownian excursion.
\newblock {\em The Annals of Probability}, pages 143--149, 1979.

\bibitem{zeitouni2004part}
Ofer Zeitouni.
\newblock Part ii: Random walks in random environment.
\newblock In {\em Lectures on probability theory and statistics}, pages
  189--312. Springer, 2004.

\end{thebibliography}

\end{document}